\documentclass[reqno]{amsart} 

\usepackage{color}
\usepackage{xcolor}

\usepackage{svg} 

\usepackage{ifpdf}
\ifpdf 
\usepackage{graphicx}   
\pdfcompresslevel=9 
\usepackage[pdftex,     
plainpages=false,   
breaklinks=true,    
colorlinks=true,
linkcolor=blue,
 citecolor=green,
pdftitle={On the transport of currents and geometric Rademacher-type Theorems},
pdfauthor={Paolo Bonicatto, Giacomo Del Nin, Filip Rindler}
]{hyperref} 
\else 
\usepackage{graphicx}       
\usepackage{hyperref}       
\fi 

\usepackage{amsfonts,amsmath}	
\usepackage{amssymb}
\usepackage{verbatim}
\usepackage{amsopn}
\usepackage[english]{babel}
\usepackage{amsthm}
\usepackage{enumerate}
\usepackage{mathrsfs}	
\usepackage{mathtools}
\usepackage{harpoon}
\usepackage{esint}
\usepackage{todonotes}

\usepackage{stmaryrd}
\usepackage{bm}
\usepackage{bbm}
\usepackage{caption}
\usepackage{subcaption}
\usepackage{nicefrac}
\captionsetup{format=hang,labelfont={sf,bf}}

\usepackage[a4paper,left=35mm,right=35mm,top=34mm,bottom=40mm,marginpar=25mm]{geometry}

\newcommand{\Crm}{\mathrm{C}}

\newcommand{\Irm}{\mathrm{I}}

\newcommand{\Lrm}{\mathrm{L}}
\newcommand{\Mrm}{\mathrm{M}}
\newcommand{\Nrm}{\mathrm{N}}

\newcommand{\Dcal}{\mathcal{D}}

\newcommand{\Ical}{\mathcal{I}}

\newcommand{\Lcal}{\mathcal{L}}

\newcommand{\Fbf}{\mathbf{F}}

\newcommand{\Mbf}{\mathbf{M}}

\newcommand{\Fbb}{\mathbb{F}}

\newcommand{\Nbb}{\mathbb{N}}

\newcommand{\norm}[1]{\|#1\|}

\newcommand{\abs}[1]{|#1|}

\newcommand{\absBB}[1]{\biggl|#1\biggr|}

\newcommand{\altnorm}[1]{{\left\vert\kern-0.25ex\left\vert\kern-0.25ex\left\vert #1 \right\vert\kern-0.25ex\right\vert\kern-0.25ex\right\vert}}
\newcommand{\eps}{\varepsilon}

\newcommand{\dprb}[1]{\bigl\langle #1 \bigr\rangle}

\newcommand{\ibf}{\mathbf{i}}
\newcommand{\tbf}{\mathbf{t}}
\newcommand{\pbf}{\mathbf{p}}
\newcommand{\qbf}{\mathbf{q}}
\newcommand{\rbf}{\mathbf{r}}

\newcommand{\Var}{{\mathrm{Var}}}

\newcommand{\pV}{\mathrm{pV}}
\newcommand{\eV}{\mathrm{eV}}
\newcommand{\dash}{\text{-}}

\theoremstyle{definition} \newtheorem{definition}{Definition}[section]
\theoremstyle{definition} \newtheorem{remark}[definition]{Remark}
\theoremstyle{plain} \newtheorem{lemma}[definition]{Lemma}
\theoremstyle{plain} \newtheorem{proposition}[definition]{Proposition}
\theoremstyle{plain} \newtheorem{theorem}[definition]{Theorem}
\theoremstyle{plain} \newtheorem{corollary}[definition]{Corollary}
\theoremstyle{definition} \newtheorem{example}[definition]{Example}
\theoremstyle{plain} 
\theoremstyle{definition} 
\theoremstyle{plain} 
\theoremstyle{plain}

\DeclareMathOperator{\AC}{AC}
\DeclareMathOperator{\BV}{BV}
\DeclareMathOperator{\SBV}{SBV}
\DeclareMathOperator{\dive}{div}

\DeclareMathOperator{\Lip}{Lip}

\DeclareMathOperator{\curl}{curl}
\DeclareMathOperator{\Wedge}{{\textstyle\bigwedge}}

\DeclareMathOperator*{\wslim}{w*-lim}

\newcommand{\ee}{\mathrm{e}}

\newcommand{\sbullet}{\begin{picture}(1,1)(-0.5,-2.5)\circle*{2}\end{picture}}
\newcommand{\frarg}{\,\sbullet\,}

\newcommand{\Fbold}{\mathbf{F}}
\newcommand{\Mbold}{\Mbf}
\newcommand{\R}{\mathbb{R}}

\newcommand{\N}{\mathbb{N}}
\newcommand{\Z}{\mathbb{Z}}

\newcommand{\loc}{\text{\rm loc}}

\renewcommand{\L}{\mathscr L}

\newcommand{\1}{\mathbbm 1}

\newcommand{\dd}{\mathrm{d}}
\newcommand{\DD}{\mathrm{D}}

\newcommand{\Crit}{\mathrm{Crit}}

\newcounter{counter}


\renewcommand{\S}{S}
\newcommand{\T}{T}

\newcommand{\weaksto}{\overset{*}{\rightharpoonup}}
\newcommand{\Fhom}{\mathbb{F}}
\newcommand{\Dscr}{\mathscr{D}}

\renewcommand{\L}{\mathscr L}
\renewcommand{\H}{\mathscr H}

\theoremstyle{plain} \newtheorem*{theorem*}{Theorem}
\theoremstyle{plain} 
\theoremstyle{plain} \newtheorem*{mthm*}{Main Theorem}
\theoremstyle{plain} \newtheorem*{conjecture*}{Conjecture}
\theoremstyle{plain} 
\theoremstyle{plain} \newtheorem*{problem*}{Problem}

\newcommand{\bbb}[1]{\llbracket #1 \rrbracket}

\numberwithin{equation}{section}

\usepackage{color}
\usepackage{graphicx}
\usepackage{tikz}
\usepackage{framed}
\definecolor{shadecolor}{rgb}{0.94, 0.97, 1.0}

\usepackage{pict2e}
\makeatletter
\DeclareRobustCommand{\intprod}{%
  \mathbin{\mathpalette\int@prod{(0.1,0)(0.9,0)(0.9,0.8)}}}
\DeclareRobustCommand{\restrict}{%
  \mathbin{\mathpalette\int@prod{(0.1,0.8)(0.1,0)(0.9,0)}}}	
\newcommand{\int@prod}[2]{%
  \begingroup
  \sbox\z@{$\m@th#1+$}%
  \setlength\unitlength{\wd\z@}%
  \begin{picture}(1,1)
  \roundcap
  \polyline#2
  \end{picture}%
  \endgroup
}
\makeatother

\usepackage[normalem]{ulem}

\title[Transport of currents]{Transport of currents and\\geometric Rademacher-type Theorems}

\author{Paolo Bonicatto}
\address[P.\ Bonicatto]{Mathematics Institute, University of Warwick,
Zeeman Building, CV4 7AL Coventry, UK}
\email{Paolo.Bonicatto@warwick.ac.uk}

\author{Giacomo Del Nin}
\address[G.\ Del Nin]{Mathematics Institute, University of Warwick,
Zeeman Building, CV4 7AL Coventry, UK}
\email{Giacomo.Del-Nin@warwick.ac.uk}

\author{Filip Rindler}
\address[F.\ Rindler]{Mathematics Institute, University of Warwick,
Zeeman Building, CV4 7AL Coventry, UK}
\email{F.Rindler@warwick.ac.uk}

\begin{document}

\maketitle


\begin{abstract}
The transport of many kinds of singular structures in a medium, such as vortex points/lines/sheets in fluids, dislocation loops in crystalline plastic solids, or topological singularities in magnetism, can be expressed in terms of the geometric (Lie) transport equation
\[
  \frac{\mathrm{d}}{\mathrm{d} t} T_t + \mathcal{L}_{b_t} T_t = 0
\]
for a time-indexed family of integral or normal and usually boundaryless $k$-currents $t \mapsto T_t$ in an ambient space $\mathbb{R}^d$ (or a subset thereof). Here, $b_t = b(t,\cdot)$ is the driving vector field and $\mathcal{L}_{b_t} T_t$ is the Lie derivative of $T_t$ with respect to $b_t$ (introduced in this work via duality). Written in coordinates for different values of $k$, this PDE encompasses the classical transport equation ($k = d$), the continuity equation ($k = 0$), as well as the equations for the transport of dislocation lines in crystals ($k = 1$) and membranes in liquids ($k =d-1$). The top-dimensional and bottom-dimensional cases have received a great deal of attention in connection with the DiPerna--Lions and Ambrosio theories of Regular Lagrangian Flows. On the other hand, very little is rigorously known at present in the intermediate-dimensional cases. This work thus develops the theory of the geometric transport equation for arbitrary $k$ and in the case of boundaryless currents $T_t$, covering in particular existence and uniqueness of solutions, structure theorems, rectifiability, and a number of Rademacher-type differentiability results. The latter yield, given an absolutely continuous (in time) path $t \mapsto T_t$, the existence almost everywhere of a ``geometric derivative'', namely a driving vector field $b_t$. This subtle question turns out to be intimately related to the critical set of the evolution, a new notion introduced in this work, which is closely related to Sard's theorem and concerns singularities that are ``smeared out in time''. Our differentiability results are sharp, which we demonstrate through an explicit example of a heavily singular evolution, which is nevertheless Lipschitz-regular in time.
\vspace{4pt}

\noindent\textsc{MSC (2020): 49Q15; 35Q49} 

\noindent\textsc{Keywords:} Currents, transport equation, Lie derivative, Rademacher theorem.

\noindent\textsc{Date:} \today{}.
\end{abstract}

\vspace{20pt}

\setcounter{tocdepth}{1}
\tableofcontents

\newpage

\section{Introduction}
Transport phenomena are ubiquitous in physics and engineering: For instance, given a bounded (time-dependent) vector field $b_t = b(t,\frarg) \colon \R^d \to \R^d$, $t \in [0,1]$ (the time interval being $[0,1]$ throughout this work for reasons of simplicity only), the \emph{transport equation}
\begin{equation} \label{eq:transport}
  \frac{\dd}{\dd t} u_t + b_t \cdot \nabla u_t = 0,  
\end{equation}
describes the transport of \emph{scalar fields} $u_t = u(t,\frarg) \colon \R^d \to \R$ (e.g., electrical potentials), while the \emph{continuity equation}
\begin{equation} \label{eq:continuity}
  \frac{\dd}{\dd t} \mu_t + \dive( b_t \mu_t) = 0,  
\end{equation}
describes the transport of \emph{densities}, or, more generally, of measures $\mu_t=\mu(t,\frarg)$ representing (possibly singular) mass distributions.

Another example of a transport phenomenon is the motion of \emph{dislocations}, which constitutes the main mechanism of plastic deformation in solids composed of crystalline materials, e.g. metals~\cite{AbbaschianReedHill09,HullBacon11book,AndersonHirthLothe17book} (also see~\cite{HudsonRindler21?} for an extensive list of further references). Dislocations are topological defects in the lattice of the crystal material which carry an orientation and a ``topological charge'', called the \emph{Burgers vector}. If one considers \emph{fields} $\tau_t = \tau(t,\frarg) \colon \R^3 \to \R^3$ of continuously-distributed dislocations (for a fixed Burgers vector) transported by a velocity field $b_t$, one obtains the \emph{dislocation-transport} equation
\begin{equation} \label{eq:transport_dislfields}
  \frac{\dd}{\dd t}\tau_t+\curl(b_t \times \tau_t) = 0.
\end{equation}
This equation follows more or less directly from the Reynolds transport theorem for $1$-dimensional quantities. A theory of plasticity based on dislocation transport are the \emph{field dislocation mechanics} developed by Acharya, see, for instance,~\cite{AA} and~\cite{AT}, and the recent variational model in~\cite{HudsonRindler21?, Rindler21a?, Rindler21b?}.

Further, also the movement of membranes in a medium can be described by a suitable transport equation, which, when formulated for the normal vector $\vec{\alpha}_t = \vec{\alpha}(t,\frarg) \colon \R^d \to \R^d$ to the surface, reads formally as
\begin{equation} \label{eq:transport_membranes}
  \frac{\dd}{\dd t} \vec{\alpha}_t + \nabla (\vec{\alpha}_t \cdot b_t) = 0.
\end{equation}

In all of the above equations, the case of ``singular'' objects being transported is just as natural as the case of fields. Besides the dislocations mentioned already, moving point masses, lines, or sheets are particularly relevant in fluid mechanics when considering concentrated vorticity. Intermediate-dimensional structures also appear in the setting of Ginzburg--Landau energies, even in the static case, see \cite{ABO, JS, BCL}. On the other hand, the continuum case corresponds to ``fields'' of such points, lines, membranes, etc., and should arise via homogenisation. However, the rigorous justification of this limit passage is missing in many cases, e.g., in the theory of dislocations, where it constitutes one of the most important open problems. This is partly due to the present lack of understanding of the singular versions of the equations above.

The starting point for the present work is the observation that the transport equation~\eqref{eq:transport}, the continuity equation~\eqref{eq:continuity}, the dislocation-transport equation~\eqref{eq:transport_dislfields} and the equation for the transport of membranes~\eqref{eq:transport_membranes}, as well as several other transport-type equations, are all special cases of the \emph{geometric transport equation}
\begin{equation} \label{eq:geom_transport} \tag{GTE}
  \frac{\dd}{\dd t} T_t + \Lcal_{b_t} T_t = 0
\end{equation}
for families of normal or integral $k$-currents $t \mapsto T_t \in \Nrm_k(\R^d)$, $t \in [0,1]$ (some notation for differential forms and currents is recalled in Section~\ref{sc:prelims}). We understand this equation in a weak sense, that is, for every $\psi\in \Crm^\infty_c((0,1))$ and every smooth $k$-form $\omega \in \Dscr^k(\R^d)$ it needs to hold that
\begin{equation} \label{eq:geom_transport_weak}
  \int_0^1 \dprb{\T_t,\omega} \psi'(t) - \dprb{ \Lcal_{b_t}\T_t, \omega} \psi(t) \; \dd t=0.
\end{equation}
Here, we define the \emph{Lie derivative} $\Lcal_{b_t} T_t$ of $T_t$ with respect to $b_t$ as the current given by 
\[
  \Lcal_{b_t} T_t := -b_t \wedge \partial T_t - \partial (b_t \wedge T_t), 
\]
which is obtained by duality via Cartan's formula for differential forms. 
In order to make sense of~\eqref{eq:geom_transport_weak}, it is further natural to assume the following integrability condition for $b_t$ relative to the family $(T_t)_t$, which forms part of our notion of weak solution:
\[
  \int_0^1 \int_{\R^d} |b_t| \; \dd(\|T_t\|+\|\partial T_t\|) \; \dd t < \infty.
\]

Consider first the case $k = 0$ for~\eqref{eq:geom_transport} and assume that we are looking for solutions with finite mass. It is well-known that $0$-currents $T_t$ ($t \in [0,1]$) with finite mass are actually measures, $T_t = \mu_t$, and in coordinates~\eqref{eq:geom_transport} turns out to be precisely the continuity equation~\eqref{eq:continuity} for $\mu_t$ (see Section~\ref{sc:particular} for this and the following computations). This is natural since the continuity equation expresses \emph{mass} transport. In this way,~\eqref{eq:geom_transport} can be seen as a generalisation of the continuity equation.

In the case $k = d$ for~\eqref{eq:geom_transport}, the moving objects are now top-dimensional currents, which we can think of as assigning a (signed) volume or (scalar) field strength to every point. In the case of a smooth volume or scalar field $u \colon [0,1] \times \R^d \to \R$, we may write
\[
  T_t = u_t(x) \, (\ee_1 \wedge \cdots \wedge \ee_d) \, \L^d \quad\text{as measures},
\]
where $u_t := u(t,\frarg)$. Then, we obtain the transport equation~\eqref{eq:transport} as the coordinate expression of~\eqref{eq:geom_transport}. 

Finally, \eqref{eq:transport_dislfields} is the coordinate expression of~\eqref{eq:geom_transport} for $k = 1$ and~\eqref{eq:transport_membranes} is the coordinate expression of~\eqref{eq:geom_transport} for $k = d-1$ (see again Section~\ref{sc:particular} for the details of the computations). 

Deep investigations have been conducted into the transport and continuity equations, that is, the top-dimensional and bottom-dimensional cases of the geometric transport equation~\eqref{eq:geom_transport}. In particular, the theory of Regular Lagrangian Flows pioneered by DiPerna--Lions and Ambrosio, see, e.g.,~\cite{DPL, AmbrosioBV, AmbrosioCrippa} and the references contained therein, has enabled many new and surprising insights into the equations of fluids. On the contrary, almost nothing rigorous seems to be known about the intermediate-dimensional cases in~\eqref{eq:geom_transport}, not even in the case of smooth vector fields $b_t$.

This paper will mostly focus on the case of \emph{boundaryless} integral (or, in some cases, normal) currents. Indeed, singular structures in applications are often prevented from ending within the medium; for instance, this is the case for dislocations in single crystals (stemming from atomistic reasons~\cite{HullBacon11book}). Moreover, if boundaries do happen to occur, then they themselves are transported, but by a transport-type equation in one dimension fewer and, potentially, a different driving vector field. The case of transported currents with boundary thus involves further complications and is not considered in this work (besides some results that are indifferent to the presence of boundaries).

\subsection*{Space-time solutions and rectifiability}

When considering the transport of a family of currents $t \mapsto T_t$, it turns out that another notion of solution is, in a sense, more natural than the weak solutions considered above, namely the \emph{space-time solutions}. This concept builds on the theory of space-time currents, introduced in \cite{Rindler21a?}, and can be explained, in the case of integral currents, as follows: Let $S$ be a $(1+k)$ integral current in $[0,1] \times \R^d$. Denote by $S|_t$ the slice of $S$ at time $t$ (with respect to the time projection $\tbf(t,x):=t$) and by
\[
  S(t) := \pbf_*(S|_t)
\]
its pushforward under the spatial projection $\pbf(t,x) := x$. Standard theory gives that $S(t)$ is an integral $k$-current in $\R^d$ and that the orienting map $\vec{S} \in \Lrm^\infty(\norm{S};\Wedge_k (\R\times\R^d))$ (with $\abs{\vec{S}} = 1$ $\norm{S}$-a.e.) decomposes orthogonally as
\[
  \vec{S} = \xi \wedge \vec{S}|_t,  \qquad \xi \perp \vec{S}|_t,
\]
where $\vec{S}|_t$ is the orienting map of the slice $S|_t$, and
\[
  \xi(t,x) := \frac{\nabla^S \tbf(t,x)}{\abs{\nabla^S \tbf(t,x)}}.
\]
Here, $\tbf(t,x) := t$ is the temporal projection and $\nabla^S \tbf$ is its approximate gradient with respect to $S$, i.e., the projection of $\nabla \tbf$ onto the approximate tangent space $\mathrm{Tan}_{(t,x)} S$ (more precisely, ``$S$'' should be replaced by the $\H^{1+k}$-rectifiable carrier set of $S$ here, but we consider this to be implicit). See Figure~\ref{fig:spacetime} for an illustration.

\begin{figure}
         \centering
         \includegraphics[width=0.45\textwidth]{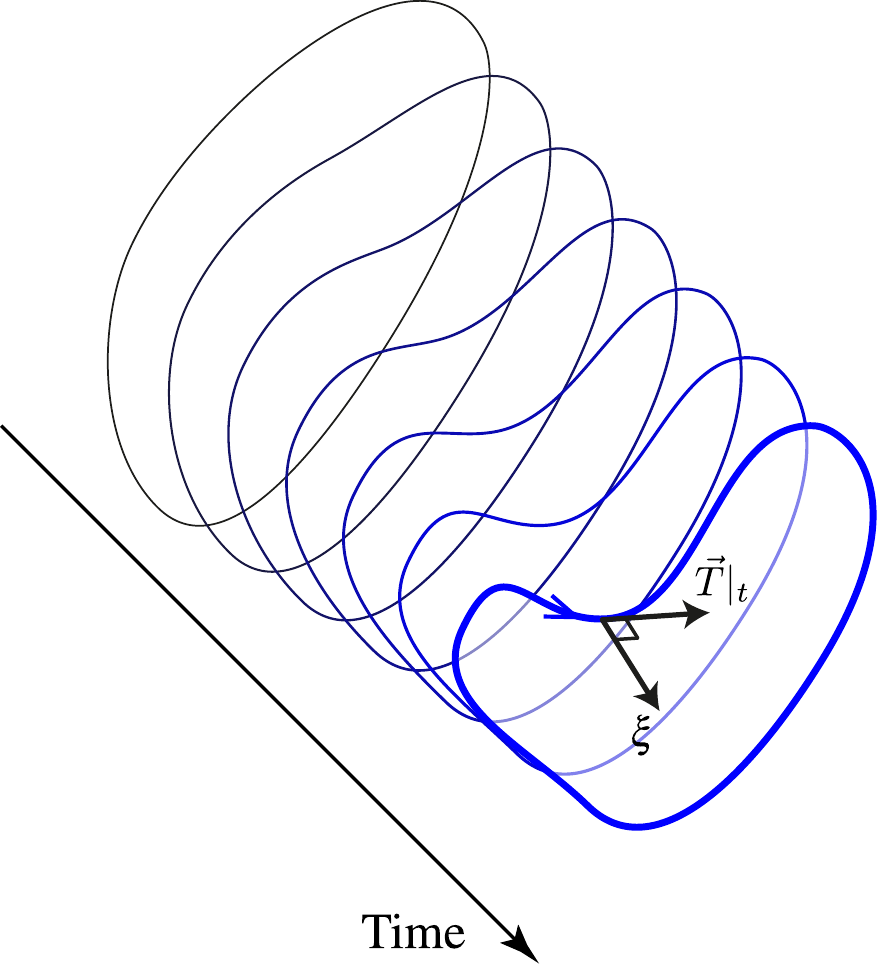}
         \caption{An illustration of an evolution of an integral $1$-current described via a space-time integral $2$-current.}
         \label{fig:spacetime}
\end{figure} 

We can now define the \emph{geometric derivative} of $S$ as the (normal) change of position per time of a point travelling on the current being transported, that is,
\[
  \frac{\DD}{\DD t} S(t,x) := \frac{\pbf(\xi(t,x))}{\abs{\tbf(\xi(t,x))}}
  = \frac{\pbf(\xi(t,x))}{\abs{\nabla^S \tbf(t,x)}}=\pbf\left(\frac{\nabla^S\tbf(t,x)}{|\nabla^S\tbf(t,x)|^2}\right)
\]
for $\norm{S}$-a.e.\ $(t,x)$. Clearly, the geometric derivative exists only outside the \emph{critical set}
\[
	\Crit(S) := \bigl\{(t,x) \in [0,1] \times \R^d:  \nabla^S \tbf (t,x)= 0 \bigr\},
\]
which is related to Sard's theorem and which turns out to play a major role in this work.

We then say that a space-time current $S$ as above is a \emph{space-time solution} of~\eqref{eq:geom_transport} if
\begin{equation} \label{eq:geom_transport_spacetime}
  \frac{\DD}{\DD t} S(t,x) = b(t,x)  \qquad\text{for $\norm{S}$-a.e.\ $(t,x)$.}
\end{equation}
In fact, this is the approach taken in the modelling of dislocation movements contained in~\cite{HudsonRindler21?,Rindler21a?,Rindler21b?}. There, the geometric derivative can be identified with the (normal) dislocation velocity, which is a key quantity in any theory of plasticity driven by dislocation motion.

One can see without too much effort that space-time solutions give rise to weak solutions: The projected slices $S(t) := \pbf_*(S|_t)$ of an integral $(1+k)$-current $S$ satisfying~\eqref{eq:geom_transport_spacetime} solve~\eqref{eq:geom_transport}. The converse question, that is, when a collection of currents $\{T_t\}_t$ solving~\eqref{eq:geom_transport} can be realised as the slices of a space-time current lies much deeper. A positive result on this \emph{space-time rectifiability} will be a principal theorem of this paper.

\subsection*{Main results}

This work develops a general theory of the geometric transport equation~\eqref{eq:geom_transport} in the case of transported integral (sometimes only normal) $k$-currents, including the case of intermediate dimensions ($k \neq 0, d$). Before stating our main results, it is worthwhile to note that the primary goal of our analysis does not lie in proving existence and uniqueness of solutions (even though we prove such a theorem for the sake of completeness), but in analysing the structure and properties of solutions, such as rectifiability and (geometric) differentiability, as well as in understanding the relation between weak solutions and space-time solutions. Indeed, in applications the existence of solutions, usually to much more complicated systems with the transport equation being only one ingredient, can often be established in a variational way by minimising over all possible paths of currents, similar to the classical minimising movement scheme; for instance, this is the approach taken in~\cite{Rindler21b?}. On the other hand, the questions investigated in the present work go to the heart of the physical modelling, such as establishing the existence of the geometric derivative (which is the ``slip velocity'' in the theory of dislocations) and establishing in what sense one may understand weak solutions to~\eqref{eq:transport} as ``physical''.

To wit, we prove the following main theorems:

\begin{itemize}
  \item \emph{Existence \& Uniqueness Theorem~\ref{thm:smooth}}: In the case where the driving vector field is assumed smooth, we show the existence and uniqueness of a path of integral currents solving the geometric transport equation.
  \item \emph{Disintegration Structure Theorem~\ref{thm:struct}:} For a space-time current, this theorem details the structure of its slices. In particular, it clarifies the role of ``critical points'' of the currents, which turn out to be central.
  \item \emph{Equality of Variations Theorem~\ref{thm:Var_equals_TV}}: We prove the equality between two different notions of variation for boundaryless space-time currents: the \emph{pointwise total variation} (with respect to the homogeneous flat norm) of the map $t \mapsto S\vert_t$ and the \emph{space-time variation} introduced in \cite{Rindler21a?}.  
  \item \emph{Rectifiability Theorem~\ref{thm:rect}:} In a sense a converse to the disintegration structure theorem, this result shows that if a time-indexed collection of boundaryless integral $k$-currents satisfies suitable continuity conditions, then these currents are in fact the slices of a space-time integral current. In this sense the path is ``space-time rectifiable'' (i.e., rectifiable of dimension $1+k$).
  \item \emph{Advection Theorem~\ref{thm:advection}:} We show that a boundaryless space-time current satisfies the \emph{negligible criticality condition} (meaning that critical points are negligible for the mass measure of the current) if and only if its slices are advected by some vector field.
  \item \emph{Weak and Strong Rademacher-type Differentiability Theorems~\ref{thm:weak_rademacher},~\ref{thm:strong_rademacher}:} These results show that a time-indexed family of boundaryless integral currents, satisfying suitable Lipschitz-continuity (or even absolute continuity) assumptions, is a solution to the geometric transport equation for some driving vector field.
\end{itemize}

We call the last results ``Rademacher-type'' theorems because they show the differentiability of Lipschitz-continuous evolutions of currents with respect to the time variable. However, the notion of derivative is not the classical one (e.g., in the Gateaux-sense with respect to the linear structure of the space of normal currents), but the geometric one defined above.

\subsection*{Technical aspects}

On a technical side let us point out three important facts that play a central role in our theory: The first point is that, for boundaryless integral currents, there are two possible definitions of the homogeneous (boundaryless) Whitney flat norm (we always work in spaces of trivial homology):
\begin{align*}
  \Fbb(T) &:= \inf\bigl\{ \Mbf(Q) : Q\in\Nrm_{k+1}(\R^d), \, T=\partial Q \bigr\}, \\
  \Fbb_{\Irm}(T) &:= \inf\bigl\{ \Mbf(Q) : Q\in\Irm_{k+1}(\R^d), \, T=\partial Q\},
\end{align*}
meaning that in $\Fbb(T)$ we use normal test currents and in $\Fbb_{\Irm}(T)$ we use integral test currents. It is known that for $k\in \{0,d-2,d-1,d\}$ these two flat norms coincide, but for other $k$ their equivalence seems to be unknown (in fact, we establish some new information on this question in Proposition~\ref{prop:asymptotic_equivalence}). A number of our results (e.g., the Rademacher-type differentiability theorems) are quite sensitive to the type of flat norm used and we need to carefully distinguish them in this work.

The second point is that one can define the ``variation'' of a path of integral or normal currents in several different ways and we compare these different ways in Section~\ref{sc:variation}. It is then a consequence of the (non-trivial) Rectifiability Theorem~\ref{thm:rect} that the space-time variation and the essential variation with respect to $\Fbb_{\Irm}$ are in fact equal, answering in particular a question left open in~\cite{Rindler21a?, Rindler21b?}.

\begin{figure}
         \centering
         \includegraphics[width=0.45\textwidth]{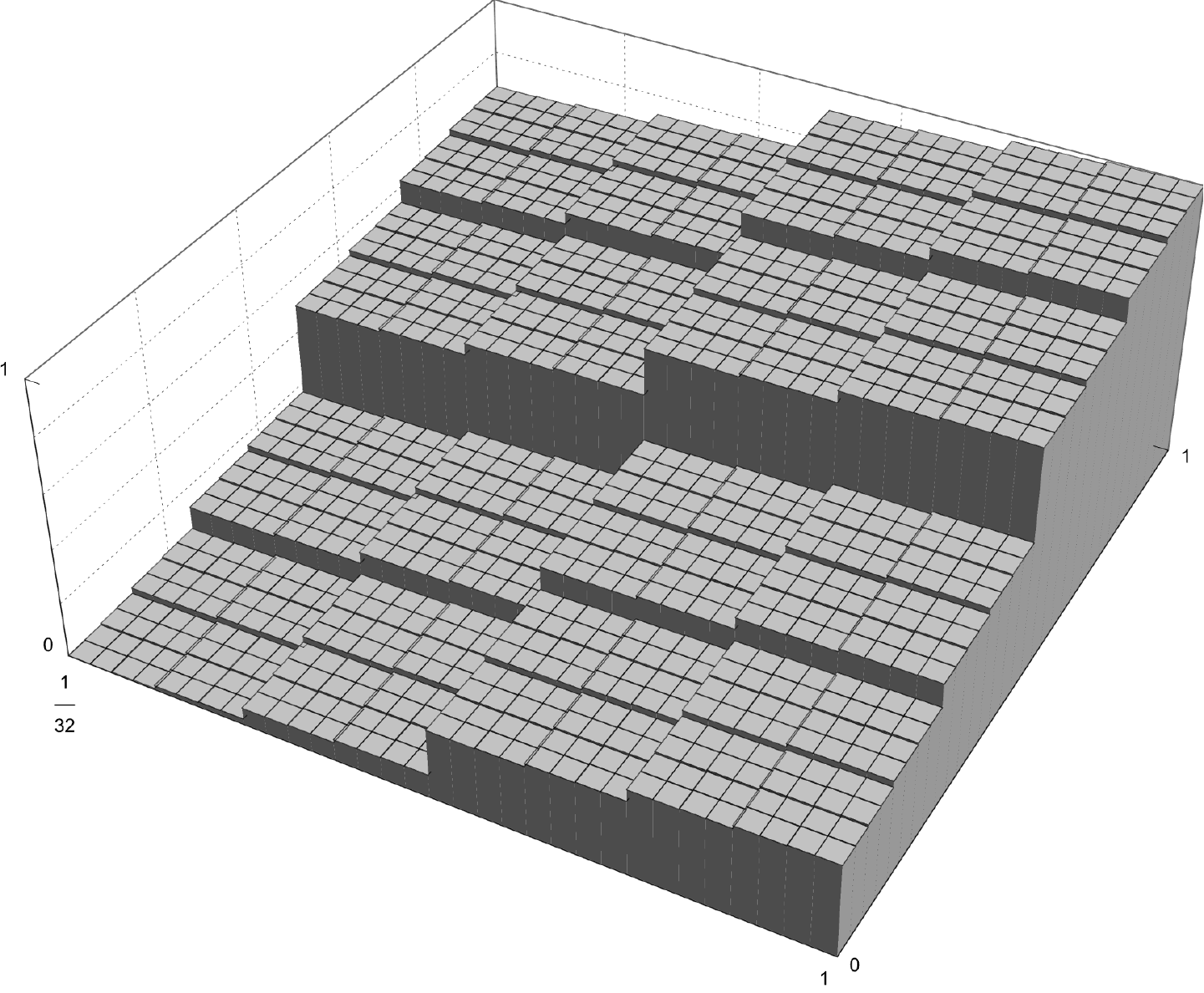}
         \caption{A step in the construction of the ``Flat Mountain'', which has many critical points, but whose slices (horizontal level-sets in this picture as the time direction is upwards) are nevertheless Lipschitz-regular in time.}
         \label{fig:Sard_intro}
\end{figure} 

As the third point we record the phenomenon that, unlike in the classical theory of BV-functions on a time interval, a path of integral $k$-currents may have a diffusely-concentrated or fractal structure \emph{in space only}, while being regular in time.
This phenomenon is closely related to Sard's theorem and to the presence of critical points smeared out in time. We give an explicit example of such behaviour in Section~\ref{sc:counterexample} (see Figure~\ref{fig:Sard_intro} for an illustration of one of the steps in the construction). In fact, the constructed current, which we call the ``Flat Mountain'', is even Lipschitz in time, so this effect is not linked to the time-regularity of a path. This example in particular shows the sharpness of our differentiability results.

\subsection*{Acknowledgements}

This project has received funding from the European Research Council (ERC) under the European Union's Horizon 2020 research and innovation programme, grant agreement No 757254 (SINGULARITY). The authors would like to thank Giovanni Alberti, Gianluca Crippa, Thomas Hudson, and Fanghua Lin for discussions related to this work.

\section{Notation and preliminaries} \label{sc:prelims}

This section fixes our notation and recalls some basic facts. We refer the reader to \cite{Federer69book,KP} for notation and the main results we use about differential forms and currents.

\subsection{Linear and multilinear algebra.}
Let $d \ge 1$ be an integer. We will often use the projection maps $\tbf \colon\R\times \R^d\to \R$, $\pbf \colon\R\times \R^d\to \R^d$ from the (Euclidean) space-time $\R\times\R^d$ onto the time and space variables, respectively, which are given as
\[
\tbf(t,x) = t,  \qquad
\pbf(t,x) = x.
\]

If $V$ is a (finite-dimensional, real) vector space, for every $k \in \N$, we let $\Wedge^k V$ be the space of \textbf{$k$-covectors} on $V$, i.e., the space of real-valued functions $\alpha \colon V \times \cdots \times V \to \R$ which are linear and alternating, 
\begin{equation*}
\alpha(v_{\sigma(v_1)}, \ldots , v_{\sigma(v_k)}) = \text{sgn}(\sigma) \cdot \alpha (v_1, \ldots, v_k)
\end{equation*}
for every $v_1, \ldots , v_k \in V$ and for every permutation $\sigma$ of the set $\{1, \ldots , k\}$. By duality, we define the space of \textbf{$k$-vectors} on $V$ as $\Wedge_k V := \Wedge^k(V^*)$. A $k$-vector $\eta\in \Wedge_k V$ is called \textbf{simple} if $\eta=v_1\wedge\ldots\wedge v_k$, where $v_i\in V$ and $\wedge$ denotes the exterior \textbf{wedge product}. Recall also that the duality between $V$ and $V^*$ gives us also a duality between $\Wedge^k V$ and $\Wedge_k V$, the duality pairing being uniquely determined by
\begin{equation*}
	\langle  v_1 \wedge \ldots \wedge v_k,\alpha\rangle  = \alpha(v_1, \ldots , v_k), \qquad  v_1 \wedge \ldots \wedge v_k  \in \Wedge_k V, \; \alpha \in \Wedge^k V,
\end{equation*}
and then extended by linearity.

Whenever $V$ is an inner product space, we can endow $\Wedge_k V$ with an inner product (Euclidean) norm $|\cdot|$ by declaring $\ee_I:=\ee_{i_1}\wedge\ldots \wedge \ee_{i_k}$, as $I$ varies in the $k$-multi-indices of $\{1,\ldots, n\}$, as \emph{orthonormal} whenever $\ee_1,\ldots,\ee_n$ are an orthonormal basis of $V$. A simple $k$-vector $\eta\in\Wedge_k V$ is called \textbf{unit} if there exists an orthonormal family $v_1,\ldots,v_k$ such that $\eta=v_1\wedge\ldots\wedge v_k$, or equivalently if its Euclidean norm $|\eta|$ equals 1. We define the \textbf{comass} of $\alpha\in\Wedge^kV$ as
\[
\|\alpha\|:=\sup\bigl\{\langle \eta,\alpha\rangle: \eta\in\Wedge_k V, \text{ simple, unit}\bigr\}
\]
and the \textbf{mass} of $\eta\in\Wedge_kV$ as
\[
\|\eta\|:=\sup\bigl\{\langle \eta,\alpha\rangle: \alpha\in\Wedge^k V,\,\|\alpha\|\leq 1\bigr\}.
\]
We have that $\|\eta\|=|\eta|$ if and only if $\eta$ is simple (and the same holds for covectors). From the definition it follows that
\begin{equation}\label{eq:cauchy_schwartz_mass_comass}
|\langle \eta,\alpha\rangle|\leq \,\|\eta\|\|\alpha\|,\qquad\text{for every $\eta\in\Wedge_k V$, $\alpha\in\Wedge^k V$.}
\end{equation}
For $\eta \in \Wedge_{k}V$ and $\alpha\in \Wedge^\ell V$, we furthermore define the \textbf{interior products} $\eta\intprod \alpha \in \Wedge^{\ell-k}V$ (whenever $\ell\ge k$) and $\eta \restrict \alpha \in \Wedge_{k-\ell}V$ (whenever $\ell\le k$) via 
\[
\left \langle\zeta,\eta\intprod \alpha\right\rangle :=\langle \zeta\wedge\eta ,\alpha\rangle\qquad\text{for every $\zeta\in\Wedge_{\ell-k}V$}
\]
and 
\[
\left \langle\eta\restrict \alpha ,\beta\right\rangle :=\langle \eta, \alpha \wedge \beta \rangle\qquad\text{for every $\beta\in\Wedge^{k-\ell}V$}.
\]
In the following, we will often use the interior product $\intprod$ between a $k$-covector $\omega \in \Wedge^k V$ and a 1-vector $b \in \Wedge_1 V$, and in this case write it as a \textbf{contraction}, setting (in analogy to what is usually done in differential geometry, see, e.g.,~\cite{Lee13book})
\[
i_{b}\omega := (-1)^{k} b \intprod \omega, 
\]
that is, for every $k$-vector $v \in \Wedge_k V$,  
\begin{equation}\label{eq:contraction}
\langle v,i_{b}\omega\rangle :=
\langle  b \wedge v,\omega\rangle.
\end{equation}
A \textbf{differential $k$-form} $\omega$ on $\R^d$ is a covector field defined on $\R^d$, i.e., a map that to each $x \in \R^d$ associates a covector $\omega(x) \in \Wedge^k\R^d$. We denote by $\Dscr^k(\R^d)$ the space of smooth $k$-forms on $\R^d$ with compact support.

Given a linear map $S\colon V\to W$, we define $\Wedge^k S \colon \Wedge_k V\to \Wedge_k W$ by
\[
( \Wedge^k S) (v_1\wedge \ldots\wedge v_k)= (Sv_1)\wedge\ldots\wedge (Sv_k),
\]
on simple vectors and then we extend this definition by linearity to all of $\Wedge_k V$. If there is no risk of confusion, we will often write simply $S$ instead of $\Wedge^{k}S$ to denote the extension of the map $S$ to $\Wedge^k V$.  
The \textbf{pullback of a covector} $\alpha\in\Wedge^kV$ with respect to a linear map $S:V\to W$ is given by
\[
\left \langle  v_1\wedge\ldots\wedge v_k,S^* \alpha\right\rangle:=\langle  (Sv_1)\wedge\ldots\wedge (Sv_k),\alpha\rangle
\]
on simple $k$-vectors and then extended by linearity. Therefore,
\[
\langle \eta,S^*\alpha\rangle= \bigl\langle(\Wedge^k S) \eta,\alpha\bigr\rangle.
\]
If $f:\R^d\to \R^d$ is a differentiable map and $\omega\in \Dscr^k(\R^d)$ we define the \textbf{pullback} $f^*\omega$ to be the differential form given by
\[
\langle v,(f^*\omega)(x)\rangle:= \bigl\langle (\Wedge^k df(x) )v,\omega(f(x))\bigr\rangle\qquad\text{for every $v\in \Wedge_k(\R^d)$.}
\]

The \textbf{comass of a form} $\omega\in \Dscr^k(\R^d)$ is
\[
\|\omega\|_\infty:= \sup_{x\in \R^d} \|\omega(x)\|.
\]
In particular, we see from~\eqref{eq:cauchy_schwartz_mass_comass} that for every $k$-vector $v \in \Wedge_k \R^d$,
\begin{equation}\label{eq:pullback_mass_comass}
|\langle v,(f^*\omega )(x)\rangle|\leq \bigl\|(\Wedge^k df(x))v\bigr\|\cdot\|\omega (f(x))\| , \qquad  x \in \R^d.
\end{equation}

\subsection{Currents} We refer to~\cite{Federer69book} for a comprehensive treatment of the theory of currents, summarising here only the main notions that we will need. The space of \textbf{$k$-dimensional currents} $\mathscr D_k(\R^d)$ is defined as the dual of $\Dscr^k(\R^d)$, where the latter space is endowed with the locally convex topology induced by local uniform convergence of all derivatives.
As for distributions, we are interested in the notion of (sequential weak*) convergence: 
\begin{equation*}
	T_n \weaksto T \text{ in the sense of currents } \Longleftrightarrow \langle T_n, \omega\rangle \to \langle T, \omega\rangle \quad  \text{for all $\omega \in \Dscr^k(\R^d)$.}
\end{equation*}
The \textbf{boundary} of a current is defined as the adjoint of De Rham's differential: if $T$ is a $k$-current, then $\partial T$ is the $(k-1)$-current given by 
\begin{equation*}
	\langle \partial T, \omega \rangle = \langle T, d\omega \rangle,   \qquad  \omega \in \Dscr^{k-1}(\R^d). 
\end{equation*}
We denote by $\Mrm_k(\R^d)$ the space of $k$-\textbf{currents with finite mass} in $\R^d$, where the \textbf{mass} of a current $T \in \mathscr{D}_k(\R^d)$ is defined as 
\begin{equation*}
	\mathbf{M}(T):= \sup \left\{ \langle T,\omega \rangle:  \omega \in \Dscr^k(\R^d), \| \omega\|_\infty \le 1 \right\}.
\end{equation*}

Let $\mu$ be a finite measure on $\R^d$ and let ${\tau} \colon \R^d \to \Wedge_k(\R^d)$ be a map in ${\rm L}^1(\mu)$. Then we define the current $T:={\tau}\mu$ as 
\begin{equation*}
	\langle T, \omega \rangle = \int_{\R^d} \langle \tau(x),\omega(x) \rangle\; \dd \mu(x).
\end{equation*}
We recall that all currents with finite mass can be represented as $T= \tau \mu$ for a suitable pair $\tau, \mu$ as above. In the case when $\|\tau\|=1$ $\mu$-a.e., we denote $\mu$ by $\|T\|$ and we call it the \textbf{mass measure} of $T$. As a consequence, we can write $T=\vec{T}\|T\|$, where $\|\vec{T}\|=1$ $\|T\|$-almost everywhere. One can check that, if $T=\tau\mu$ with $\tau\in \Lrm^1(\mu)$, then $\|T\|=\|\tau\| \mu$, hence
\[
\mathbf{M}(T) = \int_{\R^d} \|\tau(x)\|\; \dd \mu(x).
\]
Given a current $T=\tau \mu \in \Dscr_k(\R^d)$ with finite mass and a vector field $v:\R^d\to\R^d$ defined $\|T\|$-a.e., we define the \textbf{wedge product}
\[
  v\wedge T:=(v\wedge \tau) \mu\in \Dscr_{k+1}(\R^d).
\]
Whenever $T$ is a current with finite mass we can extend its action on forms that are merely $\|T\|$-summable, and in particular we can consider for every set $A\subseteq\R^d$ the current $T\restrict A$ given by
\[
\langle T\restrict A,\omega\rangle := \langle T,\1_A \omega\rangle\qquad\text{for every $\omega\in\Dscr^k(\R^d)$.}
\]
If $T\in \Dscr_k(\R^d)$ and $\omega\in \Dscr^\ell(\R^d)$, with $\ell\le k$, we similarly define the \textbf{restriction} $T\restrict \omega\in \Dscr_{k-\ell}(\R^d)$ by
\[
\langle T\restrict \omega,\eta\rangle:=\langle T,\omega\wedge \eta\rangle,\qquad\text{for every $\eta\in\Dscr^{k-\ell}(\R^d)$.}
\]
The \textbf{pushforward} of $\T$ with respect to a $\Crm^1$-map $f:\R^d\to\R^d$ is defined by
\[
  \langle f_* T,\omega\rangle=\langle \T,f^* \omega\rangle. 
\]
In view of~\eqref{eq:pullback_mass_comass} we have the estimate 
\begin{equation}\label{eq:mass_of_pushforward}
\Mbf(f_{*} T) \leq \int_{\R^d} \bigl\| (\Wedge^k df(x))\tau(x) \bigr\| \; \dd \|T\|(x)
\leq \|df\|_\infty^k \Mbf(T).
\end{equation}
If $T$ is \textbf{simple}, i.e., $\vec{T}$ is a simple $k$-vector $\|T\|$-almost everywhere, then the same inequality holds with the mass norm $\|\cdot\|$ replaced by the Euclidean norm $|\cdot|$.

Given two currents $T_1\in \mathscr{D}_{k_1}(\R^{d_1})$ and $T_2\in\mathscr{D}_{k_2}(\R^{d_2})$, their \textbf{product} $T_1\times T_2$ is a well-defined current in $\mathscr{D}_{k_1+k_2}(\R^{d_1}\times\R^{d_2})$. The boundary of the product is given by
\begin{equation}\label{eq:boundary_of_product}
\partial (T_1\times T_2)=\partial T_1 \times T_2 + (-1)^{k_1} T_1\times \partial T_2.
\end{equation}

A $k$-current on $\R^d$ is said to be \textbf{normal} if both $T$ and $\partial T$ have finite mass. The space of normal $k$-currents is denoted by $\mathrm N_k(\R^d)$.
The weak* topology on the space of (normal) currents has good properties of compactness and lower semicontinuity: if $(T_j)_j$ is a sequence of currents with $\Mbf(T_j) + \Mbf(\partial T_j) \le C < +\infty$ for every $j \in \N$, then there exists a normal current $T$ such that, up to a subsequence, $T_j \weaksto T$. Furthermore, 
\begin{equation*}
\Mbf(T) \le\liminf_{j \to +\infty} \Mbf(T_j) ,  \qquad
\Mbf(\partial T) \le\liminf_{j \to +\infty} \Mbf(\partial T_j).
\end{equation*}
For a normal current $T\in \Nrm_k(\R^d)$, it is possible to define the pushforward $f_*T$ when $f:\R^d\to\R^d$ is merely Lipschitz~\cite[4.1.14]{Federer69book}. 
We also mention that, by~\cite[4.1.20]{Federer69book}, if $T\in\Nrm_k(\R^d)$ then $\|T\|\ll\H^k$.

An \textbf{integer-multiplicity rectifiable} $k$-current $T$ is a
$k$-current of the form 
\[
	T = m \, \vec{T} \, \mathscr H^k \restrict R,
\]
where:
\begin{enumerate}
\item $R \subset \R^d$ is countably $\mathscr H^k$-rectifiable (that is, it can be covered up to a $\H^k$-null set by countably many images of Lipschitz functions from $\R^k$ to $\R^d$) with $\mathscr H^k(R \cap K) < \infty$ for all compact sets $K \subset \R^d$;
\item $\vec{T} \colon R \to \Wedge_k \R^d$ is $\mathscr H^k$-measurable and for $\mathscr H^k$-a.e.\ $x \in R$ the $k$-vector $\vec{T}(x)$ is simple, unit ($|\vec{T}(x)| = 1$), and its span coincides with the approximate tangent space $\mathrm{Tan}_x R$ to $R$ at $x$;
\item $m \in {\rm L}^1_\loc(\mathscr H^k \restrict R;\Z)$;
\end{enumerate}
The map $\vec{T}$ is called the \textbf{orientation map} of $T$ and $m$ is the \textbf{multiplicity}.
Let $T = \vec{T} \| T\|$ be the Radon--Nikod\'{y}m decomposition of $T$ with the \textbf{total variation measure} $\|T\| = |m| \, \mathscr H^k \restrict R \in \mathscr M^+_{\rm loc}(\R^d)$. Then we have %
\[
\Mbf(T) = \| T\|(\R^d) = \int_R |m(x)| \; \dd \mathscr H^k(x).
\]
We then define the space of \textbf{integral} $k$-currents ($k \in \N \cup \{0\}$):
\[
\Irm_k(\R^d) := \bigl\{ \text{$T$ integer-multiplicity rectifiable $k$-current}:  \Mbf(T) + \Mbf(\partial T) < \infty \bigr\}. 
\]

For $F \subset \R^d$ closed, the subspaces $\Irm_k(F)$, $\Nrm_k(F)$ are defined as the spaces of all $T \in \Irm_k(\R^d)$, or $T \in \Nrm_k(\R^d)$, respectively, with support (in the sense of measures) in $F$. Since $F$ is closed, these subspaces are weakly* closed.

An important property of integral currents is the \emph{Federer-Fleming compactness theorem}~\cite[Theorems 7.5.2, 8.2.1]{KP}: Let $(T_j)_j \subset \mathrm I_k(\R^d)$ with 
\[
\sup_{j \in \N} \,(\Mbf(T_j) + \Mbf(\partial T_j)) < \infty. 
\]
Then, there exists a (not relabeled) subsequence and a $T \in \mathrm I_k(\R^d)$ such that $T_j \weaksto T$ in the sense of currents.

\subsection{Flat norms}

For $T \in \Irm_k(\R^d)$, the \textbf{(Whitney) flat norm} is given by
\begin{equation}\label{eq:def_flat_norm_new}
\Fbf (\T):=\inf\{\Mbf(Q)+\Mbf(R): Q\in\mathrm{N}_{k+1}(\R^d),R\in\mathrm{N}_k(\R^d),\, T=\partial Q+R\}
\end{equation}
and one can also consider the \textbf{flat convergence} $\mathbf{F}(T-T_j) \to 0$ as $j \to \infty$. Under the mass bound $\sup_{j\in\N} \, \big( \mathbf{M}(T_j) + \mathbf{M}(\partial T_j) \big) < \infty$, this flat convergence is equivalent to weak* convergence (see, for instance,~\cite[Theorem~8.2.1]{KP} for a proof). The flat norm admits also a dual representation (see~\cite[4.1.12]{Federer69book}) as
\begin{equation}\label{eq:flat_norm_dual}
\Fbf(T)=\sup\{\langle T,\omega\rangle\colon \, \omega\in\Dscr^k(\R^d), \|\omega\|_\infty\leq 1,\,\|d\omega\|_\infty\leq 1\}.
\end{equation}
When $\partial T=0$, one can also consider the \textbf{homogeneous flat norm}
\begin{equation}\label{eq:def_hom_flat_norm}
\Fhom (\T):=\inf\{\Mbf(Q): Q\in\mathrm{N}_{k+1}(\R^d), \, T=\partial Q\},
\end{equation}
which also admits a dual representation as
\begin{equation}\label{eq:hom_flat_norm_dual}
\Fhom(T)=\sup \{\langle T,\omega\rangle:\, \omega\in\Dscr^k(\R^d),\, \|d\omega\|_\infty\leq 1\}.
\end{equation}

If $T$ is integral, one can also consider the corresponding integral versions of~\eqref{eq:def_flat_norm_new} and~\eqref{eq:def_hom_flat_norm}, called \textbf{integral flat norm} and \textbf{integral homogeneous flat norm} respectively:
\begin{align*}
\Fbf_{\Irm} (T)&:=\inf\{\Mbf(Q)+M(R): Q\in\Irm_{k+1}(\R^d),R\in\Irm_k(\R^d), \,T=\partial Q+R\},\\
\Fhom_{\Irm} (T)&:=\inf\{\Mbf(Q): Q\in\Irm_{k+1}(\R^d), \,T=\partial Q\}.
\end{align*}
These, however, do not admit a dual representation as in~\eqref{eq:flat_norm_dual} and~\eqref{eq:hom_flat_norm_dual}. Notice that these are not proper norms because $\Irm_k(\R^d)$ is not a vector space. Moreover, quite surprisingly, they can even fail to be positively $\mathbb{Z}$-homogeneous: there are examples of $S\in\Irm_1(\R^4)$ such that $\Fhom(2S)<2\Fhom(S)$ (see \cite{Young63,White84}, and \cite{Young18} for a recent advancement). Observe that, by compactness and lower semicontinuity, all the above infima in the definition of flat norms are attained. In the following, we will also consider the homogeneous flat norms $\Fhom, \Fhom_{\Irm}$ on the whole $\Nrm_{k}(\R^d)$ or $\Irm_{k}(\R^d)$, in which case they are understood to be $+\infty$ on currents that are not boundaryless.

\subsection{Slicing and coarea formula for integral currents} \label{ss:slicing} 

Given a Lipschitz function $f:\R^n\to \R$ and $S\in\Nrm_{k+1}(\R^n)$, the \textbf{slicing} of $S$ at level $t$ is defined by the following, which will be referred to as the \textbf{cylinder formula}:
\begin{equation}\label{eq:definition_slicing}
	S|_t:= \partial (S\restrict \{f<t\})-(\partial S)\restrict \{f<t\}.
\end{equation}
The slices with respect to $f$ are also characterised by the following property: 
\begin{equation}\label{eq:slice_general}
	\int  \S|_t \psi(t) \; \dd t=\S\restrict (\psi\circ f) df \qquad\text{for every $\psi\in \Crm^\infty_c(\R)$}.
\end{equation} 
see, e.g.,~\cite{AK} or also~\cite[4.3.2]{Federer69book}.
For integral currents the following coarea formula holds~\cite[Section 2.4]{Rindler21a?}: For every non-negative Borel function $g \colon \R^n \to \R$ we have
\begin{equation}\label{eq:coarea_integral}
\int_{\R^n} g(z) |\nabla^S f (z)| \; \dd \|S\|(z)=\int_{\R} \left(\int_{\R^n} g(z)\dd\|S|_t\|(z)\right) \; \dd t,
\end{equation}
where $\nabla^S f(z)$ denotes the tangential gradient of the map $f$ on the approximate tangent space to $S$ at $z$, that is, the projection of the vector $\nabla f(z)$ onto the approximate tangent space to $S$ at $z$ (see~\cite[Theorem 2.90]{AFP}). The equality~\eqref{eq:coarea_integral} holds also whenever $g \in {\rm L}^1(|\nabla^S f|\| S\|)$.

\subsection{Disintegration of measures}\label{ss:disintegration}

Given the product structure of the space-time $\R\times \R^d$, we will often work with product measures or \emph{generalised product measures} and we will consider the \emph{disintegration} of measures on $\R \times \R^d$ with respect to the map $\tbf$, for which we follow the approach of~\cite{ABC1}. Let $\{\mu_t\} = \{\mu_t: t \in \R \}$ be a family of finite (vector) measures on $\R \times \R^d$. We say that such a family is \textbf{Borel} if
\[
t \mapsto \int_{\R \times \R^d} \phi \; \dd \mu_t
\]
is Borel for every test function $\phi \in \Crm_c(\R \times \R^d)$. 
Given a measure $\nu$ on $\R$ and 
a family $\{\rho_t: t \in \R\}$ of measures on $\R^d$ such that 
\begin{equation*}
	\int_\R |\rho_t|(\R^d) \; \dd \nu(t) < \infty, 
\end{equation*}
we define the generalised product $\nu \otimes \rho_t$ as  the measure on $\R \times\R^d$ such that 
\begin{equation*}
	\int_{\R\times \R^d} \phi(t,x) \; \dd (\nu\otimes\rho_t)(t,x) = \int_\R\left( \int_{\R^d} \phi(t,x) \; \dd\rho_t(x) \right)\; \dd\nu(t)  
\end{equation*}
for every $\phi \in\Crm_c(\R^d)$. 

Let now $\mu$ be a (possibly vector-valued) measure in $\R \times \R^d$ and let $\nu$ be a measure on $\R$ such that $\tbf_{\#}\mu \ll \nu$. Then, there exists a Borel family $\{\mu_t: t \in \R\}$ of (possibly vector-valued) measures on $\R \times \R^d$ such that: 
\begin{enumerate}
	\item[\rm(i)] $\mu_t$ is supported on $\{t\} \times \R^d$ for $\nu$-a.e.\ $t \in \R$; 
	\item[\rm(ii)] $\mu$ can be decomposed as 
	\[
	\mu = \int_\R \mu_t \; \dd \nu(t),
	\]
	which means 
	\begin{equation}\label{eq:def_disint}
		\mu(A) = \int_\R \mu_t(A) \; \dd \nu(t),
	\end{equation}
	for every Borel set $A \subset \R \times \R^d$. 
\end{enumerate}
Any family $\{\mu_t\}$ satisfying the conditions {\rm(i)} and {\rm(ii)} above will be called a \textbf{disintegration of $\mu$ with respect to $\tbf$ and $\nu$}. We remark that, from~\eqref{eq:def_disint} we obtain
\[
\int_{\R \times \R^d} \phi \; \dd\mu  = \int_\R \left(\int_{\{t\}\times \R^d} \phi \; \dd \mu_t \right)\; \dd \nu(t)
\]
for every Borel function $\phi \colon \R \times \R^d \to [0,+\infty]$. 

In the context above, if $\mu = \nu \otimes \rho_t$ then it also holds that $|\mu| = \nu \otimes |\rho_t|$ (see, e.g.,~\cite[Corollary 2.29]{AFP}).

In what follows, most of the times $\nu$ will be taken to be the Lebesgue measure. The existence and uniqueness of the disintegration in the case where $\nu=\tbf_{\#}\mu$ is a standard result in measure theory. For the version cited above, we refer the reader to~\cite{ABC1} and to the references listed therein.

\begin{remark}\label{rem:disint_explicit} We point out that, with the notations above, the following formula characterises the measures $\mu_t$: For $\nu$-a.e.\ $t \in \R$
    \begin{equation}\label{eq:explicit_disintegration}
        \mu_t = \wslim_{h \to 0} \frac{\mu \restrict {(t-h,t+h)\times \R^d}}{\nu((t-h,t+h))} \qquad \text{in the sense of measures on $\R \times \R^d$.}
    \end{equation}
    Indeed, let us fix a countable dense set $\Dcal$ of functions $\phi \in \Crm_c(\R\times \R^d)$. For every $\phi \in\Dcal$ the function
    \[
    t \mapsto \langle \mu_t, \phi \rangle = \int_{\R \times \R^d} \phi \; \dd \mu_t
    \]
    belongs to ${\rm L}^1(\nu)$, and it is thus approximately continuous for $\nu$-a.e.\ $t$. This entails the existence of $N \subset \R$, $\nu(N)=0$ such that for every $t\in \R\setminus N$
	\begin{align}
	\langle  \mu_t,\phi \rangle  & = \lim_{h\to 0}\frac{1}{\nu((t-h,t+h))} \int_{t-h}^{t+h} \langle \mu_s,\phi \rangle \; \dd\nu(s) \notag\\ 
	& = \lim_{h\to 0} \frac{1}{\nu((t-h,t+h))} \int_{(t-h,t+h) \times \R^d} \phi \; \dd\mu \label{eq:coincidence} 
	\end{align}
	for all $\phi\in\Dcal$. By definition of $\nu$, the family of measures 
	\[
	\frac{\mu \restrict {(t-h,t+h)\times \R^d}}{\nu((t-h,t+h))}
	\]
	in~\eqref{eq:explicit_disintegration} has total mass uniformly bounded by $1$, and therefore admits a converging subsequence. It is readily seen that the limit must be $\mu_t$, since by~\eqref{eq:coincidence} they agree when evaluated on all $\phi\in\Dcal$ and consequently the limit in~\eqref{eq:explicit_disintegration} exists as $h\to 0$.
\end{remark}

\section{The geometric transport equation}

Throughout this work we will consider the time interval to be $[0,1]$, but all results hold, with due modifications, for a general interval. Given a path of currents $t \mapsto \T_t \in \mathrm{N}_k(\R^d)$ ($t \in [0,1]$) and a vector field $b \colon [0,1] \times \R^d \to \R^d$, the \textbf{geometric transport equation} reads as
\begin{equation}\label{eq:PDE} \tag{GTE}
    \frac{\dd}{\dd t} \T_t + \Lcal_{b_t} \T_t = 0, 
\end{equation}
where the Lie derivative $\Lcal_{b_t} \T_t$ is defined in the next section. As we shall see in a moment, in order to make sense of~\eqref{eq:PDE}, it would suffice to have $b_t$ defined for $\L^1$-a.e.\ $t \in \R$ and for $(\|T_t\|+\|\partial T_t\|)$-a.e.\ $x \in \R^d$. However, since $T_t$ will be unknown, we will assume that $b$ is an everywhere-defined Borel measurable map.

\subsection{The Lie derivative of a current}

Let $b \colon \R^d \to \R^d$ be a smooth globally-bounded \emph{autonomous} (i.e., time-independent) vector field. We recall that, under these assumptions, there exists a unique flow of $b$, that is, a  family of $\Crm^\infty$-maps $\Phi \colon \R \times \R^d \to \R^d$ such that 
\[
  \left\{
  \begin{aligned}
	\partial_t \Phi(t,x) &= b(\Phi(t,x)) \\
	\Phi(0,x) &= x,
  \end{aligned}
  \right. 
  \qquad
  (t,x) \in \R \times \R^d.
\]
We will often write $\Phi_t(x) := \Phi(t,x)$ and likewise for other time-dependent quantities. Standard facts ensure that the family $\{\Phi_t : t \in \R\}$ is a family of $\Crm^\infty$-diffeomorphisms of $\R^d$ to itself. It is worth noting that the family $\{\Phi_t : t \in \R\}$ induces a one-parameter semigroup: 
\begin{equation}\label{eq:semigroup}
    \Phi_t \circ \Phi_{s} = \Phi_{t+s} = \Phi_s \circ \Phi_t
    \qquad \text{for every $s,t \in \R$}. 
\end{equation}

We recall that the \textbf{Lie derivative} of a smooth $k$-form $\omega \in \Dscr^k(\R^d)$ with respect to the vector field $b$ is the smooth $k$-form
\[
  \Lcal_{b} \omega := \lim_{h \to 0} \frac{(\Phi_h)^* \omega - \omega}{h}. 
\]
Here the convergence is to be understood pointwise or in $\Crm^\infty$. The following identity, which is sometimes taken as definition of the Lie derivative, is called \textbf{Cartan's formula}:
\begin{equation}\label{eq:cartan_magic}
	\Lcal_{b}\omega=d(i_{b}\omega) + i_{b} (d\omega),
\end{equation}
where $i_{b}\omega$ denotes the contraction of $\omega$ with the vector field $b$ defined in~\eqref{eq:contraction}.

By duality, we now define the \textbf{Lie derivative of a current} $T\in \Dscr_k(\R^d)$ with respect to a smooth vector field $b$ via
\[
 \langle \Lcal_{b} \T, \omega \rangle := - \left \langle \T, \Lcal_{b} \omega \right \rangle, 
    \qquad \omega \in \Dscr^k(\R^d).
\]
By~\eqref{eq:cartan_magic} we have  
\begin{equation*}
    \langle \Lcal_{b} \T, \omega \rangle = - \left \langle \T, \Lcal_{b} \omega \right \rangle = - \langle \T, d(i_{b}\omega) + i_{b} (d\omega) \rangle = - \langle b \wedge \partial \T + \partial (b \wedge \T), \omega \rangle,  
\end{equation*}
and this gives the analogous \textbf{Cartan identity for currents}:
\[
\Lcal_{b}\T=-b\wedge \partial \T-\partial(b\wedge \T).
\]

In the following we will also work with \emph{non-autonomous} vector fields, i.e., $b \colon [0,1] \times \R^d \to \R^d$. In this case we write $b_t(x):=b(t,x)$ and we define the Lie derivative of $T$ with respect to $b_t$ as  
\[
\Lcal_{b_t}T:=-b_t\wedge \partial T-\partial(b_t\wedge T).
\]

\subsection{Equivalent formulations of the PDE}

In the following we will often work with \textbf{paths of currents} $t \mapsto T_t \in \Dscr_k(\R^d)$, $t \in [0,1]$. We will always tacitly assume the weak* measurability of the path, namely, for every $\omega\in\Dscr^k(\R^d)$ the map $t \mapsto \langle T_t,\omega \rangle$ is assumed to be Lebesgue-measurable.

The following lemma offers some equivalent conditions to the triviality of the path $t \mapsto T_t$ and will be used quite often throughout the work. 

\begin{lemma}\label{lemma:equivalent_being_zero}
Let $t \mapsto T_t$ ($t \in [0,1]$) be a path of currents with 
\begin{equation}\label{eq:mass_control}
\int_{K} \Mbf(T_t) \; \dd t < \infty, \qquad  K \Subset \R. 
\end{equation}
The following statements are equivalent: 
\begin{enumerate}
    \item[\rm{(i)}] $T_t = 0$ for $\L^1$-a.e.\ $t \in \R$.
    \item[\rm{(ii)}] For every $\eta \in \Dscr^k([0,1] \times \R^d)$, where $\iota_t$ denotes the embedding $x \mapsto (t,x)$, it holds that
    \[
    \int_0^1 \langle \delta_t \times T_t, \eta \rangle \; \dd t = \int_0^1 \langle T_t, \iota_{t}^* \eta \rangle \; \dd t = 0.
    \]
    
     \item[\rm{(iii)}] For every $\omega \in \Dscr^{k}(\R^d)$ and for every function $\phi \in \Crm_c^\infty(\R)$, it holds that
    \[
    \int_0^1 \phi(t) \langle T_t, \omega \rangle \; \dd t= 0.
    \]
     \end{enumerate}
Moreover, in {\rm(ii)} and {\rm(iii)} we may replace the spaces $\Dscr^k([0,1] \times \R^d)$, $\Dscr^k(\R^d)$ and $\Crm_c^\infty(\R)$ with countable dense subsets thereof. 
\end{lemma}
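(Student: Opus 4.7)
The implications (i) $\Rightarrow$ (ii) and (i) $\Rightarrow$ (iii) are immediate from the respective definitions. My plan is to close the chain via (ii) $\Rightarrow$ (iii) $\Rightarrow$ (i), and then read off the ``moreover'' clause as a by-product of the two proofs.

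For (ii) $\Rightarrow$ (iii), I would specialise the test form in (ii) to a tensor product $\eta := \phi \cdot \pbf^*\omega$ with $\phi \in \Crm_c^\infty((0,1))$ and $\omega \in \Dscr^k(\R^d)$. Since $\pbf \circ \iota_t = \Id_{\R^d}$ and $\iota_t^* \dd t = 0$, a direct computation gives $\iota_t^* \eta = \phi(t)\,\omega$, so (ii) collapses immediately to (iii).

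The substantive step is (iii) $\Rightarrow$ (i). The integrability assumption~\eqref{eq:mass_control} has two consequences I would use repeatedly: $\Mbf(T_t)<\infty$ for $\L^1$-a.e.\ $t$, and the map $t\mapsto \langle T_t,\omega\rangle$ is locally Lebesgue-integrable for every $\omega\in\Dscr^k(\R^d)$, via the bound $|\langle T_t,\omega\rangle|\le \Mbf(T_t)\|\omega\|_\infty$. The technical heart of the argument is then the construction of a countable comass-dense family in $\Dscr^k(\R^d)$ with support control: fix an exhaustion $\R^d=\bigcup_j K_j$ by compact sets and, for each $j$, choose a countable collection of smooth $k$-forms supported in $K_j$ that is dense in the sup-comass norm among such forms; let $\{\omega_n\}_n$ be the union of these collections. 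Applying (iii) to each $\omega_n$, with $\phi$ ranging over a countable $\Lrm^1$-dense family of bumps in $\Crm_c^\infty((0,1))$, the usual duality argument for $\Lrm^1$-functions yields an $\L^1$-null set $N_n$ off which $\langle T_t,\omega_n\rangle=0$. Setting $N:=\{t:\Mbf(T_t)=\infty\}\cup\bigcup_n N_n$, for every $t\notin N$ each $\omega\in\Dscr^k(\R^d)$ is supported in some $K_j$ and is therefore the comass-uniform limit of a sequence $(\omega_{n_k})_k$ supported in the same $K_j$; the pointwise mass bound then forces $\langle T_t,\omega\rangle=0$ for all $\omega$, whence $T_t=0$.

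The ``moreover'' clause falls out of this scheme: both reductions only used (iii) tested on a countable dense family of pairs $(\phi,\omega)$, and correspondingly (ii) only on the countable family of associated tensor products, which is itself comass-dense in $\Dscr^k([0,1]\times\R^d)$ after an analogous compact exhaustion of the space-time cylinder. The delicate point I foresee is really just combining uniform-in-comass density with control of supports, so that the $\L^1$-a.e.\ finiteness of $\Mbf(T_t)$ is enough to pass from countable density to arbitrary test forms; everything else reduces to the routine $\Lrm^1$-duality/Fubini--Lebesgue point argument.
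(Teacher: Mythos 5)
Your proposal is correct and follows essentially the same route as the paper: note the trivial implications, and then prove that (iii), tested only against a countable dense family of $\omega$'s and $\phi$'s, already forces $T_t=0$ for $\L^1$-a.e.\ $t$ via the usual $\Lrm^1$-duality/Lebesgue-point argument and the pointwise mass bound $|\langle T_t,\omega\rangle|\le\Mbf(T_t)\|\omega\|_\infty$. The paper states this more tersely but the structure is identical; your extra detail on building the countable family (compact exhaustion plus sup-comass density) is exactly the content the paper compresses into ``let $\Dcal$ be a countable dense set.''

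One slip in your last sentence: the countable family of tensor products $\phi_m\cdot\pbf^*\omega_n$ is \emph{not} comass-dense in $\Dscr^k([0,1]\times\R^d)$. By construction these forms contain no $dt$-factor, so their span misses the entire summand $dt\wedge\Wedge^{k-1}(\R^d)$ in the decomposition $\Wedge^k(\R\times\R^d)=\pbf^*\Wedge^k(\R^d)\oplus dt\wedge\pbf^*\Wedge^{k-1}(\R^d)$. What makes the family nonetheless adequate for (ii) is that $\iota_t^*$ annihilates exactly the $dt$-summand, so $\eta\mapsto\int_0^1\langle T_t,\iota_t^*\eta\rangle\,\dd t$ cannot distinguish forms differing only in $dt$-terms; this is precisely the content of Remark~\ref{rmk:not_dt} in the paper, stated immediately after the lemma. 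Alternatively, for the ``moreover'' clause on (ii) one can avoid the tensor products altogether: the functional $\eta\mapsto\int_0^1\langle T_t,\iota_t^*\eta\rangle\,\dd t$ is continuous in the sup-comass norm on forms with controlled support by the mass-control assumption, so (ii) restricted to \emph{any} countable dense subset of $\Dscr^k([0,1]\times\R^d)$ extends by continuity to all of (ii), and then the cycle through (iii) closes as before. The core of your argument, (iii restricted to countable test data) $\implies$ (i), is unaffected and is what the paper actually records.
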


\begin{proof} It is clear that $ {\rm(i)} \implies {\rm(ii)} \implies  {\rm(iii)}$, therefore it suffices to show $ {\rm(iii)} \implies  {\rm(i)}$ and we prove directly the version where the condition in {\rm(iii)} holds for a countable dense set of $\omega$'s and $\phi$'s. Let indeed $\Dcal = \{\omega_n\}_{n \in \N}$ be a countable dense set in $\Dscr^k(\R^d)$. For each $n$, the map $t \mapsto \langle T_t,\omega_n \rangle$ is locally integrable by~\eqref{eq:mass_control}, and by assumption its integral against a countable dense set of test functions vanishes. Therefore, for each $n$, it holds that $\langle T_t,\omega_n \rangle = 0$ for $\L^1$-a.e.\ $t$, that is, we can find a full measure set $L_n \subset \R$ such that $\langle T_t,\omega_n\rangle=0$ for each $t \in L_n$. For every $t$ belonging to the full measure set $L:=\bigcap_n L_n$ we thus have 
\[
\langle T_t,\omega_n \rangle = 0 \qquad\text{for every $n \in\N$.}
\]
Therefore, by density, we conclude that {\rm(i)} holds.
\end{proof}

\begin{remark}\label{rmk:not_dt} We explicitly observe that in condition {\rm(ii)} in Lemma~\ref{lemma:equivalent_being_zero} it is equivalent to consider forms $\eta \in \Dscr^k([0,1] \times \R^d)$ of the kind $\eta(t,x) = \alpha(t)\beta(x)$, where $\alpha \in \Crm_{c}^\infty(\R)$ and $\beta \in \Dscr^k(\R^d)$ is given by $\beta = \sum_{I} \beta_{I} dx^{I}$ with $I$ a multi-index of order $k$ just in the spatial variables. In other words, it is enough to verify the condition on forms without terms involving $dt$, because $\iota_{t}^*\eta$ vanishes if $\eta$, written in coordinates $dt, dx^1, \ldots, dx^{d}$, only contains terms involving $dt$. 
\end{remark}

In order to make sense of the geometric transport equation~\eqref{eq:PDE}, we will assume that 
\begin{equation}\label{eq:uniform_bound_normal_mass}
\sup_{t\in[0,1]}\big[\Mbold(T_t)+\Mbold (\partial T_t)\big]<\infty 
\end{equation}
and that the following integrability condition for $b_t$ holds:
\begin{equation}\label{eq:basic_integrability_b}
\int_0^1 \int_{\R^d} |b_t(x)|\; \dd(\|T_t\|+\|\partial T_t\|)(x)\; \dd t<+\infty.
\end{equation}
We also set
\[
B(t):=\int_{\R^d}|b_t(x)|\; \dd(\|T_t\|+\|\partial T_t\|)(x).
\]

The following lemma explains the way in which we understand the equation~\eqref{eq:PDE}. 

\begin{lemma}\label{lemma:equiv_def_of_solutions} Suppose $t \mapsto T_t$ is a path of normal currents satisfying~\eqref{eq:uniform_bound_normal_mass} and let $b$ be a vector field satisfying~\eqref{eq:basic_integrability_b}. The following conditions are equivalent:
\begin{enumerate}
    \item[\rm{(i)}] For every $\psi\in \Crm^\infty_c((0,1))$ and every $\omega \in \mathscr{D}^k(\R^d)$ it holds that  
    \[
    \int_0^1 \langle \T_t,\omega\rangle \psi'(t)-\langle \Lcal_{b_t}\T_t,\omega\rangle \psi(t) \; \dd t=0. 
    \]
    
     \item[\rm{(ii)}] For every $\omega \in \mathscr{D}^k(\R^d)$ the map $t \mapsto \langle T_t,\omega \rangle$ is absolutely continuous and the following equality holds for $\L^1$-a.e.\ $t \in (0,1)$: 
     \[
     \frac{\dd}{\dd t} \langle T_t,\omega \rangle = -\langle \Lcal_{b_t} T_t, \omega \rangle. 
     \]
\end{enumerate}
In addition, if $b$ is autonomous and smooth, {\rm(i)} and {\rm(ii)} are equivalent to:
\begin{enumerate}
	\item[\rm{(iii)}] For every $\eta \in \Dscr^{k}((0,1) \times \R^d)$ it holds that
\[
\int_{0}^1 \langle \delta_t \times T_t, \partial_t \eta + \Lcal_{b} \eta \rangle \; \dd t = 0. 
\]
\end{enumerate}
\end{lemma}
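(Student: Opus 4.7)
The plan is to establish the chain (ii) $\Rightarrow$ (i) $\Rightarrow$ (ii), and then separately (i) $\Leftrightarrow$ (iii), organized around the principle that (i) is nothing more than the statement ``the distributional derivative of $t\mapsto\langle T_t,\omega\rangle$ equals $-\langle \Lcal_{b_t}T_t,\omega\rangle$'', interpreted against different classes of test objects.

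The easy direction (ii) $\Rightarrow$ (i) is integration by parts: fix $\omega\in\Dscr^k(\R^d)$ and $\psi\in\Crm^\infty_c((0,1))$; writing $f(t):=\langle T_t,\omega\rangle$, the hypothesis that $f$ is absolutely continuous with $f'(t)=-\langle \Lcal_{b_t}T_t,\omega\rangle$ allows us to integrate by parts against $\psi$, with the boundary terms vanishing since $\psi(0)=\psi(1)=0$. For the converse (i) $\Rightarrow$ (ii), fix again $\omega\in\Dscr^k(\R^d)$ and set $g(t):=-\langle \Lcal_{b_t}T_t,\omega\rangle$. Using Cartan's identity for currents,
\[
  g(t) = \langle b_t\wedge \partial T_t, \omega\rangle + \langle b_t\wedge T_t, d\omega\rangle,
\]
together with the elementary bound $|\langle v\wedge S,\alpha\rangle|\le \|\alpha\|_\infty \int |v|\,\dd\|S\|$, I obtain
\[
  |g(t)| \le (\|\omega\|_\infty + \|d\omega\|_\infty)\,B(t),
\]
so $g\in \Lrm^1([0,1])$ thanks to \eqref{eq:basic_integrability_b}; meanwhile $|f(t)|\le \|\omega\|_\infty\,\Mbf(T_t) \in \Lrm^\infty([0,1])$ by \eqref{eq:uniform_bound_normal_mass}. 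Condition (i) then states precisely that $f'=g$ in $\Dscr'((0,1))$. The standard one-dimensional Sobolev characterisation yields that $f$ admits an absolutely continuous representative whose classical derivative equals $g$ almost everywhere, which is (ii).

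For the equivalence (i) $\Leftrightarrow$ (iii) in the autonomous smooth case, I interpret $\Lcal_b\eta$ as the Lie derivative in space-time along the extended vector field $\tilde b := (0,b)$. Decomposing any $\eta \in \Dscr^k((0,1)\times\R^d)$ as $\eta = \eta_0 + dt\wedge \eta_1$, where $\eta_0,\eta_1$ contain only spatial differentials, and using $i_{\tilde b}(dt)=0$ (hence $\Lcal_{\tilde b}(dt)=0$ via Cartan's formula~\eqref{eq:cartan_magic}), one verifies that
\[
  \iota_t^*\bigl(\partial_t\eta + \Lcal_{\tilde b}\eta\bigr) = \partial_t \eta_0(t,\cdot) + \Lcal_b \eta_0(t,\cdot),
\]
so the $\eta_1$-part drops out of (iii), consistent with Remark~\ref{rmk:not_dt}. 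By linearity and density I may further restrict to product forms $\eta_0(t,x)=\alpha(t)\beta(x)$ with $\alpha\in\Crm_c^\infty((0,1))$ and $\beta\in\Dscr^k(\R^d)$. Substituting and moving $\Lcal_b$ to the current side via the defining duality $\langle T_t,\Lcal_b\beta\rangle = -\langle \Lcal_b T_t,\beta\rangle$, the identity in (iii) becomes
\[
  \int_0^1 \alpha'(t)\langle T_t,\beta\rangle - \alpha(t)\langle \Lcal_b T_t,\beta\rangle \;\dd t = 0,
\]
which is (i) with $\psi=\alpha$ and $\omega=\beta$.

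The main obstacle is the integrability bookkeeping for (i) $\Rightarrow$ (ii): one must check that the hypothesis \eqref{eq:basic_integrability_b}, which controls $b_t$ only against $\|T_t\|+\|\partial T_t\|$, indeed suffices to make $\langle \Lcal_{b_t}T_t,\omega\rangle$ integrable in $t$; the Cartan expansion above shows that exactly the two mass terms bounded by $B(t)$ are the ones that arise, which is the reason for the specific form of the integrability condition.
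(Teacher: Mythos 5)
Your proposal is correct and follows essentially the same route as the paper: (i)$\Leftrightarrow$(ii) via the Cartan expansion of $\Lcal_{b_t}T_t$, the bound by $B(t)$ from~\eqref{eq:basic_integrability_b}, and the one-dimensional fact that an $\Lrm^1$ distributional derivative gives an absolutely continuous (representative of the) map; and (i)$\Leftrightarrow$(iii) by reducing to product forms $\alpha(t)\beta(x)$ without $dt$-terms, which is exactly the paper's appeal to the density argument of~\cite[4.1.8]{Federer69book} together with Remark~\ref{rmk:not_dt}. Your explicit verification of the pullback identity $\iota_t^*(\partial_t\eta+\Lcal_{(0,b)}\eta)=\partial_t\eta_0(t,\cdot)+\Lcal_b\eta_0(t,\cdot)$ is a slightly more detailed justification of why the $dt$-part drops out, but it is the same argument in substance.
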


\begin{proof} We split the proof into several steps.

${\rm(i)} \implies  {\rm(ii)}$. Let us begin by showing that, for every fixed $\omega \in \Dscr^k(\R^d)$, the function $t\mapsto \langle \T_t,\omega\rangle$ is absolutely continuous. Indeed, by {\rm(i)} its distributional time derivative is 
\[ 
\frac{\dd}{\dd t}\langle \T_t,\omega\rangle = - \langle \Lcal_{b_t}\T_t,\omega\rangle=\langle b_t \wedge \T_t,d\omega\rangle +\langle b_t\wedge \partial \T_t,\omega\rangle
\]
and
\begin{align*}
\int_0^1 |\langle \Lcal_{b_t} \T_t,\omega\rangle| \; \dd t
&\leq \|d\omega\|_\infty \int_0^1\int_{\R^d} |b_t(x)|\; \dd\|T_t\|(x) \; \dd t+\|\omega\|_\infty \int_0^1 \int_{\R^d}|b_t(x)|\; \dd\|\partial \T_t\|(x)\; \dd t \\
&<\infty
\end{align*}
by~\eqref{eq:basic_integrability_b}. This yields the conclusion.

${\rm(ii)} \implies  {\rm(i)}$. This follows from the fact that, for an absolutely continuous function, the distributional derivative coincides with the pointwise one. 

${\rm(iii)} \implies {\rm(i)}$. It is enough to choose $\eta(t,x) = \psi(t) \omega(x)$. 

${\rm(i)} \implies  {\rm(iii)}$. By adapting the argument in~\cite[4.1.8]{Federer69book}, one can prove that the forms of the kind $\psi\omega:=(\tbf^*\psi)\wedge (\pbf^*\omega)$, with $\psi \in \Crm_c^\infty((0,1))$ and $\omega \in \Dscr^k(\R^d)$, generate a vector space which is  dense in the space of $k$-forms in $\R\times \R^d$ and that, when written in coordinates $dt,dx^1,\ldots,dx^d$, do not contain terms involving $dt$. The conclusion follows from Lemma~\ref{lemma:equivalent_being_zero}, taking into account Remark~\ref{rmk:not_dt}. 
\end{proof}

\begin{definition}\label{def:sol}
Let $t \mapsto T_t \in \Nrm_k(\R^d)$ be a path of normal currents satisfying~\eqref{eq:uniform_bound_normal_mass} and let $(b_t)_t$ be a family of (everywhere-defined) Borel vector fields satisfying~\eqref{eq:basic_integrability_b}. If $t \mapsto T_t$ satisfies one of the first two equivalent conditions listed in Lemma~\ref{lemma:equiv_def_of_solutions} then it is called a \textbf{(weak) solution} to the geometric transport equation~\eqref{eq:PDE}.
\end{definition}

The following lemma concerns the existence of a continuous representative.

\begin{lemma} \label{lemma:continuous_representative} Let $t \mapsto T_t \in \Nrm_k(\R^d)$ satisfy~\eqref{eq:uniform_bound_normal_mass} and let $(b_t)_t$ be a family of Borel vector fields satisfying~\eqref{eq:basic_integrability_b} such that $T_t$ is a weak solution to~\eqref{eq:PDE}. Then:
\begin{enumerate}[(i)]
    \item There exists a weakly*-continuous path $t \in [0,1] \mapsto \tilde{T}_t \in \mathrm{N}_k(\R^d)$ such that $T_t = \tilde{T}_t$ for a.e.\ $t \in [0,1]$.
    \item The path from {\rm(i)} is absolutely continuous as a map from $[0,1]$ to $\mathrm{N}_k(\R^d)$ equipped with the flat norm $\Fbf$.
\end{enumerate}
If, in addition, $B(t)$ is uniformly bounded on $[0,1]$ (which follows, for instance, if the field $b$ is uniformly bounded) then $t \mapsto \tilde{T}_t $ is Lipschitz with respect to $\Fbf$.
\end{lemma}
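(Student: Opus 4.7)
The plan is to construct the representative by integrating the PDE, using the fundamental-theorem-of-calculus formula suggested by Lemma~\ref{lemma:equiv_def_of_solutions}(ii), and then to verify in order that $\tilde{T}_t = T_t$ a.e., that $\tilde{T}_t \in \Nrm_k(\R^d)$ with a uniform mass bound, weak*-continuity, and finally absolute continuity (or Lipschitz continuity) in the flat norm.

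For part (i), I would define, for all $t\in[0,1]$ and $\omega\in \Dscr^k(\R^d)$,
\[
\langle \tilde{T}_t, \omega\rangle := \langle T_0, \omega\rangle - \int_0^t \langle \Lcal_{b_s} T_s, \omega\rangle \; \dd s.
\]
The bound from the proof of Lemma~\ref{lemma:equiv_def_of_solutions}, combined with~\eqref{eq:basic_integrability_b}, gives $|\langle \Lcal_{b_s} T_s, \omega\rangle| \le \max(\|\omega\|_\infty,\|d\omega\|_\infty)\,B(s)$, so the integrand lies in $\Lrm^1([0,1])$ and the formula defines a linear functional on $\Dscr^k(\R^d)$ that is absolutely continuous in $t$ for each fixed $\omega$. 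Applying Lemma~\ref{lemma:equiv_def_of_solutions}(ii) against a countable dense family $\{\omega_n\}\subset \Dscr^k(\R^d)$ produces a single $\L^1$-null set $N\subset[0,1]$ with $\langle T_t,\omega_n\rangle=\langle \tilde{T}_t,\omega_n\rangle$ for every $n$ and every $t\notin N$; by density, $T_t=\tilde{T}_t$ as currents on $[0,1]\setminus N$, and~\eqref{eq:uniform_bound_normal_mass} transfers the uniform bound $\Mbf(\tilde{T}_t)+\Mbf(\partial \tilde{T}_t)\le C$ off $N$. For $t\in N$, I pick $t_n \to t$ in $[0,1]\setminus N$: the pointwise-in-$\omega$ continuity of $s\mapsto\langle \tilde{T}_s,\omega\rangle$ gives $\tilde{T}_{t_n}\weaksto \tilde{T}_t$ distributionally, and lower semicontinuity of $\Mbf$ and $\Mbf\circ\partial$ extends the bound to all of $[0,1]$. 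Weak*-continuity of $t\mapsto\tilde{T}_t$ is then automatic from the pointwise-in-$\omega$ continuity together with this uniform mass bound.

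For part (ii), the key tool is the dual representation~\eqref{eq:flat_norm_dual} of the flat norm. For $0\le s<t\le 1$ and any $\omega\in\Dscr^k(\R^d)$ with $\|\omega\|_\infty,\|d\omega\|_\infty\le 1$, the Cartan identity for currents $\Lcal_{b_r}T_r=-b_r\wedge \partial T_r-\partial(b_r\wedge T_r)$ yields
\[
|\langle \tilde{T}_t-\tilde{T}_s,\omega\rangle| = \left|\int_s^t \langle b_r\wedge T_r,d\omega\rangle+\langle b_r\wedge \partial T_r,\omega\rangle\; \dd r\right|\le \int_s^t B(r)\;\dd r,
\]
using $|\langle b_r\wedge T_r,d\omega\rangle|\le \|d\omega\|_\infty\int |b_r|\, \dd\|T_r\|$ and the analogous bound with $\partial T_r$. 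Taking the supremum over admissible $\omega$ gives $\Fbf(\tilde{T}_t-\tilde{T}_s)\le \int_s^t B(r)\;\dd r$, which is absolute continuity since $B\in\Lrm^1([0,1])$ by~\eqref{eq:basic_integrability_b}. If $b$ is uniformly bounded, then~\eqref{eq:uniform_bound_normal_mass} forces $B\in \Lrm^\infty([0,1])$, and the estimate sharpens to $\Fbf(\tilde{T}_t-\tilde{T}_s)\le \|B\|_\infty(t-s)$, giving the Lipschitz claim.

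The only mildly delicate step is bootstrapping the pointwise-in-$\omega$ absolute continuity into a coherent representative that is a normal current for \emph{every} $t$; this is handled by the separability plus lower-semicontinuity argument above, and the rest is routine bookkeeping with the dual flat norm.
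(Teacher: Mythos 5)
Your proof takes essentially the same route as the paper's: both extract absolute continuity of $t\mapsto\langle T_t,\omega\rangle$ from Lemma~\ref{lemma:equiv_def_of_solutions}(ii), both work with a countable dense family $\{\omega_n\}\subset\Dscr^k(\R^d)$ to pin down a single good full-measure set, both invoke the dual representation~\eqref{eq:flat_norm_dual} of the flat norm to obtain the $\Fbf$-modulus of continuity, and both use lower semicontinuity of $\Mbf$ and $\Mbf\circ\partial$ under weak* convergence to conclude that $\tilde T_t$ is normal for the remaining bad $t$'s. The one genuine structural difference is the construction of $\tilde T_t$ itself: you define it explicitly by the fundamental-theorem-of-calculus formula $\langle\tilde T_t,\omega\rangle=\langle T_0,\omega\rangle-\int_0^t\langle\Lcal_{b_s}T_s,\omega\rangle\,\dd s$, while the paper restricts to a full-measure set $L$ on which the path is uniformly weakly*-continuous and flat-Cauchy and then extends by completeness; yours is the more concrete of the two.

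There is one subtlety you should be aware of. Condition~(i) of Lemma~\ref{lemma:equiv_def_of_solutions} is a distributional statement and hence insensitive to modifying $T_t$ on an $\L^1$-null set; condition~(ii) as literally stated (``the map $t\mapsto\langle T_t,\omega\rangle$ \emph{is} absolutely continuous'') is not. The paper's proof of (i)$\Rightarrow$(ii) in fact only yields that the distributional derivative lies in $\Lrm^1$, which gives agreement with an AC function a.e.\ rather than pointwise AC --- and the very need for Lemma~\ref{lemma:continuous_representative} (and the paper's use of the sets $L_n$ and the Cauchy-extension) confirms that the a.e.-interpretation is the intended one. Under that reading, your constant of integration $\langle T_0,\omega\rangle$ is not guaranteed to be the ``correct'' boundary value of the AC representative: as a weak solution, the path is determined only up to a.e.\ equality, so $T_0$ could a priori be an arbitrary normal current, in which case $\langle\tilde T_t,\omega\rangle-\langle T_t,\omega\rangle$ would equal a possibly nonzero constant $c_\omega$ for a.e.\ $t$ and your identification ``$T_t=\tilde T_t$ off a null set'' would fail. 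The fix is easy and does not change the structure of the argument: replace $\langle T_0,\omega_n\rangle$ by the initial value obtained from a simultaneous Lebesgue-point average (e.g.\ $\lim_{h\to0^+}\frac1h\int_0^h\langle T_r,\omega_n\rangle\,\dd r$), or, equivalently, run the integral from a fixed $t_0$ that is a Lebesgue point of $t\mapsto\langle T_t,\omega_n\rangle$ for all $n$ simultaneously. Aside from this, your flat-norm estimate $\Fbf(\tilde T_t-\tilde T_s)\le\int_s^t B(r)\,\dd r$ is in fact slightly sharper than the paper's $2\int_s^t B(r)\,\dd r$, since you pair $\|d\omega\|_\infty$ and $\|\omega\|_\infty$ with their respective mass integrals rather than crudely bounding their sum by $2$; both give the stated conclusion.
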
 

Here, $t \in [0,1] \mapsto \tilde{T}_t \in \mathrm{N}_k(\R^d)$ is called \textbf{weakly*-continuous} if for every $t \in [0,1]$ and for every sequence $t_j \to t$ then $\tilde{T}_{t_j} \weaksto \tilde{T}_t$ in the sense of currents over $\R^d$.

\begin{proof} By {\rm(ii)} in Lemma~\ref{lemma:equiv_def_of_solutions}, the function $t\mapsto \langle \T_t,\omega\rangle$ is absolutely continuous. Let now $\Dcal = \{\omega_n\}_{n \in \N}$ be a countable dense set in $\Dscr^k(\R^d)$. For each $n$, we can find a set $L_n \subset [0,1]$ such that $t \mapsto \langle T_t,\omega_n\rangle$ is uniformly continuous on $L_n$ and $\L^1([0,1] \setminus L_n)=0$. Set $L:=\bigcap_n L_n$. The restriction $L\ni t  \mapsto T_t$ is a uniformly weakly*-continuous path because for each $\omega \in \Dcal$ and all $s, t \in [0,1]$ with $s < t$,
\begin{equation*}
    |\langle \T_t-\T_s,\omega\rangle| \leq \int_s^t \frac{\dd}{\dd r}\langle T_r,\omega\rangle \; \dd r\le (\|\omega\|_\infty + \|d\omega\|_\infty)\int_s^t B(r)\; \dd r ,
\end{equation*}
and this extends to every $\omega\in\mathscr{D}^k(\R^d)$ by density.  Moreover, recalling~\eqref{eq:flat_norm_dual}, the same inequality above shows that
\begin{equation}\label{eq:cauchy}
\Fbold(T_t-T_s)\leq 2\int_s^t B(r)\; \dd r\qquad\text{for $s,t\in L$.}
\end{equation}
This proves that $L\ni t\mapsto T_t$ is indeed absolutely continuous (or Lipschitz if $B(r)$ is uniformly bounded) with respect to the flat norm $\Fbf$. It remains to show that we can extend this map to the whole interval $[0,1]$. 
Let $t$ be any point in $[0,1]$, and let us choose any sequence $t_i\to t$ with $t_i\in L$. Then, $T_{t_i}$ is a Cauchy sequence by~\eqref{eq:cauchy}, hence $T_{t_i}\weaksto \tilde{T}_t$ for some $\tilde{T}_t\in\Nrm_k(\R^d)$, and $\tilde{T}_t$ does not depend on the sequence $t_i$. In this way we can extend $t\mapsto T_t$ to a weakly*-continuous map $t\mapsto \tilde T_t$ on $[0,1]$. This proves {\rm(i)}.

To establish {\rm(ii)}, let now $s,t\in[0,1]$ with $s_i,t_i\in L$ and $s_i\to s$, $t_i\to t$. Then,
\begin{align*}
\Fbf(\tilde{T}_t-\tilde{T}_s)&\leq \Fbf(\tilde{T}_t-T_{t_i})+\Fbf(T_{t_i}-T_{s_i})+\Fbf(T_{s_i}-\tilde{T}_s)\\
& \leq \Fbf(\tilde{T}_t-T_{t_i})+2\int_{s_i}^{t_i} B(r)\; \dd r+\Fbf(T_{s_i}-\tilde{T}_s)
\end{align*}
and sending $t_i\to t$ and $s_i\to s$ we obtain that~\eqref{eq:cauchy} holds for $t\mapsto\tilde T_t$ for all $s,t\in[0,1]$ with $s < t$.
\end{proof}

As a consequence of Lemma~\ref{lemma:continuous_representative} we obtain that, for vector fields satisfying~\eqref{eq:basic_integrability_b}, one may complement the PDE~\eqref{eq:PDE} with an initial condition: Given $\overline{T} \in \mathrm{N}_{k}(\R^d)$ we can consider the initial-value problem
\begin{equation}\label{eq:initial_value_problem}
  \left\{ \begin{aligned}
    \frac{\dd}{\dd t} \T_t + \Lcal_{b_t} \T_t &= 0, \\ 
    \T_{0} &= \overline{T},
  \end{aligned} \right.
\end{equation}
where the second condition is to be understood by employing the continuous representative of $t \mapsto \T_t$. Equivalently, one can understand~\eqref{eq:initial_value_problem} in the following way: For every $\omega \in \Dscr^k(\R^d)$ and for every $\psi \in C_c^\infty([0,1))$ it must hold that 
\[
    \int_0^1 \langle \T_t,\omega\rangle \psi'(t) - \langle \Lcal_{b_t}\T_t, \omega\rangle \psi(t) \; \dd t = -\langle  \overline{T},\omega\rangle\psi(0).
\]
Observe that in general, if $\psi \in C_c^\infty([0,1])$ and $\omega \in \Dscr^k(\R^d)$, it holds that 
\begin{equation}\label{eq:non_compact_support}
    \int_0^1 \langle \T_t,\omega\rangle \psi'(t)-\langle \Lcal_{b_t}\T_t,\omega\rangle \psi(t) \; \dd t
    = \langle  \tilde{T}_1, \omega\rangle\psi(1) - \langle  \tilde{T}_0,\omega\rangle\psi(0),
\end{equation}
where $t \mapsto \tilde{T}_t$ denotes the continuous representative defined in Lemma~\ref{lemma:continuous_representative}. Similarly, if $b$ is smooth and autonomous, for every $\eta \in C^{\infty}([0,1] \times \R^d; \Wedge^k(\R \times \R^d))$ it holds that 
\begin{equation}\label{eq:with_boundary_terms}
    \int_{0}^1 \langle \delta_t \times T_t, \partial_t \eta + \Lcal_{b} \eta \rangle \; \dd t = \langle \delta_1 \times T_1, \eta \rangle - \langle \delta_0 \times T_0, \eta \rangle = \langle T_1, \iota_{1}^*\eta \rangle - \langle T_0, \iota_0^* \eta \rangle . 
\end{equation}

\subsection{Existence and uniqueness in the smooth framework}

When our vector field $b$ is autonomous and sufficiently regular it is straightforward to see that we have existence and uniqueness of solutions to~\eqref{eq:initial_value_problem}:

\begin{theorem}[Existence \& uniqueness] \label{thm:smooth}
Let $b \colon \R^d \to \R^d$ be a globally bounded, $\Crm^\infty$ vector field and let $\Phi_t$ be its (unique) flow. Let $\overline{T} \in \Nrm_k(\R^d)$ be a given initial value. Then, the path of normal currents defined by
\begin{equation*}
    t \mapsto \T_t := (\Phi_t)_* \overline{T}
\end{equation*}
is a weak solution to~\eqref{eq:initial_value_problem}. Moreover, the solution is unique in the class of normal currents.
\end{theorem}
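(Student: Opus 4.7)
For existence, my plan is to verify directly that $T_t := (\Phi_t)_* \overline T$ is a weak solution in the sense of Definition~\ref{def:sol}, using condition (ii) of Lemma~\ref{lemma:equiv_def_of_solutions}. Since $b$ is smooth and globally bounded, Gronwall's inequality gives that the differentials $\mathrm{D}\Phi_t$ and $\mathrm{D}\Phi_t^{-1}$ are uniformly bounded for $t\in[0,1]$, so estimate~\eqref{eq:mass_of_pushforward} and the identity $\partial T_t=(\Phi_t)_*\partial\overline T$ yield the mass bound~\eqref{eq:uniform_bound_normal_mass}, while global boundedness of $b$ immediately gives~\eqref{eq:basic_integrability_b}. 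It then remains to check that for every $\omega\in\Dscr^k(\R^d)$,
\[
\frac{\dd}{\dd t}\dpr{T_t,\omega}=\dpr{T_t,\Lcal_b\omega}=-\dpr{\Lcal_b T_t,\omega}.
\]
By duality $\dpr{T_t,\omega}=\dpr{\overline T,\Phi_t^*\omega}$, and the semigroup property~\eqref{eq:semigroup} allows one to compute, for any fixed $t_0$,
\[
\frac{\dd}{\dd t}\bigg|_{t=t_0}\Phi_t^*\omega=\lim_{h\to 0}\frac{\Phi_{t_0}^*\Phi_h^*\omega-\Phi_{t_0}^*\omega}{h}=\Phi_{t_0}^*\Lcal_b\omega,
\]
which is the standard commutation of pullback and Lie derivative under an autonomous flow. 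Substituting in yields the desired equality and completes existence.

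For uniqueness, let $T_t,T_t'$ be two weak solutions with $T_0=T_0'=\overline T$ and set $D_t:=T_t-T_t'$, so that $D_t$ is a weak solution with trivial initial datum. The plan is to test the equation against a cleverly chosen time-dependent form, via the ``boundary formula''~\eqref{eq:with_boundary_terms}. Fix $T^*\in(0,1]$ and $\omega\in\Dscr^k(\R^d)$, and define the space-time form
\[
\eta(t,x):=\bigl(\Phi_{T^*-t}^{\,*}\,\omega\bigr)(x), \qquad (t,x)\in[0,T^*]\times\R^d,
\]
which is purely spatial (contains no $dt$ terms) and smooth, so $\iota_{T^*}^*\eta=\omega$ and $\iota_0^*\eta=\Phi_{T^*}^*\omega$. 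The key computation is
\[
\partial_t\eta=-\Phi_{T^*-t}^{\,*}\Lcal_b\omega,\qquad \Lcal_b\eta=\Lcal_b\Phi_{T^*-t}^{\,*}\omega=\Phi_{T^*-t}^{\,*}\Lcal_b\omega,
\]
where in the second equality we again invoke the commutation of $\Lcal_b$ with $\Phi_s^*$ for the autonomous field $b$. Hence $\partial_t\eta+\Lcal_b\eta\equiv 0$ on $[0,T^*]\times\R^d$, and applying~\eqref{eq:with_boundary_terms} on $[0,T^*]$ to the solution $D_t$ (with $D_0=0$) gives
\[
0=\int_0^{T^*}\dprb{\delta_t\times D_t,\partial_t\eta+\Lcal_b\eta}\,\dd t=\dpr{D_{T^*},\omega}-\dpr{D_0,\Phi_{T^*}^*\omega}=\dpr{D_{T^*},\omega}.
\]
As $\omega\in\Dscr^k(\R^d)$ is arbitrary, $D_{T^*}=0$, and since $T^*\in(0,1]$ is arbitrary, we conclude $T_t=T_t'$ for all $t$.

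I expect the main technical point to be the rigorous use of the space-time identity~\eqref{eq:with_boundary_terms}: it is stated in the preceding discussion for smooth autonomous $b$, but its proof requires combining Lemma~\ref{lemma:equiv_def_of_solutions}(iii) with the weakly*-continuous representative provided by Lemma~\ref{lemma:continuous_representative}, together with Remark~\ref{rmk:not_dt} to reduce to purely spatial test forms. Once this is in place, the computation above is essentially algebraic, and the identity $\Phi_s^*\Lcal_b=\Lcal_b\Phi_s^*$ (which holds because $b$ is autonomous, so $\Phi_s$ is itself a flow line of $\Lcal_b$) closes the loop.
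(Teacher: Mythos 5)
Your proof is correct, and the existence half is essentially the same computation the paper uses (differentiate $t\mapsto\langle T_t,\omega\rangle = \langle\overline T,\Phi_t^*\omega\rangle$ and use the semigroup property to identify the derivative with $-\langle\Lcal_b T_t,\omega\rangle$).

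For uniqueness, you and the paper both run a duality argument through the boundary-term identity~\eqref{eq:with_boundary_terms}, but the chosen test form is genuinely different. The paper fixes an arbitrary space-time form $\eta$ and builds the Duhamel integral
\[
\omega_t := -\int_t^1 (\Phi_{s-t})^*\eta_s\,\dd s,
\]
which solves the \emph{inhomogeneous} backward equation $\partial_t\omega+\Lcal_b\omega=\eta$ with $\omega_1=0$; pairing against $\delta_t\times T_t$ and appealing to Lemma~\ref{lemma:equivalent_being_zero} then gives $T_t=0$ a.e. You instead fix a terminal time $T^*$ and a spatial form $\omega$, and take the \emph{homogeneous} backward solution $\eta_t=\Phi_{T^*-t}^*\omega$, getting $\langle D_{T^*},\omega\rangle=0$ directly for every $\omega$, hence $D_{T^*}=0$. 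These are two faces of the same method-of-characteristics-for-the-adjoint idea — the paper's Duhamel formula is literally an integral of your homogeneous solutions against the forcing $\eta$ — but yours is a bit leaner: it sidesteps Lemma~\ref{lemma:equivalent_being_zero} entirely and gives the conclusion at each time $T^*$ rather than only a.e. Both rely on the commutation $\Phi_s^*\Lcal_b=\Lcal_b\Phi_s^*$, valid for autonomous $b$, and both implicitly lean on~\eqref{eq:with_boundary_terms}, whose justification (via Lemma~\ref{lemma:equiv_def_of_solutions}(iii), Lemma~\ref{lemma:continuous_representative} and Remark~\ref{rmk:not_dt}) you correctly flag as the one technical point to be careful about; you should also note that you are applying it on $[0,T^*]$ rather than $[0,1]$, which is a harmless restriction of the same formula.
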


\begin{proof}
We first prove the existence statement, for which we notice that the path of currents $t \mapsto T_t$ is indeed a path of normal currents and the bound~\eqref{eq:uniform_bound_normal_mass} is satisfied by~\eqref{eq:mass_of_pushforward}. The bound~\eqref{eq:basic_integrability_b} follows as a consequence of the global boundedness of $b$. 
To show the claim, in view of Lemma~\ref{lemma:equiv_def_of_solutions}, it therefore suffices to compute the pointwise derivative of the map  $t \mapsto \langle T_t, \omega \rangle$, where $\omega \in \Dscr^k(\R^d)$ is a fixed smooth $k$-form. From~\eqref{eq:mass_of_pushforward} we see that this map is absolutely continuous (even Lipschitz): For $s < t$ it holds that
\begin{align*}
|\langle T_t, \omega \rangle - \langle T_s, \omega\rangle| & =\bigl|\bigl\langle  (\Phi_t)_* \overline{T} -  (\Phi_s)_* \overline{T}, \omega \bigr\rangle\bigr| \\ 
& =\bigl|\bigl\langle  (\Phi_t - \Phi_s)_* \overline{T}, \omega \bigr\rangle\bigr|\\ 
& \le \|\omega\|_{\infty} \int_{\R^d} \bigl\|\Wedge^k d(\Phi_t - \Phi_s)(x)\, \vec{\overline{T}}(x) \bigr\| \; \dd\|\overline{T}\|(x)\\
& \le C\Mbf(\overline{T}) \|\omega\|_{\infty} \|d(\Phi_t - \Phi_s))\|_{\infty}^k  \\ 
& \le C|t-s|   
\end{align*}
since $t \mapsto \Phi_t$ is Lipschitz with values in $\Crm^1$ by classical results. On the other hand, by the regularity of the flow map again,
\begin{align*}
    \langle \Lcal_b T_t, \omega\rangle & = - \langle T_t, \Lcal_b \omega\rangle \\ 
    & = - \left \langle T_t, \lim_{h \to 0} \frac{(\Phi_h)^* \omega - \omega}{h} \right \rangle \\ & = - \left \langle (\Phi_t)_* \overline{T}, \lim_{h \to 0} \frac{(\Phi_h)^* \omega - \omega}{h} \right \rangle \\
    & = - \lim_{h \to 0} \frac{1}{h} \left \langle \overline{T}, (\Phi_{t+h})^{*} \omega - (\Phi_t)^{*} \omega \right \rangle \\
    & = - \frac{\dd}{\dd t} \langle \overline{T}, (\Phi_t)^{*}\omega\rangle\\
    & =- \frac{\dd}{\dd t} \langle T_t, \omega\rangle , 
\end{align*}
where we have used the semigroup property~\eqref{eq:semigroup}. The validity of the initial condition is obvious and therefore the proof of existence is complete. 

For the uniqueness part of the theorem, we will employ a duality strategy, which is inspired by~\cite[Proposition 8.1.7]{AGS} or~\cite[Theorem II.6]{DPL}. By the linearity of the problem, it is enough to show that if $\overline{T} = 0$ then for the solution $t \mapsto T_t$ it holds that $T_t=0$ for $\L^1$-a.e.\ $t \in (0,1)$.  Let us fix $\eta \in \Dscr^{k}([0,1] \times \R^d)$. By Lemma~\ref{lemma:equivalent_being_zero}, our claim is equivalent to \[
\int_0^1 \langle \delta_t \times T_t, \eta \rangle \; \dd t = 0. 
\]
Consider the following family of
forms on $\R^d$ indexed by $t \in (0,1)$: 
\[
\omega_t := -\int_{t}^1 (\Phi_{s-t})^*\eta_s \; \dd s,
\]
where $\eta_s := \iota_s^{*}\eta \in \Dscr^k(\R^d)$ is the pullback via the immersion $\iota_s \colon x \mapsto (s,x)$. We will think of $\omega(t,x) := \omega_t(x)$ as a $k$-form defined on $(0,1) \times \R^d$ and a direct computation shows that 
\[
\partial_t \omega (t,x) = \eta(t,x) - \int_{t}^1 \frac{\dd}{\dd t}(\Phi_{s-t})^*\eta_s(x) \; \dd s, \qquad  (t,x) \in (0,1) \times \R^d.
\]
On the other hand, by linearity of the pullback and dominated convergence,
\begin{align*}
\Lcal_{b}\omega (t,x) & = \lim_{h \to 0} \frac{(\Phi_h)^{*}\omega_t - \omega_t}{h}(x) \\ 
& = - \int_{t}^1 \lim_{h \to 0} \frac{(\Phi_h)^{*} (\Phi_{s-t})^* \eta_s - (\Phi_{s-t})^{*} \eta_s}{h}(x)\; \dd s \\
& =\int_{t}^1 \frac{\dd}{\dd t}(\Phi_{s-t})^*\eta_s(x) \; \dd s,
\end{align*} 
whereby
\begin{align*}
    \partial_t \omega + \Lcal_{b}\omega = \eta \qquad \text{as $k$-forms on $[0,1] \times \R^d$}. 
\end{align*}
In addition, observe that $\omega_1 =0$. Hence,
\begin{align*}
\int_0^1 \langle \delta_t \times T_t, \eta \rangle \; \dd t & = \int_0^1 \langle \delta_t \times T_t, \partial_t \omega + \Lcal_{b}\omega \rangle \; \dd t = 0, 
\end{align*}
where we have used Lemma~\ref{lemma:equiv_def_of_solutions} and~\eqref{eq:with_boundary_terms} (observe that the boundary terms vanish: at $0$ because of $\overline{T}=0$, and at $1$ because of $\iota_1^* \omega = \omega_1=0$). Thus, $T_t=0$ for $\L^1$-a.e.\ $t \in (0,1)$ and this concludes the proof. 
\end{proof}

\subsection{Particular cases} \label{sc:particular}

We now consider the geometric transport equation~\eqref{eq:PDE} in some specific dimensions $k\in\{0,\ldots,d\}$ and see how it reduces to well-known PDEs.

\medskip

\textbf{$0$-currents:} In the case $k=0$, $T_t$ is a distribution with finite mass and can be identified with a measure $\mu_t$. Therefore,~\eqref{eq:PDE} becomes
	\[
	\frac{\dd}{\dd t} \mu_t - \partial(b_t \wedge\mu_t)=0,
	\]
	where $b_t \wedge\mu_t$ can be identified with the vector-valued measure $b_t\mu_t$.
	A generic test $0$-form is given by $\omega\in \Crm^\infty_c(\R^d)$, and for such a function, $d\omega=\sum_i \frac{\partial \omega}{\partial x_i}(x)\, d x_i$. Then,
	\[
	\langle - \partial(b_t \wedge\mu_t),\omega\rangle =\langle - b_t\wedge \mu_t,d \omega\rangle= - \int_{\R^d} b_t(x)\cdot \nabla \omega(x) \; \dd\mu_t(x).
	\]
	Hence, the PDE in coordinates reads as
	\[
	\frac{\dd}{\dd t} \mu_t+\dive(b_t \mu_t)=0,
	\]
	which is the \textbf{continuity equation}.

\medskip
	
\textbf{$1$-currents:} Assume that we have a family of normal 1-currents $t \mapsto T_t \in \Nrm_1(\R^d)$ in $\R^d$ satisfying
\begin{equation*}
\frac{\dd}{\dd t} {T_t} + \Lcal_{b_t} T_t = 0 
\end{equation*}
and suppose that the $T_t$ are diffuse, i.e., $\|T_t\| \ll \L^d$, so that they can be identified with a time-dependent (1-)vector field $\tau_t = (\tau_1, \ldots ,\tau_d)$ on $\R^d$. A straightforward computation yields that, for every smooth $1$-form $\omega = \sum_{i=1}^d \omega_i dx^i$, it holds that 
\begin{equation*}
- \langle \partial (b_t\wedge T), \omega \rangle = - \langle T, i_{b_t} (d\omega) \rangle = \int_{\R^d} \sum_{j=1}^d \dive(\tau_t \otimes b_t - b_t\otimes \tau_t)_j(x) \omega_j(x) \; \dd x
\end{equation*}
and 
\begin{equation*}
	- \langle b_t\wedge \partial T, \omega \rangle = \int_{\R^d}  \dive(\tau_t) \sum_{j=1}^d \ b_j(t,x) \cdot \omega_j(x) \; \dd x.
\end{equation*}
We conclude that~\eqref{eq:PDE} in this case reads as
\begin{equation*}\
	\frac{\dd}{\dd t}\tau_t + \dive(\tau_t \otimes b_t - b_t\otimes \tau_t) + b_t \dive \tau_t = 0. 
\end{equation*}
If we further assume that every $T_t$ is boundaryless (equivalently, $\dive\tau_t = 0$ for every $t$), we obtain 
\begin{equation*}\
	\frac{\dd}{\dd t}\tau_t + \dive(\tau_t \otimes b_t - b_t\otimes \tau_t) = 0,  
\end{equation*}
which features for instance in~\cite{Brenier2, Brenier1} in connection with the ``magnetic relaxation equations'' and some models of incompressible magnetohydrodynamics. We observe further that, in $\R^3$, by standard vector-calculus identities, it can be easily shown that for every pair of vector fields $u,w$ it holds that 
\[
\dive\left(u \otimes w - w \otimes u \right) = \curl(w \times u), 
\]
where $\times$ denotes the vector product. We conclude that, for absolutely continuous $1$-currents $\tau_t\L^3$ in $\R^3$, the PDE~\eqref{eq:PDE} can also be written as 
\[
\frac{\dd}{\dd t}\tau_t+\curl(b_t\times \tau_t)=0.
\]
which features, e.g., in the ``field dislocation mechanics'' developed in~\cite[Eq.~27a]{AA},~\cite[Eq.~1]{AT} and, implicitly, in~\cite{HudsonRindler21?, Rindler21a?, Rindler21b?}.

\medskip 

\textbf{$(d-1)$-currents:} We first recall a general duality between $(d-1)$-currents and $1$-forms (cf., e.g., the appendix to~\cite{HudsonRindler21?}): Given $\alpha \in \mathscr D^{1}(\R^d)$, we associate to it the current $T_\alpha \in \Dscr_{d-1}(\R^d)$ defined by 
\[
\langle T_\alpha, \omega \rangle := \int_{\R^d} \alpha \wedge \omega = \int_{\R^d} \langle \ee_1 \wedge \ldots \wedge \ee_d, \alpha \wedge \omega \rangle \; \dd x, \qquad  \omega \in \Dscr^{d-1}(\R^d),
\]
where $\ee_1, \ldots, \ee_d$ is the standard basis in $\R^d$. We note the following identities, which follow from Leibniz rule and elementary computations:
\begin{equation*}
    \partial T_\alpha = T_{d\alpha}, \qquad \partial (b \wedge T_\alpha) = \partial T_{i_b(\alpha)}, 
\end{equation*}
for any vector field $b\colon \R^d \to \R^d$. Notice that given any $(d-1)$-current $T=\tau \L^d$ with $\tau \in \Crm_c^\infty(\R^d; \Wedge_{d-1}(\R^d))$, it holds
\[
T=T_{\alpha},
\]
where $\alpha \in \Crm_c^\infty(\R^d; \Wedge^{1}(\R^d))$ is the form defined by 
\[
\alpha := \tau \intprod (dx^1 \wedge \ldots \wedge dx^d). 
\]

The following is a geometric interpretation of this duality: identifying $\alpha=\sum_i \alpha_i dx^i$ with the vector field $\vec{\alpha}(x)=(\alpha_1(x),\ldots,\alpha_d(x))^\top$, the orienting $(d-1)$-vector $\tau(x)$ of $T_\alpha=\tau \L^d$ (when written as a simple vector) spans the $(d-1)$-plane $\vec{\alpha}^\perp(x)$. In other words, $\vec{\alpha}$ can be identified with the Hodge dual of $\tau$, taking into account the standard inner product structure of $\R^d$.

Given a (smooth) path of boundaryless $(d-1)$-currents $t \mapsto T_t=T_{\alpha(t)}=T_\alpha$ (where for ease of notation we suppress the time index $t$ in the form $\alpha$) and given a smooth velocity field $b \colon \R^d \to \R^d$, with $b = (b^1, \ldots, b^d)$, we have
\[
\langle \Lcal_b T_t, \omega \rangle = - \langle  \partial (b \wedge T_t), \omega \rangle = - \langle T_{i_b(\alpha)}, d\omega \rangle.
\]
Writing $\alpha=\sum_i \alpha_i dx^i$ and $\omega = \sum_j \omega_j \widehat{dx^j}$, where we have used the abbreviation $\widehat{dx^j} := dx^1\wedge\cdots \wedge dx^{j-1} \wedge dx^{j+1} \wedge \cdots\wedge dx^d$, we further have
\[
d \omega = \sum_j (-1)^{j+1} \frac{\partial \omega_j}{\partial x_j} dx^1 \wedge \ldots \wedge dx^d 
\]
and 
\[
i_b(\alpha) = \sum_j \alpha_j b^j.
\]
Therefore,
\begin{align*}
\langle \Lcal_b T_t, \omega \rangle &=  - \langle T_{i_b(\alpha)}, d\omega \rangle \\
& = - \int_{\R^d} i_b(\alpha) \wedge d\omega \\ 
& = - \int_{\R^d} (\vec{\alpha}\cdot b) \, \sum_j (-1)^{j+1} \frac{\partial \omega_j}{\partial x_j} \; \dd x \\ 
& = \int_{\R^d} \sum_j \frac{\partial}{\partial x_j} (\vec{\alpha}\cdot b) \,  (-1)^{j+1} \omega_j \; \dd x.
\end{align*} 
On the other hand, 
\[
\frac{\dd}{\dd t} \langle T_{\alpha}, \omega \rangle = \frac{\dd}{\dd t} \int_{\R^d} \alpha \wedge \omega = \frac{\dd}{\dd t} \int_{\R^d} \sum_{j} \alpha_j \omega_j (-1)^{j+1} \; \dd x ,
\]
so that the coordinate expression of~\eqref{eq:PDE} reads as
\[
\frac{\dd}{\dd t} \int_{\R^d} \sum_{j} \alpha_j \omega_j (-1)^{j+1} \; \dd x + \int_{\R^d} \sum_j \frac{\partial}{\partial x_j} (\vec{\alpha}\cdot b) \,  (-1)^{j+1} \omega_j \; \dd x = 0,  \qquad \omega\in\Dscr^{d-1}(\R^d).
\]
This is the weak formulation of the following system of conservation laws
\[
\frac{\dd}{\dd t} \vec{\alpha} + \nabla (\vec{\alpha} \cdot b) = 0. 
\]

\medskip
	
\textbf{$d$-currents:} Let us now consider a family of top-dimensional normal currents $T_t$, which in coordinates take the form $T_t = u_t(x) \, \ee_1 \wedge \ldots \wedge \ee_d \, \L^d$ for some scalar function $u_t(x)$, which will be assumed smooth and compactly supported. Then, equation~\eqref{eq:PDE} becomes
	\[
	\frac{\dd}{\dd t}T_t - b_t\wedge \partial T_t=0.
	\]
	We need to test this equation with a generic $d$-form $\omega(x)=\bar\omega(x) d x_1\wedge\ldots\wedge d x_d$. We compute
	\[
	i_{b_t} \omega(t,x)=\sum_j (-1)^{j-1} b_t^j(x)\bar\omega(x) \, \widehat{dx^j},
	\]
	where we have again used the abbreviation $\widehat{dx^j} := dx^1\wedge\cdots \wedge dx^{j-1} \wedge dx^{j+1} \wedge \cdots\wedge dx^d$.
	Then,
	\begin{align*}
	\langle b_t\wedge \partial T_t,\omega\rangle&=\langle  T_t, d(i_{b_t} \omega)\rangle\\
	&=\int_{\R^d} u_t(x)\sum_j (-1)^{j-1}\frac{\partial}{\partial x_j}\big((-1)^{j-1}b_t^j(x)\bar \omega (x)\big) \; \dd x\\
	&=-\int_{\R^d}\nabla u_t(x)\cdot b_t(x) \bar\omega(x) \; \dd x. 
	\end{align*}
	Therefore, in this case the equation reduces to
	\[
	\frac{\dd}{\dd t}u_t+ b_t  \cdot \nabla u_t=0,
	\]
	which is the \textbf{transport equation}.

\begin{remark}
One could also consider the transport and continuity equations as special cases of the general transport equation for differential $k$-forms $\omega(t,\frarg) = \omega_t(\frarg)$,
\[
  \frac{\dd}{\dd t} \omega_t + \Lcal_{b} \omega_t = 0.
\]
Then we would again obtain the transport equation as the special case of $0$-forms (scalars) and the continuity equation as the special case of top-dimensional forms (assigning volumes) and this is essentially the point of view of~\cite{bouchut_slides}. However, the transport equation for $k$-forms requires more regularity on $\omega$. For instance, one can model continuously-distributed dislocation via the so-called Weitzenb\"{o}ck approach (see, e.g.,~\cite{EpsteinKupfermanMaor20} for an overview) and consider the torsion $2$-form of a material connection expressing a distribution of dislocations. However, this only makes sense in the smooth setting, and for singular torsions, most notably the torsion concentrated on discrete dislocation lines, one needs to adopt a dual approach, as detailed in the appendix to~\cite{HudsonRindler21?}, leading eventually to the transport of singular $1$-currents. The geometric transport equation then gives a rigorous interpretation of this ``transport of singular torsion''.
\end{remark}

\subsection{Product solutions}

For technical reasons we also need to consider products of solutions. Denote by $\qbf \colon \R^\ell \times \R^m \to \R^\ell$ and $\rbf \colon \R^\ell \times \R^m \to \R^m$ the projections onto the first and second factors. We write a generic point in $\R^\ell\times \R^m$ as $(x,y)$, where $x$ denotes the coordinates in $\R^\ell$ and $y$ the coordinates in $\R^m$. 

\begin{proposition} \label{prop:cartesian_product}
Let $t \mapsto T_t \in \mathscr{D}_{k}(\R^\ell)$, $t \in [0,1]$, be a path of currents with $\partial T_t=0$ such that there exists a vector field $v \in {\rm L}^1(\L^1(\dd t)\otimes \|T_t\|; \R^\ell)$ with 
 \[
 \frac{\dd}{\dd t} T_t + \Lcal_{v_t}T_t = 0.
 \]
Let $t \mapsto U_t \in \mathscr{D}_{h}(\R^m)$ be a path of currents with $\partial U_t =0$ such that there exists a vector field $w \in {\rm L}^1(\L^1(\dd t)\otimes \|T_t\|; \R^m)$ with
 \[
 \frac{\dd}{\dd t} U_t + \Lcal_{w_t} U_t = 0.
 \]
Assume that there is a constant $C > 0$ such that
 \[
 \Mbf(T_t)+\Mbf(U_t)\le C, \qquad  t\in[0,1].
 \]
 Then, the path $t \mapsto P_t$ of the product currents $P_t:=T_t \times U_t \in \mathscr{D}_{k+h}(\R^\ell \times \R^m)$ solves
 \[
 \frac{\dd}{\dd t} P_t + \Lcal_{B_t} P_t = 0,
 \]
where the vector field $B_t \colon \R^\ell\times \R^m \to \R^\ell\times \R^m$ is defined as $B_t := (v_t \circ \qbf,  w_t \circ \rbf)$, i.e., 
\[
B_t(x,y) = (v_t(x), w_t(y)), \qquad (x,y) \in \R^\ell \times \R^m.  
\]
\end{proposition}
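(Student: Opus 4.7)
The plan is to verify the weak formulation of~\eqref{eq:PDE} from Lemma~\ref{lemma:equiv_def_of_solutions}(i) for the product path $P_t$ with driving field $B_t$. To begin, I would check the structural hypotheses: by~\eqref{eq:boundary_of_product} and $\partial T_t = \partial U_t = 0$ we have $\partial P_t = 0$, and $\|P_t\| = \|T_t\|\otimes\|U_t\|$; combined with $|B_t(x,y)| \le |v_t(x)| + |w_t(y)|$ and the uniform mass bound on $T_t, U_t$, the integrability condition~\eqref{eq:basic_integrability_b} for $B_t$ follows immediately from the corresponding conditions for $v$ and $w$.

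The algebraic heart of the proof is the identity
\[
  \Lcal_{B_t} P_t = (\Lcal_{v_t} T_t) \times U_t + T_t \times (\Lcal_{w_t} U_t).
\]
To establish it, I would compute $B_t \wedge P_t$ pointwise using the splitting $B_t(x,y) = (v_t(x), 0) + (0, w_t(y))$. Since $\vec{T}_t(x) \in \Wedge_k \R^\ell$ is ``horizontal'' inside $\Wedge_k(\R^\ell \times \R^m)$, permuting the vertical vector $(0, w_t(y))$ past $\vec{T}_t(x)$ introduces the sign $(-1)^k$, yielding
\[
  B_t \wedge P_t = (v_t \wedge T_t) \times U_t + (-1)^k \, T_t \times (w_t \wedge U_t).
\]
Applying~\eqref{eq:boundary_of_product} to both summands, the pieces involving $\partial T_t$ or $\partial U_t$ vanish, and using the Cartan identity $\partial(v_t \wedge T_t) = -\Lcal_{v_t} T_t$ (valid because $\partial T_t = 0$) together with $\Lcal_{B_t} P_t = -\partial(B_t \wedge P_t)$ (valid because $\partial P_t = 0$), the two factors of $(-1)^k$ cancel and the displayed identity drops out.

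With this identity in hand, I would test the PDE against forms of the product type $\omega = \qbf^*\alpha \wedge \rbf^*\beta$ with $\alpha \in \Dscr^i(\R^\ell)$, $\beta \in \Dscr^j(\R^m)$, $i+j = k+h$. Only the bidegree $(i,j) = (k,h)$ gives a non-trivial pairing on both sides of the PDE, and for such $\omega$ we have $\langle P_t, \omega\rangle = \langle T_t, \alpha\rangle \langle U_t, \beta\rangle$, a product of bounded absolutely continuous functions (by Lemma~\ref{lemma:equiv_def_of_solutions}(ii) and the mass bounds). The Leibniz rule combined with the algebraic identity above then gives
\[
  \frac{\dd}{\dd t} \langle P_t, \omega\rangle = -\langle (\Lcal_{v_t} T_t)\times U_t + T_t \times (\Lcal_{w_t} U_t), \omega\rangle = -\langle \Lcal_{B_t} P_t, \omega\rangle
\]
at $\L^1$-a.e.\ $t$, which is precisely condition (ii) of Lemma~\ref{lemma:equiv_def_of_solutions} for such $\omega$. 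A density argument (approximating the coefficients of a general $\omega \in \Dscr^{k+h}(\R^\ell \times \R^m)$ by finite sums of tensor products $\alpha(x)\beta(y)$, in the same spirit as the density step in the proof of Lemma~\ref{lemma:equiv_def_of_solutions}) extends the conclusion to all test forms.

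The main obstacle is essentially bookkeeping: the $(-1)^k$ sign in the wedge identity and the fact that all bidegrees other than $(k,h)$ contribute trivially on both sides of the PDE. Both facts are consequences of the horizontal/vertical splitting of $B_t$ and of the product structure of $P_t$, and once this is correctly tracked the remainder of the proof is routine.
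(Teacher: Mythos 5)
Your argument is correct and reaches the same essential identity as the paper's proof, namely $\langle \Lcal_{B_t}P_t,\qbf^*\alpha\wedge\rbf^*\beta\rangle = \langle\Lcal_{v_t}T_t,\alpha\rangle\langle U_t,\beta\rangle + \langle T_t,\alpha\rangle\langle\Lcal_{w_t}U_t,\beta\rangle$, followed by the Leibniz rule for products of bounded $\AC$ scalar functions and the density of product-type forms. The route is, however, genuinely different: the paper works on the form side, establishing and using the interior-product identities $i_{B_t}(\alpha\wedge\beta) = (i_{B_t}\alpha)\wedge\beta + (-1)^r\alpha\wedge(i_{B_t}\beta)$ and $i_{B_t}(\qbf^*\alpha)=\qbf^*(i_v\alpha)$, $i_{B_t}(\rbf^*\beta)=\rbf^*(i_w\beta)$, and then expands $\langle\Lcal_{B_t}P_t,\qbf^*\alpha\wedge\rbf^*\beta\rangle$ degree by degree, killing the cross-terms with $\partial T_t=\partial U_t=0$. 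You instead work on the current side, splitting $B_t=(v_t,0)+(0,w_t)$ inside the wedge $B_t\wedge P_t$, applying the boundary-of-product formula~\eqref{eq:boundary_of_product}, and invoking Cartan's identity for boundaryless currents, arriving at the explicit Leibniz-type identity $\Lcal_{B_t}P_t = (\Lcal_{v_t}T_t)\times U_t + T_t\times(\Lcal_{w_t}U_t)$. Your version isolates this identity cleanly, which is arguably more transparent and immediately reusable; the sign bookkeeping (your $(-1)^k$ from commuting $(0,w_t)$ past $\vec T_t$, then another $(-1)^k$ from $\partial$ of the product, cancelling) is correct. The paper's version, dually, has the small advantage of never needing to invoke products of infinite-mass currents (such as $\Lcal_{v_t}T_t\times U_t$), though these are in any case defined in Federer 4.1.8, which the paper uses anyway for the density step. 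One minor imprecision: $\|P_t\|$ equals $\|\vec T_t\wedge\vec U_t\|\,(\|T_t\|\otimes\|U_t\|)$, which need not literally coincide with $\|T_t\|\otimes\|U_t\|$ unless the orienting vectors are simple; for the integrability check only the inequality $\|P_t\|\le\|T_t\|\otimes\|U_t\|$ is needed, so nothing breaks, but the phrasing should be tightened.
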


\begin{proof} We first show that $B \in {\rm L}^1(\L^1(\dd t) \otimes \|P_t\|)$:
\begin{align*}
    &\int_0^1 \int_{\R^\ell \times \R^m} |(v_t(x), w_t(y))| \; \dd \|P_t\|(x,y)\; \dd t \\
    &\qquad \le \int_0^1 \int_{\R^m} \int_{\R^\ell} |(v_t(x), w_t(y))| \; \dd \|T_t\|(x) \; \dd \|U_t\|(y) \; \dd t \\
    &\qquad \le \int_0^1 \int_{\R^m} \int_{\R^\ell} (|v_t(x)| +| w_t(y)|) \; \dd \|T_t\|(x) \; \dd \|U_t\|(y) \; \dd t\\
    &\qquad \le C\int_0^1 \int_{\R^\ell}|v_t(x)|\; \dd \|T_t\|(x) \; \dd t + C\int_0^1 \int_{\R^m}|w_t(x)|\; \dd \|U_t\|(x) \; \dd t. 
\end{align*}

By~\cite[4.1.8]{Federer69book} the forms of the kind $\qbf^*\alpha \wedge \rbf^{*}\beta$, where $\alpha \in \mathscr{D}^r(\R^\ell)$ and $\beta \in \Dscr^s(\R^m)$, with $r+s = h+k$, generate a vector space that is dense in $\Dscr^{h+k}(\R^\ell \times \R^m)$. Therefore, it is enough to prove that the equation is satisfied pointwise on these forms.  To this end, we will use the following identities:
\begin{equation}\label{eq:contraction_distributive}
    i_{B_t} (\alpha\wedge \beta)=(i_{B_t} \alpha)\wedge \beta+(-1)^r \alpha\wedge(i_{B_t} \beta),\qquad \alpha\in \Dscr^r(\R^d), \; \beta\in\Dscr^s(\R^d),
\end{equation}
and
\begin{align}
\begin{aligned}\label{eq:comm_p_and_contraction} 
    i_{B_t} (\qbf^{*}\alpha) &= \qbf^{*}(i_{v}\alpha),\qquad &&\alpha \in \Dscr^{r}(\R^\ell),\\
    i_{B_t} (\rbf^{*}\beta) &= \rbf^{*}(i_{w}\beta),\qquad &&\beta \in \Dscr^{s}(\R^m)
    \end{aligned}
\end{align}
Identity~\eqref{eq:contraction_distributive} follows from~\cite[Lemma~13.11(b)]{Lee13book}. Let us now prove~\eqref{eq:comm_p_and_contraction}. For every multi-index $I$ of order $r$ in $\{1, \ldots ,\ell+m\}$ and every $\omega \in \Dscr^{r+s-1}(\R^\ell \times \R^m)$, we have for every $x,y$:
\begin{align*}
     \langle \ee_I ,i_{B_t} (\qbf^{*}\omega)(x,y)\rangle & = \langle  B_t \wedge \ee_I,(\qbf^{*}\omega)(x,y) \rangle \\
     &  = \langle  (v \circ \qbf, 0) \wedge \ee_I + (0, w \circ \rbf) \wedge \ee_I ,(\qbf^{*}\omega)(x,y)\rangle \\ 
     &  = \langle (v \circ \qbf,0) \wedge \ee_I ,(\qbf^{*}\omega)(x,y)\rangle \\
     & = \langle \Wedge^r \qbf [(v \circ \qbf,0) \wedge \ee_I] , \omega(x)\rangle \\
     & = \langle  v \wedge \Wedge^{r-1}\qbf \,\ee_I ,\omega(x)\rangle \\
     & = \langle \Wedge^{r-1}\qbf \,\ee_I ,i_v\omega(x)\rangle \\
     & = \langle \ee_I ,(\qbf^{*} i_v\omega)(x,y)\rangle. 
\end{align*}
This proves~\eqref{eq:comm_p_and_contraction}.

Now, for every $\alpha \in \mathscr{D}^r(\R^\ell)$ and $\beta \in \Dscr^s(\R^m)$ we have 
\begin{align}
\langle \Lcal_{B_t} P_t, \qbf^*\alpha \wedge \rbf^{*}\beta \rangle & =  -\langle \partial (B_t \wedge P_t), \qbf^*\alpha\wedge \rbf^{*}\beta\rangle \nonumber\\ 
& = - \langle B_t \wedge P_t, \qbf^*(d\alpha) \wedge \rbf^{*}\beta + (-1)^{r}\qbf^{*}\alpha \wedge \rbf^{*}(d\beta)
\rangle\nonumber\\ 
& = - \langle P_t, i_{B_t}(\qbf^*(d\alpha) \wedge \rbf^{*}\beta) + (-1)^r i_{B_t}(\qbf^{*}\alpha \wedge \rbf^{*}(d\beta)) \rangle. \label{eq:last_line}
\end{align}
We study the terms separately. For the first one, applying~\eqref{eq:contraction_distributive} and~\eqref{eq:comm_p_and_contraction}, 
\begin{align*}
     i_{B_t}(\qbf^*(d\alpha) \wedge \rbf^{*}\beta) & = i_{B_t}(\qbf^*(d\alpha)) \wedge \rbf^{*}\beta + (-1)^{r+1} \qbf^*(d\alpha) \wedge i_{B_t}(\rbf^{*}\beta) \rangle \\  
     & = \qbf^*(i_{v_t} d\alpha) \wedge \rbf^{*}\beta + (-1)^{r+1} \qbf^*(d\alpha) \wedge \rbf^{*}(i_{w_t} \beta),
\end{align*}
while for the second one we have 
\begin{align*}
    (-1)^r i_{B_t}(\qbf^{*}\alpha \wedge \rbf^{*}(d\beta)) & = (-1)^r \left[ i_{B_t}(\qbf^*\alpha) \wedge \rbf^{*}d\beta + (-1)^r \qbf^*\alpha \wedge i_{B_t}(\rbf^{*}d\beta) \right] \\ 
    & = (-1)^r\qbf^*(i_{v_t} \alpha) \wedge \rbf^{*}d\beta + \qbf^*\alpha \wedge \rbf^{*}(i_{w_t} d\beta). 
\end{align*}
We now observe that $P_t$ vanishes on the form $\qbf^*(d\alpha) \wedge \rbf^{*}(i_{w_t} \beta)$: Indeed, if $r+1 \ne k$,
\[
\langle P_t, \qbf^*(d\alpha) \wedge \rbf^{*}(i_{w_t} \beta)\rangle = 0. 
\]
If instead $r+1 = k$, then 
\[
\langle P_t, \qbf^*(d\alpha) \wedge \rbf^{*}(i_{w_t} \beta)\rangle = \langle T_t, d\alpha \rangle \langle U_t, i_{w_t} \beta \rangle = 0,  
\]
because $\partial T_t = 0$. A similar computation shows that $P_t$ vanishes also on the form $\qbf^*(i_{v_t} \alpha)) \wedge \rbf^{*}d\beta$ and therefore from~\eqref{eq:last_line} we obtain 
\begin{align*}
\langle \Lcal_{B_t} P_t, \qbf^*\alpha \wedge \rbf^{*}\beta \rangle =  - \langle P_t, \qbf^*(i_{v_t} d\alpha) \wedge \rbf^{*}\beta + \qbf^*\alpha \wedge \rbf^*(i_{w_t} d\beta) \rangle. 
\end{align*}
Now, if $r \ne k$ then 
\[
\langle \Lcal_{B_t} P_t, \qbf^*\alpha \wedge \rbf^{*}\beta \rangle = 0 = \langle P_t, \qbf^*\alpha \wedge \rbf^{*}\beta \rangle
\]
and therefore the equation is satisfied. If instead $r=k$, we have 
\begin{align*}
\langle \Lcal_{B_t} P_t, \qbf^*\alpha \wedge \rbf^{*}\beta \rangle & = -\langle P_t, \qbf^*(i_{v_t} d\alpha) \wedge \rbf^{*}\beta + \qbf^*\alpha \wedge \rbf^*(i_{w_t} d\beta) \rangle \\ 
& = -\langle T_t, i_{v_t} d\alpha \rangle \langle U_t, \beta \rangle  - \langle T_t, \alpha \rangle \langle U_t, i_{w_t} d\beta \rangle \\ 
& = \langle \Lcal_{v_t} T_t, \alpha \rangle \langle U_t, \beta \rangle + \langle T_t, \alpha \rangle \langle \Lcal_{w_t} U_t, \beta \rangle \\ 
& = - \frac{\dd}{\dd t}\langle T_t, \alpha \rangle\langle U_t, \beta \rangle - \langle T_t, \alpha \rangle  \frac{\dd}{\dd t}\langle U_t, \beta \rangle \\ 
& = -\frac{\dd}{\dd t} [\langle T_t, \alpha\rangle \langle U_t,\beta\rangle ] \\ 
& = - \frac{\dd}{\dd t} \langle P_t, \qbf^*\alpha \wedge \rbf^{*}\beta \rangle, 
\end{align*}
and the equation is again satisfied. Notice that in the last computation we have used that the maps $t \mapsto\langle T_t, \alpha\rangle$ and $t\mapsto \langle U_t, \alpha\rangle$ are absolutely continuous (by the very definition of solution, see Lemma~\ref{lemma:equiv_def_of_solutions}~{\rm(ii)}) and we have applied the Leibniz rule for absolutely continuous maps (see, e.g.,~\cite[Exercise 3.17]{AFP}). By resorting once again to Lemma~\ref{lemma:equiv_def_of_solutions}~{\rm(ii)} we can therefore conclude that the path $t \mapsto P_t$ solves the equation with vector field $B_t$ and this concludes the proof. 
\end{proof} 

\begin{corollary}\label{corollary:product} 
Let $t \mapsto T_t \subset \mathscr{D}_{k}(\R^d)$, $t \in [0,1]$, be a path of currents in $\R^d$ with $\partial T_t=0$ such that there exists a vector field $b \in {\rm L}^1(\L^1(\dd t)\otimes \|T_t\|; \R^d)$ with 
 \[
 \frac{\dd}{\dd t} T_t + \Lcal_{b_t}T_t = 0.
 \]
Then, the currents $S_t:=\delta_{t}\times T_t$  satisfy
\[
\frac{\dd}{\dd t} S_t + \Lcal_{(1,b_t)}S_t = 0.
\]
\end{corollary}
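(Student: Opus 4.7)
The plan is to recognize the corollary as a direct instance of Proposition~\ref{prop:cartesian_product}, applied to the trivially transported Dirac masses on the time line and to the given path $(T_t)$ on $\R^d$. Specifically, I would take $\ell = 1$, $m = d$, the first path to be $t \mapsto \delta_t \in \Dscr_0(\R)$ with driving field $v_t \equiv 1$, and the second path to be the given $t \mapsto T_t \in \Dscr_k(\R^d)$ with driving field $w_t = b_t$. The product path becomes $S_t = \delta_t \times T_t$ and the combined vector field is $B_t(s,x) = (1, b_t(x))$, which is exactly $(1,b_t)$ as in the statement.

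The only nontrivial verification is that $t \mapsto \delta_t$ is itself a solution of the geometric transport equation in $\R$ with $v \equiv 1$. This reduces to checking, for a test function $\omega \in \Crm^\infty_c(\R)$, the identity
\[
\frac{\dd}{\dd t}\langle \delta_t, \omega\rangle = -\langle \Lcal_1 \delta_t, \omega\rangle.
\]
Since $\partial \delta_t = 0$, Cartan's identity for currents reduces $\Lcal_1 \delta_t$ to $-\partial(1 \wedge \delta_t)$, and pairing against $\omega$ yields $-\langle \delta_t, d\omega\rangle = -\omega'(t)$. On the other hand $\langle \delta_t,\omega\rangle = \omega(t)$ has pointwise derivative $\omega'(t)$, so the two sides agree and Lemma~\ref{lemma:equiv_def_of_solutions}(ii) applies.

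The remaining hypotheses of Proposition~\ref{prop:cartesian_product} are immediate: both paths are boundaryless (by the corollary's assumption for $T_t$, and trivially for the $0$-currents $\delta_t$); the integrability $v \equiv 1 \in \Lrm^1(\L^1(\dd t) \otimes \|\delta_t\|)$ holds because $\Mbf(\delta_t) = 1$; the integrability of $w = b$ is assumed; and the uniform mass bound $\Mbf(\delta_t) + \Mbf(T_t) \le 1 + \sup_t \Mbf(T_t) < \infty$ follows from~\eqref{eq:uniform_bound_normal_mass}, built into the notion of solution. Invoking Proposition~\ref{prop:cartesian_product} then yields $\frac{\dd}{\dd t} S_t + \Lcal_{(1,b_t)} S_t = 0$, as required.

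No genuine obstacle is expected here: the content is entirely contained in the product proposition, and the only new computation—that a translating Dirac mass solves the GTE with constant velocity—is a one-line exercise.
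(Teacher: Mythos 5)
Your proof is correct and follows essentially the same route as the paper: the paper's proof likewise observes that $t\mapsto\delta_t$ solves the equation with velocity $1$ and then invokes Proposition~\ref{prop:cartesian_product}. Your additional verifications (the one-line Cartan computation for $\delta_t$ and the check of the mass and integrability hypotheses) are fine and merely spell out what the paper leaves implicit.
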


\begin{proof}
It is sufficient to observe that the path $t\mapsto \delta_{t}$ satisfies the equation
\[
\frac{\dd}{\dd t}\delta_t+\Lcal_1 \delta_t=0
\]
and to apply Proposition~\ref{prop:cartesian_product}.
\end{proof}

\section{Disintegration structure}

As was explained in the introduction, one needs to consider also space-time solutions of the geometric transport equation. To this end, we need to precisely describe the disintegration structure of space-time integral currents, which is the topic of this section and in particular of Theorem~\ref{thm:struct}. We will also introduce crucial technical tools and explore the relationship between two different decompositions of the mass measure: the one given by the slicing and the one given by standard disintegration of measures with respect to the time projection $\tbf$.

The following definition is fundamental: Let $S \in \Irm_{k+1}([0,1] \times \R^d)$ and let $\|S\|$ denote its mass measure. We define the \textbf{critical set} of $S$ as
\begin{equation}\label{eq:definition_critical_set}
	\Crit(S) := \bigl\{(t,x) \in [0,1] \times \R^d:  \nabla^S \tbf (t,x)= 0 \bigr\}.
\end{equation}

\subsection{The coarea formula revisited}

We start by recording the following two observations on the disintegration via the coarea formula.

\begin{lemma}\label{lemma:disint_bella}
Let $S \in \Irm_{k+1}([0,1] \times \R^d)$ and define the measure 
\[
\gamma := |\nabla^S \tbf| \, \| S\|. 
\]
The following statements hold: 
\begin{enumerate}
    \item[\rm{(i)}] $\tbf_{\#} \gamma \ll \L^1$ and the disintegration of $\gamma$ with respect to $\tbf$ and $\L^1$ is given by
        \begin{equation*}
        \gamma =  \int_0^1  \| S\vert_t \| \; \dd t; 
        \end{equation*}
    \item[\rm{(ii)}] for $\L^1$-a.e.\ $t \in \R$ it holds that $\|S\vert_t \|(\Crit(S)) =0$;
    
    \item[\rm{(iii)}] for any set $A \subset [0,1] \times \R^d$ with $\|S\|(A) <\infty$ it holds that
    \[
    \int_{A} \frac{1}{|\nabla^S \tbf|} \; \dd \|S\vert_t\| <\infty
    \]
    for $\L^1$-a.e.\ $t \in \R$.
\end{enumerate}
\end{lemma}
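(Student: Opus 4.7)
The whole statement is essentially a direct application of the integral coarea formula~\eqref{eq:coarea_integral} with $f=\tbf$; nothing deep is involved beyond choosing the right test function $g$ in each of the three parts. So my plan is to dispose of (i), (ii), (iii) in turn by carefully picking $g$.

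For part (i), I would first establish $\tbf_\#\gamma\ll\L^1$ by taking, for an arbitrary Borel $B\subset[0,1]$, the test function $g=\1_{\tbf^{-1}(B)}$ in~\eqref{eq:coarea_integral}. Since the slice $S\vert_t$ is supported in $\{t\}\times\R^d$, the right-hand side reduces to $\int_B \|S\vert_t\|(\R^d)\,\dd t$, while the left-hand side equals $\gamma(\tbf^{-1}(B))=(\tbf_\#\gamma)(B)$; absolute continuity is then immediate. For the disintegration identity itself, I would then apply~\eqref{eq:coarea_integral} with a general nonnegative Borel $g\colon[0,1]\times\R^d\to[0,\infty]$, which gives
\[
\int g\,\dd\gamma \;=\; \int_{[0,1]\times\R^d} g\,|\nabla^S\tbf|\,\dd\|S\| \;=\; \int_0^1\!\!\int g\,\dd\|S\vert_t\|\,\dd t.
\]
Combined with the fact (recalled just above in Section~\ref{ss:disintegration}) that each $\|S\vert_t\|$ is concentrated on $\{t\}\times\R^d$, this is exactly the defining property of the disintegration $\gamma=\int_0^1\|S\vert_t\|\,\dd t$ with respect to $\tbf$ and $\L^1$.

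For part (ii), the idea is to apply the coarea formula to $g=\1_{\Crit(S)}$. The left-hand side vanishes identically by the very definition~\eqref{eq:definition_critical_set} of $\Crit(S)$, since there $|\nabla^S\tbf|=0$. Hence
\[
\int_0^1 \|S\vert_t\|(\Crit(S))\,\dd t = 0,
\]
which forces $\|S\vert_t\|(\Crit(S))=0$ for $\L^1$-a.e.\ $t$.

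For part (iii), given a set $A$ with $\|S\|(A)<\infty$, I would apply~\eqref{eq:coarea_integral} with
\[
g(z) \;:=\; \frac{\1_{A\setminus\Crit(S)}(z)}{|\nabla^S\tbf(z)|},
\]
extended by $0$ on $\Crit(S)$ (so that $g$ is well-defined and Borel everywhere). Then $g\,|\nabla^S\tbf|=\1_{A\setminus\Crit(S)}$, and the left-hand side is $\|S\|(A\setminus\Crit(S))\le\|S\|(A)<\infty$. The coarea identity therefore gives
\[
\int_0^1 \!\!\left(\int_{A\setminus\Crit(S)} \frac{1}{|\nabla^S\tbf|}\,\dd\|S\vert_t\|\right)\!\dd t \;<\;\infty,
\]
so the inner integral is finite for $\L^1$-a.e.\ $t$. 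Invoking (ii) to discard $\Crit(S)$ from the domain of integration without changing the value yields the claim.

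No step is really an obstacle; the only mildly delicate point is the definition of $g$ in (iii), where one has to take the standard convention $1/0=0$ on $\Crit(S)$ and then use (ii) to make sure this convention does not affect the conclusion. The argument also makes clear in which order the parts must be proved: (ii) is used inside (iii).
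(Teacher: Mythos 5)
Your proposal is correct and follows essentially the same route as the paper's proof: all three parts via the coarea formula~\eqref{eq:coarea_integral} with $f=\tbf$ and the same choices of test function ($\1$ of a preimage set for the absolute continuity, $\1_{\Crit(S)}$ for (ii), and $|\nabla^S\tbf|^{-1}\1_{A\setminus\Crit(S)}$ for (iii), using (ii) to discard the critical set). The only cosmetic difference is that in (i) the paper concludes by invoking uniqueness of the disintegration after comparing with the coarea formula, whereas you verify the defining property of the disintegration directly — these are the same argument.
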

    
\begin{proof}
	Assertion {\rm(i)} can be seen as a reformulation of the coarea formula~\eqref{eq:coarea_integral}. First, let $N \subset \R$ be an $\L^1$-null set. Then, applying~\eqref{eq:coarea_integral} with $g := \1_{N\times \R^d}$ we obtain  
	\begin{align*}
		\tbf_{\#} \gamma (N) &= \gamma(N \times \R^d) \\
		& = \int_{[0,1] \times \R^d} \1_{N\times \R^d} |\nabla^S \tbf| \; \dd \| S\| \\ 
		& = \int_0^1 \left(\int_{\{t\} \times \R^d} \1_{N\times \R^d}\; \dd\|S|_t\|\right)\; \dd t \\ 
		& =  \int_0^1 \1_{N}(t) \left(\int_{\{t\} \times \R^d}  \; \dd\|S|_t\|\right)\; \dd t \\
		&= 0, 
	\end{align*} 	
	because $N$ is $\L^1$-negligible. Hence, $\tbf_{\#} \gamma \ll \L^1$.

By the disintegration procedure recalled in Section~\ref{ss:disintegration}, the disintegration $\{\gamma_t\}$ of $\gamma$ with respect to $\tbf$ and $\L^1$ satisfies, for every Borel $B\subseteq\R$,
	\begin{equation*}
		|\nabla^S\tbf| \, \|S\|\restrict (B\times\R^d) = \gamma\restrict(B\times\R^d) = \int_{B} \gamma_t \; \dd t
	\end{equation*}
	as measures.
	Comparing this with the coarea formula~\eqref{eq:coarea_integral}, we infer 
	\begin{equation*}
		\int_{A}  \|S|_t\| \; \dd t  = \int_{A} \gamma_t \; \dd t.
	\end{equation*}
	Hence, since $B$ is arbitrary, {\rm(i)} follows from the uniqueness of the disintegration.
	
	Claim {\rm(ii)} is again a consequence of the coarea formula~\eqref{eq:coarea_integral}, now applied with $g := \1_{C}$, where $C:=\Crit(S)$. This yields 
	\begin{equation*}
	0 =\int_{C}|\nabla^S\tbf |	\; \dd \|S\| = \int_0^1 \left(\int_{\{t\} \times \R^d} \1_{C}\; \dd\|S|_t\| \right)\; \dd t = \int_0^1 \|S\vert_t \| (C)\; \dd t,
	\end{equation*}
	whence $\|S\vert_t \| (C) = 0$ for $\L^1$-a.e.\ $t \in \R$.
	
	It remains to show~{\rm(iii)}. Observe that, as a consequence of~{\rm(ii)}, it holds that $\1_{C}(t,x)=0$ for $ (\int_0^1  \| S\vert_t \| \; \dd t)$-a.e.\ $(t,x)$. By yet another application the coarea formula~\eqref{eq:coarea_integral}, this time with $g = |\nabla^S \tbf|^{-1} \1_{A\setminus C}$, we have 
	\begin{align*}
	+\infty &>
	\int_{A \setminus C} \; \dd \| S\| \\ 
	& = \int_{[0,1] \times \R^d} \frac{1}{|\nabla^S \tbf|}\1_{A\setminus C}\; \dd\gamma \\ 
	& = \int_0^1 \int_{\{t\} \times \R^d} \frac{1}{|\nabla^S \tbf|}\1_{A\setminus C}\; \dd\|S\vert_t\| \; \dd t\\ 
	& = \int_0^1 \int_{A} \frac{1}{|\nabla^S \tbf|}\; \dd\|S\vert_t\| \; \dd t,
	\end{align*}
	thus concluding the proof.
\end{proof}

We now define the vector field 
\begin{equation}\label{eq:def_eta}
    \xi(t,x) := \frac{\nabla^\S \tbf(t,x)}{|\nabla^\S \tbf(t,x)|}\1_{\Crit(S)^c}(t,x).  
\end{equation} 
Notice that, as a consequence of Lemma~\ref{lemma:disint_bella}~{\rm(ii)}, it holds that
\[
  \xi = \frac{\nabla^\S \tbf}{|\nabla^\S \tbf|}  \quad \text{a.e.\ with respect to the measure $ \int_0^1  \| S\vert_t \| \; \dd t$.}
\]

\begin{lemma} \label{lemma:disint_bella_vettoriale}
Let $S \in \Irm_{k+1}([0,1] \times \R^d)$ and define the vector-valued measure 
\[
\Gamma := \nabla^S \tbf \, \| S\|.
\]
Then, $\tbf_{\#} |\Gamma| \ll \L^1$ and the disintegration of $\Gamma$ with respect to $\tbf$ and $\L^1$ is
\begin{equation*}
    \Gamma = \int_0^1 \xi \, \|S\vert_t\| \; \dd t. 
\end{equation*}
\end{lemma}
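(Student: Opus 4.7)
The strategy is to reduce the claim to Lemma~\ref{lemma:disint_bella} by factoring out the length of the gradient and observing that the resulting unit vector is precisely $\xi$.

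First I would establish the absolute continuity of $\tbf_{\#}|\Gamma|$. Since the total variation of a vector-valued measure $v\mu$ (with $v$ a Borel vector field and $\mu$ a positive measure) is $|v|\mu$, we have
\[
  |\Gamma| = |\nabla^S \tbf|\, \|S\| = \gamma,
\]
where $\gamma$ is the positive measure introduced in Lemma~\ref{lemma:disint_bella}. Hence $\tbf_{\#}|\Gamma| = \tbf_{\#}\gamma \ll \L^1$ directly from part (i) of that lemma, and moreover the disintegration of $\gamma$ is $\gamma = \int_0^1 \|S|_t\|\; \dd t$.

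Second, I would rewrite $\Gamma$ as a vector density with respect to $\gamma$. By the very definition~\eqref{eq:def_eta} of $\xi$, on $\Crit(S)^c$ we have $\nabla^S \tbf = |\nabla^S \tbf|\,\xi$, while on $\Crit(S)$ both $\nabla^S \tbf$ and $\xi$ vanish. Thus the identity
\[
  \nabla^S \tbf = \xi \, |\nabla^S \tbf|
\]
holds pointwise everywhere, and consequently
\[
  \Gamma = \nabla^S\tbf\,\|S\| = \xi \,\bigl(|\nabla^S\tbf|\,\|S\|\bigr) = \xi\,\gamma
\]
as $\R^{1+d}$-valued measures.

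Third, I would combine these two facts via the elementary principle that multiplication by a bounded Borel function (here, each component of $\xi$, which satisfies $|\xi|\le 1$) commutes with the disintegration operation. Concretely, for any $\phi \in \Crm_c([0,1]\times\R^d)$,
\[
  \int \phi \;\dd\Gamma = \int \phi\,\xi \;\dd\gamma = \int_0^1 \!\!\int_{\{t\}\times\R^d}\! \phi(t,x)\,\xi(t,x)\;\dd\|S|_t\|(x)\;\dd t,
\]
which is exactly the statement that $\{\xi\,\|S|_t\|\}_{t\in[0,1]}$ is the disintegration of $\Gamma$ with respect to $\tbf$ and $\L^1$. Support in $\{t\}\times\R^d$ is inherited from the corresponding property of $\|S|_t\|$ (cf.\ Lemma~\ref{lemma:disint_bella}), and Borel measurability of $t\mapsto \xi\,\|S|_t\|$ follows from that of $t\mapsto \|S|_t\|$ together with boundedness and Borel measurability of $\xi$.

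There is no real obstacle here; the proof is essentially a one-line manipulation once Lemma~\ref{lemma:disint_bella} is in place. The only conceptual point worth emphasising is that the vanishing of $\nabla^S\tbf$ on the critical set makes the definition $\xi\equiv 0$ on $\Crit(S)$ compatible with the factorisation, so no exceptional sets need to be removed.
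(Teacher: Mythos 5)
Your proof is correct and is essentially the paper's argument: both identify $|\Gamma|=\gamma$ and reduce to Lemma~\ref{lemma:disint_bella} by observing that $\nabla^S\tbf = \xi\,|\nabla^S\tbf|$, the paper doing so by restricting the integral to $\Crit(S)^c$ and you by noting the pointwise identity holds on all of $[0,1]\times\R^d$ since both sides vanish on $\Crit(S)$. The packaging $\Gamma=\xi\gamma$ followed by uniqueness of the disintegration is the same computation the paper carries out against test functions.
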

	
\begin{proof}
Observe that $|\Gamma| = \gamma$, where $\gamma$ is the measure defined in Lemma~\ref{lemma:disint_bella}. Therefore, $\tbf_{\#} |\Gamma| = \tbf_{\#}\gamma\ll \L^1$ by said lemma. Set again $C := \Crit(S)$. For the second claim it is enough to observe that, for every vector-valued test function $\phi \in \Crm_c([0,1] \times \R^d; \R \times \R^d)$ it holds that 
\begin{align*}
\int_{[0,1] \times \R^d} \phi \cdot \dd \Gamma & = \int_{[0,1] \times \R^d} \phi \cdot \nabla^S \tbf \; \dd \|S\| \\ 
& = \int_{([0,1] \times \R^d) \cap C^c} \phi \cdot \nabla^S \tbf \; \dd \|S\| \\ 
& = \int_{([0,1] \times \R^d) \cap C^c} \phi \cdot \xi |\nabla^S \tbf|\; \dd \|S\| \\
& = \int_{[0,1] \times \R^d} \phi \cdot \xi |\nabla^S \tbf|\; \dd \|S\| \\ 
& = \int_0^1 \left( \int_{\{t\}\times \R^d} \phi \cdot \xi \; \dd\|S|_t\| \right) \dd t,
\end{align*}
where we have used Lemma~\ref{lemma:disint_bella} again in the last line. We conclude via the uniqueness of the disintegration. 
\end{proof}

\subsection{Disintegration of the mass measure}

Given a space-time integral current $S \in \Irm_{k+1}([0,1] \times \R^d)$, we now turn to investigate the disintegrations of the mass measure $\|\S\|$ and of $S$ itself, seen as a multivector-valued measure, with respect to the map $\tbf$ and the measure
\[
  \lambda := \L^1 +(\tbf_{\#}\|S\|)^s,
\]
where $(\tbf_{\#}\|S\|)^s$ denotes the singular part (with respect to $\L^1$) of $\tbf_{\#}\|S\|$. For the rest of this section, and in fact the whole article, we will denote by $\{\mu_t\}$ the disintegration of $\|S\|$ with respect to $\tbf$ and $\lambda$, i.e., 
\begin{equation}\label{eq:disintegration}
	\| S \| = \int_0^1 \mu_t \; \dd \lambda(t) = \int_0^1 \mu_t \; \dd t  +  \int_0^1 \mu_t \; \dd \lambda^s(t).
\end{equation} 
The next goal is to investigate the measures $\mu_t$, which will be achieved in several steps. We begin with the following lemma, which characterises the disintegration of the non-critical mass, that is, the mass restricted to the complement of the critical set.

\begin{lemma}\label{lemma:disint_non_critical_mass} 
Let $S \in \Irm_{k+1}([0,1] \times \R^d)$ and define the measure
\[
\sigma := \| S\| \restrict \Crit(S)^c.
\]
Then, $\tbf_{\#} \sigma \ll \L^1$ and the disintegration of $\sigma$ with respect to $\tbf$ and $\L^1$ is given by 
\[
	\sigma = \int_0^1 \frac{1}{|\nabla^S \tbf|}\| S\vert_t\|\; \dd t.  
\]
\end{lemma}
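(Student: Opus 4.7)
The plan is to deduce this lemma as a fairly direct application of the coarea formula~\eqref{eq:coarea_integral}, paired with the results already established in Lemma~\ref{lemma:disint_bella}. The key idea is to cancel the factor $|\nabla^S \tbf|$ appearing on the left-hand side of~\eqref{eq:coarea_integral} by testing with a reciprocal weight supported away from the critical set. I would begin by fixing any Borel set $A \subseteq [0,1]\times\R^d$ and apply the coarea formula with the non-negative Borel function
\[
g(t,x) := \frac{\1_A(t,x)\,\1_{\Crit(S)^c}(t,x)}{|\nabla^S \tbf(t,x)|},
\]
which is well-defined because $|\nabla^S \tbf| > 0$ on $\Crit(S)^c$. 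Since $g \,|\nabla^S \tbf| = \1_A \,\1_{\Crit(S)^c}$ is bounded and $\|S\|$ is finite, $g$ lies in the class for which~\eqref{eq:coarea_integral} applies.

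Writing out the coarea formula with this choice gives
\[
\sigma(A) = \|S\|(A \cap \Crit(S)^c) = \int_0^1 \int \frac{\1_A(t,x)\,\1_{\Crit(S)^c}(t,x)}{|\nabla^S \tbf(t,x)|}\; \dd\|S|_t\|(t,x)\; \dd t.
\]
By Lemma~\ref{lemma:disint_bella}~{\rm(ii)}, the slice measure $\|S|_t\|$ gives zero mass to $\Crit(S)$ for $\L^1$-a.e.\ $t$, so the indicator $\1_{\Crit(S)^c}$ can be dropped from the inner integral without changing its value. This yields
\[
\sigma(A) = \int_0^1 \int_A \frac{1}{|\nabla^S \tbf|}\; \dd\|S|_t\|\; \dd t,
\]
which is exactly the claimed decomposition in the sense of~\eqref{eq:def_disint}. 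The finiteness of the inner integral for $\L^1$-a.e.\ $t$ is granted by Lemma~\ref{lemma:disint_bella}~{\rm(iii)} applied with $A = [0,1]\times\R^d$, since $\|S\|([0,1]\times\R^d) = \Mbf(S) < \infty$; this confirms that the family $\{|\nabla^S\tbf|^{-1}\|S|_t\|\}_{t}$ satisfies the integrability needed to define a generalised product.

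The absolute continuity $\tbf_\#\sigma \ll \L^1$ is then an immediate consequence: for any $\L^1$-null set $N \subseteq [0,1]$, one applies the above identity with $A = N\times\R^d$ to get $\tbf_\#\sigma(N) = \sigma(N\times\R^d) = 0$. Finally, since the constructed family $\{|\nabla^S\tbf|^{-1}\|S|_t\|\}_t$ is supported on $\{t\}\times\R^d$ (as the slice $\|S|_t\|$ itself is) and Borel-dependent on $t$, the uniqueness part of the disintegration theorem recalled in Section~\ref{ss:disintegration} guarantees that this is indeed \emph{the} disintegration of $\sigma$ with respect to $\tbf$ and $\L^1$. The only slightly delicate point in the argument is the verification that one may drop $\1_{\Crit(S)^c}$ in the slice integral, but this is already provided by Lemma~\ref{lemma:disint_bella}~{\rm(ii)}, so no real obstacle arises.
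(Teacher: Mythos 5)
Your argument is correct and follows essentially the same route as the paper's own proof: apply the coarea formula~\eqref{eq:coarea_integral} with $g = |\nabla^S\tbf|^{-1}\1_{A\cap\Crit(S)^c}$, use Lemma~\ref{lemma:disint_bella}~{\rm(ii)} to drop the indicator of the non-critical set from the slice integral, and conclude by uniqueness of the disintegration. The only difference is a minor reordering — the paper verifies $\tbf_{\#}\sigma\ll\L^1$ first and then identifies the disintegration, whereas you obtain both from the single identity for arbitrary Borel $A$ — which is entirely cosmetic.
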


\begin{proof}
	Let $N \subset \R$ be an $\L^1$-negligible set. Applying the coarea formula~\eqref{eq:coarea_integral} with $g = |\nabla^S \tbf|^{-1}\1_{(N \times \R^d) \cap C^c}$, where $C:=\Crit(S)$, we obtain   
	\begin{align*}
		\tbf_{\#} \sigma (N) 
		& = \int_{[0,1] \times \R^d} \1_{N \times \R^d}\; \dd\sigma \\ 
		& = \int_{C^c} \1_{N \times \R^d} \; \dd\|S\| \\ 
		& = \int_0^1 \left(\int_{\{t\} \times \R^d}  |\nabla^S \tbf|^{-1} \1_{(N \times \R^d) \cap C^c}\; \dd\|S|_t\| \right)\; \dd t \\ 
		& =  \int_0^1 \left(\int_{\{t\} \times \R^d}   |\nabla^S \tbf|^{-1} \1_{N \times \R^d}\; \dd\|S|_t\| \right)\; \dd t \\ 
		& =  \int_0^1 \1_{N}(t) \left(\int_{\{t\} \times \R^d}   |\nabla^S \tbf|^{-1} \dd\|S|_t\| \right)\; \dd t \\ 
		& = 0, 
	\end{align*} 	
	where we have exploited that $\1_{C^c} (t,x)= 1$ for $( \int_0^1  \| S\vert_t \| \; \dd t)$-a.e.\ points $(t,x)$ by virtue of Lemma~\ref{lemma:disint_bella}~(ii). Thus, $\tbf_{\#} \sigma \ll \L^1$. An analogous argument also yields
	$$
	\tbf_{\#} \sigma = \left[ \int_{\{t\} \times \R^d} |\nabla^S \tbf|^{-1} \; \dd\| S\vert_t \| \right] \L^1(\dd t)
	= \left(|\nabla^S \tbf|^{-1}\| S\vert_t\|\right)(\R\times \R^d) \, \L^1(\dd t)
	$$ 
	as measures.
	
	By the disintegration theorem, the disintegration $\{\sigma_t\}$ of $\sigma$ with respect to $\tbf$ and $\L^1$ satisfies
	\begin{equation*}
		\sigma = \int_0^1 \sigma_t  \; \dd t.
	\end{equation*}
 	For every Borel set $A \subset [0,1] \times \R^d$, we have by the coarea formula with $g := |\nabla^S \tbf|^{-1} \1_{A \setminus C}$ that
 	\begin{align*}
	 \sigma(A)
	 &= \| S \|(A \setminus C) \\ 
	& = \int_0^1 \int_{\{t\} \times \R^{d}}|\nabla^S \tbf|^{-1} \1_{A\setminus C} \; \dd \|S\vert_t\| \; \dd t \\ 
	& = \int_0^1 \int_{\{t\} \times \R^{d}}|\nabla^S \tbf|^{-1} \1_{A} \; \dd \| S\vert_t\| \; \dd t, 
	\end{align*}
	where we have once again exploited that $\1_{A\setminus C} (t,x)= \1_A(t,x)$ for $( \int_0^1  \| S\vert_t \| \; \dd t)$-a.e.\ $(t,x)$. As $A \subset [0,1] \times \R^d$ was arbitrary, we therefore obtain   
	\begin{equation*}
		\sigma_t =  |\nabla^S \tbf|^{-1}\| S\vert_t\|
		\qquad \text{for $\L^1$-a.e.\ $t\in \R$ as measures on $\R\times \R^d$,}
	\end{equation*}
	which concludes the proof. 
\end{proof}

As before, we also have the multivector-valued analogue of Lemma~\ref{lemma:disint_non_critical_mass}. 

\begin{lemma}\label{lemma:disint_non_critical_current} 
 Let $S \in \Irm_{k+1}([0,1] \times \R^d)$ and define the $(k+1)$-vector-valued measure 
\[
\Sigma := S \restrict {\Crit(S)^c}.
\]
Then, $\tbf_{\#} |\Sigma|\ll \L^1$ and the disintegration of $\Sigma$ with respect to $\tbf$ and $\L^1$ is
\[
	\Sigma = \int_0^1 \frac{1}{|\nabla^S \tbf|} \xi \wedge  S\vert_t\; \dd t. 
\]
\end{lemma}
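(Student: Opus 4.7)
The absolute continuity $\tbf_{\#}|\Sigma|\ll \L^1$ is immediate: since $|\vec{S}|=1$ at $\|S\|$-a.e.\ point, we have $|\Sigma|=\sigma=\|S\|\restrict \Crit(S)^c$, so the claim follows from Lemma~\ref{lemma:disint_non_critical_mass}. It therefore only remains to identify the fibres of the disintegration of $\Sigma$ with respect to $\tbf$ and $\L^1$.

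The key ingredient is the orthogonal decomposition of the orienting multivector of a space-time current, already recorded in the introduction: at $\|S\|$-a.e.\ point $(t,x)\in \Crit(S)^c$ it holds that
\[
\vec{S}(t,x) = \xi(t,x)\wedge \vec{S}|_t(t,x),\qquad \xi\perp \vec{S}|_t,
\]
where $\xi$ is the field from~\eqref{eq:def_eta} and $\vec{S}|_t$ denotes the orientation map of the slice $S|_t$. Writing $S|_t=\vec{S}|_t\,\|S|_t\|$, this identity yields the equality of $(k+1)$-vector-valued measures
\begin{equation*}
\vec{S}\,\|S|_t\|\restrict \Crit(S)^c \;=\; \xi \wedge S|_t \restrict \Crit(S)^c,
\end{equation*}
which by Lemma~\ref{lemma:disint_bella}~(ii) is simply $\vec{S}\,\|S|_t\|=\xi\wedge S|_t$ for $\L^1$-a.e.\ $t$.

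Armed with this, the computation of the disintegration is parallel to that of Lemma~\ref{lemma:disint_non_critical_mass}. For any test field $\phi\in \Crm_c([0,1]\times\R^d;\Wedge^{k+1}(\R\times\R^d))$ we apply the coarea formula~\eqref{eq:coarea_integral} with $g=|\nabla^S\tbf|^{-1}\langle \vec{S},\phi\rangle\,\1_{\Crit(S)^c}$ and obtain
\begin{align*}
\int \phi\cdot \dd\Sigma
&= \int_{\Crit(S)^c}\langle \vec{S},\phi\rangle\;\dd\|S\|\\
&= \int_0^1 \int \frac{\langle \vec{S},\phi\rangle}{|\nabla^S\tbf|}\,\1_{\Crit(S)^c}\;\dd\|S|_t\|\;\dd t\\
&= \int_0^1 \int \frac{\langle \xi\wedge \vec{S}|_t,\phi\rangle}{|\nabla^S\tbf|}\;\dd\|S|_t\|\;\dd t\\
&= \int_0^1 \left\langle \frac{1}{|\nabla^S\tbf|}\,\xi\wedge S|_t,\phi\right\rangle\;\dd t,
\end{align*}
where in the third line we used Lemma~\ref{lemma:disint_bella}~(ii) together with the identity for $\vec{S}$ discussed above. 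The uniqueness of the disintegration then gives the stated formula.

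The main technical point is the orthogonal splitting $\vec{S}=\xi\wedge \vec{S}|_t$ on $\Crit(S)^c$: this rests on the rectifiable structure of $S$ and of its slices, namely that at $(t,x)$ outside the critical set the approximate tangent space $\mathrm{Tan}_{(t,x)}S|_t$ is the orthogonal complement (inside $\mathrm{Tan}_{(t,x)}S$) of $\nabla^S\tbf(t,x)$, so that the unit simple $(k+1)$-vector $\vec{S}$ factors as the wedge of $\xi$ and of the unit simple $k$-vector $\vec{S}|_t$. Once this splitting is invoked, the rest of the argument is a routine coarea computation combined with the uniqueness of the disintegration, exactly mirroring Lemmas~\ref{lemma:disint_bella_vettoriale} and~\ref{lemma:disint_non_critical_mass}.
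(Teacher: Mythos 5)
Your argument is correct and follows essentially the same route as the paper: both rest on the coarea formula together with the orthogonal factorisation $\vec{S}=\xi\wedge\vec{S}|_t$ (from~\cite[Lemma 3.5]{Rindler21a?} and~\cite[1.5.3]{Federer69book}), followed by the uniqueness of the disintegration. The only cosmetic difference is that the paper first reads off $\Sigma_t = \vec{S}\,|\nabla^S\tbf|^{-1}\|S|_t\|$ from Lemma~\ref{lemma:disint_non_critical_mass} and then substitutes the wedge decomposition, whereas you feed everything directly into a single coarea computation; the two are equivalent.
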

\begin{proof} Observe that $|\Sigma| = \sigma$, where $\sigma$ is the measure defined in Lemma~\ref{lemma:disint_non_critical_mass}, and so,
$\tbf_{\#} |\Sigma| = \tbf_{\#}\sigma\ll \L^1$. Set $C := \Crit(S)$. For the second claim, observe that, for every vector-valued test function $\phi \in \Crm_c([0,1] \times \R^d; \R \times \R^d)$, it holds that 
\begin{align*}
\int_{[0,1] \times \R^d} \phi \; \dd \Sigma  & = \int_{([0,1] \times \R^d) \cap C^c} \phi \; \dd S \\ 
& = \int_{([0,1] \times \R^d) \cap C^c} \phi \cdot \vec{S} \; \dd \|S\| \\ 
& = \int_{[0,1] \times \R^d} \phi \cdot \vec{S} \; \dd\sigma.
\end{align*}
By Lemma~\ref{lemma:disint_non_critical_mass} we thus obtain for the disintegration $\{\Sigma_t\}$ of $\Sigma$ with respect to $\tbf$ and $\L^1$ that
\begin{equation*}
	\Sigma_t = \vec{S} |\nabla^S \tbf|^{-1}\| S\vert_t\|
	\qquad \text{for $\L^1$-a.e.\ $t \in \R$} 
\end{equation*}
as $(k+1)$-vector-valued measures on $\R\times \R^d$. 

Observe that
\[
  \xi(t,x)\wedge \vec{S}|_t(t,x)=\vec{S}(t,x)  \qquad\text{for $( \textstyle\int_0^1  \| S\vert_t \| \; \dd t)$-a.e.\ $(t,x)$,}
\]
which follows from \cite[Lemma 3.5]{Rindler21a?} (see also \cite[1.5.3]{Federer69book})
and therefore for $\L^1$-a.e.\ $t$ we have
\begin{align*}
	\Sigma_t & = \bigl( \xi \wedge \vec{S}|_t\bigr)\,  |\nabla^S \tbf|^{-1}\| S\vert_t\|
	= 
	\frac{1}{|\nabla^S \tbf|} \xi \wedge  S\vert_t 
\end{align*}
as $(k+1)$-vector-valued measures on $\R\times \R^d$. This finishes the proof. 
\end{proof}

\subsection{Structure theorem}

Summing up the results from the previous section, we have the following formulae for the disintegrations with respect to the map $\tbf$ and the measure $\L^1$: 
\begin{align*}
|\nabla^S \tbf| \| S\| &= \int_0^1 \| S\vert_t \| \; \dd t, & \| S\|\restrict \Crit(S)^c  &= \int_0^1 |\nabla^S \tbf|^{-1} \| S\vert_t\| \; \dd t,  \\
\nabla^S \tbf \| S\| &= \int_0^1 \xi \| S\vert_t \| \; \dd t, &  S\restrict \Crit(S)^c &= \int_0^1 |\nabla^S \tbf|^{-1} \xi \wedge S\vert_t \; \dd t. 
\end{align*}

We are now ready to describe the structure of the disintegration $\{\mu_t\}_t$ appearing in~\eqref{eq:disintegration}. 

\begin{theorem}[Disintegration structure] \label{thm:struct}
Let $S \in \Irm_{k+1}([0,1] \times \R^d)$ and write
\[
  \| S \| = \int_0^1 \mu_t \; \dd \lambda(t) = \int_0^1 \mu_t \; \dd t  +  \int_0^1 \mu_t \; \dd \lambda^s(t),
\]
where $\lambda := \L^1 + \lambda^s := \L^1 + (\tbf_{\#}\|S\|)^s$, as in~\eqref{eq:disintegration}. Then the following statements hold:
	\begin{enumerate}
		\item[\rm (i)] For $\lambda^s$-a.e.\ $t \in \R$ the measure $\mu_t$ is concentrated on $\Crit(S)$.
		\item[\rm (ii)] For $\L^1$-a.e.\ $t \in \R$ the measure $\mu_t$ can be decomposed as
		\begin{equation*}
		    \mu_t =|\nabla^S\tbf|^{-1} 
		    \| S\vert_t \| + \mu^s_t,
		\end{equation*}
		where $\mu^s_t$ is a measure which is concentrated on $\Crit(S)$ and is singular with respect to $|\nabla^S\tbf|^{-1} \|S\vert_t\|$ and also with respect to $\mathscr H^k$.
		\end{enumerate}
\end{theorem}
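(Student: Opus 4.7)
The plan is to split the mass measure according to the critical set and disintegrate each piece separately, exploiting the linearity (and uniqueness) of disintegration. Write $\mu' := \|S\|\restrict \Crit(S)^c$ and $\mu'' := \|S\|\restrict \Crit(S)$, so that $\|S\| = \mu' + \mu''$, and let $\{\mu'_t\}$, $\{\mu''_t\}$ denote the respective disintegrations with respect to $\tbf$ and $\lambda$; by uniqueness we then have $\mu_t = \mu'_t + \mu''_t$ for $\lambda$-a.e.\ $t$.

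For the non-critical part, Lemma~\ref{lemma:disint_non_critical_mass} gives $\tbf_{\#}\mu' \ll \L^1$, and since $\lambda^s \perp \L^1$ by construction $\tbf_\#\mu'$ assigns no mass to the set on which $\lambda^s$ is concentrated; consequently $\mu'_t = 0$ for $\lambda^s$-a.e.\ $t$, while the same lemma yields $\mu'_t = |\nabla^S \tbf|^{-1}\| S|_t \|$ for $\L^1$-a.e.\ $t$. For the critical part, the identity $\int \mu''_t(\Crit(S)^c)\dd\lambda(t) = \mu''(\Crit(S)^c) = 0$ forces $\mu''_t$ to be concentrated on $\Crit(S)$ for $\lambda$-a.e.\ $t$, and the same reasoning with $R^c$ in place of $\Crit(S)^c$ shows that $\mu''_t$ is concentrated on any chosen $\H^{k+1}$-rectifiable carrier set $R$ of $\|S\|$ for $\lambda$-a.e.\ $t$.

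Claim~(i) is now immediate: for $\lambda^s$-a.e.\ $t$, $\mu_t = \mu''_t$, which is concentrated on $\Crit(S)$. For claim~(ii), set $\mu^s_t := \mu''_t$ for $\L^1$-a.e.\ $t$, so that the asserted decomposition $\mu_t = |\nabla^S \tbf|^{-1}\| S|_t \| + \mu^s_t$ holds. As just noted, $\mu^s_t$ is concentrated on $\Crit(S)$; on the other hand, by Lemma~\ref{lemma:disint_bella}(ii), $\| S|_t \|(\Crit(S))=0$ for $\L^1$-a.e.\ $t$, so that $|\nabla^S\tbf|^{-1}\|S|_t\|$ is concentrated on $\Crit(S)^c$ and hence the two measures are mutually singular.

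The principal remaining subtlety is showing $\mu^s_t \perp \H^k$ for $\L^1$-a.e.\ $t$. Let $R$ be an $\H^{k+1}$-rectifiable carrier set for $\|S\|$. Applying the standard coarea formula for Lipschitz maps on rectifiable sets to $\tbf|_R$ restricted to $R \cap \Crit(S)$, on which $\nabla^R\tbf \equiv 0$ by definition, we obtain
\[
  0 = \int_{R \cap \Crit(S)} |\nabla^R \tbf| \; \dd\H^{k+1} = \int_\R \H^k\bigl(R \cap \Crit(S) \cap \tbf^{-1}(t)\bigr) \; \dd t,
\]
so that $\H^k(R \cap \Crit(S) \cap (\{t\}\times \R^d)) = 0$ for $\L^1$-a.e.\ $t$. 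Since $\mu^s_t$ is concentrated on this $\H^k$-null slice, the required singularity follows.
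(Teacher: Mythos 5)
Your proposal is correct and follows essentially the same route as the paper: split $\|S\|$ into the pieces carried by $\Crit(S)^c$ and $\Crit(S)$, disintegrate each with respect to $\tbf$ and $\lambda$, identify the non-critical disintegration via the non-critical mass lemma, set $\mu^s_t$ equal to the disintegration of the critical piece, and obtain $\mu^s_t \perp \H^k$ from the coarea formula for rectifiable sets (the paper works directly with $\Crit(S)$ as a $(k+1)$-rectifiable subset of the carrier, whereas you intersect explicitly with the carrier $R$, which is the same argument made slightly more explicit).
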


The disintegration of the mass measure $\| S \|$ with respect to the map $\tbf$ and $\L^1$ therefore has the following structure:
\begin{equation}\label{eq:structure_disint}
	\| S \| = \int_0^1 \left( |\nabla^S\tbf|^{-1}  \| S\vert_t \| + \mu^s_t  \right) \; \dd t  +  \int_0^1 \mu_t \; \dd \lambda^s(t), 
\end{equation}
where for $\lambda^s$-a.e.\ $t\in \R$ the measure $\mu_t$ is concentrated on $\Crit(S)$, while for $\L^1$-a.e.\ $t \in \R$ the measure $\mu^s_t$ is concentrated on $\Crit(S)$ and is singular with respect to $|\nabla^S\tbf|^{-1} \|S\vert_t\|$ and with respect to $\mathscr H^k$.

\begin{proof} The strategy of the proof consists of decomposing $\|S\| = \sigma + \nu$, where $\sigma := \|S\|\restrict C^c$ and $\nu:=\|S\|\restrict C$ (as usual, $C:=\Crit(S)$), and then applying the disintegration theorem separately to $\sigma$ and $\nu$.

First, invoking Lemma~\ref{lemma:disint_non_critical_mass}, we have that the disintegration of $\sigma$ with respect to $\tbf$ and $\lambda$  satisfies 
	\begin{align*}
	    & \sigma_t = |\nabla^S \tbf|^{-1} \|S\vert_t\| & &\text{as measures on $[0,1] \times \R^d$ for $\L^1$-a.e.\ $t \in \R$,} \\ 
	    & \sigma_t  = 0 & &\text{as measures on $[0,1] \times \R^d$ for $\lambda^s$-a.e.\ $t\in \R$.}
	\end{align*}
Denoting by $\{\nu_t\}$ the disintegration of $\nu$ with respect to $\tbf$ and $\lambda$, we obtain 
    \begin{align*}
		\|S\| & =\sigma +\nu \\ 
		& =  \int_0^1 |\nabla^S \tbf|^{-1}\| S\vert_t\| \; \dd t + \int_0^1 \nu_t \dd \lambda(t)\\ 
		& =  \int_0^1 \left(  |\nabla^S \tbf|^{-1}\| S\vert_t\|+ \nu_t \right)\; \dd t + \int_0^1 \nu_t \; \dd \lambda^s(t). 
	\end{align*}
	We therefore conclude, in view of~\eqref{eq:disintegration}, that 
	\begin{align*}
	    & \mu_t = |\nabla^S \tbf|^{-1}\| S\vert_t\|+ \nu_t &  &\text{as measures on $[0,1] \times \R^d$ for $\L^1$-a.e.\ $t \in \R$,} \\
	    & \mu_t = \nu_t & &\text{as measures on $[0,1] \times \R^d$ for $\lambda^s$-a.e.\ $t \in \R$.} 
	\end{align*}
	In particular, since $\{\nu_t\}$ is the disintegration of $\nu$, which is concentrated on $\Crit(S)$, we conclude that the same holds for $\mu_t$ for $\lambda^s$-a.e.\ $t \in \R$; this yields~{\rm(i)}.
	
	Comparing the absolutely continuous parts, we see that 
	\begin{equation*}
		\mu_t =|\nabla^S\tbf|^{-1} \| S\vert_t \| + \mu^s_t
	\end{equation*}
	for $\L^1$-a.e.\ $t \in \R$, where 
	\begin{equation}\label{eq:def_of_sing_sard}
	\mu^s_t  := \nu_t
	\end{equation}
	is concentrated on $\Crit(S)$ and is therefore singular with respect to $|\nabla^S\tbf|^{-1} \| S\vert_t \|$ (for which we recall the set $\Crit(S)$ is negligible).
	
	Resorting to the coarea formula for rectifiable sets (see, e.g.,~\cite[Theorem 2.93]{AFP}) one can show that $\mu_t^s \perp \mathscr H^k$. Indeed, notice that the set $C:=\Crit(S)$ is $(k+1)$-rectifiable, being a subset of (the carrier of) $S$. Then, by the coarea formula, one infers that for $\L^1$-a.e.\ $t$ the set 
    \[
    C_t := C \cap (\{t\} \times \R^d)
    \] 
    is $k$-rectifiable and it holds that 
    \[
    \int_0^1 \mathscr H^{k}(C_t) \; \dd t = \int_C |\nabla^S\tbf| \; \dd \mathscr H^{k+1} = 0,
    \]
    whence $\mathscr H^{k}(C_t) = 0$ for $\L^1$-a.e.\ $t \in \R$. Since, by definition, $\mu_t^s$ is concentrated on $C \cap (\{t\} \times \R^d)=C_t$, we conclude that $\mu_t^s \perp \mathscr H^k$ for $\L^1$-a.e.\ $t \in \R$, thus finishing the proof. 
	\end{proof}

\begin{remark}\label{remark:coarea_di_alberti_mu_t_perp}
From the proof above it follows that $\mu_t^s$ is concentrated on $\Crit(S)\cap (\{t\}\times\R^d)$, which is $\H^k$-null for $\L^1$-a.e.\ $t$. Hence, by the Besicovitch differentiation theorem (see, e.g.,~\cite[Thm.~2.22]{AFP}), $\mu_t^s$ can be identified (for $\L^1$-a.e.\ $t$) with the restriction of $\mu_t$ to the set
\[
\left\{(t,x):\limsup_{\rho\to 0}\frac{\mu_t(B_\rho(x))}{\rho^k} = \infty\right\},
\]
which conveys the idea that $\mu_t^s$ is more concentrated than $\mathscr H^k$. 
\end{remark}

We conclude this section observing that, in general, all terms in the disintegration~\eqref{eq:structure_disint} can be non-zero. The measure $\lambda^s$ takes into account singular-in-time behaviour (in Section~\ref{ss:ac_currents} we will show a characterisation of currents for which $\lambda^s = 0$, namely $\lambda^s$ vanishes if and only if $S \in \Irm_{k+1}^{\AC}([0,1]\times \R^d)$).

\begin{example}
As an easy example where $\lambda^s \ne 0$, one can consider the current $S \in \Irm_{2}(\R\times \R^2)$ defined by $S=S_1+S_2$, where
\[
S_1 = \bbb{0,1} \times C_1, \qquad S_2 = \bbb{1,2} \times C_2 + \delta_{1} \times W,
\]
where $C_1$ and $C_2$ are circles of radius $1$ and $2$ respectively and $W$ is the annulus between them (so that $\partial W = C_2 - C_1$ and hence $\partial S\restrict {(0,1) \times \R^2} = 0$). In this case, one can see that $\tbf_{\#}\|S\| = \L^1\restrict {[0,2]} + \delta_{1}$ and hence $\lambda^s$ is non-zero.
\end{example}

On the other hand, the measures $\mu_t^s$ in~\eqref{eq:structure_disint} account for a completely different type of singularity, measuring a sort of \emph{diffuse concentration} that is ``smeared out in time'', which will be further investigated in the following sections.

\section{Notions of variation}\label{sc:variation}

A central quantity in our theory is the \emph{variation} of a time-indexed path of currents. It turns out that this notion can be defined in several distinct ways. One classic approach is to use a metric on the space of integral or normal currents (usually, the one induced by one of the flavours of flat norm introduced before) and define the variation in a pointwise way. This is the approach taken in the theory of rate-independent systems, cf.~\cite{MielkeRoubicek15book,ScalaVanGoethem19}. On the other hand, the variational approach to the motion of currents developed in~\cite{Rindler21a?} crucially rests on a notion of space-time variation, which is, roughly, the area traversed as the current moves. In preparation for the later developments in this work, it is crucial to compare these notions and to derive conditions for their equality or non-equality. 

As motivated in the introduction, from now onwards we almost exclusively consider evolutions of \emph{boundaryless} currents.

\subsection{Variations and AC integral currents}

Let $(X,\mathsf d)$ be a locally compact metric space. 
For a curve $t \mapsto \gamma(t) \in X$, $t \in [a,b]$ ($a < b$), we define the \textbf{pointwise variation} as 
\begin{equation}\label{eq:def_pV}
	{\mathsf d}\dash\pV(\gamma;[a,b]):=\sup\left\{ \sum_{i=1}^N \mathsf d(\gamma(t_{i+1}),\gamma(t_i)):   a\leq t_1\leq \ldots \leq t_N\leq b \right\} . 
\end{equation}
We further define the \textbf{essential variation} of the curve $\gamma$ as 
\[
	{\mathsf d}\dash\eV(\gamma;[a,b]) := \inf \left\{{\mathsf d}\dash\pV(\tilde{\gamma};[a,b]):  \gamma(t) = \tilde{\gamma}(t)  \, \text { for  $\L^1$-a.e.\ $t \in [a,b]$} \right\}. 
\]
We extend the same definition to curves $\gamma$ which are only defined for $\L^1$-a.e. $t \in [a,b]$. In this case, the supremum in \eqref{eq:def_pV} is taken over families of partitions such that $\gamma$ is defined at $t_i$ for every $i$. 
By~\cite[Remark 2.2]{AmbrosioAnnali}, the infimum in the definition of essential variation is achieved and therefore, if ${\mathsf d}\dash\eV(\gamma:[a,b]) < \infty$, then there exist two \textbf{good representatives}, the right-continuous representative $\gamma_+$ and the left-continuous representative $\gamma_-$ such that 
\[
{\mathsf d}\dash\eV(\gamma_{\pm}; [a,b]) = {\mathsf d}\dash\pV(\tilde{\gamma}; [a,b]).
\]
If ${\mathsf d}\dash\eV(\gamma;[a,b])<\infty$, then ${\mathsf d}\dash\eV(\gamma; \frarg)$ can be extended to a finite measure on the Borel subsets of $[a,b]$.

In this vein, for for $S\in\Irm_{k+1}([0,1] \times \R^d)$ and $U\in\Nrm_{k+1}([0,1] \times \R^d)$ with $\partial S \restrict (0,1) \times \R^d = \partial U \restrict (0,1) \times \R^d = 0$ we set, with a little abuse of notation,
\begin{align*}
\Fhom_\Irm\dash\eV(S;[a,b]) &:= \Fhom_\Irm\dash\eV(t \mapsto S(t);[a,b]), \\
\Fhom\dash\eV(U;[a,b]) &:= \Fhom\dash\eV(t \mapsto U(t);[a,b])
\end{align*}
for every closed interval $[a,b] \subset [0,1]$. Recall that we denote by $S(t):=\pbf_{*}(S|_t)$ the pushforward of the slice $S|_t$ onto $\R^d$.  Likewise we define the slices $U|_t$ and the pushforwards $U(t):=\pbf_{*}(U|_t)$ for $\L^1$-a.e.\ $t$. 

On the other hand, the work~\cite{Rindler21a?} introduced the \textit{(space-time) variation} of an integral space-time current. Given a current $S$ of finite mass, i.e., $S \in \Mrm_{k+1}([0,1] \times \R^d)$, we define the \textbf{(space-time) variation} of $S$ on the interval $[a,b]$ to be
\[
\Var(S; [a,b]):=\int_{[a,b]\times\R^d} \|\pbf (\vec{S})\|\; \dd\|S\|.
\]
Here and in the following, we will often write $\pbf (\vec{S})$ instead of $\Wedge^{k+1} \pbf(\vec{S})$ for ease of notation. We remark that, if $S$ is integral, then $\vec{S}$ is simple and so is $\pbf(\vec{S})$. Therefore, in this case, $ \|\pbf(\vec{S})\| = |\pbf(\vec{S})|$. One can further see that $\Var(S; \frarg)$ can be extended to all Borel sets (by the very same formula) to define a non-negative finite measure on $\R$, which will still be denoted by $\Var(S;\frarg)$.

We will mostly work with the following particular subclasses of integral currents, namely the \textbf{absolutely continuous ($\AC$)} integral space-time currents \[
\Irm^{\AC}_{1+k}([0,1] \times \R^d) := \bigl\{S \in \Irm_{1+k}([0,1] \times \R^d):  \Var(S;\frarg) \ll \L^1, \;
\Var(\partial S;\frarg) \restrict (0,1) \ll \L^1  \bigr\}
\]
and the \textbf{Lipschitz} integral space-time currents
\begin{align*}
\Irm^{\rm Lip}_{1+k}([0,1] \times \R^d) := \biggl\{S \in \Irm^{\AC}_{1+k}([0,1] \times \R^d): &\frac{\dd \Var(S;\frarg)}{\dd \L^1} \in \Lrm^\infty([0,1]), \\
  &\frac{\dd \Var(\partial S;\frarg) \restrict (0,1)}{\dd \L^1} \in \Lrm^\infty((0,1)) \biggr\}.
\end{align*}
This space of Lipschitz integral space-time currents was already introduced in~\cite{Rindler21a?}, but there an additional $\Lrm^\infty$-bound on the masses of the slices was included, which is not necessary here. We remark further that since the present paper mostly focusses on the transport of \emph{boundaryless} currents, that is, assuming $\partial S \restrict (0,1) \times \R^d = 0$, the second condition in the definitions of $\Irm^{\AC}_{1+k}([0,1] \times \R^d)$ and $\Irm^{\rm Lip}_{1+k}([0,1] \times \R^d)$ will always be trivially satisfied.

\subsection{Comparison of variations} 

The following proposition entails that $\Fhom_\Irm\dash\eV(S;\frarg)$ and $\Fhom\dash\eV(U;\frarg)$  are finite measures and are always bounded from above by the space-time variation.

\begin{proposition}\label{prop:TV_leq_Var}  Let $S\in\Irm_{k+1}([0,1] \times \R^d)$ with $\partial S \restrict (0,1) \times \R^d = 0$. Then the following claims hold:
    \begin{itemize}
        \item[\rm (a)] The map $t \mapsto S|_t$ defines a path that has finite essential variation with values in the metric space $(\Irm_{k}([0,1] \times \R^d); \Fhom_\Irm)$.
        \item[\rm (b)] The map $t\mapsto S(t)$ defines a path that has finite essential variation with values in the metric space $(\Irm_{k}(\R^d); \Fhom_\Irm)$.
        \item[\rm (c)] $\Fhom_\Irm\dash\eV(S;\frarg)\leq\Var(S;\frarg)$ as measures on $\R$.
    \end{itemize}
\end{proposition}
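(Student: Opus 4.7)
The plan is to bound the $\Fhom_\Irm$-distance between two slices by the mass of the ``slab'' of $S$ lying between them and then telescope over partitions. For $\L^1$-a.e.\ pair $0<s<t<1$ at which both slices $S|_s,S|_t$ are well-defined integral currents (a set of full $\L^1$-measure by standard slicing theory), I will set
\[
R_{s,t}:=S\restrict\{s\le\tbf<t\}\in\Irm_{k+1}([0,1]\times\R^d).
\]
Applying the cylinder formula~\eqref{eq:definition_slicing} with $f=\tbf$ at the two levels and subtracting, the contribution $(\partial S)\restrict\{s\le\tbf<t\}$ vanishes, since $\{s\le\tbf<t\}\subset(0,1)\times\R^d$ and $\partial S$ vanishes there by hypothesis, leaving $S|_t-S|_s=\partial R_{s,t}$. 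Both slices are themselves boundaryless (slicing commutes with $\partial$ up to sign), so their $\Fhom_\Irm$-distance is well-defined and is witnessed by the integral filling $R_{s,t}$.

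For~{\rm(a)}, the trivial estimate
\[
\Fhom_\Irm(S|_t-S|_s)\le\Mbf(R_{s,t})=\|S\|(\{s\le\tbf<t\}),
\]
summed over any partition $a\le t_0<\cdots<t_N\le b$ of slicing-good times, telescopes by disjointness of the strips to $\|S\|([a,b]\times\R^d)\le\Mbf(S)<\infty$; taking the supremum gives finite pointwise variation for the good representative of $t\mapsto S|_t$, and hence finite essential variation in $(\Irm_k([0,1]\times\R^d),\Fhom_\Irm)$. For~{\rm(b)} and~{\rm(c)}, I will push $R_{s,t}$ forward by the spatial projection $\pbf$: since $\pbf_*R_{s,t}\in\Irm_{k+1}(\R^d)$ has boundary $\partial\pbf_*R_{s,t}=\pbf_*(\partial R_{s,t})=S(t)-S(s)$ of finite mass, it serves as an integral filling of $S(t)-S(s)$ in $\R^d$, and the pushforward-mass bound~\eqref{eq:mass_of_pushforward} in its simple-current version (noting that $|\Wedge^{k+1}d\pbf\,\vec{S}|=|\pbf(\vec{S})|$) yields
\[
\Fhom_\Irm(S(t)-S(s))\le\Mbf(\pbf_*R_{s,t})\le\int_{\{s\le\tbf<t\}}|\pbf(\vec{S})|\,\dd\|S\|=\Var(S;[s,t)).
\]
Telescoping over a slicing-good partition of $[a,b]$ and using additivity of the measure $\Var(S;\frarg)$ on disjoint strips gives $\sum_i\Fhom_\Irm(S(t_{i+1})-S(t_i))\le\Var(S;[a,b])$; passing to the supremum and then to the infimum over representatives yields $\Fhom_\Irm\dash\eV(S;[a,b])\le\Var(S;[a,b])$ for every closed $[a,b]\subset[0,1]$. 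Both sides being finite Borel measures, the inequality extends to all Borel sets, proving~{\rm(c)} and a fortiori the finite essential variation asserted in~{\rm(b)}.

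The only genuinely delicate point is to secure the identity $S|_t-S|_s=\partial R_{s,t}$ and the integrality of $\pbf_*R_{s,t}$ \emph{simultaneously} for all pairs $(s,t)$ in a single full-measure subset of $[0,1]$, so that the telescoping sums are literally valid on the same cofinal family of partitions; this is handled by fixing the common slicing-good set at the outset, after which the rest is routine bookkeeping with measures and infima.
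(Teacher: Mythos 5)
Your proposal is correct and follows essentially the same route as the paper: use the cylinder formula together with $\partial S \restrict (0,1)\times\R^d = 0$ to exhibit the slab $S\restrict\{s\le\tbf<t\}$ as an integral filling of $S|_t-S|_s$ (giving (a) by telescoping against $\Mbf(S)$), then push this filling forward by $\pbf$ and invoke the simple-current version of~\eqref{eq:mass_of_pushforward} to bound $\Fhom_\Irm(S(t)-S(s))$ by $\Var(S;\frarg)$ of the strip, yielding (b) and (c). The only cosmetic differences are your half-open strips versus the paper's open ones and your more explicit handling of the common full-measure set of slicing-good times, neither of which changes the argument.
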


\begin{proof}
We begin by showing that $t \mapsto S\vert_t$ has finite essential variation on every interval $[a,b]$. Indeed, fix an arbitrary (pointwise everywhere defined) representative and any finite subdivision times $a\leq t_1\leq\ldots\leq t_N\leq b$ such that the slices $S|_{t_i}$ are defined. By definition
\[
S\vert_{t_{i+1}}-S|_{t_i}=\partial (S\restrict (t_i, t_{i+1}) \times \R^d).
\]
This shows that $S\restrict {\{t_i< t\leq t_{i+1}\}}$ is a competitor for $\Fhom_{\Irm}(S|_{t_{i+1}}, S|_{t_i})$, and so
\[
\Fhom_{\Irm}(S|_{t_{i+1}},S|_{t_i})\leq \Mbf(S\restrict (t_i, t_{i+1}) \times \R^d).
\]
Summing over $i$ we get 
\begin{equation*}
    \sum_{i=0}^{N-1} \Fhom_{\Irm}(S\vert_{t_{i+1}}, S\vert_{t_i}) \le \sum_{i=0}^{N-1} \Mbf(S\restrict (t_i, t_{i+1}) \times \R^d)
    \leq \Mbf(S)< \infty 
\end{equation*}
and, since the subdivision $\{t_i\}_{i =0}^N$ is arbitrary (with the constraint of all slices being defined), we conclude that indeed $t \mapsto S\vert_t$ has finite essential variation. 

For the map $t \mapsto S(t)$ the proof is similar and uses the commutativity of $\pbf_*$ with $\partial$:  
we have (with the same notations as before)
\[
S(t_{i+1})-S(t_i)=\pbf_*\partial(S\restrict (t_i, t_{i+1}) \times \R^d)=\partial \pbf_*(S\restrict (t_i, t_{i+1}) \times \R^d), 
\]
showing that $\pbf_*(S\restrict (t_i, t_{i+1}) \times \R^d)$ is a competitor for $\Fhom_\Irm(S(t_{i+1})-S(t_i))$, and so
\begin{align*} 
\sum_{i=0}^{N-1} \Fhom_{\Irm}(S(t_{i+1}), S(t_i)) & \le \sum_{i=0}^{N-1} \Mbf(\pbf_*(S\restrict (t_i, t_{i+1}) \times \R^d)) \\ 
& \le \sum_{i=0}^{N-1} \int_{(t_i, t_{i+1}) \times \R^d} \| \Wedge^{k+1}\pbf (\vec{S})\|\; \dd\|S\| \\ 
& =\sum_{i=0}^{N-1}  \Var(S;(t_i, t_{i+1}) \times \R^d)\\ 
& \leq \Var(S,(a,b)) \\
&< \infty, 
\end{align*}
where we have used~\eqref{eq:mass_of_pushforward}. The same argument can also be used to show the inequality 
\[
\Fhom_\Irm\dash\eV(S; I) \le \Var(S;I)
\]
for an interval $I \subset [0,1]$ and therefore the proof is complete. 
\end{proof}

For the sake of completeness, we state also the following version of Proposition~\ref{prop:TV_leq_Var} for normal currents:

\begin{proposition} Let $U\in\Nrm_{k+1}([0,1] \times \R^d)$ with $\partial U \restrict (0,1) \times \R^d = 0$. Then the following claims hold: 
\begin{itemize}
	\item[\rm (a)] The map $t \mapsto U|_t$ defines a path that has finite essential variation with values in the metric space $(\Nrm_{k}([0,1] \times \R^d); \Fhom)$.
	\item[\rm (b)] The map $t\mapsto U(t)$ defines a path that has finite essential variation with values in the metric space $(\Nrm_{k}(\R^d); \Fhom)$.
	\item[\rm (c)] $\Fhom\dash\eV(U;\frarg)\leq\Var(U;\frarg)$ as measures on $\R$.
\end{itemize}
\end{proposition}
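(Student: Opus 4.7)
The plan is to mirror the proof of Proposition~\ref{prop:TV_leq_Var} line by line, replacing integral currents with normal currents throughout and the integral flat norm $\Fhom_\Irm$ with $\Fhom$. The only point requiring care is that the competitors used in the definition of $\Fhom$ remain in $\Nrm_{k+1}$ (which is easier than staying integral, since there is no rectifiability or integrality to preserve).

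For (a), I would fix any representative and pick subdivision points $a \le t_1 \le \cdots \le t_N \le b$ in $(0,1)$ at which the slices $U|_{t_i}$ are defined. By the cylinder formula~\eqref{eq:definition_slicing} together with the hypothesis $\partial U \restrict (0,1)\times\R^d = 0$,
\[
U|_{t_{i+1}} - U|_{t_i} = \partial\bigl(U \restrict \{t_i < t \le t_{i+1}\}\bigr),
\]
since the term $(\partial U)\restrict\{t_i<t\le t_{i+1}\}$ vanishes. The restriction $U\restrict\{t_i<t\le t_{i+1}\}$ has mass $\le \Mbf(U)$, and the displayed identity shows that its boundary has finite mass (equal to $\Mbf(U|_{t_{i+1}}) + \Mbf(U|_{t_i})$ in fact); hence it is a normal $(k+1)$-current and serves as a valid competitor in~\eqref{eq:def_hom_flat_norm}. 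Thus
\[
\Fhom\bigl(U|_{t_{i+1}} - U|_{t_i}\bigr) \le \Mbf\bigl(U \restrict \{t_i<t\le t_{i+1}\}\bigr),
\]
and summing over $i$ gives $\sum_i \Fhom(U|_{t_{i+1}} - U|_{t_i}) \le \Mbf(U) < \infty$, establishing finiteness of the essential variation.

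For (b) and (c) simultaneously, I would apply $\pbf_*$ and use the commutativity $\pbf_*\partial = \partial\pbf_*$ to obtain
\[
U(t_{i+1}) - U(t_i) = \partial\, \pbf_*\bigl(U \restrict \{t_i<t\le t_{i+1}\}\bigr).
\]
Since $\pbf$ is linear and smooth, hence Lipschitz, the pushforward of a normal current is normal (cf.~\cite[4.1.14]{Federer69book}), so it is a valid competitor for $\Fhom(U(t_{i+1}) - U(t_i))$. Applying the mass bound~\eqref{eq:mass_of_pushforward}, which in this case reads
\[
\Mbf\bigl(\pbf_*(U\restrict I \times \R^d)\bigr) \le \int_{I \times \R^d} \bigl\|\pbf(\vec{U})\bigr\| \; \dd\|U\| = \Var(U; I \times \R^d),
\]
and summing over the subdivision yields
\[
\sum_{i=0}^{N-1} \Fhom\bigl(U(t_{i+1}) - U(t_i)\bigr) \le \Var(U; (a,b)) < \infty,
\]
the finiteness on the right being a consequence of $\|\pbf(\vec U)\| \le 1$ and $\Mbf(U) < \infty$. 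Taking the supremum over subdivisions of an arbitrary interval $I \subset [0,1]$ gives the interval-wise bound $\Fhom\dash\eV(U;I) \le \Var(U;I)$, and since both sides define finite Borel measures on $[0,1]$, the measure inequality in (c) follows by a standard monotone-class argument.

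The only mildly subtle point, and the one I would highlight as the key obstacle, is ensuring that $U \restrict \{t_i < t \le t_{i+1}\}$ is genuinely normal; this is where the boundarylessness hypothesis $\partial U \restrict (0,1)\times \R^d = 0$ enters, ensuring that the cylinder formula produces a boundary consisting solely of the two slices (which have finite mass for a.e.\ $t$ by~\eqref{eq:slice_general} applied to $f = \tbf$). Everything else is a direct transcription of the integral case.
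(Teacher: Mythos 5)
Your proposal is correct and is essentially the paper's own argument: the paper simply states that the proof is obtained from Proposition~\ref{prop:TV_leq_Var} by the same cylinder-formula/pushforward computation, which is exactly what you carry out (with the right attention to the competitors being normal and to choosing subdivision points where the slices are defined). The only tiny imprecision is your parenthetical claim that $\Mbf\bigl(\partial(U\restrict\{t_i<t\le t_{i+1}\})\bigr)$ equals $\Mbf(U|_{t_{i+1}})+\Mbf(U|_{t_i})$ — in general it is only bounded by this sum — but this remark plays no role in the argument.
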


\begin{proof}
The proof is similar to the one for Proposition~\ref{prop:TV_leq_Var} and is therefore omitted.
\end{proof}

In general, the opposite inequality in Proposition~\ref{prop:TV_leq_Var}, namely $\Fhom_\Irm\dash\eV(S;\frarg) \le \Var(S;\frarg)$, may not hold, due to the possible presence of jumps in the path $t\mapsto S|_t$. Indeed, whenever a jump occurs at a certain time $t_0$, $\Var(S;\frarg)$ depends on the particular current that connects $S|_{t_0^-}$ and $S|_{t_0^+}$, while $\Fhom_{\Irm}\dash\eV$ always measures the optimal connection (given by the solution to Plateau's problem).

The next theorem entails that jumps are in fact the only obstructions to the equality between $\Var$ and $\Fhom_\Irm\dash\eV$. Its proof is postponed to Section~\ref{subsec:equivalence}, as it builds upon tools that will be introduced in the next section.

\begin{theorem}[Equality of variations]\label{thm:Var_equals_TV}
Let $S\in\Irm_{k+1}([0,1] \times \R^d)$ with $\partial S \restrict (0,1) \times \R^d = 0$ and such that $\Var(S;\frarg)$ is non-atomic. Then,
\[
\Var(S;\frarg)=\Fhom\dash\eV(S;\frarg)=\Fhom_\Irm \dash\eV(S;\frarg).
\]
\end{theorem}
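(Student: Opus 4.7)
The easy bounds $\Fhom\dash\eV(S;\cdot) \le \Fhom_\Irm\dash\eV(S;\cdot) \le \Var(S;\cdot)$ are in fact already at hand: the second is Proposition~\ref{prop:TV_leq_Var}(c), while the first follows because $\Fhom(T)\le\Fhom_\Irm(T)$ pointwise (every integral filling in the definition of $\Fhom_\Irm$ is also a normal filling in the definition of $\Fhom$), and hence the same inequality propagates to the essential variations, since the latter are just suprema of partition sums built from the former. The whole substance of the theorem is therefore the reverse bound
\[
\Var(S;\cdot) \le \Fhom\dash\eV(S;\cdot).
\]

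The first step will be to exploit the non-atomicity of $\Var(S;\cdot)$. The estimate $\Fhom_\Irm(S(b)-S(a))\le \Var(S;(a,b))$ used in the proof of Proposition~\ref{prop:TV_leq_Var}(b) then yields $\Fhom_\Irm$-continuity (and a fortiori $\Fhom$-continuity) of $t\mapsto S(t)$. Consequently every representative of the path agrees, and the essential variation admits the concrete expression
\[
\Fhom\dash\eV(S;[a,b])\;=\;\lim_{\delta\to 0}\;\sup_{\pi:\,\mathrm{mesh}(\pi)\le\delta}\;\sum_{i=0}^{N-1}\Fhom\bigl(S(t_{i+1})-S(t_i)\bigr).
\]

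The second step will be to invoke the Rectifiability Theorem~\ref{thm:rect} (stated and proved in the next section), applied precisely to the $\Fhom$-continuous path of boundaryless integral $k$-currents $t\mapsto S(t)$ with finite essential variation. This should produce a space-time integral current $\tilde S\in\Irm_{k+1}([0,1]\times\R^d)$ with $\pbf_*\tilde S|_t = S(t)$ for $\L^1$-a.e.\ $t$, and with $\Var(\tilde S;\cdot)\le \Fhom\dash\eV(S;\cdot)$ as measures. Since $\pbf$ is injective on each fiber $\{t\}\times\R^d$, this equality of spatial push-forwards immediately upgrades to $\tilde S|_t = S|_t$ for $\L^1$-a.e.\ $t$.

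The third and final step will be a uniqueness/rigidity argument identifying the two space-time variations. On the non-critical parts this is automatic from Lemmas~\ref{lemma:disint_non_critical_mass}--\ref{lemma:disint_non_critical_current}, which express $S\restrict\Crit(S)^c$ (and likewise for $\tilde S$) purely in terms of the slices $\{S|_t\}$. For the critical parts we will use the Disintegration Structure Theorem~\ref{thm:struct}: the non-atomicity of $\Var(S;\cdot)$ precludes any $\lambda^s$-contribution to $\|S\|$, and the diffuse ``smeared'' pieces $\mu_t^s$ of $S$ and $\tilde S$ must coincide in order to yield the already established a.e.\ identity of slices. Combining these reductions forces $\Var(S;\cdot)\le\Var(\tilde S;\cdot)$, which closes the sandwich.

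The hard part will be precisely this last rigidity claim: controlling the diffuse critical measure $\mu_t^s$ of $S$. The Flat Mountain example (Section~\ref{sc:counterexample}), which carries a highly non-trivial critical set while still being Lipschitz in time, shows that this piece is genuinely present in general. Thus the uniqueness argument cannot proceed by simply discarding critical mass; it must use the non-atomicity of $\Var(S;\cdot)$ in an essential way to ensure that every bit of mass of $S$ is effectively ``detected'' by the motion of its slices, and hence already accounted for by $\Var(\tilde S;\cdot)$.
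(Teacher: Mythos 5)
Your proposal goes wrong at the second step, in a way that cannot be patched while staying with Theorem~\ref{thm:rect}. You claim that invoking the Rectifiability Theorem on the path $t\mapsto S(t)$ produces an \emph{integral} space-time current $\tilde S$ with
\[
\Var(\tilde S;\cdot)\le \Fhom\dash\eV(S;\cdot),
\]
but Theorem~\ref{thm:rect}(b) in fact gives the \emph{equality}
\[
\Var(\tilde S;\cdot)=\Fhom_\Irm\dash\eV(S;\cdot)\;\ge\;\Fhom\dash\eV(S;\cdot),
\]
i.e.\ the inequality in the opposite direction from the one you need. With some additional work (via Corollary~\ref{cor:S=S'}, once one checks that $\Var(\tilde S;\cdot)$ is itself non-atomic) your scheme can be pushed through to establish $\Var(S;\cdot)=\Fhom_\Irm\dash\eV(S;\cdot)$, but it is structurally incapable of reaching the other equality $\Var(S;\cdot)=\Fhom\dash\eV(S;\cdot)$, because the integral rectifiability result never "sees" the weaker normal flat norm at all.

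The key insight the paper uses, and that your argument lacks, is that one must run the gluing construction in the \emph{normal} category. The paper applies Proposition~\ref{prop:gluing_normal} (the normal variant of the rectifiability theorem) to $t\mapsto S(t)$, obtaining a normal current $U\in\Nrm_{k+1}$ with $\Var(U;\cdot)=\Fhom\dash\eV(S;\cdot)$. Then the Zero-slice Lemma~\ref{lemma:zero_slices} gives $S-U=W=\int_0^1 W_t\,\dd\kappa$ with $\partial W_t=0$ and $\vec W_t\in\Wedge_{k+1}(\{0\}\times\R^d)$, and Lemma~\ref{lemma:W_perp_S} gives $\|S\|\perp\|W\|$. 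One then reads off
\[
\Var(S)+\Var(W)=\Var(U)=\Fhom\dash\eV(S)\le\Fhom_\Irm\dash\eV(S)\le\Var(S),
\]
which forces $\Var(W)=0$; since $\vec W$ is purely spatial, $\Var(W)=\|W\|(\R\times\R^d)$, so $W=0$ and all inequalities are equalities. Your proposed third step, by contrast, is not a proof: the attempt to control the ``diffuse critical parts'' $\mu_t^s$ of $S$ and $\tilde S$ by appeal to the Disintegration Structure Theorem does not actually exploit non-atomicity in any identifiable way, and it does not address the mismatch between $\Fhom$ and $\Fhom_\Irm$ that is the crux of the statement. The clean mechanism is the orthogonality $\|S\|\perp\|W\|$ together with the observation that $W$ has no time component in its orientation; no analysis of $\mu_t^s$ is needed.
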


The equality of the various concepts of variation in the class of space-time currents with non-atomic variation occupies a central place in this work and can be seen as a generalisation to any codimension of the following formula, valid for a function $u \colon [0,1] \to \R$ that is continuous and of bounded variation: 
\[
\pV(u,\R)
= \int_{\text{graph}(u)} |\pbf(\tau)|\; \dd \H^{1}
= \Var(S_u,\R), 
\]
where $S_u := \tau \H^{1} \restrict \text{graph}(u)$ and $\tau$ is the forward-pointing unit tangent to $\text{graph}(u)$, see \cite[Example 3.1]{Rindler21a?}.

\subsection{Zero-slice lemma}

We now establish the following \emph{zero-slice lemma}, which describes normal or integral space-time currents with trivial slices; it will be used several times throughout the paper.  

\begin{lemma} \label{lemma:zero_slices}
	Let $W \in \Nrm_{k+1}([0,1] \times \R^d)$ be a normal current with $\partial W \restrict (0,1) \times \R^d = 0$. 
	Then, the following are equivalent:
	\begin{enumerate}
	    \item[{\rm(i)}] $\displaystyle W\vert_t = 0$ for $\L^1$-a.e.\ $t \in [0,1]$.
    	\item[{\rm(ii)}] With $\kappa := \tbf_\# \|W\|$, for $\kappa$-a.e.\ $t$ there exists $W_t \in \Nrm_{k+1}([0,1] \times \R^d)$ with $W_t$ supported in $\{t\}\times\R^d$ such that
    	\begin{equation}\label{eq:W_zero_slices}
    	W= \int_0^1 W_t \; \dd \kappa(t)   \qquad\text{and}\qquad
    	\partial W_t=0 \quad\text{for $\kappa$-a.e.\ $t$.}
        \end{equation}
	\end{enumerate}
In addition, in~{\rm(ii)} it holds that $\vec W_t(x)\in\Wedge_k(\{0\}\times \R^d)$ for $\kappa$-a.e.\ $t$ and $\|W_t\|$-a.e.\ $x$. Moreover, if $W$ is integral and {\rm(ii)} holds, then $\kappa$ is purely atomic and the $W_t$'s are integral, so that
\[
W=\sum_{j} W_{t_j}
\]
for an at most countable collection of points $t_j \in [0,1]$.
\end{lemma}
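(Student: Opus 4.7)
The plan is to prove the equivalence by disintegrating $W$ (viewed as a $(k+1)$-vector-valued Radon measure) with respect to $\tbf$ and $\kappa=\tbf_\#\|W\|$, and then to translate the triviality of the slices into an algebraic condition on the orienting vector $\vec W$. For (i)$\Rightarrow$(ii), I would first use the slicing identity~\eqref{eq:slice_general} with $\psi\in\Crm^\infty_c(\R)$: since $W|_t=0$ for $\L^1$-a.e.\ $t$, we obtain $W\restrict(\psi\circ\tbf)\,d\tbf=0$ as a current for every such $\psi$. Pairing this against arbitrary test $k$-forms and varying $\psi$ via the fundamental lemma of calculus of variations yields
\[
\langle \vec W(t,x),\,dt\wedge\beta(t,x)\rangle=0 \qquad\text{for $\|W\|$-a.e.\ $(t,x)$, for every spatial $k$-form $\beta$,}
\]
i.e., $\vec W\restrict dt=0$ $\|W\|$-a.e. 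Decomposing $\Wedge_{k+1}(\R\times\R^d)=\Wedge_{k+1}(\{0\}\times\R^d)\oplus\bigl(\partial_t\wedge\Wedge_k(\{0\}\times\R^d)\bigr)$, this contraction identity forces $\vec W(t,x)\in\Wedge_{k+1}(\{0\}\times\R^d)$ $\|W\|$-a.e., so $\vec W$ is purely spatial. The standard disintegration theorem then produces a Borel family $\{W_t\}$ of $(k+1)$-vector measures with $W_t$ supported in $\{t\}\times\R^d$ and $W=\int_0^1 W_t\,d\kappa(t)$; each $W_t$ inherits the spatial orientation of $\vec W$.

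To obtain $\partial W_t=0$ for $\kappa$-a.e.\ $t$, I would test the boundary hypothesis $\partial W\restrict(0,1)\times\R^d=0$ against product forms $\omega(t,x)=\alpha(t)\beta(x)$ with $\alpha\in\Crm^\infty_c((0,1))$ and $\beta\in\Dscr^k(\R^d)$ pulled back via $\pbf$. Writing $d\omega=\alpha'(t)\,dt\wedge\beta+\alpha(t)\,d_x\beta$, the first summand annihilates against the purely spatial $\vec W$, leaving
\[
0=\langle\partial W,\omega\rangle=\int_0^1 \alpha(t)\,\langle\partial W_t,\beta\rangle\,d\kappa(t).
\]
A countable dense choice of $\beta$'s together with Lebesgue differentiation then gives $\partial W_t=0$ as currents on $[0,1]\times\R^d$ for $\kappa$-a.e.\ $t$. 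The converse direction (ii)$\Rightarrow$(i) follows from the same contraction computation run backwards: if $\vec W_t$ is spatial then $W\restrict \psi(\tbf)\,d\tbf$ vanishes on every test $k$-form, whence Lemma~\ref{lemma:equivalent_being_zero} combined with~\eqref{eq:slice_general} forces $W|_t=0$ for $\L^1$-a.e.\ $t$.

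For the integral case, the spatial orientation $\vec W(t,x)\in\Wedge_{k+1}(\{0\}\times\R^d)$ amounts to $\nabla^W\tbf=0$ $\|W\|$-a.e. I would cover the $(k+1)$-rectifiable carrier of $W$ by countably many Lipschitz images $f_j(E_j)$ with $E_j\subset\R^{k+1}$; by the chain rule, each $\tbf\circ f_j$ has vanishing gradient $\L^{k+1}$-a.e.\ on $E_j$, and after a Lipschitz extension to $\R^{k+1}$ the classical fact that an a.e.-vanishing gradient forces constancy gives $f_j(E_j)\subset\{t_j\}\times\R^d$ for some $t_j$. Hence the carrier of $W$ lies in a countable union of slices, so $\kappa$ is purely atomic, and each $W_{t_j}:=W\restrict\bigl(\{\tbf=t_j\}\times\R^d\bigr)$ is integer rectifiable as a Borel restriction of an integral current, has vanishing boundary by the argument above, and is therefore integral; summing gives $W=\sum_j W_{t_j}$.

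The main obstacle I anticipate is the careful bookkeeping in the disintegration step: although $\partial$ commutes formally with the integral $\int W_t\,d\kappa(t)$, recovering $\partial W_t=0$ as an identity between currents on $[0,1]\times\R^d$ (rather than merely on the spatial slices) requires the product-form test together with the spatial algebraic structure of $\vec W$ to annihilate the $dt$-components of $d\omega$. The atomicity argument in the integral case is more delicate than the disintegration itself and relies essentially on the rigidity of Lipschitz maps with a.e.-vanishing gradient, a property that would fail for mere $BV$-regularity of the parametrizing maps.
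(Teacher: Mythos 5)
Your equivalence $(i)\Leftrightarrow(ii)$ is proved by a genuinely different route than the paper's. Where the paper disintegrates $W$ first and then, via Remark~\ref{rem:disint_explicit}, realises each $W_t$ as the weak* limit of rescaled restrictions $W\restrict(t-h_j,t+h_j)\times\R^d$ with $h_j$ chosen so that the cylinder formula forces $\partial W_t=0$ (and normality plus support in $\{t\}\times\R^d$ then give the spatial orientation), you instead extract the algebraic fact $\vec W\restrict d\tbf=0$ $\|W\|$-a.e.\ directly from the slicing identity~\eqref{eq:slice_general}, disintegrate, and finish by testing $\partial W$ against product forms. This is a clean alternative and isolates the role of the spatial orientation nicely. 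One small caveat: in $(ii)\Rightarrow(i)$ you say ``if $\vec W_t$ is spatial'', but $(ii)$ gives only that $W_t$ is normal, boundaryless and supported in $\{t\}\times\R^d$; the spatial orientation has to be derived first (for instance from the Leibniz identity $\partial(T\restrict f)=(\partial T)\restrict f - T\restrict df$ with $f=\tbf-t$, which forces $W_t\restrict d\tbf=0$ since $W_t$ and $\partial W_t$ are both supported in $\{f=0\}$).

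The integral case, however, contains a genuine gap. Covering the carrier by Lipschitz images $f_j(E_j)$ with $E_j\subset\R^{k+1}$ and observing that $\nabla(\tbf\circ f_j)=0$ $\L^{k+1}$-a.e.\ on $E_j$ does \emph{not} give $f_j(E_j)\subset\{t_j\}\times\R^d$. The ``classical fact that an a.e.-vanishing gradient forces constancy'' needs the gradient to vanish a.e.\ on an open set, not merely on the measurable subset $E_j$: a Lipschitz $g\colon[0,1]\to\R$ with $g'=0$ exactly on a fat Cantor set $C$ of positive measure and $g'>0$ off $C$ has $\nabla g=0$ a.e.\ on $C$, yet $g$ is strictly increasing and $g(C)$ is an uncountable set of positive measure. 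Your argument thus only yields $\L^1(\tbf(f_j(E_j)))=0$, a Sard-type estimate equivalent to $\kappa\perp\L^1$, which is strictly weaker than atomicity. The correct argument, as in the paper, uses the $\H^{k+1}$-finiteness of the carrier $R$ together with the absolute continuity $\|W_t\|\ll\H^{k+1}$ (from normality of $W_t$): for $\kappa$-a.e.\ $t$ one has $\|W_t\|(R)=1$, hence $\H^{k+1}\bigl(R\cap(\{t\}\times\R^d)\bigr)>0$, and since $\H^{k+1}(R\cap([0,1]\times\R^d))<\infty$, this can happen for only countably many $t$.
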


\begin{proof}
	${\rm(ii)}\implies {\rm(i)}$ We use the cylinder formula to infer that for $\L^1$-a.e.\ $t\in [0,1]$
	\[
	\langle W|_t,\omega\rangle =\langle \partial(W\restrict\{\tbf<t\}),\omega\rangle =\langle W\restrict\{\tbf<t\},d\omega\rangle=\int_{(0,t)}\langle W_t,d\omega\rangle \; \dd \kappa(t)=0
	\]
	because $\partial W_t=0$ for $\kappa$-a.e.\ $t$.
	
	${\rm(i)}\implies {\rm(ii)}$ We consider the disintegration of $W$ (as a multivector-valued measure) with respect to $\kappa :=\tbf_\# \|W\|$, i.e.,
	\[
	W=\int_0^1 W_t \; \dd \kappa(t),
	\]
	where $W_t$ is a $(k+1)$-multivector-valued measure with total mass $1$ supported on $\{t\}\times \R^d$. 
	By Remark~\ref{rem:disint_explicit}, we can write for $\kappa$-a.e.\ $t \in (0,1)$,
	\[
	W_t = \wslim_{h\to 0} \frac{W\restrict {(t-h,t+h)\times\R^d}}{\kappa((t-h,t+h))}.
	\]
	By the cylinder formula~\eqref{eq:definition_slicing} and the assumption on the slices, we can choose a sequence $h_j\to 0$ so that $\partial [W\restrict {(t-h_j,t+h_j)\times\R^d}]=W|_{t+h_j}-W|_{t-h_j}=0$, and as a consequence we must also have $\partial W_t=0$. In particular, $W_t$ is a normal $(1+k)$-current.
	Since $W_t$ is normal for $\kappa$-a.e.\ $t$ and it is supported in $\{t\}\times\R^d$, its orienting multi-vector must belong to $\Wedge_k(\{0\}\times \R^d)$.
	
	Finally, if $W$ is integral then it is supported on a $(k+1)$-rectifiable set $R$, and thus there can be only countably many $t$'s for which $\|W_t\|(R)>0$, therefore $\kappa$ must be atomic. By the same limiting argument as for normal currents, and the closure theorem for integral currents, we also deduce that the $W_t$'s are integral.
\end{proof}

The zero-slice lemma is often combined with the following simple reasoning.

\begin{lemma}\label{lemma:W_perp_S}
Let $S\in \Irm_{k+1}([0,1] \times \R^d)$ with $\partial S \restrict (0,1) \times \R^d = 0$ such that $\Var(S;\frarg)$ is non-atomic. Then, $\| S\| \perp \|W\|$ for any $W$ of the form~\eqref{eq:W_zero_slices}. 
\end{lemma}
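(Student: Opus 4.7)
The plan is to show that non-atomicity of $\Var(S;\frarg)$ forces $\|S\|$ to vanish on every horizontal time-slice $\{t_0\}\times\R^d$, which combined with the slice-by-slice concentration of $\|W\|$ provided by Lemma~\ref{lemma:zero_slices} immediately yields mutual singularity.

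The main step is the vanishing $\|S\|(\{t_0\}\times\R^d)=0$ for every $t_0\in[0,1]$. Applying the coarea formula~\eqref{eq:coarea_integral} with $g=\1_{\{t_0\}\times\R^d}$ and $f=\tbf$, the right-hand side equals $\int_\R \|S|_t\|(\{t_0\}\times\R^d)\,\dd t$, which vanishes: for $t\neq t_0$ the measure $\|S|_t\|$ is supported on the disjoint slice $\{t\}\times\R^d$, and the remaining integrand at $t=t_0$ contributes nothing since $\{t_0\}$ is Lebesgue-null. Hence $|\nabla^S\tbf|=0$ for $\|S\|$-a.e.\ point of $\{t_0\}\times\R^d$; but $\nabla^S\tbf=0$ exactly characterises the tangent plane of $S$ being orthogonal to $\partial_t$, i.e., horizontal, and on such horizontal planes $\pbf$ acts as an isometry on $\Wedge_{k+1}$, so $|\pbf(\vec{S})|=1$ at those points. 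Therefore
\[
\Var(S;\{t_0\})=\int_{\{t_0\}\times\R^d}|\pbf(\vec{S})|\,\dd\|S\|=\|S\|(\{t_0\}\times\R^d),
\]
and the non-atomicity hypothesis forces the right-hand side to be zero.

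To conclude, write $\|S\|=|m|\,\H^{k+1}\restrict R$ for the $(k+1)$-rectifiable carrier $R$ of $S$; since $|m|\geq 1$ on $R$, the previous step gives $\H^{k+1}(R\cap(\{t\}\times\R^d))=0$ for every $t\in[0,1]$. By Lemma~\ref{lemma:zero_slices}, $\|W\|$ disintegrates as $\|W\|=\int \|W_t\|\,\dd\kappa(t)$ with $\kappa=\tbf_{\#}\|W\|$ and each $W_t$ a normal $(k+1)$-current supported on $\{t\}\times\R^d$. Federer's bound~\cite[4.1.20]{Federer69book} yields $\|W_t\|\ll\H^{k+1}$, so $\|W_t\|(R)=\|W_t\|(R\cap(\{t\}\times\R^d))=0$ for every $t$, and integrating in $\kappa$ produces $\|W\|(R)=0$. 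Since $\|S\|$ is concentrated on $R$ and $\|W\|$ is concentrated on its complement, we obtain $\|S\|\perp\|W\|$.

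The main obstacle is the bridge in the central step: converting the purely analytic coarea identity into the geometric lower bound $|\pbf(\vec{S})|=1$, which relies on identifying $\{\nabla^S\tbf=0\}$ with the horizontal-tangent locus. Everything else amounts to routine disintegration and dimension counting.
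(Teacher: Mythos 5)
Your proof is correct and follows essentially the same route as the paper: both arguments reduce to showing $\|W\|(R)=0$ for the $(k+1)$-rectifiable carrier $R$ of $\|S\|$, using that each $W_t$ is a normal $(k+1)$-current supported on $\{t\}\times\R^d$ with $\|W_t\|\ll\H^{k+1}$, that $R\cap(\{t\}\times\R^d)$ is $\H^{k+1}$-null, and then integrating in $\kappa$. The only difference is that you justify, via the coarea formula and the horizontality of the tangent planes on a time slice, the intermediate fact that non-atomicity of $\Var(S;\frarg)$ forces $\|S\|(\{t_0\}\times\R^d)=0$, a step the paper states without proof; this is a worthwhile (and correct) filling-in of a detail rather than a different approach.
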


\begin{proof}
Let 
\[
R:=\left\{x:\limsup_{\rho\to 0}\frac{\|S\|(B(x,\rho))}{\rho^{k+1}}>0\right\},
\]
which is a $(k+1)$-rectifiable carrier of $\|S\|$. Since $\Var(S;\frarg)$ is non-atomic we know that $\|S\|(\{t\}\times\R^d)=0$ for every $t$. Moreover, the $W_t$'s are normal $(k+1)$-currents, hence $\|W_t\|\ll \H^{k+1}$ (see~\cite[4.1.20]{Federer69book}), and in particular $\|W_t\|\restrict R\ll \H^{k+1}\restrict R\leq \|S\|\restrict R$. Since $\|S\|\restrict R(\{t\}\times\R^d)=0$, it follows $\|W_t\|(R)=0$. From this we deduce
\[
\|W\|(R)=\int_0^1 \|W_t\|(R)\; \dd \kappa(t)=0.
\]
This proves that $ \|W\|\perp \|S\|$.
\end{proof}

From the previous lemma it follows that $\AC$ integral space-time currents are determined by their slices:

\begin{corollary}\label{cor:S=S'}
Let $S,S'\in \Irm_{k+1}([0,1] \times \R^d)$ with $\partial S \restrict (0,1) \times \R^d = 0 = \partial S' \restrict (0,1) \times \R^d$ be such that $\Var(S;\frarg)$  and $\Var(S';\frarg)$ are non-atomic (which is the case if, e.g., $S,S'\in\Irm_{k+1}^{\AC}([0,1] \times \R^d)$). Suppose that 
\[
S|_t=S'|_t\qquad\text{for $\L^1$-a.e.\ $t$}.
\]
Then, $S=S'$.
\end{corollary}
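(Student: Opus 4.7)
The plan is to reduce the statement to the zero-slice lemma (Lemma~\ref{lemma:zero_slices}) applied to the difference $W := S - S'$, and then to rule out the presence of concentrated slices via Lemma~\ref{lemma:W_perp_S}.

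First I would observe that integral currents form an additive group, so $W \in \Irm_{k+1}([0,1] \times \R^d)$. By linearity of the boundary operator, $\partial W \restrict (0,1) \times \R^d = 0$. The slicing operation is linear as well (this is immediate from the cylinder formula~\eqref{eq:definition_slicing}), so $W|_t = S|_t - S'|_t = 0$ for $\L^1$-a.e.\ $t \in [0,1]$.

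Next I would invoke the integral case of Lemma~\ref{lemma:zero_slices}: the zero-slice condition~{\rm(i)} there gives the structural representation~\eqref{eq:W_zero_slices}, and since $W$ is integral the disintegration measure $\kappa$ must be purely atomic, yielding
\[
W = \sum_j W_{t_j}
\]
with each $W_{t_j}$ an integral, boundaryless $(k+1)$-current supported in $\{t_j\} \times \R^d$.

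Finally, I would apply Lemma~\ref{lemma:W_perp_S} twice, once to the pair $(S, W)$ and once to $(S', W)$ — this is legitimate because both $\Var(S;\frarg)$ and $\Var(S';\frarg)$ are non-atomic by hypothesis (which in particular covers the $\AC$ case). This yields $\|W\| \perp \|S\|$ and $\|W\| \perp \|S'\|$, hence $\|W\| \perp (\|S\| + \|S'\|)$. On the other hand, the elementary subadditivity of mass gives $\|W\| = \|S - S'\| \leq \|S\| + \|S'\|$ as measures on $[0,1] \times \R^d$. The combination of these two facts forces $\|W\| = 0$, and hence $W = 0$, i.e., $S = S'$.

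I do not anticipate any serious obstacle here: the argument is a clean composition of the two preceding lemmas together with linearity of slicing and subadditivity of mass. The only minor point worth checking is that the difference of integral currents is still integral and that slicing distributes over subtraction, both of which are immediate.
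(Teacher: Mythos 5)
Your proof is correct and follows essentially the same route as the paper's: apply the Zero-slice Lemma~\ref{lemma:zero_slices} to $W = S - S'$ to obtain the atomic decomposition $W = \sum_j W_{t_j}$, then use Lemma~\ref{lemma:W_perp_S} to get $\|W\| \perp \|S\| + \|S'\|$, and conclude $W = 0$. The paper states the final step more tersely ("It follows that $W=0$") while you helpfully spell out the mechanism — subadditivity $\|W\| \le \|S\| + \|S'\|$ together with mutual singularity — but this is a presentational difference, not a mathematical one.
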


\begin{proof} By Lemma~\ref{lemma:zero_slices},
\[
S- S'=\sum_{j\in\mathbb{N}} W_{t_j} =: W
\]
where each $W_{t_j}$ is boundaryless, integral, and supported in $\{t_j\}\times\R^d$. By Lemma~\ref{lemma:W_perp_S}, $\|W\|\perp \|S\|+\|S'\|$. It follows that $W=0$, hence $S=S'$. 
\end{proof}

\begin{corollary}
Let $S\in \Irm_{k+1}([0,1] \times \R^d)$ with $\partial S \restrict (0,1) \times \R^d = 0$ be such that $\Var(S;\frarg)$ is non-atomic. Then, $S$ solves
\[
    \min \bigl\{\Var(U):  U \in \Nrm_{k+1}([0,1] \times \R^d), \, U\vert_t = S\vert_t \text{ for $\L^1$-a.e.\ $t$} \bigr\}. 
\] 
\end{corollary}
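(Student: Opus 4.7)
The approach is to show, given any competitor $U$, that the difference $W := U - S$ has mass measure orthogonal to $\|S\|$, so that the space-time variation of $U$ splits as $\Var(S)$ plus a non-negative remainder. The two main tools are the zero-slice lemma (Lemma~\ref{lemma:zero_slices}) and the mutual-singularity statement in Lemma~\ref{lemma:W_perp_S}, both of which apply almost directly to $W$.

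First I would verify that $W = U - S$ satisfies the hypotheses of the zero-slice lemma: it is normal (since $U$ is normal by assumption and $S$ is integral, hence normal), its boundary vanishes on $(0,1)\times\R^d$ (since both $\partial U$ and $\partial S$ do), and $W|_t = U|_t - S|_t = 0$ for $\L^1$-a.e.\ $t$ by the constraint on competitors. Lemma~\ref{lemma:zero_slices} then provides the representation
\[
W = \int_0^1 W_t \, \dd\kappa(t), \qquad \kappa := \tbf_\# \|W\|,
\]
with each $W_t$ a boundaryless normal $(k+1)$-current supported in $\{t\}\times\R^d$. Since $\Var(S;\frarg)$ is non-atomic by hypothesis, Lemma~\ref{lemma:W_perp_S} then yields $\|W\|\perp\|S\|$.

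Fixing then disjoint Borel carriers $R$ of $\|S\|$ and $R^c$ of $\|W\|$, the identity $U = S + W$ of multivector-valued measures forces $\|U\| = \|S\| + \|W\|$, together with $\vec{U}=\vec{S}$ on $R$ and $\vec{U}=\vec{W}$ on $R^c$, $\|U\|$-almost everywhere. Substituting into the definition of $\Var$,
\[
\Var(U) = \int \|\pbf(\vec{U})\|\, \dd\|U\| = \int_R \|\pbf(\vec{S})\|\, \dd\|S\| + \int_{R^c} \|\pbf(\vec{W})\|\, \dd\|W\| \ge \Var(S),
\]
where the second integrand is pointwise non-negative. This proves that $S$ is a minimiser.

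I do not foresee any substantial obstacle. Both invoked lemmas apply essentially immediately under the stated hypotheses, and the decomposition of $\|U\|$ into mutually singular pieces is standard once $\|S\|\perp\|W\|$ is in hand. The only point requiring a moment's care is the verification that $U - S$ meets the hypotheses of the zero-slice lemma, which is straightforward from the assumption on slices and boundaries. As a byproduct one even obtains $\Var(U)=\Var(S)+\Mbf(W)$, since $\vec{W}_t$ is purely spatial and so $\|\pbf(\vec{W})\|=\|\vec{W}\|$, which gives strict inequality unless $U=S$.
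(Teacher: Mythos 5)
Your proof is correct and follows essentially the same route as the paper: apply the Zero-slice Lemma to $W=U-S$, invoke Lemma~\ref{lemma:W_perp_S} to get $\|W\|\perp\|S\|$, and conclude $\Var(U)=\Var(S)+\Var(W)\ge\Var(S)$. The paper's proof is terser but identical in substance; your added observation that $\Var(W)=\Mbf(W)$ (because $\vec{W}$ is purely spatial) is a correct refinement giving uniqueness of the minimiser.
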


\begin{proof} Let $U$ be any competitor, i.e., $U \in \Nrm_{k+1}([0,1] \times \R^d)$ with $U\vert_t = S\vert_t \text{ for $\L^1$-a.e.\ $t$}$. By the Zero-slice Lemma~\ref{lemma:zero_slices} we have 
\[
S - U = W=\int_0^1 W_t \; \dd t, \qquad \partial W_t = 0,
\]
where $W_t \in \Nrm_{k+1}([0,1] \times \R^d)$ is supported in $\{t\}\times\R^d$. By Lemma~\ref{lemma:W_perp_S}, $\|W\|\perp \|S\|$, which entails $\|U\|=\|S\|+\|W\|$. As a consequence, $\Var(U)=\Var(S)+\Var(W)\ge \Var(S)$. 
\end{proof}

\subsection{Comparison of different flat norms}

As we already discussed, the normal and the integral homogeneous flat distance between two integral currents may differ, and it is an open problem whether they are even equivalent up to some dimensional constant (see \cite{Brezis_Mironescu} and \cite{Young18} for some recent developments). Theorem~\ref{thm:Var_equals_TV} is relevant also for this problem, since the equivalence between the two norms can be reformulated as an equivalence between the total variation of curves. The results in this section are not used anywhere else in this paper.

\begin{lemma}\label{lemma:equiv_bv_curves_norms}
The following are equivalent:
\begin{enumerate}
    \item[{\rm(i)}] $\Fhom_\Irm$ and $\Fhom$ are equivalent norms on $\Irm_k(\R^d)$, i.e., there exists a constant $C$ such that
    \[
    \Fhom_\Irm(T)\leq C \Fhom(T)\qquad\text{for every $T\in\Irm_k(\R^d)$}.
    \]
    \item[{\rm(ii)}] Every curve $[0,1]\ni t\mapsto T_t\in\Irm_k(\R^d)$ that is $\BV$ with respect to $\Fhom$ is also $\BV$ with respect to $\Fhom_\Irm$.
\end{enumerate}
\end{lemma}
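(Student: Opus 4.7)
The plan is to handle the two directions separately, with essentially all content in ${\rm(ii)}\Rightarrow{\rm(i)}$.

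For ${\rm(i)}\Rightarrow{\rm(ii)}$ I would just observe that if $\Fhom_\Irm(T)\leq C\Fhom(T)$ uniformly on boundaryless integral $T$, then for any partition $0\leq t_1\leq\ldots\leq t_N\leq 1$ the differences $T_{t_{i+1}}-T_{t_i}$ are again boundaryless integral currents, so summing the pointwise inequality gives
\[
\sum_{i} \Fhom_\Irm(T_{t_{i+1}}-T_{t_i})\leq C\sum_{i} \Fhom(T_{t_{i+1}}-T_{t_i}),
\]
and taking the supremum over partitions yields $\Fhom_\Irm\dash\pV\leq C\,\Fhom\dash\pV$; in particular $\Fhom$-$\BV$ implies $\Fhom_\Irm$-$\BV$.

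The interesting direction ${\rm(ii)}\Rightarrow{\rm(i)}$ I would prove by contraposition. Assuming the inequality in (i) fails, there exist boundaryless $T_n\in\Irm_k(\R^d)$ with $a_n:=\Fhom(T_n)>0$ and $b_n:=\Fhom_\Irm(T_n)\geq n a_n$. The natural try is to concatenate pulses of the $T_n$ on pairwise disjoint sub-intervals $[s_n,r_n]\subset[0,1]$, defining $T_t:=T_n$ for $t\in[s_n,r_n)$ and $T_t=0$ elsewhere; using the triangle inequality together with the easy identities $\Fhom(-T)=\Fhom(T)$ and $\Fhom_\Irm(-T)=\Fhom_\Irm(T)$, the $n$-th pulse contributes exactly $2\Fhom(T_n)$, respectively $2\Fhom_\Irm(T_n)$, to the corresponding pointwise variations. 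Thus we would be done if we could arrange $\sum_n a_n<\infty$ and $\sum_n b_n=\infty$, but a priori nothing forces $a_n$ to be small.

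To fix this I would exploit the dilation $\phi_\lambda(x):=\lambda x$. Since $\phi_\lambda$ is a diffeomorphism, maps $\Irm_k(\R^d)$ into itself, commutes with $\partial$, and satisfies $\Mbf((\phi_\lambda)_*Q)=\lambda^{k+1}\Mbf(Q)$ for every normal or integral $(k+1)$-current $Q$, both $\Fhom$ and $\Fhom_\Irm$ are homogeneous of degree $k+1$ under $\phi_\lambda$; in particular the ratio $\Fhom_\Irm/\Fhom$ is scale-invariant. I would then replace $T_n$ by $T_n':=(\phi_{\lambda_n})_*T_n$ with $\lambda_n:=(1/(n^{2}a_n))^{1/(k+1)}$, so that
\[
\Fhom(T_n')=\frac{1}{n^2},\qquad \Fhom_\Irm(T_n')\geq \frac{1}{n}.
\]
Running the pulse construction with $\{T_n'\}$ produces a piecewise-constant boundaryless integral curve on $[0,1]$ whose $\Fhom$-pointwise variation is bounded by $2\sum n^{-2}<\infty$, while testing against the partition that contains all of the $s_n,r_n$ shows its $\Fhom_\Irm$-pointwise variation is at least $2\sum n^{-1}=+\infty$, contradicting (ii).

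The main obstacle, and the reason (ii) carries genuine content, is exactly this decoupling step: one cannot freely rescale the failing pair $(a_n,b_n)$ numerically, one must rescale the underlying currents themselves while preserving the ratio $b_n/a_n$. The dilation trick delivers precisely this, and the rest of the argument is routine book-keeping with the triangle inequality for the two flat quasi-distances on boundaryless integral currents.
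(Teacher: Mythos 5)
Your proof is correct and follows essentially the same route as the paper: argue the contrapositive of ${\rm(ii)}\Rightarrow{\rm(i)}$ by producing a normalized sequence with $\Fhom(T_n')=n^{-2}$ and $\Fhom_\Irm(T_n')\geq n^{-1}$ and concatenating pulses on disjoint subintervals of $[0,1]$ to get a curve of finite $\Fhom$-variation but infinite $\Fhom_\Irm$-variation. Your explicit dilation argument (using that both homogeneous flat norms scale by $\lambda^{k+1}$ under $x\mapsto\lambda x$, so the ratio is scale-invariant while integrality is preserved) rigorously supplies the normalization step that the paper's proof asserts without comment, which is a worthwhile addition rather than a different approach.
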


\begin{proof}
Note first that without loss of generality we only need to consider the boundaryless case.

${\rm(i)}\implies {\rm(ii)}$ is clear. Let us prove the converse. We will show the contrapositive: if $\Fhom$ and $\Fhom_\Irm$ are \emph{not} equivalent, then there exists a curve that is $\BV$ with respect to $\Fhom$ but not with respect to $\Fhom_\Irm$. Supposing that $\Fhom$ and $\Fhom_\Irm$ are not equivalent we can find a sequence $T_j\in\Irm_k(\R^d)$, $j=1,2,\ldots$ such that
\[
\Fhom(T_j)=\frac{1}{j^2},\qquad \Fhom_\Irm(T_j)\geq \frac{1}{j}.
\]
Let us define $\gamma:[0,1]\to\Irm_k(\R^d)$ by
\[
\gamma(t):=
\begin{cases}
T_j & \text{if $t\in(2^{-2j},2^{-2j+1}]$}\\
0 & \text{if $t\in(2^{-2j+1},2^{-2j+2}]$} 
\end{cases}.
\]
Then 
\[
\Fhom\dash\eV(t\mapsto T_t)=\sum_j \frac{1}{j^2}<+\infty,\qquad \Fhom_\Irm\dash\eV(t\mapsto T_t)=\sum_j \frac{1}{j}=+\infty,
\]
therefore $\gamma$ is $\BV$ with respect to $\Fhom$, but not $\Fhom_\Irm$.
\end{proof}

What Theorem~\ref{thm:Var_equals_TV} achieves is proving the following: every curve $[0,1]\ni t\mapsto T_t\in\Irm_k(\R^d)$ such that
\begin{enumerate}
    \item it comes from the slicing of a space-time $\AC$ current (and thus it is $\BV$ with respect to $\Fhom_\Irm$), with $\Var(S;\frarg)$ non-atomic; 
    \item $\Mbf(T_t)\leq C$ for every $t\in [0,1]$, for some constant $C$;
\end{enumerate}
is also $\BV$ with respect to $\Fhom$, with additionally $\Fhom\dash\eV(t\mapsto T_t)=\Fhom_\Irm\dash\eV(t\mapsto T_t)$.

The proof of the following result is an application of the Zero-slice lemma. However its proof also requires space-time arguments that will be developed in the next section and is thus postponed to Section~\ref{subsec:equivalence}.

\begin{proposition}[Generic asymptotic equivalence of flat norms]\label{prop:asymptotic_equivalence} 
Let $S\in \Irm_{k+1}([0,1] \times \R^d)$ be such that $\partial S \restrict (0,1) \times \R^d = 0$ and such that $\Var(S;\frarg)$ is non-atomic, and let $t\mapsto T_t=S(t)$ be the corresponding pointwise-defined continuous representative. Then, for $\Var(S; \frarg)$-a.e.\ $t$, 
\begin{equation}\label{eq:asymptotic_equivalence}
\lim_{h\to 0} \frac{\Fhom_\Irm(T_{t+h}-T_t)}{\Fhom(T_{t+h}-T_t)}=1, 
\end{equation}
where we stipulate that the ratio is $1$ when the denominator is $0$ (in that case, $T_t = T_{t+h}$). 
\end{proposition}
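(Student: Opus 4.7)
The plan is to exploit the one-sided inequality $\Fhom\leq\Fhom_{\Irm}$, which holds because every integral filling is a normal filling, so the ratio in~\eqref{eq:asymptotic_equivalence} is already $\geq 1$ whenever the denominator is nonzero. The goal is thus a matching upper bound of the form $1+o(1)$ at $V$-a.e.\ $t$, where I set $V:=\Var(S;\frarg)$, a finite and non-atomic measure on $[0,1]$ by hypothesis.

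Two ingredients drive the argument. First, since $\partial S\restrict (0,1)\times\R^d=0$, the cylinder formula~\eqref{eq:definition_slicing} gives, for the interval $I_{t,h}$ with endpoints $t,t+h$,
\[
T_{t+h}-T_t=\pbf_*(S|_{t+h}-S|_t)=\partial\pbf_*\bigl(S\restrict I_{t,h}\times\R^d\bigr),
\]
so that $\pbf_*(S\restrict I_{t,h}\times\R^d)$ is an integral filling of $T_{t+h}-T_t$, and by~\eqref{eq:mass_of_pushforward} one obtains the explicit upper bound
\[
\Fhom_{\Irm}(T_{t+h}-T_t)\leq\Mbf\bigl(\pbf_*(S\restrict I_{t,h}\times\R^d)\bigr)\leq\Var(S;I_{t,h})=V(I_{t,h}).
\]
Second, by Theorem~\ref{thm:Var_equals_TV} the $\Fhom$-essential variation of the curve $\gamma(t):=T_t$ coincides with $V$, and since $V$ is non-atomic the continuous representative is genuinely $\Fhom$-continuous. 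I then apply the standard metric-space Lebesgue differentiation fact: for any continuous curve $\gamma:[0,1]\to(X,d)$ in a (pseudo-)metric space with non-atomic variation measure $V$, one has
\[
\lim_{h\to 0}\frac{d(\gamma(t+h),\gamma(t))}{V(I_{t,h})}=1\qquad\text{for $V$-a.e.\ $t$,}
\]
applied with $d=\Fhom$. Combining the two ingredients yields
\[
1\leq\frac{\Fhom_{\Irm}(T_{t+h}-T_t)}{\Fhom(T_{t+h}-T_t)}\leq\frac{V(I_{t,h})}{\Fhom(T_{t+h}-T_t)}\xrightarrow[h\to 0]{}1,
\]
which is the desired conclusion.

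The main obstacle is the metric-space fact above, which is the metric counterpart of the fundamental theorem of calculus for variation measures. I would prove it by arc-length reparametrization: setting $F(t):=V([0,t])$ (continuous, non-decreasing, with $F(1)=L:=V([0,1])$), I check that $\gamma$ factors as $\gamma=\tilde\gamma\circ F$ with $\tilde\gamma:[0,L]\to X$ well-defined, because flat intervals of $F$ correspond to intervals on which $\gamma$ is constant (any such interval $I$ has $V(I)=0$, hence $\gamma$ has zero variation and is thus constant on it). The resulting $\tilde\gamma$ is $1$-Lipschitz and has total variation exactly $L$, so by the general theory of Lipschitz curves in metric spaces its metric derivative satisfies $|\tilde\gamma'|(s)=1$ for $\L^1$-a.e.\ $s\in[0,L]$. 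The identity $F_\#V=\L^1\restrict[0,L]$ then transfers this to $V$-a.e.\ $t$, modulo handling the (a priori $V$-negligible) set of $t$ where $F$ is locally constant on one side, on which the ratio is $1$ by the convention of the statement. No further current-theoretic difficulties arise beyond the invocation of Theorem~\ref{thm:Var_equals_TV} to identify the variation measure.
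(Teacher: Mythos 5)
Your proposal is correct, and it takes a genuinely different route from the paper. The paper argues by contradiction: assuming the ratio stays $\ge 1+\delta$ on a set $B$ of positive $\Var(S;\frarg)$-measure, it uses a Vitali-type covering to select finitely many small slabs, replaces $S$ in each slab by a cylinder plus an \emph{optimal normal} filling, passes to a limiting normal current $U$, and then contradicts the identity $\Var(U)=\Var(S)+\Var(S-U)$ (coming from the Zero-slice Lemma~\ref{lemma:zero_slices} and Lemma~\ref{lemma:W_perp_S}) against the lower semicontinuity of $\Var$. Your argument instead is a direct sandwich: the cylinder formula gives the one-sided bound $\Fhom_\Irm(T_{t+h}-T_t)\le\Var(S;I_{t,h})$, while the matching lower bound on the denominator comes from Theorem~\ref{thm:Var_equals_TV} (identifying $\Var(S;\frarg)$ with $\Fhom\dash\eV(t\mapsto T_t;\frarg)$) combined with the arc-length reparametrization fact that a continuous curve of finite variation in a metric space has unit metric speed $V$-a.e.\ after reparametrizing by $F(t)=V([0,t])$. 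The two proofs rely on the same underlying machinery (both go through the zero-slice lemma, yours indirectly via Theorem~\ref{thm:Var_equals_TV}), but yours factors the argument through a clean metric-geometry lemma in the spirit of Remark~\ref{sc:variation}'s citation of Burago--Burago--Ivanov, and as a byproduct you get the stronger conclusion that both $\Fhom(T_{t+h}-T_t)/V(I_{t,h})\to 1$ and $\Fhom_\Irm(T_{t+h}-T_t)/V(I_{t,h})\to 1$ at $V$-a.e.\ $t$. Two minor points worth making explicit: the upper bound $\Fhom_\Irm(T_{t+h}-T_t)\le V(I_{t,h})$ holds a priori only for a.e.\ pair (where the slices exist) and should be extended to all pairs by the $\Fhom$-continuity of $t\mapsto T_t$, lower semicontinuity of $\Fhom_\Irm$, and non-atomicity of $V$; and the metric-speed lemma uses the Tonelli-type length formula $\int_0^L|\tilde\gamma'|\,\dd s=\pV(\tilde\gamma)$ for Lipschitz curves in an arbitrary metric space, which is standard but deserves a citation.
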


\begin{remark}
The only case when the previous proposition does not say anything is when $\Var(S;[0,1])=0$. However in this case, by Proposition~\ref{prop:TV_leq_Var}, we would have $\Fhom_\Irm\dash\pV(t\mapsto S(t);[0,1])=0$, hence the path would be constant and the flat norms appearing in~\eqref{eq:asymptotic_equivalence} would both be zero.

It is also interesting to compare the conclusion of Proposition~\ref{prop:asymptotic_equivalence} with the following fact (see~\cite[Corollary~2.7.5]{BuragoBuragoIvanov}): if $\gamma:[0,1]\to (X,\mathsf{d})$ is a curve with finite pointwise variation, then for $\L^1$-a.e.\ $t\in[0,1]$ either
\[
\liminf_{h\to 0}\frac{\mathsf{d}\dash\pV(\gamma;[t,t+h])}{|h|}=0
\]
or
\[
\lim_{h\to 0} \frac{\mathsf{d}(\gamma(t),\gamma(t+h))}{\mathsf{d}\dash\pV(\gamma; [t,t+h])} = 1.
\]
From this one can deduce that, setting 
\[
N := \left\{t \in [0,1]: \liminf_{h\to 0} \frac{\Fhom\dash\pV(t\mapsto T_t; [t,t+h])}{|h|} = 0\right\},
\]
for $\L^1$-a.e.\ $t\in[0,1]\setminus N$ it holds that
\[
\lim_{h\to 0} \frac{\Fhom_\Irm(T_{t+h}-T_t)}{\Fhom(T_{t+h}-T_t)}=1.
\]
\end{remark}

\section{Space-time rectifiability}\label{sec:space-time}

We next turn our attention to understanding which paths of integral currents are ``space-time rectifiable'', meaning that the path consists of the slices of a space-time \emph{integral} current. The main result is the Rectifiability Theorem~\ref{thm:rect}, which requires bounded $\Fhom_{\Irm}$-variation. As a complementary result, in Proposition~\ref{prop:kampschulte}, we also investigate the case where we \emph{a-priori} know that the path solves the geometric transport equation for some driving vector field.

\subsection{Rectifiability under a BV assumption}\label{sc:rect}

The following theorem is space-time rectifiability result for general integral currents, i.e., we do not a-priori assume that the path is the solution of the geometric transport equation (for some vector field). In this general setting, we need to require a BV-type continuity property in time.

\begin{theorem}[Rectifiability] \label{thm:rect}
Let $t \mapsto T_t \in \Irm_k(\R^d)$, $t \in [0,1]$, with $\partial T_t = 0$ for every $t \in [0,1]$, and such that 
    \[
    \sup_{t \in [0,1]} \Mbf(T_t)  < \infty, \qquad  \Fhom_{\Irm}\dash\eV(t \mapsto T_t;[0,1])<\infty. 
    \]
    Then, there exists $S \in \Irm_{k+1}(\R\times \R^d)$ with $\partial S \restrict {(0,1)\times \R^d}=0$ such that:
    \begin{enumerate}
        \item[{\rm(a)}] $S\vert_t = T_t$ for $\L^1$-a.e.\ $t \in [0,1]$; 
        \item[{\rm(b)}] $\Var(S; \frarg) = \Fhom_\Irm\dash\eV(t \mapsto T_t; \frarg)$ as measures on $[0,1]$. 
    \end{enumerate}
\end{theorem}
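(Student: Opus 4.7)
The plan is to realize $(T_t)$ as the slices of a weak$^*$ limit of piecewise-constant-in-time integral approximants built from optimal Plateau fillings of the consecutive jumps. By the finiteness of $\Fhom_\Irm\dash\eV(T;[0,1])$, fix a good representative of $t\mapsto T_t$ and a sequence of refining partitions $0=t_0^N<t_1^N<\cdots<t_N^N=1$, $N\in\N$, with mesh tending to $0$, all endpoints in the domain of the good representative. By Federer--Fleming compactness the infimum defining $\Fhom_\Irm$ is attained, so for each $i$ pick $Q_i^N\in\Irm_{k+1}(\R^d)$ with
\[
\partial Q_i^N=T_{t_i^N}-T_{t_{i-1}^N},\qquad \Mbf(Q_i^N)=\Fhom_\Irm\!\bigl(T_{t_i^N}-T_{t_{i-1}^N}\bigr),
\]
and set
\[
S^N:=\sum_{i=1}^N\Bigl(\dbr{t_{i-1}^N,t_i^N}\times T_{t_{i-1}^N}+\delta_{t_i^N}\times Q_i^N\Bigr)\in\Irm_{k+1}([0,1]\times\R^d).
\]
By the product boundary formula~\eqref{eq:boundary_of_product} and $\partial T_t=0$, the $i$-th summand has boundary $\delta_{t_i^N}\times T_{t_i^N}-\delta_{t_{i-1}^N}\times T_{t_{i-1}^N}$, so the boundaries telescope and $\partial S^N\restrict (0,1)\times\R^d=0$. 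Uniformly in $N$,
\[
\Mbf(S^N)\le \sup_t\Mbf(T_t)+\sum_i\Mbf(Q_i^N)\le \sup_t\Mbf(T_t)+\Fhom_\Irm\dash\eV(T;[0,1])<\infty.
\]

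Federer--Fleming compactness then yields a subsequence with $S^N\weaksto S\in\Irm_{k+1}([0,1]\times\R^d)$, and weak$^*$ continuity of $\partial$ gives $\partial S\restrict (0,1)\times\R^d=0$. For the slice identification, note that $S^N(t):=\pbf_*(S^N|_t)=T_{t_{i-1}^N}$ for $t\in(t_{i-1}^N,t_i^N)$. By Proposition~\ref{prop:TV_leq_Var} the good representative is continuous in $\Fhom_\Irm$ (and, by the uniform mass bound, also weakly$^*$) at $\L^1$-a.e.\ $t$, so $S^N(t)\weaksto T_t$ for a.e.\ $t$. Testing $S^N\weaksto S$ against a product form $\psi(t)\,dt\wedge\pbf^*\omega$ with $\psi\in\Crm_c^\infty((0,1))$, $\omega\in\Dscr^k(\R^d)$, and using the slice identity~\eqref{eq:slice_general} with $f=\tbf$,
\[
\int_0^1\psi(t)\langle S(t),\omega\rangle\,dt=\lim_N\int_0^1\psi(t)\langle S^N(t),\omega\rangle\,dt=\int_0^1\psi(t)\langle T_t,\omega\rangle\,dt,
\]
the last step via dominated convergence. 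Lemma~\ref{lemma:equivalent_being_zero} applied to $t\mapsto S(t)-T_t$ gives $S(t)=T_t$ for $\L^1$-a.e.\ $t$, proving~(a).

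For (b), Proposition~\ref{prop:TV_leq_Var} gives the inequality $\Fhom_\Irm\dash\eV(T;\cdot)\le\Var(S;\cdot)$. For the reverse, the structure of the approximants is decisive: the \emph{horizontal} blocks $\dbr{t_{i-1}^N,t_i^N}\times T_{t_{i-1}^N}$ have orienting multivector of the form $\ee_0\wedge\vec T_{t_{i-1}^N}$ (with $\ee_0$ the time direction), which is killed by $\Wedge^{k+1}\pbf$, so they contribute nothing to $\Var$; the \emph{vertical} blocks $\delta_{t_i^N}\times Q_i^N$ are oriented purely in spatial directions and contribute exactly $\Mbf(Q_i^N)$. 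Thus for every open interval $I\subset(0,1)$ whose endpoints belong to the refining partitions,
\[
\Var(S^N;I)=\sum_{t_i^N\in I}\Fhom_\Irm\!\bigl(T_{t_i^N}-T_{t_{i-1}^N}\bigr)\;\xrightarrow[N\to\infty]{}\;\Fhom_\Irm\dash\eV(T;I),
\]
the limit being monotone in $N$ by the triangle inequality for $\Fhom_\Irm$ and the fact that the essential variation is the supremum over partitions. The main obstacle is then the weak$^*$ lower semicontinuity of $\Var(\cdot;I)$, which I would derive from the dual representation
\[
\Var(S;I)=\sup\Bigl\{\langle S,\phi(\tbf)\,\pbf^*\omega\rangle:\phi\in\Crm_c^\infty(I),\,\|\phi\|_\infty\le 1,\,\omega\in\Dscr^{k+1}(\R^d),\,\|\omega\|_\infty\le 1\Bigr\}
\]
(a supremum of weakly$^*$ continuous linear functionals). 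This yields $\Var(S;I)\le\liminf_N\Var(S^N;I)=\Fhom_\Irm\dash\eV(T;I)$ on the dense family of such intervals, hence equality there and, by a standard monotone class argument, equality of the two finite Borel measures on $[0,1]$, completing~(b).
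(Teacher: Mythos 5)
Your construction of the piecewise-constant approximants $S^N$, the telescoping boundary computation, the mass bounds, and the use of Federer--Fleming compactness are all correct and essentially identical to the paper's argument (the overall sign on $S^N$ versus the paper's $S_N$ is just an orientation convention and does not matter). Where you diverge is in how the slices of the limit $S$ are identified. The paper invokes a Helly-type selection theorem for space-time integral currents from~\cite{Rindler21a?}, which delivers simultaneous weak$^*$ convergence of $S_{N_j}$ \emph{and} of the projected slices $S_{N_j}(t)\weaksto S(t)$ for all but countably many $t$; you instead use only ordinary Federer--Fleming compactness and recover the slice identification by testing $S^N\weaksto S$ against product forms $\psi(\tbf)\,d\tbf\wedge\pbf^*\omega$ via the slice identity~\eqref{eq:slice_general}, plus dominated convergence and Lemma~\ref{lemma:equivalent_being_zero}. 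This is a legitimate and arguably more elementary route, since it avoids a black-box compactness theorem from another paper; the trade-off is that it only identifies $S(t)=T_t$ a.e.\ rather than at all but countably many $t$ (which is all the theorem claims anyway). Two small points to tighten: the a.e.\ $\Fhom_\Irm$-continuity of a good representative of a BV-in-time path is a standard BV fact (e.g.\ via~\cite[Remark 2.2]{AmbrosioAnnali}), not a consequence of Proposition~\ref{prop:TV_leq_Var} as you cite; and you should note that $\Mbf(\partial S^N)\le 2\sup_t\Mbf(T_t)$ before invoking compactness.

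For part (b), your route via the dual representation of $\Var(\cdot\,;I)$, lower semicontinuity on each open interval with endpoints in the refining family, and a monotone class/$\pi$-system argument is correct but substantially more elaborate than necessary, and it is delicate at discontinuity points of the good representative (you need the interval endpoints to be continuity points for $\sum_{t_i^N\in I}\Fhom_\Irm(T_{t_i^N}-T_{t_{i-1}^N})$ to converge to $\Fhom_\Irm\dash\eV(T;I)$ rather than to $\Fhom_\Irm\dash\eV(T;\overline I)$). The paper gets (b) more cleanly: Proposition~\ref{prop:TV_leq_Var} already gives $\Fhom_\Irm\dash\eV(T;\cdot)\le\Var(S;\cdot)$ as measures, so it suffices to show the total masses agree, which follows at once from lower semicontinuity of $\Var$ on the whole interval plus the uniform bound $\Var(S^N;[0,1])\le\Fhom_\Irm\dash\eV(T;[0,1])+\eps$. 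When two finite measures are ordered and have equal total mass, they coincide; no interval-by-interval argument is needed. I would recommend adopting that shortcut.
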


\begin{proof}
The basic idea is to construct a piecewise constant (in time) approximation with good properties.

{\rm(a)}.
Let $\eps > 0$ and assume without loss of generality that 
\[
  \Fhom_{\Irm}\dash\pV(t \mapsto T_t;[0,1]) \leq \Fhom_{\Irm}\dash\eV(t \mapsto T_t;[0,1]) + \eps.
\]
For each $N \in \N$, define the uniform partition $0 = t_0 < t_1 < \ldots < t_N=1$ of $[0,1]$, where $t_i := \frac{i}{N}$ for $i=0,\ldots, N$. For ease of notation, we set for $i= 0,\ldots, N$,
\[
T_i := T_{t_i} 
\]
and take for $i=0,\ldots, N-1$ the currents $W_i \in \mathrm I_{k+1}(\R^d)$ that are optimal for $\Fhom_\Irm(T_{i+1}-T_i)$, i.e. such that
\[
T_{i+1} - T_i = \partial W_i \quad \text{ and } \quad \mathbf \Fhom_{\Irm}(T_{i+1} - T_i) = \mathbf{M}(W_i).
\]
We can now define the following $(k+1)$-integral current in $[0,1]\times \R^d$:
\begin{equation}\label{eq:def_S_N}
S_N:= -\sum_{i=0}^{N-1} \big(\bbb{t_i, t_{i+1}} \times T_i + \delta_{t_{i+1}} \times W_i\big).
\end{equation}
Here, $\bbb{t_i, t_{i+1}}$ denotes the (integral) $1$-current $\ee_t \L^1 \restrict [t_i, t_{i+1}]$ ($\ee_t$ pointing in the positive time direction).

Recalling formula~\eqref{eq:boundary_of_product} for the boundary of the product, we have 
\begin{align}
    \partial S_N
    &=-\sum_{i=0}^{N-1} \big(\delta_{t_{i+1}} \times T_i - \delta_{t_{i}} \times T_i + \delta_{t_{i+1}} \times \partial W_i\big)\nonumber \\
    & = -\sum_{i=0}^{N-1}\bigl( - \delta_{t_{i}} \times T_i + \delta_{t_{i+1}} \times T_{i+1} \bigr)\nonumber\\
    & = -\delta_{1} \times T_{1} + \delta_{0} \times T_{0}. \label{eq:boundary_S_N}
\end{align}
For the mass and boundary mass of $S_N$ we estimate
\begin{align*}
\mathbf{M}(S_N) &\le \frac{1}{N}\sum_{i=0}^{N-1}  \mathbf{M}(T_i) + \sum_{i=0}^{N-1}\mathbf{M}(W_i),  \\
\mathbf{M}(\partial S_N) &\le \mathbf{M}(T_{1}) + \mathbf{M}(T_{0}).
\end{align*}
Since we are assuming a uniform bound on the normal mass of $T_i$, we have 
\[
 \frac{1}{N}\sum_{i=0}^{N-1}\mathbf{M}(T_i) \leq C < \infty.
\]
For the two other terms we rely on the bound on the pointwise variation with respect to the homogeneous integral flat norm, which gives 
\[
\sum_{i=0}^{N-1} \mathbf{M}(W_i) = \sum_{i=0}^{N-1} \Fhom_{\Irm}(T_{i+1} - T_{i}) \leq \Fhom_{\Irm}\dash\pV(t \mapsto T_t;[0,1]) < \infty. 
\]
This inequality also yields a bound on the space-time variation:
\begin{equation}\label{eq:bound_on_var} 
\Var(S_N; [0,1])
= \sum_{i=0}^{N-1} \mathbf{M}(W_i) \leq \Fhom_{\Irm}\dash\pV(t \mapsto T_t;[0,1])
\leq \Fhom_{\Irm}\dash\eV(t \mapsto T_t;[0,1]) + \eps.
\end{equation}
We have therefore shown a uniform bound on the normal mass of $S_N$, which is clearly an integral current by definition. By the Helly selection principle for space-time integral currents in~\cite[Theorem~3.7]{Rindler21a?},
we obtain that there exists an integral current $S$ and a subsequence $N_j$ such that
\begin{equation}\label{eq:compactness_of_slices}
\left\{\begin{aligned}
S_{N_j} &\weaksto S\\
S_{N_j}(t)&\weaksto S(t)\quad\text{for all but countably many $t\in[0,1]$}.
\end{aligned}\right.
\end{equation}
We are left to show that $S(t)=T_t$ for all but at most countably many $t\in[0,1]$. To this aim, we fix $ t\in [0,1]$ with $S_{N_j}(t)\weaksto S(t)$ and use formula~\eqref{eq:definition_slicing} for the slicing of $S_{N_j}$ to get
\[
S_{N_j}|_{t}=(\partial S_{N_j})\restrict {\{\tbf< t\}}-\partial(S_{N_j}\restrict{\{\tbf< t\}})
\]
Let us define $\bar i$ to be the index such that $t_{\bar i}< t<t_{\bar{i}+1}$, that is, $\bar i=\lfloor N t\rfloor$. Then, recalling~\eqref{eq:def_S_N}, we obtain
\[
S_{N_j} \restrict {\{\tbf < t\}}=-\bbb{t_{\bar i}, t} \times T_{\bar i}-\sum_{i=0}^{\bar i-1} \big(\bbb{t_i, t_{i+1}} \times T_i + \delta_{t_{i+1}} \times W_i\big),
\]
and thus
\[
\partial(S_{N_j}\restrict {\{\tbf < t\}})=-(\delta_{t}-\delta_{t_{\bar i}})\times T_{\bar i}-\partial\left(\sum_{i=0}^{\bar i-1} \big(\bbb{t_i, t_{i+1}} \times T_i + \delta_{t_{i+1}} \times W_i\big)\right).
\]
On the other hand, from~\eqref{eq:boundary_S_N} we have
\[
(\partial S_{N_j})\restrict {\{\tbf< t\}}=\delta_{t_{\bar i}}\times T_{\bar i}-\partial \left(\sum_{i=0}^{\bar i-1} \big(\bbb{t_i, t_{i+1}} \times T_i + \delta_{t_{i+1}} \times W_i\big)\right).
\]
It follows that 
\[
S_{N_j}|_t=\delta_t\times T_{\bar i}=\delta_{t}\times T_{\lfloor N_j t\rfloor/N_j}\qquad\text{and}\qquad S_{N_j}( t)=T_{\lfloor N_j t\rfloor/N_j}.
\]
Since $t \mapsto T_t$ has finite variation, for all but at most countably many values of $ t\in [0,1]$ the left limit exists, that is
\[
T_{ t}=\wslim_{s\to t^-} T_s.
\]
Taking also~\eqref{eq:compactness_of_slices} into account, we deduce that for all but countably many $ t\in[0,1]$ it holds that
\[
S( t)=\wslim_{j\to\infty} S_{N_j}( t)=\wslim_{j\to\infty} T_{\lfloor N_j t\rfloor/N_j}=T_{t}.
\]
In the case that $t\mapsto T_t$ is continuous, equality follows for every $ t\in [0,1]$.

{\rm(b)}. The inequality $\Fhom_{\Irm}\dash\eV(t \mapsto T_t; \frarg) \le \Var(S; \frarg)$ follows from Proposition~\ref{prop:TV_leq_Var}. To prove the reverse inequality it is sufficient to show that the total masses of the two measures are equal. This is indeed a simple consequence of the lower semicontinuity of the variation and of the piecewise constant construction. We have 
\[
\Var(S;[0,1]) \leq \liminf_{N \to \infty} \Var(S_N;[0,1]) \leq \Fhom_{\Irm}\dash\eV(t \mapsto T_t;[0,1]) + \eps,
\]
where we have used~\eqref{eq:bound_on_var}. We conclude after letting $\eps \to 0$.
\end{proof}

\begin{remark} A straightforward adaptation of the proof shows that space-time currents satisfying {\rm(a)} can be constructed also without the assumption $\partial T_t =0$, provided that
\[
\sup_{t \in [0,1]} \Mbf(T_t) +\Mbf(\partial T_t) < \infty \qquad \text{and} \qquad \Fbf_{\Irm}\dash\eV(t \mapsto T_t;[0,1]) < \infty.
\]
Notice that we have replaced the \emph{homogeneous} flat norm with the \emph{inhomogeneous} flat norm
\begin{equation*}
 \Fbf_\Irm(T) := \inf\bigl\{ \Mbf(Q)+\Mbf(R) : Q\in\Irm_{k+1}(\R^d), \, R\in\Irm_k(\R^d), \, T=\partial Q+R \bigr\}.
\end{equation*}
\end{remark}

\begin{remark} \label{rem:rect_variational}
In the context of Theorem~\ref{thm:rect}, one can show that the current
$S$ constructed above is a minimiser for the following problem: 
        \[
        \min \bigl\{\Var(R;[0,1]):  R \in \Irm_{k+1}([0,1] \times \R^d), \,\partial R=0,\, R\vert_t = T_t \text{ for $\L^1$-a.e.\ $t$} \bigr\}. 
        \]
Indeed, for any competitor $R$, since the slices of $R$ coincide with those of $S$, we have 
\[
\Var(S;[0,1]) = \Fhom_{\Irm}\dash \eV(t \mapsto S\vert_t;[0,1])  = \Fhom_{\Irm}\dash \eV(t \mapsto R\vert_t;[0,1]) \le \Var(R;[0,1]),
\]
where we have used Proposition~\ref{prop:TV_leq_Var}~(c).
If $\Var(S;\frarg)$ is non-atomic, this conclusion can be strengthened in the following sense: the current $S$ constructed above is a minimiser of the following problem: 
\[
\min \bigl\{\Var(R;[0,1]):  R \in \Nrm_{k+1}([0,1] \times \R^d), \,\partial R=0,\, R\vert_t = T_t \text{ for $\L^1$-a.e.\ $t$} \bigr\},
\]
where now the class of competitors includes all \emph{normal} boundaryless currents with slices equal to the $T_t$'s. Indeed, for any such normal competitor $R$, by the Zero-slice Lemma~\ref{lemma:zero_slices} in conjunction with Lemma~\ref{lemma:W_perp_S}, we have 
\[
\|S\|\perp \|S - R\|,
\]
whence
\[
\Var(R;[0,1]) = \Var(S;[0,1])+\Var(S-R;[0,1]) \ge \Var(S;[0,1]).
\]
\end{remark}

Let us also state the following version of Theorem~\ref{thm:rect} for normal currents.

\begin{proposition} \label{prop:gluing_normal}
Let $t \mapsto T_t \in \Nrm_k(\R^d)$, $t \in [0,1]$, with $\partial T_t = 0$ for every $t \in [0,1]$, and such that 
    \[
   \sup_{t \in [0,1]} \Mbf(T_t)  < \infty, \qquad \Fhom\dash\eV(t \mapsto T_t;[0,1])<\infty. 
    \] 
    Then, there exists $U \in \Nrm_{k+1}(\R\times \R^d)$ with $\partial U \restrict {(0,1)\times \R^d}=0$ such that:
    \begin{enumerate}
        \item[{\rm(a)}] $U\vert_t = T_t$ for $\L^1$-a.e.\ $t \in [0,1]$; 
        \item[{\rm(b)}] $\Var(U;\frarg) = \Fhom\dash\eV(t \mapsto T_t; \frarg)$.
    \end{enumerate}
\end{proposition}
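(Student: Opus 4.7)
The strategy is to mimic the piecewise-constant construction from Theorem~\ref{thm:rect}, replacing integral fillings by normal ones. Fix $\eps>0$ and, passing to a good representative, assume $\Fhom\dash\pV(t\mapsto T_t;[0,1])\le \Fhom\dash\eV(t\mapsto T_t;[0,1])+\eps$. For each $N\in\N$, set $t_i:=i/N$ and $T_i:=T_{t_i}$, and choose $W_i\in\Nrm_{k+1}(\R^d)$ attaining $\Fhom(T_{i+1}-T_i)$ (the infimum is attained by weak* compactness and lower semicontinuity of mass, as recalled in the paper). Define
\[
U_N:=-\sum_{i=0}^{N-1}\bigl(\bbb{t_i,t_{i+1}}\times T_i+\delta_{t_{i+1}}\times W_i\bigr)\in\Nrm_{k+1}([0,1]\times\R^d).
\]
Exactly the same computations as in the proof of Theorem~\ref{thm:rect} give
\[
\partial U_N=-\delta_1\times T_1+\delta_0\times T_0,
\]
the uniform bounds $\Mbf(U_N)\le \sup_t \Mbf(T_t)+\Fhom\dash\pV(t\mapsto T_t;[0,1])$ and $\Mbf(\partial U_N)\le 2\sup_t \Mbf(T_t)$, and
\[
\Var(U_N;[0,1])=\sum_{i=0}^{N-1}\Mbf(W_i)\le \Fhom\dash\eV(t\mapsto T_t;[0,1])+\eps.
\]

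Next, by the standard weak* compactness for normal currents with uniformly bounded normal mass, we extract a subsequence $U_{N_j}\weaksto U\in\Nrm_{k+1}([0,1]\times\R^d)$. To identify the slices we apply the cylinder formula~\eqref{eq:definition_slicing} to $U_{N_j}$ as in the proof of Theorem~\ref{thm:rect}: denoting $\bar i = \lfloor N_j t\rfloor$, a direct calculation using~\eqref{eq:def_S_N}-type manipulations yields $U_{N_j}|_t=\delta_t\times T_{\bar i/N_j}$ and hence $U_{N_j}(t)=T_{\lfloor N_j t\rfloor/N_j}$. Since $t\mapsto T_t$ has finite essential $\Fhom$-variation, outside a countable set $E\subset[0,1]$ the left limit exists and equals $T_t$. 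For $t\notin E$, the bound $\Fhom\dash\pV\le\Fhom\dash\eV+\eps$ forces $\Fhom(T_{\lfloor N_j t\rfloor/N_j}-T_t)\to 0$; combined with the uniform mass bound $\sup_t\Mbf(T_t)<\infty$, flat convergence upgrades to weak* convergence, giving $U_{N_j}(t)\weaksto T_t$. On the other hand, by the slicing theory, $U_{N_j}(t)\weaksto U(t)$ for $\L^1$-a.e. $t$ along a further subsequence, so $U(t)=T_t$ for $\L^1$-a.e.\ $t$, proving~(a).

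For~(b), one inequality $\Fhom\dash\eV(t\mapsto T_t;\frarg)\le\Var(U;\frarg)$ is Proposition~\ref{prop:TV_leq_Var} (the normal version). For the reverse, it suffices to compare total masses: by lower semicontinuity of the space-time variation under weak* convergence,
\[
\Var(U;[0,1])\le\liminf_{j\to\infty}\Var(U_{N_j};[0,1])\le \Fhom\dash\eV(t\mapsto T_t;[0,1])+\eps,
\]
and letting $\eps\to 0$ yields equality of total masses, hence of measures since the reverse inequality already holds setwise.

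The main obstacle compared to the integral case is the lack of a ready-made Helly-type selection principle delivering simultaneous weak* convergence of a subsequence of space-time normal currents \emph{together with} weak* convergence of their slices at all but countably many times. I expect this to be circumvented as above: weak* compactness of normal currents gives the space-time limit $U$; the explicit formula for $U_{N_j}(t)$ combined with the $\Fhom$-BV regularity of $t\mapsto T_t$ forces pointwise-in-$t$ convergence of the slices to $T_t$ off a countable set; finally matching this with the slices of $U$ uses only that slicing is weakly* continuous for $\L^1$-a.e.\ level.
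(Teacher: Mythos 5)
Your plan is the right one and matches what the paper intends (the paper's own proof is just the one-liner ``the same as for Theorem~\ref{thm:rect} with minor adaptations''). The construction of $U_N$, the boundary/mass/variation estimates, the normal-current compactness, the explicit formula $U_{N_j}(t)=T_{\lfloor N_j t\rfloor/N_j}$, and the identification off the countable jump set $E$ all go through exactly as you say, and part~(b) follows by lower semicontinuity exactly as in the integral case.

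The one step that is phrased too loosely is the final assertion that ``slicing is weakly* continuous for $\L^1$-a.e.\ level.'' Slicing is \emph{not} continuous under weak* convergence pointwise in $t$, so this needs a justification. Two clean fixes, both compatible with your outline: (i) Since the $U_{N_j}$ have uniformly bounded normal mass, their weak* convergence to $U$ upgrades to flat convergence $\Fbf(U_{N_j}-U)\to 0$; writing $U_{N_j}-U=\partial Q_j+R_j$ with $\Mbf(Q_j)+\Mbf(R_j)\to 0$ and slicing, the commutation of slicing with boundary and the standard estimate $\int\Mbf(Q_j|_t)\,\dd t\le\Mbf(Q_j)$ give $\int_0^1\Fbf\bigl(U_{N_j}|_t-U|_t\bigr)\,\dd t\to 0$; a further subsequence then gives flat convergence of slices for $\L^1$-a.e.\ $t$, which together with the uniform mass bound $\Mbf(U_{N_j}|_t)=\Mbf(T_{\lfloor N_jt\rfloor/N_j})\le C$ yields weak* convergence $U_{N_j}|_t\weaksto U|_t$ a.e. (ii) More directly, avoid pointwise-in-$t$ continuity altogether: by~\eqref{eq:slice_general}, for each $\psi\in\Crm_c^\infty((0,1))$ and each smooth $k$-form $\eta$ on $\R\times\R^d$, $\int_0^1\psi(t)\langle U_{N_j}|_t,\eta\rangle\,\dd t=\langle U_{N_j},(\psi\circ\tbf)\,\dd\tbf\wedge\eta\rangle\to\langle U,(\psi\circ\tbf)\,\dd\tbf\wedge\eta\rangle=\int_0^1\psi(t)\langle U|_t,\eta\rangle\,\dd t$, while dominated convergence (using $T_{\lfloor N_jt\rfloor/N_j}\weaksto T_t$ for $t\notin E$ plus the uniform mass bound) gives $\int_0^1\psi(t)\langle U_{N_j}|_t,\eta\rangle\,\dd t\to\int_0^1\psi(t)\langle\delta_t\times T_t,\eta\rangle\,\dd t$; then Lemma~\ref{lemma:equivalent_being_zero} identifies $U|_t=\delta_t\times T_t$ for a.e.\ $t$. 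Either way, the gap is easily filled; I'd favour option~(ii) as it sidesteps the slice-compactness subtlety entirely and uses only tools already in the paper.
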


\begin{proof}
The proof is the same as the one for Theorem~\ref{thm:rect} with minor adaptations.   
\end{proof}

\subsection{On the equivalence of variations and flat norms}\label{subsec:equivalence}

We are now finally ready to prove Theorem~\ref{thm:Var_equals_TV} and Proposition~\ref{prop:asymptotic_equivalence}.

\begin{proof}[Proof of Theorem~\ref{thm:Var_equals_TV}]
Let $S \in \Irm_{k+1}([0,1] \times \R^d)$ with $\Var(S; \frarg)$ non-atomic. Let us consider the \emph{normal} current $U$ obtained through Proposition~\ref{prop:gluing_normal} starting from the map $t \mapsto S(t)=\pbf_* (S|_t)$.
By the Zero-slice Lemma~\ref{lemma:zero_slices}, there is a positive measure $\kappa$ on $[0,1]$ and $W_t \in \Nrm_{k+1}([0,1] \times \R^d)$ for $\kappa$-a.e.\ $t$, which is supported in $\{t\}\times\R^d$, such that
\[
S-U=W=\int_0^1 W_t\; \dd \kappa(t) \qquad\text{and}\qquad
    	\partial W_t=0 \quad\text{for $\kappa$-a.e.\ $t$.}
\]
By Lemma~\ref{lemma:W_perp_S}, $\|S\|\perp \|W\|$. Then, using Proposition~\ref{prop:gluing_normal}~{\rm(b)} and Proposition~\ref{prop:TV_leq_Var}, we obtain 
\begin{align}
\Var(S;[0,1])+\Var(W;[0,1])&=\Var(U;[0,1]) \notag\\
& = \Fhom\dash\eV(U;[0,1]) \notag\\
& = \Fhom\dash\eV(S;[0,1]) \notag\\
& \leq \Fhom_{\Irm}\dash\eV(S;[0,1]) \notag\\
&\leq \Var(S;[0,1]), \label{eq:inequalities_in_variations}
\end{align}
hence $\Var(W;[0,1])=0$. The orienting multi-vector of $W$ always lies in $\Wedge_{k+1}(\{0\}\times \R^d)$, which gives the equality $\Var(W;[0,1])=\|W\|([0,1] \times \R^d)$, and thus $W=0$. Therefore, the inequalities in~\eqref{eq:inequalities_in_variations} are all equalities and the conclusion is reached.
\end{proof}

As a byproduct of the previous proof we also obtain the following.

\begin{corollary}\label{cor:S=U}
Let $S \in \Irm_{k+1}([0,1] \times \R^d)$ with $\partial S \restrict (0,1) \times \R^d = 0 $ and $\Var(S; \frarg)$ non-atomic. Let us consider the \emph{normal} current $U$ obtained through Proposition~\ref{prop:gluing_normal} starting from the map $t \mapsto S(t)=\pbf_* (S|_t)$. Then, $S=U$.
\end{corollary}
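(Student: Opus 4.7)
The plan is to observe that this corollary is essentially already contained in the proof of Theorem~\ref{thm:Var_equals_TV}: one only needs to read off the conclusion $W=0$ rather than just $\Var(W;[0,1])=0$. So my approach will be to recall the construction, identify $W:=S-U$ as the relevant ``zero-slice'' current, and then argue that the vanishing of its space-time variation forces it to vanish entirely.

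First I would note that, by construction of $U$ via Proposition~\ref{prop:gluing_normal} applied to $t \mapsto S(t)=\pbf_*(S|_t)$, the two paths $t\mapsto S|_t$ and $t\mapsto U|_t$ agree for $\L^1$-a.e.\ $t$. (Here I am using the standard identification of $S|_t$, which is supported in $\{t\}\times\R^d$, with its spatial pushforward $S(t)$.) Consequently $W:=S-U\in\Nrm_{k+1}([0,1]\times\R^d)$ has $\partial W\restrict(0,1)\times\R^d=0$ and $W|_t=0$ for $\L^1$-a.e.\ $t$. By the Zero-slice Lemma~\ref{lemma:zero_slices}, $W$ decomposes as $W=\int_0^1 W_t\,\dd\kappa(t)$ with each $W_t$ boundaryless and supported in $\{t\}\times\R^d$; in particular the orienting multivector of $W$ lies in $\Wedge_{k+1}(\{0\}\times\R^d)$ at $\|W\|$-a.e.\ point.

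Next I would chain together the identities already derived in the proof of Theorem~\ref{thm:Var_equals_TV}: by Lemma~\ref{lemma:W_perp_S}, $\|S\|\perp\|W\|$, hence $\Var(U;[0,1])=\Var(S;[0,1])+\Var(W;[0,1])$; on the other hand, by Proposition~\ref{prop:gluing_normal}(b) combined with Theorem~\ref{thm:Var_equals_TV}, $\Var(U;[0,1])=\Fhom\dash\eV(S;[0,1])=\Var(S;[0,1])$. Thus $\Var(W;[0,1])=0$.

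The main (and in fact only non-bookkeeping) point is then to upgrade $\Var(W;[0,1])=0$ to $W=0$. Because the orienting multivector $\vec W$ of $W$ lies in $\Wedge_{k+1}(\{0\}\times\R^d)$, the spatial projection satisfies $\|\pbf(\vec W)\|=|\vec W|=1$ at $\|W\|$-a.e.\ point, so
\[
0=\Var(W;[0,1])=\int_{[0,1]\times\R^d}\|\pbf(\vec W)\|\,\dd\|W\|=\|W\|([0,1]\times\R^d).
\]
Hence $W=0$, i.e.\ $S=U$. I do not expect any real obstacle here, since the hard analytic content (non-atomicity implying $\|S\|\perp\|W\|$, and the equality of variations) has already been established; the role of this corollary is simply to package those facts into an identification statement for $S$ itself.
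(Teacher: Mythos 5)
Your proof is correct and follows essentially the same route as the paper, which presents this corollary as a direct byproduct of the proof of Theorem~\ref{thm:Var_equals_TV}, where $W=0$ is already established through exactly the chain of lemmas you cite. One small notational point: since $W$ is only normal, $\vec W$ need not be simple, so the line should read $\|\pbf(\vec W)\|=\|\vec W\|=1$ (mass norm, which equals $1$ by the definition of $\|W\|$, and is preserved by $\pbf$ since $\pbf$ restricted to $\{0\}\times\R^d$ is an isometry) rather than $|\vec W|=1$ (Euclidean norm); this does not affect the conclusion $\Var(W;[0,1])=\|W\|([0,1]\times\R^d)$.
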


\begin{proof}[Proof of Proposition \ref{prop:asymptotic_equivalence}]
	Suppose the conclusion is not true. Then, there exists a set $B\subset\R$ with $\Var(S;B)>0$, a number $\delta>0$, and for every $t\in B$ a sequence $h_j(t)\to 0$ of positive numbers such that 
	\[
	\Fhom_\Irm(T_{t+h_j(t)}-T_t)\geq (1+\delta) \Fhom(T_{t+h_j(t)}-T_t)
	\]
	and we can assume without loss of generality that $\Fhom(T_{t+h_j(t)}-T_t)>0$ for every $j$ (if this were not true, we would have $T_{t+h_j(t)}=T_t$ and therefore the ratio would be $1$ by the convention agreed in the statement). 
	
	By a covering argument, for any $\rho>0$ we can select finitely many $t_1^\rho,\ldots,t_m^\rho$ and $h_i^\rho\leq \rho$ such that
	\[
	\sum_i \Var(S;[t_i^\rho,t_i^\rho+h_i^\rho])\geq\frac12 \Var(S;B).
	\]
	We consider a modification of the space-time piecewise-constant construction used in the proof of Theorem~\ref{thm:rect}. We build the currents $S^\rho$ in the following way: outside the slabs $[t_i^\rho,t_i^\rho+h_i^\rho]\times\R^d$ we set $S^\rho=S$; inside each slab $[t_i^\rho,t_i^\rho+h_i^\rho]\times\R^d$ we instead replace $S$ with 
	\[
	\bbb{t_i^\rho,t_i^\rho+h_i^\rho}\times S(t_i^\rho)+\delta_{t_i^\rho+h_i^\rho}\times R_i^\rho,
	\]
	where $R_i^\rho$ denotes an optimal \textit{normal} filling between $S(t_i^\rho)$ and $S(t_i^\rho+h_i^\rho)$. Then, $S^\rho$ converge, up to a subsequence, to some normal current $U$, with the property that $U|_t=S|_t$ for $\L^1$-a.e.\ $t$. By the Zero-slice Lemma~\ref{lemma:zero_slices},
	\[
	S-U=\int_0^1 W_t\; \dd\lambda(t)=:W.
	\]
	By Lemma~\ref{lemma:W_perp_S}, $\|W\|\perp \|S\|$.
	From this it follows that $\|U\|=\|S\|+\|W\|$, and therefore
	\begin{equation}\label{eq:variation_USW}
		\Var(U)=\Var(S)+\Var(W).
	\end{equation}
	Observe now that
	\begin{align*}
		\Var(S^\rho;[t_i^\rho,t_i^\rho+h_i^\rho])&=\Fhom(T_{t_i^\rho+h_i^\rho}-T_{t_i^\rho})\\
		&\leq (1+\delta)^{-1} \Fhom_\Irm(T_{t_i^\rho+h_i^\rho}-T_{t_i^\rho})\\
		&\leq (1+\delta)^{-1} \Var(S;[t_i^\rho,t_i^\rho+h_i^\rho]),
	\end{align*}
	which by summing yields
	\begin{align*}
		\sum_i \Var(S^\rho;[t_i^\rho,t_i^\rho+h_i^\rho])&\leq \frac{1}{1+\delta}\sum_i \Var(S;[t_i^\rho,t_i^\rho+h_i^\rho])\\
		&=\sum_i \Var(S;[t_i^\rho,t_i^\rho+h_i^\rho])-\frac{\delta}{1+\delta}\sum_i \Var(S;[t_i^\rho,t_i^\rho+h_i^\rho])\\
		& \leq \sum_i \Var(S;[t_i^\rho,t_i^\rho+h_i^\rho])-\frac{\delta}{2(1+\delta)}\Var(S;B).
	\end{align*}
	Thus,
	\[
	\Var(S^\rho;[0,1])\leq \Var(S;[0,1])-\frac{\delta}{2(1+\delta)}\Var(S;B).
	\]
	From the lower semicontinuity of the variation it holds that
	\[
	\Var(U;[0,1])\leq \liminf_{\rho\to 0} \Var(S^\rho;[0,1])\leq \Var(S;[0,1])-\frac{\delta}{2(1+\delta)}\Var(S;B),
	\]
	but this contradicts~\eqref{eq:variation_USW}.
\end{proof}

\subsection{Gluing of transported currents}

The following proposition constitutes another approach to turn a path of integral currents into a space-time current and shares some similarities with \cite[Theorem 6.11]{Kampschulte17PhD}. It only applies when we know \emph{a-priori} that our path of integral currents satisfies the geometric transport equation. In this case, the resulting space-time current is in general only normal.

\begin{proposition}\label{prop:kampschulte}
    Let $t \mapsto T_t \in \Irm_k(\R^d)$, $t \in [0,1]$, such that $\partial T_t=0$ for every $t \in [0,1]$ and such that there exists a vector field $b \in {\rm L}^1(\L^1\otimes \|T_t\|)$ with $b_t$ normal to $\vec{T}_t$ for $\|T_t\|$-a.e.\ $x$ and $\L^1$-a.e.\ $t$, and such that 
    \begin{equation}\label{eq:usual_PDE}
     \frac{\dd}{\dd t} T_t + \Lcal_{b_t} T_t = 0.
\end{equation}
Let $\vec{T} \colon [0,1] \times \R^d \to \R \times \R^d$ be the space-time $k$-vector field defined by $\vec{T}(t,x) := \iota_t(\vec{T}_t(x))$, where $\iota_t:x\mapsto (t,x)$. We set
\begin{equation*}
    	U := [(1, b) \wedge \vec{T}] \L^1 \otimes \|T_t\|. 
\end{equation*}
Then, the following statements hold true: 
\begin{enumerate}
    \item[\rm{(i)}] $U$ is normal, i.e., $U \in \Nrm_{k+1}([0,1] \times \R^d)$ and $(\partial U) \restrict {(0,1) \times \R^d} = 0 $; 
    \item[\rm{(ii)}] $U$ has simple unit orienting $(k+1)$-vector and $\|U\|=|(1, b)|\L^1 \otimes \|T_t\|$ as measures on $[0,1] \times \R^d$;  
    \item[\rm{(iii)}] $\tbf_{\#}\|U\| \ll \L^1$;  
    \item[\rm{(iv)}] $\pbf_{*}(U\vert_t) = T_t$ for $\L^1$-a.e.\ $t \in (0,1)$. 
\end{enumerate} 
\end{proposition}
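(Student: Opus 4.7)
The plan is to exploit the product structure $U = [(1,b)\wedge \vec T]\,\L^1(\dd t)\otimes\|T_t\|$ and reduce each claim to a short calculation; only the vanishing of $\partial U$ in (i) genuinely uses the geometric transport equation. For (ii), note that $\iota_t \vec T_t = (0,\vec T_t)$ is a simple unit $k$-vector and that, writing $(1,b_t) = (1,0) + (0,b_t)$, both summands are orthogonal to the span of $(0,\vec T_t)$ (the first trivially, the second by the hypothesis $b_t\perp \vec T_t$). Consequently, $(1,b_t)\wedge \iota_t\vec T_t$ is a simple $(k+1)$-vector whose Euclidean (and hence mass) norm equals $|(1,b_t)|$, yielding $\|U\| = |(1,b)|(\L^1\otimes\|T_t\|)$. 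Claim (iii) then follows because $\tbf_\#\|U\|$ has density $t\mapsto \int |(1,b_t)|\,\dd\|T_t\|$ with respect to $\L^1$, which is integrable by the hypothesis on $b$ and the uniform bound $\sup_t \mathbf M(T_t)<\infty$ from~\eqref{eq:uniform_bound_normal_mass} (recall $\partial T_t = 0$); the same bounds also give $\mathbf M(U) < \infty$ as required in (i).

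For the boundary vanishing in (i), by the density argument of Lemma~\ref{lemma:equiv_def_of_solutions} it suffices to check $\langle \partial U,\eta\rangle = \langle U, d\eta\rangle = 0$ for forms of the two types $\eta = \psi(t)\alpha(x)$ and $\eta = \psi(t)\,dt\wedge\beta(x)$, with $\psi\in C_c^\infty((0,1))$, $\alpha\in\Dscr^k(\R^d)$ and $\beta\in\Dscr^{k-1}(\R^d)$. Splitting $(1,b_t) = (1,0)+(0,b_t)$ yields the two pairing identities
\[
\langle (1,b_t)\wedge \iota_t\vec T_t,\, dt\wedge \omega\rangle = \langle \vec T_t,\omega\rangle, \qquad \langle (1,b_t)\wedge \iota_t\vec T_t,\, \gamma\rangle = \langle b_t\wedge \vec T_t,\gamma\rangle,
\]
valid for purely spatial covectors $\omega$ of degree $k$ and $\gamma$ of degree $k+1$; in each case only one of the two summands contributes. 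In the first test case, $d\eta = \psi'(t)\,dt\wedge\alpha + \psi(t)\,d\alpha$, so these identities reduce $\langle U,d\eta\rangle$ to
\[
\int_0^1\bigl[\psi'(t)\langle T_t,\alpha\rangle + \psi(t)\langle b_t\wedge T_t,d\alpha\rangle\bigr]\,\dd t,
\]
which is precisely the weak form of~\eqref{eq:usual_PDE} after invoking the Cartan identity for currents together with $\partial T_t=0$, and hence vanishes by Lemma~\ref{lemma:equiv_def_of_solutions}(i). In the second test case $d\eta = -\psi(t)\,dt\wedge d\beta$, and the first identity applied with $\omega = d\beta$ gives $\langle U, d\eta\rangle = -\int_0^1 \psi(t)\langle T_t,d\beta\rangle\,\dd t = -\int_0^1 \psi(t)\langle \partial T_t,\beta\rangle\,\dd t = 0$. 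This proves $(\partial U)\restrict (0,1)\times\R^d = 0$.

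For (iv), I would invoke the averaged slicing characterisation~\eqref{eq:slice_general}: for every $\psi\in C_c^\infty((0,1))$ and $\omega\in\Dscr^k(\R^d)$,
\[
\int_0^1 \langle \pbf_*(U|_t),\omega\rangle\,\psi(t)\,\dd t = \langle U,\psi(t)\,dt\wedge \pbf^*\omega\rangle = \int_0^1\psi(t)\langle T_t,\omega\rangle\,\dd t,
\]
where the second equality uses the already-established identity $\langle (1,b_t)\wedge \iota_t\vec T_t, dt\wedge\omega\rangle = \langle \vec T_t,\omega\rangle$. Letting $\psi$ and $\omega$ range over countable dense families then forces $\pbf_*(U|_t) = T_t$ for $\L^1$-a.e.\ $t$. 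The main obstacle in the whole proof is the careful bookkeeping of $dt$-components versus spatial components in the pairings on $(1,b_t)\wedge \iota_t\vec T_t$; once these two linear-algebraic identities are in place, the remainder amounts to translating the weak geometric transport equation into space-time language via Fubini.
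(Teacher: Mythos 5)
Your argument is essentially sound and reaches the same conclusions as the paper, but by a more hands-on route: for the boundary in (i) the paper does not test on product forms directly, but writes $\vec T\,\L^1\otimes\|T_t\| = \int_0^1\delta_t\times T_t\,\dd t$, applies the Cartan identity at the space-time level and invokes the product-solution result (Corollary~\ref{corollary:product}) together with~\eqref{eq:non_compact_support} to compute $\partial U$ exactly; for (iv) the paper slices via the cylinder formula~\eqref{eq:definition_slicing} and uses the PDE once more, whereas you use the averaged characterisation~\eqref{eq:slice_general} plus the algebraic identity $\langle (1,b_t)\wedge\iota_t\vec T_t,\,dt\wedge\pbf^*\omega\rangle=\langle\vec T_t,\omega\rangle$, which avoids the PDE in this step altogether and is a perfectly legitimate (arguably cleaner) alternative. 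Your treatment of (ii)--(iii), including the explicit use of the orthogonality hypothesis $b_t\perp\vec T_t$ to get $|(1,b_t)\wedge\iota_t\vec T_t|=|(1,b_t)|$, matches what the paper does (and spells out a point the paper leaves terse). Your two pairing identities and the two test cases $\psi(t)\alpha(x)$, $\psi(t)\,dt\wedge\beta(x)$ check out, and the reduction of the first case to Lemma~\ref{lemma:equiv_def_of_solutions}(i) via $\Lcal_{b_t}T_t=-\partial(b_t\wedge T_t)$ is correct.

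There is, however, one genuine omission in (i): the statement asserts $U\in\Nrm_{k+1}([0,1]\times\R^d)$, which requires $\Mbf(\partial U)<\infty$, and your computation with $\psi\in\Crm_c^\infty((0,1))$ only identifies $\partial U$ on the open slab. Finite mass of $U$ together with $(\partial U)\restrict(0,1)\times\R^d=0$ does \emph{not} by itself give finite boundary mass: a finite-mass current supported in the slab can have a boundary of infinite mass concentrated on the end slices (e.g.\ sums of thin capped cylinders $\bbb{0,\eps_n}\times C_n-\delta_{\eps_n}\times D_n$ with radii $1/n$ and heights $2^{-n}$). The fix stays entirely within your framework: run the same two test cases with $\psi\in\Crm^\infty([0,1])$ not vanishing at the endpoints and use the absolute continuity of $t\mapsto\langle T_t,\alpha\rangle$ (Lemma~\ref{lemma:equiv_def_of_solutions}(ii), or directly~\eqref{eq:non_compact_support}) to pick up the boundary terms; this identifies $\partial U=\delta_0\times T_0-\delta_1\times T_1$ (continuous representatives), which has finite mass and in particular recovers your claim on the open slab. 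This is exactly the information the paper's route via Corollary~\ref{corollary:product} delivers automatically.
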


Notice that the assumption that $b_t$ is normal to $\vec{T}_t$ is not restrictive, as one can always reduce to this case by adding a tangential component to $b_t$, without changing \eqref{eq:usual_PDE}. 

\begin{proof}
{\rm(i)}. We first show that $U$ has finite mass. Indeed, for every smooth $(k+1)$-form $\omega$ we have (here and in the following $\L^1$ is always understood with respect to $t$)
\begin{align*}
	|\langle U,\omega \rangle | & = |\langle [(1, b) \wedge \vec{T}] \L^1 \otimes \|T_t\| ,\omega \rangle |  \\ 
	& = \left| \int_0^1 \int_{\R^d} \langle (1,b(t,x))\wedge \vec{T}(t,x), \omega(t,x)\rangle  \; \dd \|T_t\|(x) \; \dd t \right| \\ 
	& \le \|\omega\|_\infty  \int_0^1 \int_{\R^d} |(1,b(t,x))\wedge \vec{T}(t,x)|  \; \dd \|T_t\|(x) \; \dd t  \\ 
	&\le \|\omega\|_\infty  \int_0^1 \int_{\R^d} |(1,b(t,x))|  \; \dd \|T_t\|(x) \; \dd t
\end{align*} 
and the claim follows since $b \in {\rm L}^1(\L^1\otimes \|T_t\|)$.

One can check that 
\begin{equation}\label{eq:int_delta_times_T_t}
\vec{T} \L^1\otimes \|T_t\| = \int_0^1 \delta_{t} \times T_t \; \dd t \qquad \text{as $k$-currents in $\R\times \R^d$.} 
\end{equation}
Indeed, for any $\psi \in \Crm^\infty_c((0,1))$ and every smooth $k$-form $\omega \in \Dscr^k(\R^d)$ we have 
\begin{align*}
\int_0^1 \langle \delta_{t} \times T_t,\psi\omega\rangle \; \dd t & = \int_0^1 \psi(t) \int_{\R^d} \langle \vec{T}_t(x) ,\omega(x)\rangle \; \dd\|T_t\|(x) \; \dd t = \langle \vec{T} \L^1\otimes \|T_t\| , \psi\omega\rangle, 
\end{align*}
and this suffices by the usual density arguments. 
Together with the formula for the boundary of a product,~\eqref{eq:int_delta_times_T_t} yields 
\[
\partial [\vec{T} \L^1\otimes \|T_t\|] = \int_0^1 \partial(\delta_{t} \times T_t )\; \dd t =0.
\]
Therefore, we have
\[
\partial U = \partial \bigl[ [(1, b) \wedge \vec{T}] \L^1 \otimes \|T_t\|\bigr] = - \Lcal_{(1,b_t)} (\vec{T} \L^1 \otimes \|T_t\|) = - \Lcal_{(1,b_t)}\left(\int_0^1\delta_{t} \times T_t \; \dd t\right). 
\]
Resorting to Corollary~\ref{corollary:product} and using also~\eqref{eq:non_compact_support} with $\psi = \1_{[0,1]}$, we obtain 
\[
\partial U = - \Lcal_{(1,b_t)}\left(\int_0^1\delta_{t} \times T_t \; \dd t\right) = -\int_0^1\Lcal_{(1,b_t)} \left( \delta_{t} \times T_t\right) \; \dd t = \delta_{0} \times T_0 - \delta_{1} \times T_1.
\]

{\rm(ii)}. By (i), $U$ can be written in the form $U = \vec{U} \|U\|$, where $\vec{U}$ is a unit $(k+1)$-vector field and $\|U\|$ is a non-negative measure. It is then clear from the definition that
\[
\vec{U} = \frac{(1, b) \wedge \vec{T}}{|(1,b)|}, \qquad \|U\| = |(1,b)|\L^1 \otimes \|T_t\|.
\]
Hence, we immediately see that $\vec{U}$ is a simple $(k+1)$-vector.

{\rm(iii)}. We have already shown that $\|U\| = |(1,b)|\L^1 \otimes \|T_t\|$ and therefore for every $\L^1$-null set $N \subset \R$ we have 
\[
\tbf_{\#}(\|U\|)(N) = \|U\|(N \times \R^d) = \int_{N \times \R^d} |(1,b)| \; \dd\|T_t\| \; \dd t
= 0. 
\]

{\rm(iv)}. By definition, for $\L^1$-a.e.\ $t \in (0,1)$ we have 
\[
U\vert_t = \partial(U \restrict {\{\tbf < t\}}).  
\]
For every smooth $k$-form $\eta$ in $\R^d$ the following chain of equalities holds:
\begin{align*}
\langle \pbf_{*}(U\vert_t),\eta \rangle &= \langle \pbf_{*}\partial (U \restrict {\{\tbf < t\}}),\eta \rangle \\
&= \langle\partial  \pbf_{*}(U \restrict {\{\tbf < t\}}),\eta \rangle \\ 
& = \langle \pbf_{*} (U\1_{\{\tbf < t\}}), d\eta \rangle \\
& = \langle U\1_{\{\tbf < t\}} , \pbf^{*}d\eta \rangle \\
& = \int_{0}^{t} \int_{\R^d} \langle (1,b(s,x))\wedge \vec{T}(s,x), \pbf^{*}d\eta(x)\rangle \; \dd \|T_s\|(x) \; \dd s\\ 
& = \int_{0}^t \int_{\R^d} \langle b(s,x)\wedge \vec{T}_s(x), d\eta(x) \rangle \; \dd \|T_s\|(x) \; \dd s\\ 
& = \int_{0}^t \langle b_s \wedge T_s, d\eta \rangle \,  \; \dd s \\
& = \int_{0}^t \langle  \partial (b_s \wedge T_s), \eta \rangle \,  \; \dd s \\ 
& = \int_{0}^t \frac{\dd}{\dd s}\langle T_s,\eta\rangle \; \dd s  \\ 
& = \langle T_t, \eta \rangle,
\end{align*}
where the second-to-last equality holds by~\eqref{eq:usual_PDE} and the last equality holds by the absolute continuity of the path $t\mapsto \langle T_t,\eta\rangle$, which follows from~\eqref{eq:usual_PDE}.
\end{proof}

\section{Negligible criticality and advection theorem} \label{sc:advection}

We are now ready to understand one of the core aspects of this work, namely the dynamics of slices of $\AC$ integral currents. Given $S\in\Irm^{\AC}_{k+1}([0,1] \times \R^d)$, with $\partial S \restrict (0,1) \times \R^d = 0$,   we prove the \emph{advection theorem}, namely, that the slices $(S\vert_t)_{t}$ are advected by a vector field (i.e., they solve a transport equation with a vector field in ${\rm L}^1( \int_0^1  \| S\vert_t \| \; \dd t)$) if and only if $S$ has the \emph{negligible criticality property} or, equivalently, if and only if $S$ has the \emph{Sard property}, both of which are defined below.

\subsection{Negligible criticality condition and Sard-type property}

The considerations of the previous section inspire the following definitions, which turn out to be central:

\begin{definition}\label{def:NH}
A space-time current $S \in \Irm_{1+k}([0,1] \times \R^d)$ is said to satisfy the \textbf{negligible-criticality condition} (with respect to the map $\tbf$) if 
\begin{equation}\label{eq:NC}\tag{NC}
\| S\| \restrict \Crit(S) = 0,
\end{equation}
where $\Crit(S)$ is the critical set of $S$ defined in~\eqref{eq:definition_critical_set}. 
\end{definition}

The condition~\eqref{eq:NC} means that for $\|S\|$-a.e.\ $(t,x)$, $\mathrm{span}\,(\vec{S}(t,x))\not\subseteq \{0\}\times \R^d$ (i.e., the approximate tangent space to $S$ has almost always a non-trivial time component). Equivalently, this can be stated by requiring that
\begin{equation*}
\vec \S(t,x) \restrict d \tbf(t,x) \neq 0\quad\text{for $\|\S\|$-a.e.\ $(t,x)$}.
\end{equation*}

We also consider the following (in general weaker) condition, for which we recall that we denote by $\{\mu_t\}$ the disintegration of $\|S\|$ with respect to $\tbf$ and
\[
  \lambda := \L^1 +(\tbf_{\#}\|S\|)^s,
\]
as in~\eqref{eq:disintegration}.

\begin{definition}\label{def:sard}
A current $S \in \Irm_{1+k}([0,1] \times \R^d)$ is said to have the \textbf{Sard property} (with respect to the map $\tbf$) if
\begin{equation}\label{eq:Sard}\tag{S}
    \mu^s_t = 0 \qquad \text{for $\L^1$-a.e.\ $t \in \R$}.
\end{equation}
\end{definition}

It might not be evident from Definition~\ref{def:sard} why the vanishing of $\mu_t^s$ is called a ‘‘Sard-type’’ property. The following lemma and example explain our choice of terminology and clarify the relationship between~\eqref{eq:NC} and the Sard property. 

\begin{lemma} \label{lemma:sard_equivalent} Let $S \in \Irm_{1+k}([0,1] \times \R^d)$. Then, the following statements are equivalent: 
	\begin{enumerate}
		\item[{\rm(i)}] $S$ has the Sard property, i.e., $\mu^s_t = 0$ for $\L^1$-a.e.\ $t \in \R$; 
		\item[{\rm(ii)}] $\tbf_{\#} (\| S \|\restrict \Crit(S) )$ is singular with respect to $\L^1$.  
	\end{enumerate}
Furthermore,~\eqref{eq:NC} implies both of them. 
\end{lemma}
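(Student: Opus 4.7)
The approach leverages Theorem~\ref{thm:struct} directly: I would restrict the disintegration of $\|S\|$ to the critical set $C := \Crit(S)$, push forward by $\tbf$, and simply read off the Lebesgue decomposition of the resulting measure. Starting from
\[
  \|S\| = \int_0^1 \bigl(|\nabla^S\tbf|^{-1} \|S|_t\| + \mu^s_t\bigr) \, \dd t + \int_0^1 \mu_t \, \dd \lambda^s(t),
\]
I restrict to $C$. By Lemma~\ref{lemma:disint_bella}(ii), $|\nabla^S\tbf|^{-1}\|S|_t\|$ gives zero mass to $C$ for $\L^1$-a.e.\ $t$; by Theorem~\ref{thm:struct}, $\mu^s_t$ is concentrated on $C$ for $\L^1$-a.e.\ $t$ and $\mu_t$ is concentrated on $C$ for $\lambda^s$-a.e.\ $t$. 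Hence the ``non-critical'' absolutely continuous piece drops out and
\[
  \|S\| \restrict C = \int_0^1 \mu^s_t \, \dd t + \int_0^1 \mu_t \, \dd \lambda^s(t).
\]

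Next, since $\mu^s_t$ and $\mu_t$ are supported in $\{t\}\times\R^d$, pushing forward by $\tbf$ yields
\[
  \tbf_{\#}(\|S\| \restrict C) = \phi \, \L^1 + \psi \, \lambda^s,
\]
where $\phi(t) := \mu^s_t(\R \times \R^d)$ and $\psi(t) := \mu_t(\R \times \R^d)$. Because $\lambda^s \perp \L^1$ by construction, this display is already the Lebesgue decomposition of $\tbf_{\#}(\|S\| \restrict C)$ with respect to $\L^1$.

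The equivalence {\rm(i)}$\Longleftrightarrow${\rm(ii)} follows by uniqueness of the Lebesgue decomposition: singularity of $\tbf_{\#}(\|S\|\restrict C)$ with respect to $\L^1$ is equivalent to the vanishing of the absolutely continuous part $\phi \, \L^1$, which in turn, since $\mu^s_t \geq 0$, is equivalent to $\mu^s_t$ being the zero measure for $\L^1$-a.e.\ $t$. For the final assertion, if $\|S\|(C) = 0$ then $\tbf_{\#}(\|S\|\restrict C) = 0$, which trivially gives {\rm(ii)}; the displayed decomposition then forces $\phi \equiv 0$ $\L^1$-a.e., establishing {\rm(i)}.

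I do not expect any substantial obstacle here: all the structural work has been done in Theorem~\ref{thm:struct}. The only subtle point to handle carefully is making sure that the ``non-critical'' absolutely continuous piece $|\nabla^S\tbf|^{-1}\|S|_t\|$ indeed disappears upon restriction to $C$ for $\L^1$-a.e.\ $t$, which is precisely the content of Lemma~\ref{lemma:disint_bella}(ii).
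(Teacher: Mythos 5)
Your proof is correct and takes essentially the same route as the paper: the paper likewise works with $\nu := \|S\|\restrict \Crit(S)$, whose disintegration with respect to $\tbf$ and $\lambda$ is exactly the identity you rederive from Theorem~\ref{thm:struct} and Lemma~\ref{lemma:disint_bella}(ii), and both directions then amount to pushing forward under $\tbf$ and comparing absolutely continuous and singular parts. The only cosmetic difference is that you phrase the conclusion via uniqueness of the Lebesgue decomposition of $\tbf_{\#}(\|S\|\restrict \Crit(S))$, which the paper does implicitly.
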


\begin{proof} It is clear that~\eqref{eq:NC} implies {\rm(ii)}, therefore it suffices to show the equivalence between {\rm(i)} and {\rm(ii)}.
For this, it is useful to recall the definition of the measure $\nu:=\|S\|\restrict C$, where $C := \Crit(S)$, introduced in the proof of Theorem~\ref{thm:struct}.

${\rm(i)} \implies {\rm(ii)}$. Assume first that $S$ has the Sard property, i.e., $\mu^s_t = 0$ for $\L^1$-a.e.\ $t \in \R$. By the very definition of $\mu^s_t$ (see~\eqref{eq:def_of_sing_sard}), we have that $\nu_t = 0$ for $\L^1$-a.e.\ $t \in \R$ and thus the disintegration of the measure $\nu$ with respect to $\tbf$ and $\lambda$ has the form
\begin{equation}\label{eq:disint_of_nu}
    \nu = \int_0^1 \nu_t \; \dd\lambda^s(t).
\end{equation} 
Hence, $\tbf_{\#}\nu \ll \lambda^s$ is indeed singular with respect to Lebesgue measure on $\R$.  
	
${\rm(ii)} \implies {\rm(i)}$. The converse is very similar: if $\tbf_{\#} (\| S \|\restrict C) \perp \L^1$, then we must have $\lambda\perp \L^1$ and therefore the disintegration of $\nu$ with respect to $\tbf$ and $\lambda$ has again the form~\eqref{eq:disint_of_nu}, i.e., $\nu_t = 0$ for $\L^1$-a.e.\ $t$. \end{proof}

Normally one defines Sard-like properties for maps, meaning that the image of the critical set is negligible. The connection with our notion given by Definition~\ref{def:sard} is best explained by the following example:

\begin{example}
Let $f:\R^d\to \R$ be $\Crm^1$, let $\Gamma\subset [0,1] \times \R^d$ denote its graph and let $S$ be the natural integral $d$-current with multiplicity $1$ associated with $\Gamma$. Then $\mathrm{Crit}(S)$ coincides with the set of points $(f(x),x)\in\Gamma$ with $x$ critical point of $f$, that is, $\nabla f(x)=0$. Indeed, the tangent space at $(f(x),x)$ is spanned by $\partial_i((f(x),x))=(\partial_i f(x),\ee_i)$ for $i=1,\ldots,d$, and thus $\nabla^S\tbf=0$ at $(f(x),x)$ if and only if $\partial_i f(x)=0$ for $i=1,\ldots,d$. By Lemma~\ref{lemma:sard_equivalent} it follows that $f$ has the classical Sard property (namely, $f(\{x:\nabla f(x)=0\})$ is $\L^1$-negligible) if and only if $S$ has the Sard property in the sense of Definition~\ref{def:sard}.
\end{example}

\subsection{The absolutely continuous case}\label{ss:ac_currents}

We now restrict our analysis to the case where the current $S$ is boundaryless and $\AC$ in the sense introduced in Section~\ref{sc:variation}. The following proposition contains a characterisation of $\AC$ space-time current via time projections.

\begin{proposition}\label{prop:projection_AC}
Let $S \in \Irm_{k+1}([0,1] \times \R^d)$ with $\partial S \restrict (0,1) \times \R^d = 0$. Then, $S \in \Irm^{\AC}_{1+k}([0,1] \times \R^d)$ if and only if $\lambda^s = 0$, i.e., 
\begin{equation*}
\tbf_{\#} (\| S\| \restrict \Crit(S))\ll \L^1.
\end{equation*} 
\end{proposition}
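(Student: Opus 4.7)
The plan is to reduce the $\AC$ condition to an assertion purely about the time-pushforward of $\|S\|$. Since we are assuming $\partial S \restrict (0,1)\times\R^d = 0$, the boundary part of the definition of $\Irm_{1+k}^{\AC}$ is automatically satisfied, so we only need to verify that $\Var(S;\frarg)\ll \L^1$. First I would unpack the ``i.e.'' in the statement: from Lemma~\ref{lemma:disint_non_critical_mass} we have $\tbf_{\#}(\|S\|\restrict \Crit(S)^c)\ll \L^1$, whence
\[
(\tbf_{\#}\|S\|)^s = \bigl(\tbf_{\#}(\|S\|\restrict\Crit(S))\bigr)^s,
\]
and the singular part on the left, which is $\lambda^s$, vanishes exactly when $\tbf_{\#}(\|S\|\restrict\Crit(S))\ll \L^1$.

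The core of the proof is then the pointwise computation of $\|\pbf(\vec S)\|$ on the two pieces of the support of $\|S\|$. On $\Crit(S)^c$, I would use the orthogonal decomposition $\vec S = \xi \wedge \vec S|_t$ with $\xi\perp \vec S|_t$, already invoked in Lemma~\ref{lemma:disint_non_critical_current}. Here $\vec S|_t$ lies in $\Wedge_k(\{0\}\times\R^d)$ and $\xi$ is the unit vector from~\eqref{eq:def_eta}. Since $\nabla^S \tbf$ is the orthogonal projection of $\ee_t = (1,0)$ onto the approximate tangent space to $S$, a direct calculation shows that the $\R$-component of $\nabla^S\tbf$ equals $|\nabla^S\tbf|^2$, and therefore the $\R$-component of $\xi$ equals $|\nabla^S\tbf|$ while $|\pbf(\xi)| = \sqrt{1-|\nabla^S\tbf|^2}$. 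Applying $\pbf$ kills the time direction and acts as the identity on $\Wedge_k(\{0\}\times\R^d)$, so using that $\pbf(\xi)$ is orthogonal to the span of $\vec S|_t$ (which follows from $\xi \perp \vec S|_t$ together with the fact that $\mathrm{span}(\vec S|_t)\subseteq \{0\}\times \R^d$), we obtain the simple $(k+1)$-vector
\[
\pbf(\vec S) = \pbf(\xi)\wedge \vec S|_t, \qquad \|\pbf(\vec S)\| = |\pbf(\vec S)| = \sqrt{1-|\nabla^S\tbf|^2} \le 1.
\]
On $\Crit(S)$ instead, the tangent space is orthogonal to $\ee_t$, hence contained in $\{0\}\times\R^d$; the projection $\pbf$ acts as an isometry on it, giving $\|\pbf(\vec S)\| = |\vec S| = 1$.

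Assembling these two identities, the variation measure splits as
\[
\Var(S;\frarg) = \tbf_{\#}\bigl(\sqrt{1-|\nabla^S\tbf|^2}\,\|S\|\restrict\Crit(S)^c\bigr) + \tbf_{\#}\bigl(\|S\|\restrict\Crit(S)\bigr).
\]
The first summand is dominated by $\tbf_{\#}(\|S\|\restrict\Crit(S)^c)$, which is $\L^1$-absolutely continuous by Lemma~\ref{lemma:disint_non_critical_mass}. Since both summands are non-negative, the absolute continuity of $\Var(S;\frarg)$ is equivalent to that of the second summand, yielding the claim. The only technical point is the clean identification of $\|\pbf(\vec S)\|$ on $\Crit(S)^c$; everything else is a direct appeal to the Disintegration Structure Theorem~\ref{thm:struct} and to Lemma~\ref{lemma:disint_non_critical_mass}.
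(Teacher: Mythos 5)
Your proof is correct and takes essentially the same route as the paper's: both split $\|S\|$ across $\Crit(S)$ and $\Crit(S)^c$ and invoke Lemma~\ref{lemma:disint_non_critical_mass} for the non-critical part. The one added refinement is that you compute the pointwise value $\|\pbf(\vec{S})\| = \sqrt{1-|\nabla^S\tbf|^2}$ exactly, yielding the single identity $\Var(S;\frarg) = \tbf_{\#}\bigl(\sqrt{1-|\nabla^S\tbf|^2}\,\|S\|\restrict\Crit(S)^c\bigr) + \tbf_{\#}\bigl(\|S\|\restrict\Crit(S)\bigr)$ that settles both directions at once, whereas the paper argues the two implications separately from the coarser facts $|\pbf(\vec{S})| = 1$ on $\Crit(S)$ and $|\pbf(\vec{S})|\le 1$ everywhere; your formula is a nice bonus but not needed for the equivalence, and the underlying decomposition and key lemma are identical.
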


\begin{proof}
	Assume first $S \in \Irm^{\AC}_{1+k}([0,1] \times \R^d)$. Observe that, for every $(t,x) \in C := \Crit(S)$, we have $|\pbf(\vec{S}(t,x))|=1$. Thus, for every open interval $(a,b) \subset [0,1]$ we have 
	\begin{align*}
		\tbf_{\#} (\| S\| \restrict C) ((a,b)) & =  \| S \|(C \cap ((a,b) \times \R^d)) \\
		& =   \int_{C \cap ((a,b) \times \R^d)}  |\pbf(\vec{S})| \; \dd\|S\|  \\ 
		& \le  \int_{(a,b) \times \R^d}  |\pbf(\vec{S})| \; \dd\|S\|  \\ 
		& = \Var(S, (a,b))
	\end{align*}
and the claim follows from the definition of $S \in \Irm^{\AC}_{1+k}([0,1] \times \R^d)$. 

For the opposite direction, suppose $\tbf_{\#} (\| S\| \restrict C)\ll \L^1$. Then, for every measurable set $A \subset [0,1]$ we have 
\begin{align*}
	\Var(S; A) &= \int_{A \times \R^d} |\pbf(\vec{S})| \; \dd \|S\|  \\
	& \le \|S\|(A \times \R^d)  \\ 
	& = \|S\| \restrict {C^c}(\tbf^{-1}(A) ) + \|S\| \restrict {C}(\tbf^{-1}(A)) \\ 
	 & = \tbf_{\#}(\|S\| \restrict C^c)(A) + \tbf_{\#}(\|S\|\restrict C)(A),
\end{align*}
whence 
\[
	\Var(S;\frarg)  \le \tbf_{\#}(\|S\| \restrict C^c) + \tbf_{\#}(\|S\|\restrict C) \qquad \text{as measures on $\R$}.
\]
It remains to observe that the first summand is always absolutely continuous with respect to $\L^1$ by Lemma~\ref{lemma:disint_non_critical_mass}, while the second is also absolutely continuous by assumption.
\end{proof}

The following structure result offers some equivalent conditions to the negligible criticality property for $\AC$ currents.

\begin{proposition} \label{prop:sard_equivalent_AC}
Let $S \in \Irm^{\AC}_{1+k}([0,1] \times \R^d)$. Then, the following are equivalent: 
	\begin{enumerate}
		\item[{\rm(i)}] $S$ has the~\eqref{eq:NC} property, i.e., $\| S \|\restrict C = 0$; 
		\item[{\rm(ii)}] it holds that $\tbf_{\#} (\| S \|\restrict C) \perp \L^1$; 
		\item[{\rm(iii)}] $S$ has the Sard property, i.e., $\mu^s_t = 0$ for $\L^1$-a.e.\ $t \in \R$; 
		\item[{\rm(iv)}] $\| S \| \ll  \int_0^1  \| S\vert_t \| \; \dd t$, that is, for every Borel set $A \subset [0,1] \times \R^d$
		\begin{equation*}
			\int_0^1 \|S\vert_t \|(A) \; \dd t = 0 \implies \|S\|(A) = 0. 
		\end{equation*}
	\end{enumerate}
Furthermore, if any of the above conditions holds, then the disintegration of the mass measure $\|S\|$ with respect to $\tbf$ and $\L^1$ has the form 
\[
	\| S \| = \int_0^1  |\nabla^S\tbf|^{-1}  \| S\vert_t \| \; \dd t. 
\]
\end{proposition}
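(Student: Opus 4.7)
The plan is to organize the four equivalences around the disintegration formula from Theorem~\ref{thm:struct}, which in the AC setting simplifies considerably since, by Proposition~\ref{prop:projection_AC}, the measure $\lambda^s$ vanishes. Under this assumption, the general disintegration structure~\eqref{eq:structure_disint} reduces to
\[
\| S \| \;=\; \int_0^1 |\nabla^S\tbf|^{-1}\,\| S\vert_t \|\; \dd t \;+\; \int_0^1 \mu^s_t\; \dd t,
\]
where by Lemma~\ref{lemma:disint_non_critical_mass} the first term equals $\|S\|\restrict C^c$ (with $C:=\Crit(S)$), and by Theorem~\ref{thm:struct} the second term is concentrated on $C$. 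Hence $\|S\|\restrict C = \int_0^1 \mu_t^s\,\dd t$; this single identity will drive all implications.

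First, (i) $\Rightarrow$ (ii) is trivial. For (ii) $\Rightarrow$ (i), I would use that AC implies $\tbf_\# \|S\|\ll \L^1$ via Proposition~\ref{prop:projection_AC}, so $\tbf_\#(\|S\|\restrict C)\ll \L^1$ as well; combined with (ii) this forces $\tbf_\#(\|S\|\restrict C)=0$, hence $\|S\|(C)=0$. The equivalence (ii) $\Leftrightarrow$ (iii) is just Lemma~\ref{lemma:sard_equivalent}, which we may apply directly.

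For (i) $\Rightarrow$ (iv): Once $\|S\|\restrict C=0$, the displayed disintegration simplifies further to $\|S\|=\int_0^1 |\nabla^S\tbf|^{-1}\|S\vert_t\|\;\dd t$, which is manifestly absolutely continuous with respect to $\int_0^1 \|S\vert_t\|\;\dd t$. For the converse (iv) $\Rightarrow$ (i), the key observation is that by Lemma~\ref{lemma:disint_bella}(i),
\[
\int_0^1 \| S\vert_t \|\; \dd t \;=\; |\nabla^S \tbf| \, \| S\|,
\]
and the right-hand side clearly satisfies $(|\nabla^S\tbf|\|S\|)(C)=0$ since $|\nabla^S\tbf|=0$ on $C$. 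The hypothesis $\|S\|\ll \int_0^1 \|S\vert_t\|\;\dd t$ then immediately gives $\|S\|(C)=0$, which is (i).

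The final displayed formula is then just the same simplified disintegration noted at the start, now justified since all the singular components ($\lambda^s$ and $\mu_t^s$) vanish. There is no real obstacle here: the proof is essentially a matter of assembling the previously established structure theorem, the AC characterization, the coarea-type disintegration lemmas, and the general equivalence from Lemma~\ref{lemma:sard_equivalent}; the only subtlety is remembering to invoke $\lambda^s=0$ (which is precisely what AC buys us) in order to upgrade the weaker condition (ii)---singularity---to the stronger condition (i)---vanishing.
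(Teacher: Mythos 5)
Your proof is correct and rests on the same ingredients as the paper's (Theorem~\ref{thm:struct}, Proposition~\ref{prop:projection_AC}, Lemma~\ref{lemma:disint_bella}, Lemma~\ref{lemma:sard_equivalent}), but the organization is a bit different, and it is worth recording the distinction. The paper proves the equivalence by closing a single cycle ${\rm(i)}\Rightarrow{\rm(ii)}\Rightarrow{\rm(iii)}\Rightarrow{\rm(iv)}\Rightarrow{\rm(i)}$, with the AC hypothesis entering only in the step ${\rm(iii)}\Rightarrow{\rm(iv)}$ (via $\lambda^s=0$). You instead organize the argument around the identity $\|S\|\restrict \Crit(S)=\int_0^1\mu^s_t\,\dd t$, which is the AC specialization of~\eqref{eq:structure_disint} combined with Lemma~\ref{lemma:disint_non_critical_mass}; this is a useful clarifying observation that the paper's proof uses implicitly but never isolates. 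You then give a \emph{direct} proof of ${\rm(ii)}\Rightarrow{\rm(i)}$: under AC, $\tbf_\#(\|S\|\restrict\Crit(S))\ll\L^1$, so singularity forces it to vanish, hence $\|S\|(\Crit(S))=0$. This short-circuits the cycle and emphasizes that in the AC setting, ${\rm(i)}$ and ${\rm(ii)}$ are equivalent for a cheap measure-theoretic reason (a measure that is both absolutely continuous and singular with respect to $\L^1$ vanishes), rather than by going the long way through ${\rm(iii)}$ and ${\rm(iv)}$. Your ${\rm(iv)}\Rightarrow{\rm(i)}$ step is the same as the paper's, just phrased via Lemma~\ref{lemma:disint_bella}(i) (identify the disintegrated measure as $|\nabla^S\tbf|\,\|S\|$, which trivially annihilates $\Crit(S)$) instead of via part (ii) of that lemma; these are interchangeable. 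In short, you are not doing anything the paper couldn't, but the hub-and-spoke structure (everything tied to ${\rm(i)}$ or ${\rm(ii)}$) makes more transparent exactly where the AC hypothesis is used.
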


\begin{proof}
	{\rm(i)} $\implies$ {\rm(ii)}. This is obvious. 
	
	{\rm(ii)} $\implies$ {\rm(iii)}. This was proven for $S \in \Irm_{k+1}([0,1] \times \R^d)$ in Lemma~\ref{lemma:sard_equivalent}.  
	
	{\rm(iii)} $\implies$ {\rm(iv)}. Assume that $S$ has the Sard property. By Proposition~\ref{prop:projection_AC} we also know $\lambda^s = 0$. Combining this with~\eqref{eq:structure_disint}, we obtain 
	\begin{equation*}
		\| S \|  = \int_0^1 |\nabla^S\tbf|^{-1}  \| S\vert_t \| \; \dd t, 
	\end{equation*}
	whence the claim follows.
	
	{\rm(iv)} $\implies$ {\rm(i)}.  By Lemma~\ref{lemma:disint_bella} we have $\|S\vert_t\|(C) = 0$ for $\L^1$-a.e.\ $t \in \R$. Thus,
	\begin{equation*}
		\int_0^1 \|S\vert_t \|(C) \; \dd t = 0
	\end{equation*}
	and this implies $\|S\|(C) = 0 $ by the assumed implication.
\end{proof}

\begin{remark}\label{rmk:case_k_zero}
If $k=0$, the Sard property is always satisfied, that is, for every $S \in \Irm_{1}(\R\times \R^d)$ it holds that $\tbf_{\#} (\| S \|\restrict C) \perp \L^1$, since by the area formula $\H^{1}(\tbf(C)) = 0$. So, the effects related to the criticality are not present in all of the classical theory of BV- or AC-maps (which can be recovered as the $k = 0$ endpoint of our theory).
\end{remark}

For $k > 1$, the Sard property is not always satisfied and it is possible to construct $\AC$ (even Lipschitz) integral currents that do not have the Sard property. We refer the reader to Section~\ref{sc:counterexample} for the construction of such an example.

\subsection{Geometric derivative and advection theorem}

In the following we will prove what we call the \emph{advection theorem}, namely that for space-time currents satisfying the negligible criticality condition~\eqref{eq:NC}, there exists an advecting vector field. Furthermore, also the converse holds. These results should be compared with e.g. \cite[Theorem 8.3.1]{AGS}, where a similar advection theorem is established within the class of probability measures.

Recall that, if $S\in\Irm^{\AC}_{k+1}([0,1] \times \R^d)$ then, by Proposition~\ref{prop:sard_equivalent_AC}, the condition~\eqref{eq:NC} is equivalent to $S$ having the Sard property or also to 
\[
\|S\| \ll \int_0^1  \| S\vert_t \| \; \dd t.
\]
We define the \textbf{geometric derivative} of such an $S$ as
\begin{equation}\label{eq:geom_deriv} 
  b(t,x) := \frac{\DD}{\DD t} S(t,x) := \frac{\pbf(\xi(t,x))}{\abs{\nabla^S \tbf(t,x)}},  \qquad (t,x) \in \Crit(S)^c,
\end{equation}
where $\xi = |\nabla^S \tbf|^{-1}\nabla^S \tbf$ (on $\Crit(S)^c$) was defined in~\eqref{eq:def_eta}. Observe that $b$ is well-defined $(\int_0^1  \| S\vert_t \| \; \dd t)$-almost everywhere (by Lemma~\ref{lemma:disint_bella}~(ii)) and under~\eqref{eq:NC}, both $\xi$ and $b$ are well-defined also $\|\S\|$-almost everywhere. Clearly,
\begin{equation} \label{eq:geom_deriv_est}
  \absBB{\frac{\DD}{\DD t} S} \leq \frac{1}{\abs{\nabla^S \tbf}}.
\end{equation}

\begin{theorem}[Advection] \label{thm:advection}
Let $S\in\Irm^{\AC}_{k+1}([0,1] \times \R^d)$ with $(\partial \S) \restrict {(0,1) \times \R^d}=0$ satisfy~\eqref{eq:NC}. Then, the geometric derivative $b := \frac{\DD}{\DD t} S$ defined in~\eqref{eq:geom_deriv} belongs to ${\rm L}^1( \int_0^1  \| S\vert_t \| \; \dd t)$ and it holds that 
\begin{equation}\label{eq:advection}
     \frac{\dd}{\dd t} \S(t) + \Lcal_{b_t} \S(t) = 0.
\end{equation}
Conversely, if there exists a vector field $b \in {\rm L}^1( \int_0^1  \| S\vert_t \| \; \dd t)$ such that~\eqref{eq:advection} holds, then $S$ satisfies~\eqref{eq:NC}. 
\end{theorem}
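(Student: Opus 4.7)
The forward direction proceeds as follows. Under~\eqref{eq:NC}, the measure $\|S\|$ is concentrated on $\Crit(S)^c$, so Lemma~\ref{lemma:disint_non_critical_current} disintegrates $S$ itself: writing $\nabla^S\tbf=(a,v)$ in space-time coordinates one checks that $a=|\nabla^S\tbf|^2$, hence $|\nabla^S\tbf|^{-1}\xi = (1,b)$ and therefore
\[
  S = \int_0^1 (1,b_t)\wedge S|_t \,\dd t
\]
as $(k{+}1)$-vector-valued measures. The elementary bound $|b|\le 1/|\nabla^S\tbf|$ combined with Lemma~\ref{lemma:disint_non_critical_mass} yields $\int_0^1\!\int|b|\,\dd\|S|_t\|\,\dd t \le \|S\|(\Crit(S)^c) = \Mbf(S) < \infty$ under~\eqref{eq:NC}, so $b\in\Lrm^1(\int_0^1\|S|_t\|\,\dd t)$. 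For the PDE itself, I would test the identity $\partial S\restrict (0,1)\times\R^d = 0$ against forms $\eta = \psi(t)\,\pbf^*\omega(x)$ with $\psi\in\Crm_c^\infty((0,1))$ and $\omega\in\Dscr^k(\R^d)$: using the displayed disintegration together with the purely-spatial character of $\vec{S}|_t$, the identity $\langle S,d\eta\rangle = 0$ splits into $\int_0^1 \psi'(t)\langle S(t),\omega\rangle\,\dd t + \int_0^1 \psi(t)\langle b_t\wedge S(t),d\omega\rangle\,\dd t = 0$, which is exactly the weak form of $\partial_t S(t)+\Lcal_{b_t}S(t) = 0$ (recalling that $\partial S(t)=0$ reduces $\Lcal_{b_t}S(t)$ to $-\partial(b_t\wedge S(t))$).

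For the converse, assume the PDE holds for some $b\in\Lrm^1(\int_0^1\|S|_t\|\,\dd t)$ and, after subtracting its tangential component (which does not affect the PDE since $b^{\mathrm{tan}}\wedge\vec{S(t)}=0$), suppose $b_t\perp\vec{S(t)}$. Proposition~\ref{prop:kampschulte} then produces a normal current $U$ which is boundaryless on $(0,1)\times\R^d$, satisfies $\pbf_*(U|_t) = S(t)$ (so $U|_t = S|_t$ as currents in $\{t\}\times\R^d$), and obeys $\|U\| = |(1,b)|\,\L^1\otimes\|T_t\|$, where $T_t := S(t)$. The difference $W := S-U$ is a normal current whose slices $W|_t$ vanish for a.e.\ $t$, so the Zero-slice Lemma~\ref{lemma:zero_slices} disintegrates $W$ as $\int_0^1 W_t\,\dd\kappa(t)$ with each $W_t$ boundaryless and supported in $\{t\}\times\R^d$. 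Because $S\in\Irm^{\AC}_{k+1}$ has non-atomic variation, Lemma~\ref{lemma:W_perp_S} gives $\|S\|\perp\|W\|$, and the mutual singularity of $S$ and $W$ in the decomposition $U = S - W$ upgrades this to the equality $\|U\| = \|S\| + \|W\|$; in particular, $\|S\|\le\|U\|$ as measures.

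Finally, the construction of $U$ gives $\|U\|\ll\L^1\otimes\|T_t\|$, while Lemma~\ref{lemma:disint_bella}~{\rm(ii)} together with the identification $\|T_t\|=\|S|_t\|$ (via the bijection $\pbf\colon\{t\}\times\R^d\to\R^d$) forces $(\L^1\otimes\|T_t\|)(\Crit(S))=0$. Consequently $\|U\|(\Crit(S))=0$ and therefore $\|S\|(\Crit(S))\le\|U\|(\Crit(S))=0$, which is precisely~\eqref{eq:NC}. The delicate step in the converse is the passage from $\|S\|\perp\|W\|$ to $\|U\|=\|S\|+\|W\|$: it relies on the mutual singularity between the integral current $S$ and the merely normal correction $W$, itself a consequence of Lemma~\ref{lemma:W_perp_S} and thus of the non-atomic character of $\Var(S;\frarg)$ inherited from the $\AC$ assumption. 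It is also worth highlighting that~\eqref{eq:NC} is not read off from properties of $S$ directly, but rather from the fact that the constructed $U$ has no critical points, combined with the measure-theoretic inequality $\|S\|\le\|U\|$.
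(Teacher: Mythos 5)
Your proof is correct, and while it uses the same key lemmas as the paper's, both halves are rephrased in a way worth noting. For the forward direction you are essentially doing the paper's computation: after obtaining the disintegration $S = \int_0^1 (1,b_t)\wedge S|_t\,\dd t$ from Lemma~\ref{lemma:disint_non_critical_current} (valid precisely because~\eqref{eq:NC} makes $S=S\restrict\Crit(S)^c$), you derive the weak PDE by pairing $\partial S\restrict(0,1)\times\R^d=0$ with $\eta=\psi\,\pbf^*\omega$ and expanding $\langle S,d\eta\rangle=\langle S,\psi'\,dt\wedge\pbf^*\omega\rangle+\langle S,\psi\,\pbf^*(d\omega)\rangle$. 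This is the exact dual of the paper's Leibniz-rule computation $\langle\partial(fT),\omega\rangle=\langle f\partial T,\omega\rangle-\langle T\restrict df,\omega\rangle$ applied to $f=\psi\circ\tbf$, so the forward direction is the same argument in a slightly tidier guise. Your identification $\nabla^S\tbf=(a,v)$ with $a=|\nabla^S\tbf|^2$, giving $|\nabla^S\tbf|^{-1}\xi=(1,b)$ and $|b|\le 1/|\nabla^S\tbf|$, is correct and makes the integrability bound transparent.

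The converse is where your argument genuinely departs from the paper's, and it is cleaner. The paper, after constructing $U$ via Proposition~\ref{prop:kampschulte} and $W:=S-U$ via the Zero-slice Lemma, goes through the Disintegration Structure Theorem~\ref{thm:struct} to show $\mu^s_t=0$ and then uses the equivalence between the Sard property and~\eqref{eq:NC} for $\AC$ currents (Proposition~\ref{prop:sard_equivalent_AC}). You instead observe that $\|S\|\perp\|W\|$ (Lemma~\ref{lemma:W_perp_S}, using non-atomicity of $\Var(S;\frarg)$ from the $\AC$ hypothesis) together with $U=S-W$ directly gives $\|U\|=\|S\|+\|W\|$ (split on a Borel set carrying $\|S\|$ and its complement carrying $\|W\|$), hence $\|S\|\le\|U\|$ as measures; then $\|U\|=|(1,b)|\,\L^1\otimes\|T_t\|$ plus Lemma~\ref{lemma:disint_bella}(ii) forces $\|U\|(\Crit(S))=0$, and~\eqref{eq:NC} follows. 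This completely sidesteps the analysis of the singular parts $\mu^s_t$ and is arguably the more natural measure-theoretic route. One small stylistic point: you write of "mutual singularity of $S$ and $W$" — it is the mass measures $\|S\|$ and $\|W\|$ that are mutually singular; the reasoning is correct but the phrasing should refer to the measures rather than the currents.
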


We will prove the theorem in the following two sections.

\subsection{Existence of the geometric derivative}

We first prove that~\eqref{eq:NC} implies the existence of  the geometric derivative defined in~\eqref{eq:geom_deriv}.

\begin{proposition} \label{prop:slices_are_advected}
Let $S\in\Irm^{\AC}_{k+1}([0,1] \times \R^d)$ with $(\partial \S) \restrict {(0,1) \times \R^d}=0$ satisfy~\eqref{eq:NC}. Then, the geometric derivative $b := \frac{\DD}{\DD t} S$ defined in~\eqref{eq:geom_deriv} belongs to ${\rm L}^1( \int_0^1  \| S\vert_t \| \; \dd t)$ and it holds that 
\begin{equation}\label{eq:PDEslices_new}
     \frac{\dd}{\dd t} \S(t) + \Lcal_{b_t} \S(t) = 0. 
\end{equation} 
\end{proposition}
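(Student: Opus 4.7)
The strategy is to verify condition~(i) of Lemma~\ref{lemma:equiv_def_of_solutions}: for every $\psi \in \Crm_c^\infty((0,1))$ and every $\omega \in \mathscr{D}^k(\R^d)$, show that
\[
\int_0^1 \langle S(t),\omega\rangle\,\psi'(t)\,\dd t \;=\; \int_0^1 \langle \mathcal{L}_{b_t} S(t),\omega\rangle\,\psi(t)\,\dd t.
\]
The integrability $b \in \Lrm^1(\int_0^1 \|S|_t\|\,\dd t)$ is immediate from the pointwise estimate~\eqref{eq:geom_deriv_est}, the identity $\int_0^1 \|S|_t\|\,\dd t = |\nabla^S\tbf|\,\|S\|$ from Lemma~\ref{lemma:disint_bella}(i), and~\eqref{eq:NC}:
\[
\int_0^1\!\!\int |b_t|\,\dd\|S|_t\|\,\dd t \;\le\; \int \frac{1}{|\nabla^S\tbf|}\,\dd\bigl(|\nabla^S\tbf|\,\|S\|\bigr) \;=\; \|S\|(\Crit(S)^c) \;=\; \Mbf(S) \;<\; \infty.
\]

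The geometric heart of the argument is the pointwise identity $\xi = |\nabla^S\tbf|\,(1, b)$ on $\Crit(S)^c$. Since $\nabla^S\tbf$ is the orthogonal projection of $\ee_t$ onto the approximate tangent $(k+1)$-plane to $S$, its time component equals $|\nabla^S\tbf|^2$, while its spatial component is $|\nabla^S\tbf|\,b$ by the very definition~\eqref{eq:geom_deriv} of the geometric derivative; dividing by $|\nabla^S\tbf|$ yields the claim. Combining this with Lemma~\ref{lemma:disint_non_critical_current} and~\eqref{eq:NC} (which via Proposition~\ref{prop:sard_equivalent_AC} forces $S = S\restrict\Crit(S)^c$) produces the clean disintegration
\[
S \;=\; \int_0^1 (1, b_t)\wedge S|_t\,\dd t \qquad\text{as $(k+1)$-vector-valued measures on $[0,1]\times\R^d$.}
\]

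The remainder is an integration-by-parts on space-time. Applying the coarea/slicing formula~\eqref{eq:slice_general} to the $k$-form $\pbf^*\omega$ and using $\psi'(\tbf)\,d\tbf = d(\psi\circ\tbf)$ together with the Leibniz rule, I rewrite
\[
\int_0^1 \langle S(t),\omega\rangle\,\psi'(t)\,\dd t \;=\; \langle \partial S,\,\psi(\tbf)\,\pbf^*\omega\rangle \;-\; \langle S,\,\psi(\tbf)\,\pbf^* d\omega\rangle.
\]
The boundary term vanishes since $\spt\psi \Subset (0,1)$ and $\partial S \restrict (0,1)\times\R^d = 0$. Substituting the disintegration into the remaining term and using that $\Wedge^{k+1}\pbf$ annihilates the time component of $(1,b_t)\wedge\vec{S}|_t$ (because $\pbf(\ee_t)=0$) produces $-\int_0^1 \psi(t)\,\langle b_t\wedge S(t), d\omega\rangle\,\dd t$. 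Finally, applying the coarea formula to the integral current $\partial S$ shows that $(\partial S)|_t = 0$ for $\L^1$-a.e.\ $t \in (0,1)$ (since $\nabla^{\partial S}\tbf = 0$ on the support of $\partial S$, which is contained in $\{0,1\}\times\R^d$), so $\partial S(t) = \pbf_*\partial(S|_t) = 0$ and Cartan's identity collapses to $\mathcal{L}_{b_t}S(t) = -\partial(b_t\wedge S(t))$, yielding the desired equation.

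The main obstacle is really just the structural identity $\xi = |\nabla^S\tbf|\,(1,b)$, which forces the orienting $(k+1)$-vector of $S$ to split along slices as $(1, b_t)\wedge \vec{S}|_t$. Once this is in hand, the PDE drops straight out of the disintegration formula with no further technical work, and no approximation or regularisation is needed because~\eqref{eq:NC} lets us work entirely on $\Crit(S)^c$, where every quantity involved is well-defined.
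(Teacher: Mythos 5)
Your proof is correct and follows essentially the same route as the paper: integrability via the coarea formula, the disintegration $S = \int_0^1 |\nabla^S\tbf|^{-1}\xi \wedge S|_t\,\dd t$ (which under~\eqref{eq:NC} captures all of $S$), followed by an integration-by-parts in space-time that hinges on the Leibniz rule for $d((\psi\circ\tbf)\pbf^*\omega)$, commutativity of $\pbf_*$ with $\partial$, and the vanishing of $\partial S$ on $(0,1)\times\R^d$. Your version merely runs the space-time manipulation starting from $\int\langle S(t),\omega\rangle\psi'(t)\,\dd t$ rather than from the Lie-derivative term, and helpfully makes explicit two facts the paper uses tacitly: the pointwise identity $\xi = |\nabla^S\tbf|\,(1,b)$ on $\Crit(S)^c$, and the fact that $\partial S(t)=0$ for $\L^1$-a.e.\ $t$ (needed to reduce $\Lcal_{b_t}S(t)$ to $-\partial(b_t\wedge S(t))$).
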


\begin{proof} The integrability of $b$ is a simple consequence of coarea formula. Indeed, we have 
	\begin{equation*}
			\int_0^1 \int_{\{t\} \times \R^d} |b| \; \dd \|S|_t\| \; \dd t 
			\leq \int_0^1 \int_{\{t\} \times \R^d} \frac{1}{|\nabla^S \tbf|} \; \dd \|S|_t\| \; \dd t = \Mbf(S)
	\end{equation*}
where we have applied~\eqref{eq:geom_deriv_est} and then~\eqref{eq:coarea_integral} with $f=\tbf$ and $g=|\nabla^S \tbf|^{-1}$.

We now show that~\eqref{eq:PDEslices_new} holds. From Lemma~\ref{lemma:disint_non_critical_current} we have the disintegration 
\begin{equation*}
    S \restrict \Crit(S)^c = \int_0^1 \frac{\xi}{|\nabla^S \tbf|} \wedge  S\vert_t \; \dd t.
\end{equation*}
In particular, if $S$ has~\eqref{eq:NC}, then $S = S \restrict \Crit(S)^c$ and therefore 
\begin{equation*}
    S = \int_0^1 \frac{\xi}{|\nabla^S \tbf|} \wedge  S\vert_t \; \dd t. 
\end{equation*}
It will be convenient to write this conclusion in the following form:  
\begin{equation}\label{eq:dynamic_coarea} 
\begin{split}
    \int_0^1 \left( \frac{ \xi}{|\nabla^\S \tbf|}\wedge \S|_{t} \right) \psi(t)\; \dd t  = (\psi \circ \tbf)\S\qquad\text{for every $\psi\in \Crm^\infty_c((0,1))$}.
\end{split}
\end{equation} 
We now want to prove that for every $\psi \in \Crm^\infty_c((0,1))$ and for every $\omega \in \mathscr D^k(\R^d)$ it holds that
\begin{equation*}
    \int_0^1 \langle S(t),\omega\rangle \psi'(t)-\langle \Lcal_{b_t}S(t),\omega\rangle \psi(t) \; \dd t = 0. 
\end{equation*}
Moreover,
\[
	- \int_0^1 \langle \Lcal_{b_t}S(t),\omega\rangle \psi(t)\; \dd t =  \int_0^1 \langle b_t \wedge S(t),d\omega\rangle \psi(t)\; \dd t.
\]
By~\eqref{eq:geom_deriv_est} and~\eqref{eq:coarea_integral} this integral is finite:
\begin{align*} 
\left|  \int_0^1 \langle b_t \wedge S(t),d\omega\rangle \psi(t)\; \dd t \right| & \le 
\|d\omega\|_\infty \|\psi\|_{\infty} \int_0^1 \Mbf(b_t \wedge S(t))\dd t \\ 
& = \|d\omega\|_\infty \|\psi\|_{\infty} \int_0^1 |b_t \wedge \vec{S}\vert_t| \; \dd \|S\vert_t \|\dd t \\ 
&\leq \|d\omega\|_\infty \|\psi\|_{\infty} \Mbf(S) \\
&< \infty. 
\end{align*} 
We then have
\begin{align*}
    \int_0^1 \langle b_t \wedge S(t),d\omega\rangle \psi(t)\; \dd t
     &= \left\langle\int_0^1  b_t \wedge S(t) \psi(t)\; \dd t,d\omega\right\rangle  \notag\\ 
     &= \left \langle\int_0^1  \frac{\pbf(\xi)}{|\nabla^S \tbf|} \wedge \pbf_{*}(S|_{t}) \psi(t)\; \dd t,d\omega\right\rangle  \notag\\
     &=  \left\langle\int_0^1  \pbf_{*}\biggl(\frac{\xi}{|\nabla^S \tbf|} \wedge S|_{t}\biggr) \psi(t)\; \dd t,d\omega\right\rangle  \notag\\
     &= \left \langle \pbf_{*}\left(\int_0^1  \frac{\xi}{|\nabla^S \tbf|} \wedge S|_{t} \psi(t)\; \dd t\right),d\omega\right\rangle  \notag\\       
     &= \langle \pbf_{*}((\psi \circ \tbf)S),d\omega\rangle  \notag\\
     &= \langle \partial\pbf_{*}((\psi \circ \tbf)S),\omega\rangle  \label{eq:lie_derivative1} 
\end{align*}
where in the second-to-last equality we used~\eqref{eq:dynamic_coarea}. Using also the commutativity between boundary and pushforward, we have thus shown 
\begin{equation}\label{eq:lie_derivative2}
   -\int_0^1 \Lcal_{b_t}(\S(t)) \psi(t) \, dt =  \partial \pbf_{*}[(\psi\circ \tbf) \S]=  \pbf_{*}\partial[(\psi\circ \tbf) \S]. 
\end{equation}
Observe now that for any $k$-current $\T$, for any $f \in \Crm_c^\infty(\R)$, and any $\omega \in \Dcal_k(\R^d)$, the Leibniz rule holds in the form 
\[
\langle \partial( f\T), \omega \rangle = \langle f\partial \T, \omega \rangle - \langle \T \restrict df ,  \omega \rangle.  
\]
Using this in~\eqref{eq:lie_derivative2} and also taking into account that $d(\psi\circ \tbf)=(\psi'\circ \tbf)dt$ as well as~\eqref{eq:slice_general},
\begin{align*}
 -\int_0^1 \langle \Lcal_{b_t}S(t),\omega\rangle \psi(t)\; \dd t 
 &=  \langle \partial [(\psi \circ \tbf)S], \pbf^{*}\omega \rangle \\
 &=  \langle (\psi \circ \tbf) \partial S, \pbf^{*}\omega \rangle - \langle S \restrict d(\psi\circ\tbf) , \pbf^{*}\omega \rangle \\ 
&=  \langle (\psi \circ \tbf) \partial S, \pbf^{*}\omega \rangle - \langle S \restrict (\psi'\circ\tbf) dt , \pbf^{*}\omega \rangle \\ 
&=   \langle (\psi \circ \tbf) \partial S, \pbf^{*}\omega \rangle - \int_0^1 \langle S|_t , \pbf^{*}\omega \rangle \psi'(t) \; \dd t. 
\end{align*}
Since $\psi \in \Crm^\infty_c((0,1))$ and $\partial S\restrict {(0,1)\times\R^d}=0$, the term $(\psi\circ \tbf)\partial S$ vanishes. Recalling that, by definition, $S(t)=\pbf_*(S|_t)$, we finally obtain
\[
\int_0^1 \langle \Lcal_{b_t}\S(t),\omega\rangle \psi(t)\; \dd t = \int_0^1 \langle S(t) ,\omega \rangle \psi'(t) \; \dd t,
\]
which proves that $S(t)$ is indeed a distributional solution to~\eqref{eq:PDEslices_new}.
\end{proof}

\subsection{Necessity of~\eqref{eq:NC}} 

In this section we prove that~\eqref{eq:NC} is a necessary condition for $S$ if its slices solve the geometric transport equation with a vector field  $b \in {\rm L}^1( \int_0^1  \| S\vert_t \| \; \dd t)$. This completes the proof of the Advection Theorem~\ref{thm:advection}.

\begin{proposition}\label{prop:converse_if_PDE_then_Sard_for_slices}
Let $S\in\Irm^{\AC}_{k+1}([0,1] \times \R^d)$ with $(\partial \S) \restrict {(0,1) \times \R^d}=0$. Suppose that there exists a vector field $b \in {\rm L}^1( \int_0^1  \| S\vert_t \| \; \dd t)$ with
\begin{equation}\label{eq:PDE_solved_by_slices}
     \frac{\dd}{\dd t} S(t) + \Lcal_{b_t} S(t) = 0,
\end{equation}
where $S(t) = \pbf_{*}(S\vert_t)$ are the projected slices of $S$. Then, $S$ satisfies~\eqref{eq:NC}. 
\end{proposition}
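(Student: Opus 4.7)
The plan is to compare $S$ with a companion normal space-time current $U$ built from the advecting field $b$ via Proposition~\ref{prop:kampschulte}, and then to match the Lebesgue decompositions of $\|U\|$, $\|S\|$, and their difference slice-by-slice, from which $\mu_t^s=0$ will fall out. A minor preparatory step: by the remark following Proposition~\ref{prop:kampschulte}, I may (and will) replace $b$ by its component normal to $\vec S(t)$ without altering~\eqref{eq:PDE_solved_by_slices}.

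Applying Proposition~\ref{prop:kampschulte} to the path $t\mapsto T_t:=S(t)=\pbf_*(S|_t)$ (whose slices are boundaryless for a.e.\ $t$ since $\partial S\restrict{(0,1)\times\R^d}=0$) yields a normal current $U\in\Nrm_{k+1}([0,1]\times\R^d)$ with $(\partial U)\restrict{(0,1)\times\R^d}=0$, $\pbf_*(U|_t)=S(t)$ for a.e.\ $t$, $\tbf_{\#}\|U\|\ll\L^1$, and $\|U\|=|(1,b)|\,\L^1\otimes\|S(t)\|$. Since $U|_t$ and $S|_t$ are both supported in $\{t\}\times\R^d$ with orientation in $\Wedge_k(\{0\}\times\R^d)$, the equality of their spatial projections forces $U|_t=S|_t$ for $\L^1$-a.e.\ $t$. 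Setting $W:=S-U$, one has $\partial W\restrict{(0,1)\times\R^d}=0$ and $W|_t=0$ for a.e.\ $t$, so the Zero-slice Lemma~\ref{lemma:zero_slices} provides $W=\int_0^1 W_t\,\dd\kappa(t)$ with $\kappa=\tbf_{\#}\|W\|$ and boundaryless normal $W_t$ supported in $\{t\}\times\R^d$. Since $S\in\Irm^{\AC}_{k+1}$, the measure $\Var(S;\cdot)$ is non-atomic, so Lemma~\ref{lemma:W_perp_S} yields $\|S\|\perp\|W\|$; arguing as in the proof of Corollary~\ref{cor:S=U}, this upgrades $S=U+W$ (up to signs) to the scalar identity $\|U\|=\|S\|+\|W\|$. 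In particular $\kappa\le\tbf_{\#}\|U\|\ll\L^1$, so $\kappa=\rho(t)\,\dd t$ with $\rho\in\Lrm^1([0,1])$, and $\|W\|=\int_0^1\rho(t)\|W_t\|\,\dd t$.

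To conclude, I would disintegrate $\|U\|=\|S\|+\|W\|$ with respect to $\tbf$ and $\L^1$: Proposition~\ref{prop:projection_AC} (using $S$ AC) gives $\lambda^s=0$, so by Theorem~\ref{thm:struct} the disintegration of $\|S\|$ is $\mu_t=|\nabla^S\tbf|^{-1}\|S|_t\|+\mu_t^s$, while the disintegrations of $\|U\|$ and $\|W\|$ are $|(1,b_t)|\,\|S|_t\|$ and $\rho(t)\|W_t\|$, respectively. For $\L^1$-a.e.\ $t$ one therefore obtains
\[
|(1,b_t)|\,\|S|_t\|=|\nabla^S\tbf|^{-1}\|S|_t\|+\mu_t^s+\rho(t)\|W_t\|.
\]
Both $\mu_t^s$ (concentrated on $\Crit(S)$, $\|S|_t\|$-null by Lemma~\ref{lemma:disint_bella}(ii)) and $\rho(t)\|W_t\|$ (singular with respect to $\|S|_t\|$ by the coarea formula applied to a Borel carrier realising $\|S\|\perp\|W\|$) are singular with respect to $\|S|_t\|$, whereas both sides' $\|S|_t\|$-absolutely continuous parts are $|(1,b_t)|\,\|S|_t\|$ and $|\nabla^S\tbf|^{-1}\|S|_t\|$. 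Uniqueness of the Lebesgue decomposition then forces $\mu_t^s+\rho(t)\|W_t\|=0$, and since both summands are non-negative, $\mu_t^s=0$ for $\L^1$-a.e.\ $t$. By Proposition~\ref{prop:sard_equivalent_AC}, this Sard property is equivalent to~\eqref{eq:NC} in the AC setting.

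The most delicate points will be (i) justifying the scalar identity $\|U\|=\|S\|+\|W\|$ (rather than just the triangle-type inequality) from the mutual singularity $\|S\|\perp\|W\|$, and (ii) verifying that $\rho(t)\|W_t\|$ is singular with respect to $\|S|_t\|$ for $\L^1$-a.e.\ $t$; the rest is bookkeeping once these two are settled.
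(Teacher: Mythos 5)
Your proof is correct and tracks the paper's argument through its first half: construct $U$ via Proposition~\ref{prop:kampschulte}, identify $U|_t = S|_t$ a.e., extract $W=S-U$ via the Zero-slice Lemma~\ref{lemma:zero_slices}, and observe $\kappa = \tbf_\#\|W\| \ll \L^1$. The closing step differs. The paper works only with the triangle inequality $\|S\|\le\|W\|+\|U\|$, evaluates the disintegrated inequality on $\Crit(S)$, and uses the nullity $\|S|_t\|(\Crit(S))=\|W_t\|(\Crit(S))=0$ to isolate $\mu_t^s(\Crit(S))=0$. You instead invoke Lemma~\ref{lemma:W_perp_S} to upgrade to the exact identity $\|U\|=\|S\|+\|W\|$ (which follows directly from $U=S-W$ and $\|S\|\perp\|W\|$; note Corollary~\ref{cor:S=U} concerns a different $U$, so the cleaner citation is the argument inside the proof of Theorem~\ref{thm:Var_equals_TV}), disintegrate all three measures, and compare singular parts relative to $\|S|_t\|$ via uniqueness of the Lebesgue decomposition. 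Both routes are valid; yours buys a sharper identity, at the small cost of one extra lemma. A simplification: the claim $\rho(t)\|W_t\|\perp\|S|_t\|$ is not actually needed — since $|(1,b_t)|\,\|S|_t\|$ has zero singular part relative to $\|S|_t\|$, the singular part of the right-hand side, which is at least $\mu_t^s\ge 0$, must already vanish. (The justification you offer for that superfluous claim — ``coarea'' on a carrier of the singularity — should in any case be phrased as a disintegration argument, not coarea, since $\|W_t\|$ arises from disintegration rather than slicing.)
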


\begin{proof} Since $S$ is an $\AC$ integral current, the conclusion is equivalent to showing that $S$ has the Sard property, i.e., $\mu_t^s=0$ for $\L^1$-a.e.\ $t$ (see Proposition~\ref{prop:sard_equivalent_AC}). This will be achieved comparing $S$ with another normal current with the same slices, constructed via Proposition~\ref{prop:kampschulte}. 

{\emph{Step 1.}} 
Consider the map $t \mapsto S(t)$. By assumption it satisfies~\eqref{eq:PDE_solved_by_slices}, therefore we can resort to Proposition~\ref{prop:kampschulte} to ‘‘bundle’’ together the currents $S(t)$. We obtain a normal current $U \in \Nrm_{k+1}([0,1] \times \R^d)$ with a simple unit $(k+1)$-vector $\vec{U}$ and mass $\|U\|=|(1, b)|\L^1 \otimes \|S(t)\|$. In addition, we also know that $(\partial U) \restrict {(0,1) \times \R^d} = 0$, $\pbf_{*}(U\vert_t) = S(t)$ for $\L^1$-a.e.\ $t \in (0,1)$, and $\tbf_{\#}\|U\| \ll \L^1$.

In particular, the slices of the space-time current $U$ coincide with those of $S$ (because $\pbf_{*}(U\vert_t) = S(t) = \pbf_{*}(S\vert_t)$, and both $U\vert_t$ and $S\vert_t$ live in the fiber $\{t\}\times \R^d$). Applying Lemma~\ref{lemma:zero_slices} we conclude that 
\begin{equation}\label{eq:s-u_is_w}
    S - U = \int_0^1 W_t \; \dd \kappa(t) =: W
\end{equation}
where $\kappa:=\tbf_{\#}(\|S - U\|)=\tbf_{\#}\|W\|$ is a finite measure in $\R$, $W_t \in \Nrm_{k+1}([0,1] \times \R^d)$, $\partial W_t=0$, $W_t$ is supported in $\{t\}\times\R^d$ 
for $\kappa$-a.e.\ $t$.  
Observe that since both $\tbf_{\#}\|S\|$ and $\tbf_{\#}\|U\|$ are absolutely continuous with respect to $\L^1$, so is $\kappa = \tbf_{\#}\|W\|$, because for every set $A\subset [0,1] \times \R^d$ it holds that 
\[
\kappa(A) = \tbf_{\#}\|W\|(A) = \|W\|(\tbf^{-1}(A)) \le (\|S\|+\|U\|)(\tbf^{-1}(A)). 
\]
Hence, we write from now onwards $\kappa = h \L^1$, where $h$ denotes the density with respect to $\L^1$.  

{\emph{Step 2.}} We now prove that $S$ has the Sard property, i.e., $\mu_t^s=0$ for $\L^1$-a.e.\ $t$. By~\eqref{eq:s-u_is_w},
\begin{equation*}
    \|S \| \le \|W \| + \|U\| \le \int_0^1\| W_t \|h(t)\; \dd t + \int_0^1 |(1,b)|\|S|_t\|\; \dd t
\end{equation*}
whence by the Disintegration Structure Theorem~\ref{thm:struct}, see in particular~\eqref{eq:structure_disint}, and the fact that $S$ is $\AC$, we obtain  
\begin{equation*}
\int_0^1 \left( |\nabla^S\tbf|^{-1}  \| S\vert_t \| + \mu^s_t  \right) \; \dd t \le \int_0^1 \| W_t\|h(t) \; \dd t + \int_0^1 |(1,b)|\|S|_t\|\; \dd t
\end{equation*}
in the sense of measures on $[0,1] \times \R^d$. Computing this expression on the set $C:=\Crit(S)$ and rearranging terms, we deduce 
\begin{equation*}
\int_0^1 \mu^s_t(C) \; \dd t \le \int_0^1 (|(1,b)| - |\nabla^S\tbf|^{-1}) \| S\vert_t \|(C) + h(t)\| W_t\|(C)  \; \dd t.
\end{equation*}
By Lemma~\ref{lemma:disint_bella}~(ii),
\[
 \|S\vert_t \|(C) = 0 \qquad \text{for $\L^1$-a.e.\ $t$.}
\]
For the other term, since $W_t \in \Nrm_{k+1}([0,1] \times \R^d)$ we have $\| W_t \| \ll \H^{k+1}$ (see~\cite[4.1.20]{Federer69book})
and hence $\| W_t \|(C) = \| W_t \|(C \cap (\{t\} \times \R^d))=0$, because by Remark~\ref{remark:coarea_di_alberti_mu_t_perp} it holds that $\H^{k}(C \cap (\{t\} \times \R^d)) = 0 $ for $\L^1$-a.e.\ $t$.
We therefore obtain $\mu_t^s(C)=0$ for $\L^1$-a.e.\ $t$, and this yields $\mu_t^s = 0$ because $\mu_t^s$ is (by definition) concentrated on $C$ (see Theorem~\ref{thm:struct}). 
\end{proof}

\begin{remark} In the proof of Proposition~\ref{prop:converse_if_PDE_then_Sard_for_slices}, it can actually be proved that $W=0$, whence $S=U$. Indeed, we have already observed that $\kappa = \tbf_{\#}\|W\| \ll \L^1$. Denoting by $R$ the $(k+1)$-rectifiable carrier of $S$, observe that $W\restrict {R^c}=0$: Indeed, for every $A \subset R^c$ we have  
\[
W\restrict A = (S - U)\restrict A=- U\restrict A.
\]
but $\vec{W}(t,x) \in\Wedge_k(\{0\}\times \R^d)$, while $\vec{U}(t,x)$ is instead parallel to $(1,b(t,x))\wedge \vec{S}|_t(t,x)$.
Therefore, $W\restrict A=0$, hence $W = W\restrict {R}$. Let now $B \subset R$: We have (see, e.g.,~\cite[Corollary 2.29]{AFP})
\[
\|W\|(B) = \int_0^1 \|W_t\|(B)\; \dd \kappa(t) = \int_0^1 \|W_t\|(B)h(t)\; \dd t. 
\]
Since the $W_t$ are $(k+1)$-normal currents, we have 
\[
\{t: \|W_t\|(B) >0 \} \subset \{t: \H^{k+1}(B \cap( \{t\} \times \R^d)) >0 \} \]
and since $\H^{k+1}(B)<\infty$ the latter set must be at most countable. Therefore, $\|W\|(B)=0$ and, since $B$ is arbitrary, we conclude $W=0$. In particular, $S=U$, and this can be considered as a rectifiability result for the current $U$ obtained in Proposition~\ref{prop:kampschulte}.
\end{remark}

\section{Rademacher-type differentiability theorems}

As the culmination of our efforts, we now prove two Rademacher-type differentiability theorems for paths of currents. We start from a path $t\mapsto T_t\in \Irm_k(\R^d)$, $\partial T_t=0$, which is absolutely continuous in time with respect to the homogeneous integral flat norm $\Fhom_\Irm$
, and we ask when we can find a vector field $b_t:\R^d\to\R^d$ that solves the geometric transport equation. Finding such a vector field gives that the path $t\mapsto T_t$ is differentiable in the geometric sense, hence the moniker ``Rademacher''. Indeed, in this way we connect the derivative taken using outer variations and the linear structure of $\Irm_k(\R^d)$ (namely, $\frac{\dd}{\dd t}T_t$) with the derivative taken with respect to inner variations (namely, $\Lcal_{b_t}T_t$). Note that the Advection Theorem~\ref{thm:advection} also had a similar differentiability conclusion, but there we started with a space-time $(1+k)$-current and not merely a path of $k$-currents.

\subsection{Weak differentiability theorem}

First, we derive a weak version of differentiability: Given an $\Fhom_\Irm$-absolutely continuous path of integral $k$-currents, we can solve the transport equation, provided that in~\eqref{eq:PDE} we replace the Lie derivative $\Lcal_{b_t}T_t$ by $\partial R_t$ for some $(k+1)$-currents $R_t$ of finite mass (note that if $R_t = -b_t\wedge T_t$ for some vector field $b_t$, then $\partial R_t = \Lcal_{b_t}T_t$). This is basically a consequence of compactness for finite-mass currents.

\begin{theorem}[Weak differentiability] \label{thm:weak_rademacher}
Let $t \mapsto T_t \in \Irm_k(\R^d)$, $t \in [0,1]$, with $\partial T_t=0$ for every $t \in [0,1]$, be a path that is absolutely continuous with respect to the homogeneous integral flat norm $\Fhom$ (which is implied by $\Fhom_\Irm$-absolute continuity), that is,
    \begin{equation} \label{eq:Fhom_AC}
    \Fhom(T_s-T_t)\leq \int_s^t g(r)\; \dd r
    \end{equation}
    for some $g\in {\rm L}^1([0,1])$ and all $s < t$. Then, there exists a finite-mass $(k+1)$-current $R_t\in \Mrm_{k+1}(\R^d)$ that solves the equation
\begin{equation}\label{eq:weak_rademacher}
\frac{\dd}{\dd t}T_t-\partial R_t=0.
\end{equation}
\end{theorem}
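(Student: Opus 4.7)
The plan is to build the currents $R_t$ from a piecewise-constant time discretisation and then pass to a weak-* limit using compactness of finite-mass currents. For each $N \in \N$, set $t_i^N := i/N$ for $i=0,\ldots,N$. Since the infimum in~\eqref{eq:def_hom_flat_norm} is attained, pick $Q_i^N \in \Nrm_{k+1}(\R^d)$ with
\[
\partial Q_i^N = T_{t_{i+1}^N} - T_{t_i^N}, \qquad \Mbf(Q_i^N) = \Fhom(T_{t_{i+1}^N} - T_{t_i^N}) \leq \int_{t_i^N}^{t_{i+1}^N} g(r)\;\dd r,
\]
and define the piecewise-constant time-dependent current $R_t^N := N\, Q_i^N$ for $t \in [t_i^N, t_{i+1}^N)$. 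Then $\Mbf(R_t^N) \leq g_N(t)$, where $g_N$ denotes the piecewise-constant conditional expectation of $g$ on this partition; since $g_N \to g$ in ${\rm L}^1([0,1])$, the family $\{g_N\}$ is in particular equiintegrable.

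\textbf{Compactness and disintegration.} Viewing
\[
\mu^N := \int_0^1 R_t^N \;\dd t
\]
as a $\Wedge_{k+1}(\R^d)$-valued finite measure on $[0,1]\times\R^d$, its total variation is bounded by $\|g\|_{{\rm L}^1}$ and its time-projection $\tbf_{\#}|\mu^N|$ is dominated by $g_N\,\L^1$. Up to a subsequence $\mu^N \weaksto \mu$ for some $\Wedge_{k+1}(\R^d)$-valued measure $\mu$ on $[0,1]\times\R^d$; by lower semicontinuity of the total variation on open sets and the equiintegrability of $\{g_N\}$, one checks $\tbf_{\#}|\mu| \ll \L^1$. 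Hence the disintegration procedure of Section~\ref{ss:disintegration} yields $\mu = \int_0^1 R_t \;\dd t$ with $R_t \in \Mrm_{k+1}(\R^d)$ for $\L^1$-a.e.\ $t$.

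\textbf{Passing to the limit in the PDE.} Fix $\omega \in \Dscr^k(\R^d)$ and $\psi \in \Crm_c^\infty((0,1))$, and set $T_t^N := T_{t_i^N}$ on $[t_i^N, t_{i+1}^N)$. The dual representation~\eqref{eq:hom_flat_norm_dual} together with~\eqref{eq:Fhom_AC} yields
\[
\int_0^1 |\langle T_t - T_t^N, \omega\rangle|\;\dd t \leq \|d\omega\|_\infty \int_0^1 \Fhom(T_t-T_t^N)\;\dd t \leq \frac{\|d\omega\|_\infty\|g\|_{{\rm L}^1}}{N} \to 0.
\]
A discrete integration by parts, using $\psi(0)=\psi(1)=0$ and $\partial Q_i^N = T_{t_{i+1}^N} - T_{t_i^N}$, together with the uniform continuity of $\psi$, shows that both $\int_0^1 \langle T_t^N, \omega\rangle \psi'(t)\;\dd t$ and $-\int_0^1 \langle R_t^N, d\omega\rangle \psi(t)\;\dd t$ coincide, up to $o(1)$ as $N\to\infty$, with $-\sum_i \langle Q_i^N, d\omega\rangle \psi(t_{i+1}^N)$. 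Consequently,
\[
\int_0^1 \langle T_t, \omega\rangle \psi'(t)\;\dd t + \int_0^1 \langle R_t^N, d\omega\rangle \psi(t)\;\dd t \longrightarrow 0,
\]
and the weak-* convergence $\mu^N \weaksto \mu$, tested against the continuous compactly supported $(k+1)$-form $\psi(t)\,d\omega(x)$ pulled back via $\pbf$, converts the second term into $\int_0^1 \langle R_t, d\omega\rangle \psi(t)\;\dd t$. The resulting identity is precisely the weak form of~\eqref{eq:weak_rademacher}.

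\textbf{Main obstacle.} The delicate point is the compactness-and-disintegration step: the family $R_t$ must be defined for $\L^1$-a.e.\ $t$, and this hinges on $\tbf_{\#}|\mu|\ll\L^1$. This absolute continuity is inherited from the ${\rm L}^1$-equiintegrability of the densities $\Mbf(R_t^N) \leq g_N(t)$, which is precisely what the integrable envelope $g$ in~\eqref{eq:Fhom_AC} supplies; without such an envelope the projected mass could concentrate at individual times and the limit would only furnish a bounded (rather than ${\rm L}^1$-parametrised) family of currents, which would only give an equation of the form $\frac{\dd}{\dd t}T_t - \partial \mu = 0$ with $\mu$ a general space-time $(k+1)$-current rather than the pointwise family claimed.
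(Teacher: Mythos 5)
Your argument is correct, but it follows a genuinely different route from the paper's. The paper works pointwise in time: for $\L^1$-a.e.\ $t$ (a Lebesgue point of $g$) it takes optimal normal fillings $R_{t,h}$ of $T_{t+h}-T_t$, notes $\Mbf(h^{-1}R_{t,h})\le \fint_t^{t+h}g$, extracts a weak* subsequential limit $R_t$, and then identifies $\langle \partial R_t,\omega\rangle$ with the a.e.\ pointwise derivative of the absolutely continuous map $t\mapsto\langle T_t,\omega\rangle$ — first on a countable dense family of forms (so a single null set serves all of them) and then for general $\omega$ by an $\eps$-approximation using the bound $\Mbf(R_t)\le g(t)$. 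You instead make a global space-time construction in the spirit of the paper's proof of the Rectifiability Theorem~\ref{thm:rect}: optimal fillings on a uniform partition, piecewise-constant densities $R^N_t$ dominated by the averaged envelope $g_N$, the bundled $\Wedge_{k+1}(\R^d)$-valued measures $\mu^N=\int_0^1 R^N_t\,\dd t$, weak* compactness, and then — this is your key point, and it is sound — absolute continuity of $\tbf_{\#}|\mu|$ via equiintegrability of $(g_N)$ and lower semicontinuity on open sets, so that the disintegration of Section~\ref{ss:disintegration} produces the a.e.-defined family $R_t$; the PDE then follows from your discrete summation by parts (which I checked: with $\psi$ vanishing near the endpoints both discretised terms equal $-\sum_i\langle Q_i^N,d\omega\rangle\psi(t_{i+1}^N)$ up to an error controlled by $\omega_\psi(1/N)\,\|d\omega\|_\infty\|g\|_{{\rm L}^1}$, and $\int_0^1\Fhom(T_t-T^N_t)\,\dd t\le \|g\|_{{\rm L}^1}/N$ handles the replacement of $T_t$ by $T^N_t$). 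The trade-off: your scheme needs only one subsequence (of $\mu^N$), valid simultaneously for all test forms, and so avoids the dense-family bookkeeping; the paper's scheme identifies $R_t$ concretely as a blow-down limit of optimal fillings between nearby slices and gives the pointwise bound $\Mbf(R_t)\le g(t)$ for free, which is what makes the explicit identification of $R_t$ in the Flat Mountain example of Section~\ref{sc:counterexample} possible (a pointwise bound could also be recovered in your setting from $\tbf_{\#}|\mu|\le g\,\L^1$, though you do not state it). The only steps worth spelling out in a final write-up are the identification of the fibre measures $\mu_t$ (supported on $\{t\}\times\R^d$) with currents in $\Mrm_{k+1}(\R^d)$ and the integrability $\int_0^1\Mbf(R_t)\,\dd t<\infty$ making $t\mapsto\langle R_t,d\omega\rangle$ summable; both are immediate from the disintegration.
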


\begin{proof}
    In an analogous fashion to Lemma~\ref{lemma:equiv_def_of_solutions}, proving that the PDE holds in the sense of distributions amounts to showing that, for every test $k$-form $\omega$, the function $t\mapsto \langle T_t,\omega\rangle$ has as distributional derivative the function $t\mapsto \langle \partial R_t,\omega\rangle$. We first identify $R_t$ for $\L^1$-a.e.\ $t$, then we prove that they solve the PDE.
    
    From the absolute continuity assumption~\eqref{eq:Fhom_AC} and by Lebesgue's theorem,
    \[
    g(t)=\lim_{h\to 0}\fint_t^{t+h}g(r)\; \dd r\qquad \text{for $\L^1$-a.e.\ $t$}.
    \]
    Let now $R_{t,h}\in\Nrm_{k+1}(\R^d)$ be such that $\partial R_{t,h}=T_{t+h}-T_t$ and $\Mbf(R_{t,h})=\Fhom(T_{t+h}-T_t)$, whose existence is guaranteed by the definition of $\Fhom$ and the usual compactness and lower semicontinuity results. It follows that
    \[
    \Mbf\left(\frac{R_{t,h}}{h}\right)=\frac{1}{h}\Fhom(T_{t+h}-T_t)\leq \fint_t^{t+h} g(r)\; \dd r
    \]
    which converges to $g(t)$ as $h\to 0$, for $\L^1$-a.e.\ $t$. Thus, $\tfrac{1}{h}R_{t,h}$ is equibounded in mass and by compactness there is an infinitesimal sequence $h_j\to 0$ such that $R_{t,h_j}\weaksto R_t$ for some $R_t\in \Nrm_{k+1}(\R^d)$. We shall now prove that these $R_t$ satisfy~\eqref{eq:weak_rademacher}.
    
    We first argue that the scalar map $t\mapsto \langle T_t,\omega\rangle$ is absolutely continuous for every test $k$-form $\omega \in \mathscr D^k(\R^d)$. Indeed,
    \[
    |\langle T_t,\omega\rangle-\langle T_s,\omega\rangle|=|\langle T_t-T_s,\omega \rangle|\leq \Fhom(T_t-T_s) \|d\omega\|_\infty\leq \|d\omega\|_\infty\int_s^t g(r)\; \dd r.
    \]
    This proves the absolute continuity, As a consequence, in order to identify the distributional derivative of $t\mapsto \langle T_t,\omega\rangle$, it is sufficient prove that
    \begin{equation}\label{eq:pointwise_derivative}
    t\mapsto \langle T_t,\omega\rangle\qquad\text{has as pointwise $\L^1$-a.e.\ derivative}\qquad t\mapsto \langle \partial R_t,\omega\rangle.
    \end{equation}
    
    Let us fix a countable set $\Dcal$ of test $k$-forms that is dense in $\mathscr D^k(\R^d)$ the $\Crm^1$-norm, and prove~\eqref{eq:pointwise_derivative} for every $\eta\in\Dcal$. From the absolute continuity, there is a negligible set $N \subset \R$ such that outside $N$
    \[
    t\mapsto \langle T_t,\eta\rangle\qquad\text{is differentiable for every $\eta\in\Dcal$.}
    \]
    This implies that outside $N$,
    \begin{equation}\label{eq:pointwise_explicit}
    \langle \partial R_t,\eta\rangle=\lim_{h\to 0}\left\langle\frac{\partial R_{t,h}}{h},\eta\right\rangle=\lim_{h\to 0} \left\langle \frac{T_{t+h}-T_t}{h},\eta\right\rangle 
    \end{equation}
    where the existence of the limit is guaranteed by the differentiability of $t\mapsto \langle T_t,\eta\rangle$. This proves~\eqref{eq:pointwise_derivative} for every $\eta\in\Dcal$.
    
    Let us consider now any test $k$-form $\omega \in \mathscr D^k(\R^d)$, and any $\eps>0$. By density we can find $\eta\in\Dcal$ such that $\|\omega-\eta\|_{\Crm^1}<\eps$. Then,
    \[
    \left \langle \frac{T_{t+h}-T_t}{h},\omega\right\rangle=\left\langle \frac{T_{t+h}-T_t}{h},\eta\right\rangle+\left\langle \frac{T_{t+h}-T_t}{h},\omega-\eta\right\rangle
    \]
    and
    \[
    \langle \partial R_t,\omega\rangle=\langle \partial R_t,\eta\rangle+\langle \partial R_t, \omega-\eta\rangle.
    \]
    We can bound both error terms as follows:
    \[
    \left\vert \left\langle \frac{T_{t+h}-T_t}{h},\omega-\eta\right\rangle\right\vert\leq \|\omega-\eta\|_{\Crm^1}\fint_t^{t+h}g(r)\; \dd r
    \]
    and
    \[
    |\langle \partial R_t, \omega-\eta\rangle|\leq \Mbf(R_t) \|\omega-\eta\|_{\Crm^1}\leq \|\omega-\eta\|_{\Crm^1}\fint_t^{t+h}g(r)\; \dd r.
    \]
    Using that~\eqref{eq:pointwise_explicit} holds for $\eta$ we deduce that
    \[
    \limsup_{h\to 0}\left\vert \left\langle \frac{T_{t+h}-T_t}{h},\omega\right\rangle-\langle \partial R_t,\omega\rangle\right\vert \leq 2\eps g(t).
    \]
    Since $\eps $ is arbitrary we obtain that~\eqref{eq:pointwise_derivative} holds for $\omega$ as well, and this concludes the proof.
\end{proof}

\subsection{Strong differentiability theorem}

We next ask when the currents $R_t$ are actually of the form $-b_t\wedge T_t$ for some vector field $b_t$, so that we can solve the geometric transport equation in the original formulation~\eqref{eq:PDE}. This question is more subtle, and after the previous sections it is perhaps not surprising that a positive answer is strictly related to the property~\eqref{eq:NC} or, equivalently, the Sard property~\eqref{eq:Sard} (for the space-time current obtained through the procedure described in Section~\ref{sc:rect}). 

\begin{theorem}[Strong differentiability] \label{thm:strong_rademacher}
Let $t \mapsto T_t \in \Irm_k(\R^d)$, $t \in [0,1]$, be a path that is absolutely continuous with respect to the homogeneous integral flat norm $\Fhom_\Irm$ and such that
\[\partial T_t = 0, \; t\in [0,1] \qquad \text{and}\qquad \sup_{t\in[0,1]} \Mbf(T_t)<\infty.
\]
Let $S$ be the unique current given by Theorem~\ref{thm:rect} in this setting (uniqueness follows from Corollary~\ref{cor:S=S'}). 
If $S$ satisfies~\eqref{eq:NC}, then there exists $b \in {\rm L}^1(\L^1 \otimes \|T_t\|)$ such that~\eqref{eq:PDE} holds, that is,
\[
\frac{\dd}{\dd t} T_t + \Lcal_{b_t} T_t = 0. 
\]
\end{theorem}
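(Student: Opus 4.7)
The plan is to chain together the Rectifiability Theorem~\ref{thm:rect} and the Advection Theorem~\ref{thm:advection} via the space-time current $S$. First I would apply Theorem~\ref{thm:rect} to the path $t \mapsto T_t$: its hypotheses are immediately satisfied because $\Fhom_{\Irm}$-absolute continuity implies $\Fhom_{\Irm}\dash\eV(t \mapsto T_t;[0,1]) < \infty$, while boundarylessness and the uniform mass bound are assumed. This yields $S \in \Irm_{k+1}([0,1] \times \R^d)$ with $\partial S \restrict (0,1) \times \R^d = 0$, with $S(t) = \pbf_{*}(S|_t) = T_t$ for $\L^1$-a.e.\ $t$, and with $\Var(S;\frarg) = \Fhom_{\Irm}\dash\eV(t \mapsto T_t;\frarg)$ as measures on $[0,1]$. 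Uniqueness of $S$ in our setting is supplied by Corollary~\ref{cor:S=S'}.

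Next, I would promote $S$ to an absolutely continuous space-time current. The $\Fhom_{\Irm}$-absolute continuity of $t \mapsto T_t$ gives $\Fhom_{\Irm}\dash\eV(t \mapsto T_t;\frarg) \ll \L^1$, and therefore $\Var(S;\frarg) \ll \L^1$ by the equality above. Combined with $\partial S \restrict (0,1) \times \R^d = 0$ (so that the condition on $\Var(\partial S;\frarg)$ in the definition of $\Irm^{\AC}_{k+1}$ is trivially met), we conclude $S \in \Irm^{\AC}_{k+1}([0,1] \times \R^d)$. Since $S$ is assumed to satisfy~\eqref{eq:NC}, the Advection Theorem~\ref{thm:advection} (equivalently, Proposition~\ref{prop:slices_are_advected}) produces the geometric derivative
\[
b(t,x) := \frac{\pbf(\xi(t,x))}{|\nabla^S \tbf(t,x)|} \in {\rm L}^1\!\left(\int_0^1 \|S|_t\| \, \dd t\right)
\]
satisfying the PDE
\[
\frac{\dd}{\dd t} S(t) + \Lcal_{b_t} S(t) = 0.
\]

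Finally I would translate this into the language of the statement. Since $\pbf$ restricts to a bijection of $\{t\}\times \R^d$ onto $\R^d$ and each $S|_t$ is supported in $\{t\}\times\R^d$, we have $\|T_t\| = \pbf_{*}\|S|_t\|$, from which a direct check on cylinder sets $A\subseteq[0,1]\times\R^d$ gives
\[
\int_0^1 \|S|_t\|\,\dd t = \L^1 \otimes \|T_t\|
\]
as measures on $[0,1]\times\R^d$. Consequently $b \in {\rm L}^1(\L^1 \otimes \|T_t\|)$, and because $S(t) = T_t$ for $\L^1$-a.e.\ $t$, the advection PDE above is exactly $\frac{\dd}{\dd t} T_t + \Lcal_{b_t} T_t = 0$, concluding the proof. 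Viewed this way, the argument is essentially a chaining of earlier results; the genuine difficulty, already carried out in the Disintegration Structure Theorem~\ref{thm:struct} and the Advection Theorem~\ref{thm:advection}, is showing that~\eqref{eq:NC} is precisely the condition that allows one to extract a pointwise advecting vector field from the ``outer'' time-derivative encoded by $t \mapsto T_t$. Without~\eqref{eq:NC} one can, at best, hope for the strictly weaker conclusion of Theorem~\ref{thm:weak_rademacher}, where the Lie derivative $\Lcal_{b_t}T_t$ must be replaced by the boundary of a finite-mass current $R_t$ that need not factor as $-b_t\wedge T_t$.
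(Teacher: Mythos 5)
Your proof is correct and follows essentially the same route as the paper: apply the Rectifiability Theorem to build the space-time current $S$, observe that $\Fhom_{\Irm}$-absolute continuity of the path makes $S$ an $\AC$ current, and then invoke the Advection Theorem (Proposition~\ref{prop:slices_are_advected}) under the \eqref{eq:NC} hypothesis. The paper's proof is terser — it leaves implicit the verification that $S\in\Irm^{\AC}_{k+1}$ and the identification $\int_0^1\|S|_t\|\,\dd t = \L^1\otimes\|T_t\|$ — whereas you spell out both steps; these are worthwhile details, but not a different argument.
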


\begin{proof} Let $S\in \Irm^{\AC}_{k+1}([0,1] \times \R^d)$ be the current obtained by Rectifiability Theorem~\ref{thm:rect} applied to $t \mapsto T_t$. Since by assumptions $S$ satisfies~\eqref{eq:NC} we can apply the Advection Theorem~\ref{thm:advection} (we only need the direction of Proposition~\ref{prop:slices_are_advected}), obtaining that there exists $b \in {\rm L}^1(\L^1 \otimes \|S(t)\|)$ such that $t\mapsto S(t)=T_t$ solves the geometric transport equation associated with $b$.
\end{proof}

Recall that in the case $k=0$ (that is, essentially, for paths of atomic measures), the Sard property is automatically satisfied by the area formula, see Remark~\ref{rmk:case_k_zero}. Thus, for $k=0$ strong differentiability holds without any further assumption. See also \cite[Theorem~1.13]{DS_revisited} and \cite[Lemma~1.1]{DS_multiple}.

\subsection{Particular dimensions}

Finally, we will look at the special cases $k\in\{0,d-2,d-1,d\}$, where it is known that $\Fhom_{\Irm}$ coincides with $\Fhom$, and thus all the results can be stated using the latter norm.

\begin{proposition}\label{prop:equality}
Let $T\in\mathrm I_k(\R^d)$ with $\partial T=0$. If $k\in \{0,d-2,d-1,d\}$ then 
\[
	\Fhom(T)=	\Fhom_{\Irm}(T).
\]			
\end{proposition}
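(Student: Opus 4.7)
The inequality $\Fhom(T)\le \Fhom_\Irm(T)$ is immediate since $\Irm_{k+1}(\R^d)\subset\Nrm_{k+1}(\R^d)$, so in each of the four cases I would prove the reverse inequality $\Fhom_\Irm(T)\le \Fhom(T)$ by promoting a generic normal filling $Q\in\Nrm_{k+1}(\R^d)$ of $T$ to an integral one of no greater mass.

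The case $k=d$ is immediate since $(d+1)$-forms on $\R^d$ vanish, so the only $(d+1)$-current is $0$; both flat norms are then $0$ if $T=0$ and $+\infty$ otherwise. For $k=d-1$ I would prove the stronger statement that every normal filling is \emph{automatically} integral. Writing $Q=u\,\ee_1\wedge\cdots\wedge\ee_d\,\L^d$ with $u\in \BV(\R^d)\cap \Lrm^1(\R^d)$, the hypothesis that $T=\partial Q$ is integer-rectifiable forces $Du$ to be concentrated on a $(d-1)$-rectifiable set with integer density. The Federer--Volpert decomposition $Du=D^au+D^cu+D^ju$ then eliminates both the absolutely continuous and Cantor parts (the former being diffuse with respect to $\L^d$, the latter being singular with respect to $\H^{d-1}$), so $u$ is an $\SBV$ function with vanishing approximate gradient, hence locally constant off its rectifiable jump set. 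Combining the integer jumps with $u\in \Lrm^1(\R^d)$ (which forces the value of $u$ on the unique unbounded component of the complement of the jump set to be zero) one obtains that all the local constant values are integers, i.e.\ $u\in \BV(\R^d;\Z)$ and $Q$ is already integral.

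For $k=0$, an integral boundaryless $0$-current has the form $T=\sum_i n_i\delta_{x_i}$ with $n_i\in\Z$, and finite-mass fillability forces the mass-balance condition $\sum_i n_i=0$. In this case $\Fhom(T)$ coincides with the Kantorovich $W_1$-cost between the positive and negative parts of $T$; since the marginals are integer combinations of Dirac masses, this optimal-transport problem admits an integer-valued minimiser, realised explicitly as a finite union of oriented straight segments joining positive to negative atoms with multiplicity one. This produces an integral $1$-current achieving mass $\Fhom(T)$.

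The main obstacle is the case $k=d-2$, where $Q\in\Nrm_{d-1}(\R^d)$ is codimension one but no longer top-dimensional, so the Federer--Volpert argument of the previous case is not directly available. My plan is to exploit the Hodge star, under which a $(d-1)$-current corresponds to a vector-valued distribution whose curl-type derivative is $\partial Q$; integrality of $\partial Q$ then translates into an integer-rectifiability condition for this rotational part, which should allow a slicing argument (integrating along generic affine lines to reduce each slice to the codimension-one case in $\R^{d-1}$, which we have already handled) and then gluing the integer slices back via the Rectifiability Theorem~\ref{thm:rect}. Executing this carefully, so that mass is not lost or gained in reconstruction, is delicate; as a safer alternative I would cite Federer's treatment of flat chains in this specific codimension~\cite[4.2.26]{Federer69book}.
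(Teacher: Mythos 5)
Your overall strategy --- upgrade a normal filling to an integral one of no greater mass --- is the right one, and it agrees with the paper's in spirit. For $k=d$ your argument is fine (though the paper notes more directly, via the constancy theorem and finite mass, that the only boundaryless normal $d$-current is zero, so both sides are trivially $0$). For $k=0$ your Kantorovich/Birkhoff argument is a valid, self-contained route; note that $\sum_i|n_i|<\infty$ with $n_i\in\Z\setminus\{0\}$ forces finitely many atoms, so the transportation polytope is genuinely finite-dimensional with integer vertices. The paper simply cites Brezis--Mironescu for this case. For $k=d-2$ both you and the paper ultimately defer to the literature, and rightly so; but I note that the slicing plan you sketch before the fallback would not work: slicing a $(d-2)$-current by a Lipschitz scalar produces $(d-3)$-currents in hyperplanes, still of codimension $2$, so there is no reduction to the codimension-one case; and even granting integral fillings of the slices $Q|_t$, the coarea factor $|\nabla^Q\tbf|$ means the slice masses do not add up to $\Mbf(Q)$, so gluing fails to control the mass of the reconstruction.

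The substantive divergence from the paper is in $k=d-1$, and there your argument has a gap. The paper uses the isoperimetric inequality to produce an integral filling $S$ of $T$ and then invokes the constancy theorem to conclude that \emph{every} normal filling equals $S$ (the difference is a boundaryless finite-mass normal $d$-current, hence zero); equality of the infima is then immediate. You instead argue directly that any normal filling $Q=u\,\ee_1\wedge\cdots\wedge\ee_d\,\L^d$ has $u$ integer-valued. The reduction $Du=D^ju$ via Federer--Volpert is correct, but your final step reasons about ``the unique unbounded component of the complement of the jump set'', and $J_u$ is merely a countably $(d-1)$-rectifiable set which may be \emph{dense} in $\R^d$ --- the Flat Mountain of Section~\ref{sc:counterexample} is precisely such a function --- in which case $\R^d\setminus J_u$ has empty interior and one cannot speak of its components. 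The conclusion you want is still true, but must be obtained measure-theoretically, e.g.\ by restricting $u$ to $\L^{d-1}$-a.e.\ line in a coordinate direction, on which $u$ is a one-dimensional $\SBV$ function with vanishing absolutely continuous part, integer jumps, and zero limit at $\pm\infty$, and hence $\Z$-valued a.e.; integrating over lines then gives $u\in\Z$ a.e. Alternatively, and more economically, adopt the paper's isoperimetric-plus-constancy argument outright, which bypasses the regularity analysis of $u$ entirely.
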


The case $k=d$ is trivial, since the only boundaryless normal $d$-current in $\R^d$ is the zero current. 
In the case $k=d-1$ the proof of Proposition~\ref{prop:equality} is an easy consequence of the isoperimetric inequality and the constancy lemma. Indeed, one can show that in this case there exists an integral current $S$ such that $\partial S = T$ (by the isoperimetric inequality, see~\cite[Theorem 7.9.1]{KP}). Such a current $S$ is also unique (in the class of normal currents with boundary $T$): suppose $N_1, N_2 \in \mathrm{N}_{d}(\R^d)$ are such that $\partial N_1 = \partial N_2 = T$, Then, the normal $d-$current $N := N_1 - N_2$ would be boundaryless, hence by the constancy lemma (see, e.g.,~\cite[Proposition 7.3.1]{KP}), $N$ would be equivalent to a constant. Since both $N_1,N_2$ have finite mass, we necessarily have $N_1 - N_2 = 0$ and so uniqueness holds.

For a proof of Proposition~\ref{prop:equality} for the case $k=d-2$ we refer the reader to, e.g.,~\cite{HP} or~\cite[Remark 5]{Brezis_Mironescu}, which also contains the proof of the case $k=0$.

Combining Proposition~\ref{prop:equality} with Theorem~\ref{thm:strong_rademacher} we obtain the following:

\begin{corollary}
Let $k \in \{0,d-2,d-1,d\}$ and let $t \mapsto T_t \in \Irm_k(\R^d)$, $t \in [0,1]$, be a path that is absolutely continuous with respect to the homogeneous (non-integral) flat norm $\Fhom$ and such that 
\[\partial T_t = 0, \; t\in [0,1] \qquad\text{and}\qquad \sup_{t\in[0,1]} \Mbf(T_t)<\infty.
\]
Let $S$ be the unique current given by Theorem~\ref{thm:rect} in this setting. If $S$ satisfies~\eqref{eq:NC}, then there exists $b \in {\rm L}^1(\L^1 \otimes \|T_t\|)$ such that~\eqref{eq:PDE} holds.
\end{corollary}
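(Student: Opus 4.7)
The plan is to reduce this corollary directly to the Strong Differentiability Theorem~\ref{thm:strong_rademacher}, using Proposition~\ref{prop:equality} as the only additional input. The latter is tailor-made for the four special dimensions $k \in \{0, d-2, d-1, d\}$ in which the two homogeneous flat norms $\Fhom$ and $\Fhom_\Irm$ coincide on boundaryless integral currents.

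Concretely, I would first observe that for every $s,t \in [0,1]$ the difference $T_s - T_t$ lies in $\Irm_k(\R^d)$ (as the algebraic difference of two finite-mass integer-multiplicity rectifiable $k$-currents, it is again integer-multiplicity rectifiable with finite mass) and is boundaryless since $\partial T_t = 0$ for every $t$. Proposition~\ref{prop:equality} then applies to $T_s - T_t$ and yields
\[
\Fhom(T_s - T_t) = \Fhom_\Irm(T_s - T_t), \qquad s,t \in [0,1].
\]
Hence the $\Fhom$-absolute continuity hypothesis in the corollary is \emph{literally identical} to $\Fhom_\Irm$-absolute continuity of the same path. Together with the uniform mass bound $\sup_t \Mbf(T_t) < \infty$, this places us exactly in the setting of Theorem~\ref{thm:strong_rademacher} (the space-time current $S$ in the hypothesis is the same one produced by Theorem~\ref{thm:rect}, and is unique by Corollary~\ref{cor:S=S'}). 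Since $S$ is assumed to satisfy~\eqref{eq:NC}, Theorem~\ref{thm:strong_rademacher} produces the desired vector field $b \in {\rm L}^1(\L^1 \otimes \|T_t\|)$ solving~\eqref{eq:PDE}.

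No substantive obstacle arises: the argument is a purely formal combination, with Proposition~\ref{prop:equality} serving solely as the bridge that upgrades the weaker (non-integral) $\Fhom$-absolute continuity hypothesis to the stronger (integral) $\Fhom_\Irm$-absolute continuity needed to invoke Theorem~\ref{thm:strong_rademacher}. The only minor verification worth recording is the closure of boundaryless integral currents under subtraction, which is immediate.
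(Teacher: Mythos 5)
Your proposal is correct and coincides with the paper's own (implicit) argument: the paper presents this corollary with exactly the preamble ``Combining Proposition~\ref{prop:equality} with Theorem~\ref{thm:strong_rademacher} we obtain the following,'' which is precisely your reduction of $\Fhom$-absolute continuity to $\Fhom_\Irm$-absolute continuity via $\Fhom(T_s-T_t)=\Fhom_\Irm(T_s-T_t)$, followed by an invocation of the strong Rademacher theorem.
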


For general $k$ it does not seem to be known whether, for $T\in \Irm_k(\R^d)$, the definitions of homogeneous flat norm and homogeneous integral flat norm give rise to equivalent norms.

\section{A Lipschitz space-time current with many critical points}\label{sc:counterexample} 

In this final section we construct an example, which we call the ``Flat Mountain'', of a Lipschitz-in-time (so, in particular, AC) integral current $S\in \Irm_2^{\Lip} (\R\times\R^2)$ that does not possess the negligible criticality property~\eqref{eq:NC} or, equivalently, the Sard property~\eqref{eq:Sard}. This implies (via Proposition~\ref{prop:converse_if_PDE_then_Sard_for_slices}) that there is \emph{no} vector field $b\in {\rm L}^1(\L^1(\dd t)\otimes \|S|_t\|)$ such that $S$ that solves the Lie derivative equation~\eqref{eq:PDE} with $b$. In this sense, our Strong Differentiability Theorem~\ref{thm:strong_rademacher} is sharp.

The current $S$ will be the graph of a $\BV$ function $u:[0,1]^2\to \R$. We present a few different ways to construct $u$. The inspiration for this construction comes from~\cite[Section~4]{ABC}, and indeed, the function $u$ can be seen as a suitable limit of the construction performed there. See Figure~\ref{fig:Sard} for an illustration of our construction.

\subsection{Fixed point} We start by defining the cubes
\[
Q_0:=[0,\tfrac12)\times [0,\tfrac12),\quad Q_1:=[\tfrac12,1)\times [0,\tfrac12),\quad Q_2:=[0,\tfrac12)\times [\tfrac12,1),\quad Q_3:=[\tfrac12,1)\times [\tfrac12,1)
\]
and the maps $A_i:Q_i\to [0,1)^2$ given by 
\[
A_0(x):=2x,\qquad A_1(x)=2x-(1,0),\qquad A_2(x):=2x-(0,1),\qquad A_3(x):=2x-(1,1)
\]
together with their inverses $B_i:[0,1)^2\to Q_i$,
\[
B_0(x)=\tfrac12 x,\qquad B_1(x)=\tfrac12 x +(\tfrac12,0),\qquad B_2(x)=\tfrac12 x+(0,\tfrac12),\qquad B_3(x)=\tfrac12 x +(\tfrac12,\tfrac12).
\]
Consider the function 
\[
\bar{u}(x):=\sum_{i=0}^3 \frac{i}{4}\chi_{Q_i}(x).
\]
We define the map $L:{\rm L}^1([0,1]^2)\to {\rm L}^1([0,1]^2)$ given by
\begin{equation}\label{eq:def_L}
(Lv)(x):=\bar{u}(x)+\frac14 \sum_{i=0}^3 v(A_i(x))\chi_{Q_i}(x).
\end{equation}

\begin{figure}\centering
\begin{subfigure}[b]{0.45\textwidth}
         \centering
         \begin{tikzpicture}[scale=4]
       \draw (0,0)--(1,0)--(1,1)--(0,1)--(0,0); 
       
      \draw (0.5,0)--(0.5,1);
      \draw (0,0.5)--(1,0.5);
      
       \node at (0.25,0.25) {$Q_0$};
       \node at (0.75,0.25) {$Q_1$};
       \node at (0.25,0.75) {$Q_2$};
       \node at (0.75,0.75) {$Q_3$};
       
    \end{tikzpicture}
    
    \caption{The subsquares $Q_0,Q_1,Q_2,Q_3$ used in the construction of $u$.}
    \label{fig:Q0123}
     \end{subfigure}
     \hfill
     \begin{subfigure}[b]{0.45\textwidth}
         \centering
         \begin{tikzpicture}[scale=4]
       \draw (0,0)--(1,0)--(1,1)--(0,1)--(0,0); 
       
       \draw[dotted] (0.5,0)--(0.5,1);
       \draw[dotted] (0.25,0)--(0.25,1);
       \draw[dotted] (0.75,0)--(0.75,1);
       
       \draw[dotted] (0,0.5)--(1,0.5);
       \draw[dotted] (0,0.75)--(1,0.75);
       \draw[dotted] (0,0.25)--(1,0.25);
       
       \draw[thick] (0.75,0.5)--(1,0.5)--(1,0.75)--(0.75,0.75)--(0.75,0.5);
       \draw[->,dashed] (1.1,0.9)--(1.02,0.78);
       \node[right] at (1.1,0.9) {$Q_{3,1}$};
       
       \draw[thick] (0.375,0.5)--(0.5,0.5)--(0.5,0.625)--(0.375,0.625)--(0.375,0.5);
       \draw[->,dashed] (-0.1,0.75)--(0.35,0.6);
       \node[left] at (-0.1,0.75) {$Q_{2,1,1}$};
       
       \draw[fill] (0.75,0.25) circle [radius=0.01];
       \draw[->,dashed] (1.1,0.5)--(0.8,0.28);
       \node[right] at (1.1,0.5) {$x_{1,3,0,0,\ldots}$};
       
         \end{tikzpicture}
         \caption{A few subsquares and points identified by a sequence of numbers in $\{0,1,2,3\}$.}
     \end{subfigure} \\ 
    \caption{}
\end{figure}

\begin{lemma}\label{lemma:contraction} We have the following:
\begin{enumerate}
    \item[{\rm(i)}] The map $L$ is a contraction in ${\rm L}^p$ for every $p\in[1,\infty]$, and also in $\Crm^0$ (actually, it is a scaled isometry with factor $\frac14$).
    \item[{\rm(ii)}] The map $L$ is a contraction in $\BV$.
    \item[{\rm(iii)}] Let $u$ be the unique fixed point of $L$ in $\BV$. Then, $Du=D^ju$. In particular, $\nabla u(x)=0$ for a.e.\ $x\in [0,1]^2$.
\end{enumerate}
\end{lemma}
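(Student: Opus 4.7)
The plan is to prove (i) and (ii) by direct scaling calculations, and then to exploit the self-similar relation $u = Lu$ for (iii). The main technical difficulty lies in (ii): establishing a \emph{strict} contraction factor for the BV seminorm.

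For (i), the constant $\bar u$ cancels in the difference, giving
\[
(Lv - Lw)(x) = \tfrac14\sum_i \chi_{Q_i}(x)\,((v-w)\circ A_i)(x).
\]
Since $A_i\colon Q_i\to[0,1)^2$ is affine with Jacobian $4$, the change of variables $y = A_i(x)$ yields, for any $p\in[1,\infty)$,
\[
\|Lv-Lw\|_{L^p}^p = \sum_i \tfrac{1}{4^p}\int_{Q_i}|(v-w)(A_i(x))|^p\,dx = 4\cdot\tfrac{1}{4^p}\cdot\tfrac14\|v-w\|_{L^p}^p,
\]
so $\|Lv-Lw\|_{L^p} = \tfrac14\|v-w\|_{L^p}$. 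The $L^\infty$ (hence $\Crm^0$) case is immediate from the pointwise representation.

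For (ii), equip $\BV([0,1]^2)$ with the norm $\|v\|_{\BV} = \|v\|_{L^1}+|Dv|([0,1]^2)$, so that the $L^1$ part is already contracted by $\tfrac14$. To bound $|D(Lv-Lw)|([0,1]^2)$, split $D(Lv-Lw)$ into the interior parts on each $Q_i^\circ$ (where $\bar u$ is constant) and the jump parts on the four interior interfaces meeting at $(\tfrac12,\tfrac12)$. The BV change of variables for $A_i$ (derivative scaling $2$, Jacobian $4$) combined with the amplitude factor $\tfrac14$ gives $|D(Lv-Lw)|(Q_i^\circ) = \tfrac18|D(v-w)|([0,1]^2)$, totalling $\tfrac12|D(v-w)|$ when summed over $i$. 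On each interface, for instance $\{\tfrac12\}\times(0,\tfrac12)$, the jump of $Lv-Lw$ equals $\tfrac14[(v-w)(0,2x_2)-(v-w)(1,2x_2)]$, whose integrated mass is at most $\tfrac18|D_1(v-w)|([0,1]^2)$ via the one-dimensional slicing bound $\int_0^1|f(0,s)-f(1,s)|\,ds\leq|D_1 f|([0,1]^2)$. Summing the four interfaces and applying the sharp Euclidean estimate $|D_1 f|+|D_2 f|\leq\sqrt 2\,|Df|$ (from $|a|+|b|\leq\sqrt 2\sqrt{a^2+b^2}$) yields an interface contribution of at most $\tfrac{\sqrt 2}{4}|D(v-w)|$. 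Hence
\[
|D(Lv-Lw)|([0,1]^2)\leq\tfrac{2+\sqrt 2}{4}\,|D(v-w)|([0,1]^2),
\]
and since $\tfrac{2+\sqrt 2}{4}<1$, the map $L$ is a strict contraction on the Banach space $\BV([0,1]^2)$; Banach's fixed point theorem then supplies the unique $u\in\BV$ with $Lu=u$. The subtle point is precisely the Euclidean improvement over the crude bound $|D_1 f|+|D_2 f|\leq 2|Df|$, which alone would leave $L$ merely non-expansive.

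For (iii), exploit the self-similar structure: on each $Q_i^\circ$ the identity $u = \bar u|_{Q_i}+\tfrac14\, u\circ A_i$ (with $\bar u$ locally constant) together with the BV change of variables gives the scaling
\[
|D^\star u|(Q_i^\circ) = \tfrac18\,|D^\star u|([0,1]^2)\qquad\text{for }\star\in\{a,c\},
\]
because both $D^au\ll\L^2$ and $D^cu$ (which vanishes on every Borel set of finite $\H^1$-measure, by the standard characterisation of the Cantor part in $\BV(\R^2)$) assign zero mass to the four one-dimensional interior interfaces. Summing over $i$,
\[
|D^\star u|([0,1]^2)=\sum_i|D^\star u|(Q_i^\circ)=4\cdot\tfrac18|D^\star u|([0,1]^2)=\tfrac12|D^\star u|([0,1]^2),
\]
and the finiteness of $|D^\star u|$ (since $u\in\BV$) forces $|D^au|=|D^cu|=0$. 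Therefore $Du = D^ju$, and in particular the $\L^2$-density $\nabla u$ vanishes almost everywhere.
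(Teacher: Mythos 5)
Your proof of (i) matches the paper. For (ii) and (iii) you take genuinely different routes, and in fact (ii) is \emph{more careful} than the paper's argument. The paper estimates $|D(Lv-Lw)|([0,1]^2)\le\tfrac14\sum_i|D((v-w)\circ A_i)|(\overline{Q_i})=\tfrac12|D(v-w)|([0,1]^2)$, which silently drops the jump contributions on the four internal interfaces $\{\tfrac12\}\times(0,\tfrac12)$, etc. These do not vanish in general: taking $v-w=x_1$, one finds $Lv-Lw=\tfrac12 x_1-\tfrac14\chi_{\{x_1\ge 1/2\}}$, so $|D(Lv-Lw)|=\tfrac34>\tfrac12$. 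Your decomposition into the interior parts (giving $\tfrac12$) plus interface jump parts, combined with the one-dimensional slicing bound and the pointwise inequality $|\sigma_1|+|\sigma_2|\le\sqrt2$ for a unit polar vector, gives the correct contraction constant $\tfrac{2+\sqrt2}{4}<1$; this constant is in fact sharp, as one checks with $v-w=(x_1+x_2)/\sqrt2$. So you have filled a genuine gap in the paper's estimate, while reaching the same conclusion that $L$ contracts on $\BV$. The only small caveat is that your trace/slicing inequality must be understood with the correct one-sided traces of $v-w$ at $\partial[0,1]^2$, but that is standard.

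For (iii), your argument is different in spirit: you use the self-similar fixed-point identity $u=\bar u+\tfrac14\,u\circ A_i$ on each $Q_i^\circ$ directly, extract the scaling identity $|D^\star u|(Q_i^\circ)=\tfrac18|D^\star u|([0,1]^2)$ for $\star\in\{a,c\}$ (using that the bi-Lipschitz affine map $A_i$ preserves the type decomposition, and that $D^a u$ and $D^c u$ charge no $\H^1$-$\sigma$-finite set), and conclude $|D^\star u|=\tfrac12|D^\star u|$, hence $D^a u=D^c u=0$. The paper instead studies the iterates $u_n=L^n 0$: each $Du_n$ is supported on a dyadic grid, $Du_n\to Du$ strongly in mass by the contraction in $\BV$, so $Du$ is supported on an $\H^1$-$\sigma$-finite set and must coincide with $D^j u$ by~\cite[Proposition~3.92]{AFP}. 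Both are correct; yours avoids the iterative construction entirely and instead exploits the algebraic structure of the fixed-point equation, which is arguably cleaner, while the paper's approach is a more hands-on ``the limit inherits the support of the approximants'' argument (and also gives the description of the support of $Du$ as a countable union of grids, which is used for intuition elsewhere). As a minor remark, the paper's proof of (iii) relies on (ii) to get strong convergence of $Du_n$, so any fix of (ii) propagates there; your (iii) is independent of the specific contraction constant.
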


\begin{proof}
(i) For any $p\in[1,\infty)$ we have
\[
\|Lv-Lw\|_{{\rm L}^p}=\left(\frac{1}{4^p}\sum_{i=0}^3 \frac14\|v-w\|_{{\rm L}^p}^p\right)^{1/p}=\frac{1}{4} \|v-w\|_{{\rm L}^p}.
\]
The case $p=\infty$ and $\Crm^0$ is very similar.

(ii) We have that
\begin{align*}
|D(Lv-Lw)|([0,1]^2)& \leq \frac14 \sum_{i=0}^3 |D(v\circ A_i-w\circ A_i)|(\overline{Q_i})\\
&= \frac14 \sum_{i=0}^3 \frac12 |D(v-w)|([0,1]^2)\\
&=\frac12 |D(v-w)|([0,1]^2).
\end{align*}
Together with point (i) for $p=1$, this proves that $L$ is a contraction in $\BV$.

(iii)
We use the fact that the fixed point can be identified by iteration: We choose as a starting point the function $u_0=0$, and define $u_{n+1}:=Lu_n$. Then, set $u:=\lim_{n\to\infty}u_n$.
Claim~(ii) ensures that the convergence happens in the $\BV$ norm, and in particular, $Du_n$ converges strongly to $Du$ in the sense of measures. Since $Du_n$ is supported on the set $D_n\times D_n$, where $D_n=\{\frac{j}{2^{n+1}}:j=0,\ldots, 2^{n+1}\}$, the measure $Du$ is supported on 
\[
J:=\bigcup_{n=1}^\infty D_n\times D_n.
\]
Since this set is $\H^1$-$\sigma$-finite, by the dimensional properties of the total variation, see~\cite[Proposition~3.92]{AFP}, we must have $Du=D^ju$, and therefore $\nabla u(x)=0$ for $\L^2$-a.e.\ $x\in [0,1]^2$.
\end{proof}

\begin{remark} An alternative way to prove point (iii) is by using the compactness theorem in $\SBV$. The jump set of $u_n$ is contained in $D_n\times D_n$, where $D_n=\{\frac{j}{2^{n+1}}:j=0,\ldots, 2^{n+1}\}$. Let $\ell_n=2^{n+1}$ denote the total length of $D_n\setminus D_{n-1}$. The jump of $u_n$ is of order $d_n\approx \frac{1}{4^n}$ on $D_n\setminus D_{n-1}$. As a consequence, setting $\theta(t):=t^\alpha$ for some $\alpha>\frac12$, we have
\[
\int_{J_{u_n}}\theta([u_n](x))\; \dd \H^1(x)\leq \sum_{j=0}^n \ell_j \theta(d_j)\lesssim\sum_{j=0}^n 2^{j(1-2\alpha)}\leq \frac{1}{1-2^{1-2\alpha}}.
\]
We can thus apply the closure theorem in $\SBV$~\cite[Theorem~4.7]{AFP} to conclude that $\nabla u=0$ almost everywhere and $Du=D^j u$.
\end{remark}

\begin{figure}
   \centering
     \begin{subfigure}[b]{0.45\textwidth}
	        	\centering
	         	\includegraphics[width=\textwidth]{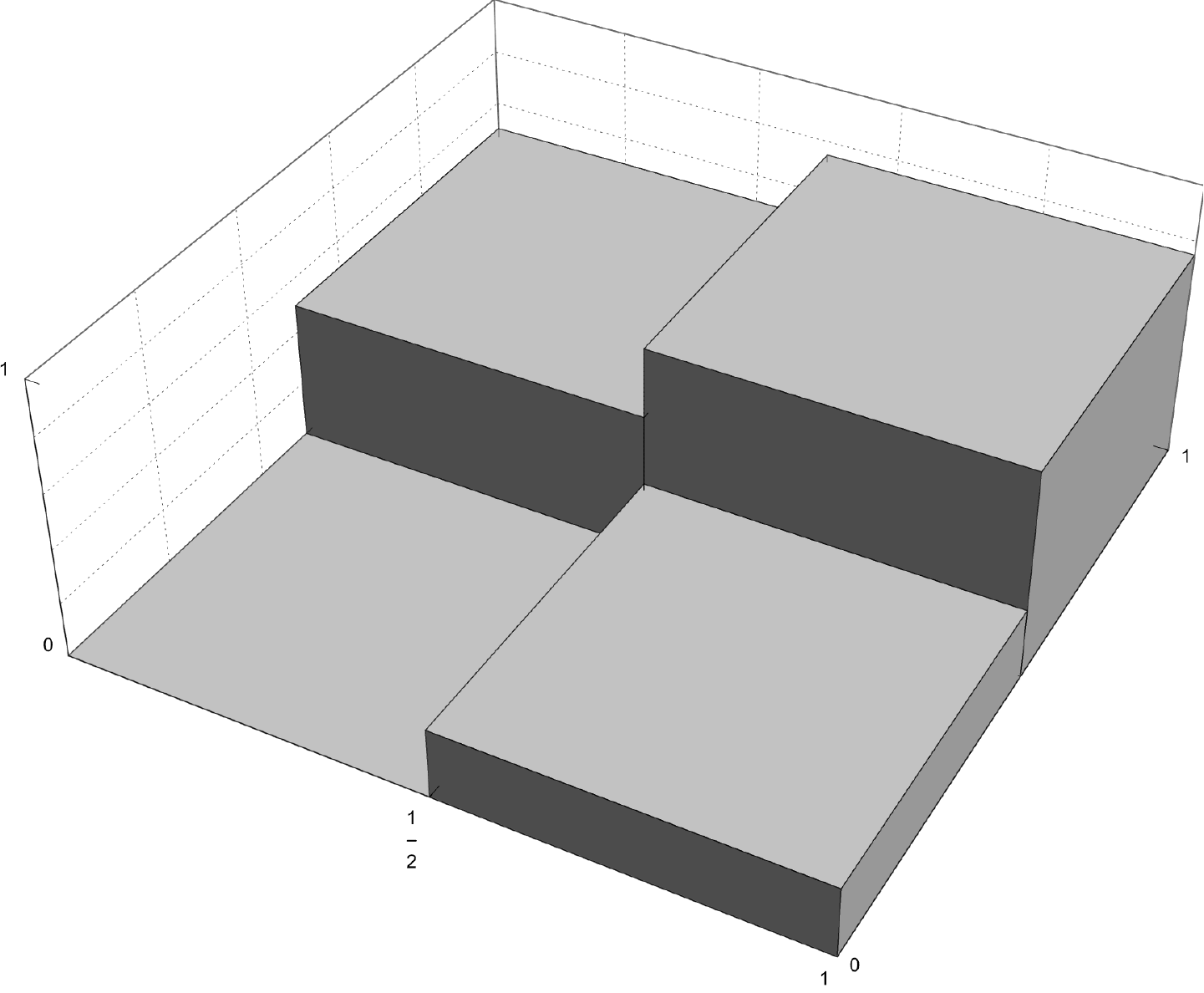}
	         	 \caption{The function $\bar u$, which also coincides with $u_1=Lu_0=L0$.}
	     \end{subfigure}
     \hfill
     \begin{subfigure}[b]{0.45\textwidth}
         \centering
	      \includegraphics[width=\textwidth]{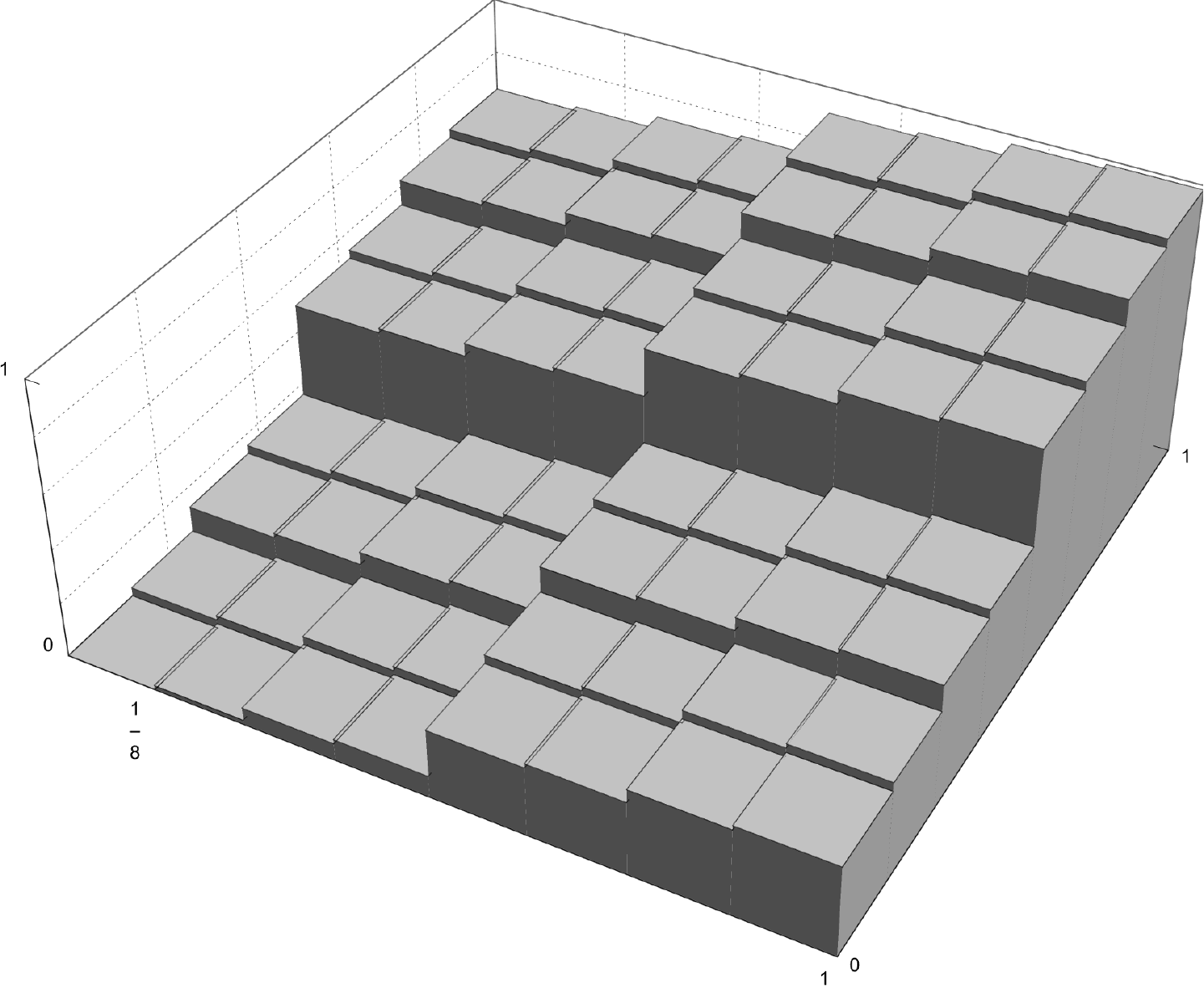}
	         \caption{The function $u_3$.}
	     \end{subfigure} \\ 
     \caption{}
	 \label{fig:Sard}
\end{figure}

\subsection{Base-4 expansion}
We now consider the family of dyadic cubes inside $[0,1)^2$. For simplicity we use half-open cubes, so that they are disjoint. It will be useful to describe these cubes by an expansion in base-$4$.

Given any $n\in\Nbb$, to any finite sequence $\ibf=(i_j)_{j=1}^n\in \{0,1,2,3\}^n$ we associate the dyadic cube $Q_{\ibf}$ of sidelength $2^{-n}$ given by 
\begin{equation}\label{eq:def_Q_i}
Q_\ibf=Q_{i_1,\ldots,i_n}:= B_{i_1}\circ \ldots\circ B_{i_n} ([0,1)^2).
\end{equation}
One can think of $Q_\ibf$ in the following way: $i_1$ tells us which subcube of $[0,1)^2$ we have to select, $i_2$ which sub-subcube of the previous one, and so on. At each step, index $0$ means the lower-left cube, index $1$ the lower-right cube, index $2$ the upper-left cube, and index $3$ the upper-right cube (see Figure~\ref{fig:Q0123}). 

In the same way, to any infinite sequence $\ibf\in\{0,1,2,3\}^\Nbb$ we associate the unique point $x_\ibf\in[0,1]^2$ identified by the corresponding infinite sequence of cubes:
\[
\{x_\ibf\}=\bigcap_{n\in \Nbb} \overline{Q}_{i_1,\ldots,i_n}.
\]
Vice versa, to any $x\in[0,1)^2$ we can associate the corresponding infinite family of cubes whose intersection is $\{x\}$, and therefore the  associated index $\ibf(x)$.

Recall the functions $u_n:=L(u_{n-1})$ obtained by iteration starting from $u_0=0$.

	\begin{lemma}\label{lemma:explicit_u}
		The following hold:
		\begin{enumerate}
			\item[{\rm(i)}] The function $u_n$ is constant on each dyadic cube $Q_\ibf$ of sidelength $2^{-n}$, and, more precisely,
			\begin{equation}\label{eq:u_n_base4}
				u_n(x)=\sum_{j=1}^n \frac{i_j}{4^j}\qquad\text{for every $x\in Q_\ibf$, $\ibf=(i_1,\ldots,i_n)$.} 
			\end{equation}
			\item[{\rm(ii)}] The function $u$ satisfies
			\begin{equation}\label{eq:u_base4}
				u(x)=\sum_{j=1}^\infty \frac{i_j}{4^j}\qquad\text{if $\ibf=\ibf(x)$.}
			\end{equation}
		\end{enumerate}
	\end{lemma}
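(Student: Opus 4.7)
The plan is a direct induction for part (i) and then a passage to the limit for part (ii), exploiting the contraction property of $L$ already established in Lemma~\ref{lemma:contraction}.

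For part (i), I would argue by induction on $n$. The base case $n=1$ is immediate: since $u_0 \equiv 0$, the definition~\eqref{eq:def_L} gives $u_1 = Lu_0 = \bar u$, and $\bar u$ takes value $i_1/4$ on $Q_{i_1}$ by construction, which is exactly~\eqref{eq:u_n_base4} for $n=1$. For the inductive step, suppose~\eqref{eq:u_n_base4} holds for $u_n$ and fix $x \in Q_\ibf$ with $\ibf = (i_1,\ldots,i_{n+1})$. By~\eqref{eq:def_Q_i}, $x \in Q_{i_1}$ so that in the formula
\[
	u_{n+1}(x) = (Lu_n)(x) = \bar u(x) + \frac{1}{4}\sum_{i=0}^3 u_n(A_i(x))\chi_{Q_i}(x)
\]
only the term $i = i_1$ contributes, and $\bar u(x) = i_1/4$. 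Moreover, $A_{i_1}$ maps $Q_{i_1,i_2,\ldots,i_{n+1}}$ onto $Q_{i_2,\ldots,i_{n+1}}$ (a cube of sidelength $2^{-n}$), so by the inductive hypothesis,
\[
	u_n(A_{i_1}(x)) = \sum_{j=1}^n \frac{i_{j+1}}{4^j}.
\]
Substituting yields
\[
	u_{n+1}(x) = \frac{i_1}{4} + \frac{1}{4}\sum_{j=1}^n \frac{i_{j+1}}{4^j} = \sum_{j=1}^{n+1} \frac{i_j}{4^j},
\]
which is the claim.

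For part (ii), I would invoke Lemma~\ref{lemma:contraction}~(i), which asserts that $L$ is a $\tfrac14$-contraction in ${\rm L}^\infty$. Since $u$ is the fixed point of $L$, we have $\|u_n - u\|_{{\rm L}^\infty} \le 4^{-n}\|u_0 - u\|_{{\rm L}^\infty}$, so the iterates $u_n$ converge uniformly (equivalently, in $C^0$ on the set where all $u_n$ are well-defined) to $u$. In particular, the formula~\eqref{eq:u_n_base4} passes to the pointwise limit for every $x$: if $\ibf(x) = (i_j)_{j\in\N}$ is the infinite base-$4$ expansion of $x$, then $x \in Q_{i_1,\ldots,i_n}$ for each $n$, and so
\[
	u(x) = \lim_{n\to\infty} u_n(x) = \lim_{n\to\infty}\sum_{j=1}^n \frac{i_j}{4^j} = \sum_{j=1}^\infty \frac{i_j}{4^j},
\]
which is~\eqref{eq:u_base4}.

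The only delicate point is the pointwise (rather than a.e.) identification of $u$ in part (ii) at dyadic boundary points, where the representation $\ibf(x)$ may be ambiguous; however, since the construction uses half-open cubes, the map $x \mapsto \ibf(x)$ is well-defined on $[0,1)^2$, and both sides of~\eqref{eq:u_base4} agree on this set of full Lebesgue measure. I do not expect any genuine obstacle here — both steps are essentially bookkeeping.
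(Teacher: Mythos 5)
Your proof is correct and follows essentially the same route as the paper's: a direct induction on $n$ for part (i), exploiting the identity $A_{i_1}(Q_{i_1,\ldots,i_{n+1}}) = Q_{i_2,\ldots,i_{n+1}}$, and uniform convergence via the $\tfrac14$-contraction property in $\Lrm^\infty$ (Lemma~\ref{lemma:contraction}(i)) to pass to the limit in part (ii). The extra remark about dyadic boundary points is reasonable but not needed since, as you note, the half-open cube convention fixes the expansion $\ibf(x)$ unambiguously on $[0,1)^2$.
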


\begin{proof}
(i) Let us prove this by induction. The claim is clear if $n=1$. Let us suppose that~\eqref{eq:u_n_base4} holds for $n$, and let us prove it for $n+1$. If $x\in Q_{i_1,\ldots,i_{n+1}}$, then in particular $x\in Q_{i_1}$, and moreover from the definition~\eqref{eq:def_Q_i} it holds that $A_i(x)\in Q_{i_2,\ldots,i_{n+1}}$. Therefore, by~\eqref{eq:def_L}, if $x\in Q_{i_1,\ldots,i_{n+1}}$, then
\begin{align*}
u_{n+1}(x)&=\bar u(x)+\frac14\sum_{i=0}^3 u_n(A_i(x))\chi_{Q_i}(x)\\
& =\frac{i_1}{4}+\frac14 u_n(A_{i_1}(x))\\
&= \frac{i_1}{4}+\frac14\sum_{j=1}^n\frac{i_{j+1}}{4^j}\\
&= \sum_{j=1}^{n+1} \frac{i_j}{4^j},
\end{align*}
and the claim is proven.

(ii)
Since $L$ is a contraction in $\Crm^0$, the convergence $u_n\to u$ is uniform. Then, passing to the limit as $n\to\infty$ in~\eqref{eq:u_n_base4}, we deduce~\eqref{eq:u_base4}.
\end{proof}

\subsection{A space-filling curve}\label{subsec:curve}
We also record here the definition of the following curve $\gamma:[0,1]\to [0,1]^2$, which is essentially surjective. This curve essentially coincides with the inverse of $u$, modulo the fact that $u$ is not exactly injective and thus not invertible (but it is so on a subset of $[0,1]^2$ of full measure). Given $t\in [0,1]$, we consider its base-$4$ expansion $t=\sum_{j=1}^\infty t_j 4^{-j}$, $t_j\in\{0,1,2,3\}$. We suppose that the expansion does not end with an infinite sequence of $3$'s, so that it is uniquely determined. Then we consider the sequence $\ibf(t)=(t_j)_{j=1}^\infty$ and we set
\[
\gamma(t):=x_{\ibf(t)}.
\]
More concretely, setting
\[
t^1_j:=\begin{cases}
0&\text{if $t_j\in\{0,2\}$,}\\
1&\text{if $t_j\in\{1,3\}$,}
\end{cases}\qquad
t^2_j:=\begin{cases}
0&\text{if $t_j\in\{0,1\}$,}\\
1&\text{if $t_j\in\{2,3\}$,}
\end{cases}
\]
then $\gamma(t)=(\gamma_1(t),\gamma_2(t))$ is given by
\[
\gamma_1(t)=\sum_{j=1}^\infty \frac{t^1_j}{2^j},\qquad\gamma_2(t)=\sum_{j=1}^\infty \frac{t_j^2}{2^j}.
\]
We record here that this curve (and slight variations) has been known under the name of \textit{Lebesgue curve} or \textit{Z-order curve}.

\subsection{The Flat Mountain $S$ and its slices}
We are finally ready to consider the integral $2$-current $S$ associated to the graph of $u$, completed with jump parts. We define the following $2$-rectifiable sets:
\[
\Gamma_u:=\bigcup_{x\in[0,1]^2\setminus J_u} \{(x,u(x))\},\qquad \Gamma_{J_u}:= \bigcup_{x\in J_u} \{x\}\times [u^-(x),u^+(x)],
\]
where $[u^-(x),u^+(x)]$ stands for the segment between the traces of $u$ on the two sides of $J_u$. We emphasise that now we consider $u$ as a function on $\R^2$ supported on $[0,1]^2$, so that a big portion of $J_u$ lies on the boundary of $[0,1]^2$. Then we define $S$ to be the integral $2$-current with density $1$ associated to $\Gamma=\Gamma_u\cup \Gamma_{J_u}$. The layer cake decomposition and coarea formula for $\BV$ functions imply that
\[
u(x) e_1\wedge e_2=\int_0^1 \1_{\{u \geq t\}}(x) \; \dd t\qquad 
\text{and}
\qquad Du^\perp =\int_0^1 \partial (\1_{\{u \geq t\}}e_1\wedge e_2) \; \dd t 
\]
if we identify $Du^\perp$ with the $1$-current associated with the clockwise rotation $Du^\perp$ of the gradient measure.

	\begin{lemma}\label{lemma:properties_u_n} The following properties hold:
		\begin{enumerate}
			\item[{\rm(i)}] The superlevel sets $\{x:u_n(x)\geq t\}$ are piecewise constant in $t\in[0,1]$, with a discontinuity exactly when $t$ is a multiple of $\tfrac{1}{4^n}$.
			\item[{\rm(ii)}] For every $m\geq n$ and $j=0,\ldots,4^n$:
			\[
			\left\{u_n\geq \frac{j}{4^n}\right\}=\left\{u_m\geq \frac{j}{4^n}\right\}=\left\{u\geq \frac{j}{4^n}\right\}.
			\]
			\item[{\rm(iii)}] The following equality holds modulo $\L^2$-negligible sets:
			\[
			\left\{\sum_{j=1}^n \frac{i_j}{4^j}<u(x)<\frac{1}{4^n}+\sum_{j=1}^n \frac{i_j}{4^j}\right\}= Q_{i_1,\ldots,i_n}.
			\]
			\item[{\rm(iv)}] The slices $S(t):=\partial(\1_{\{u\geq t\}} e_1\wedge e_2)$ (seen as integral $1$-currents) satisfy
			\[
			\Fhom(S(t)-S(s))=|t-s|\qquad \text{for every $t,s\in[0,1]$ multiples of $\tfrac{1}{4^n}$.}
			\]
			\item[{\rm(v)}] The slices $S(t):=\partial(\1_{\{u\geq t\}} e_1\wedge e_2)$ (seen as integral $1$-currents) satisfy
			\[
			\Fhom(S(t)-S(s))=|t-s|\qquad\text{for every $t,s\in[0,1]$.}
			\]
		\end{enumerate}
	\end{lemma}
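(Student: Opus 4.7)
The statement has five parts; (i)--(iii) are essentially combinatorial consequences of Lemma~\ref{lemma:explicit_u}, while (iv)--(v) identify the flat distance between slices with the time-gap.

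For (i), I will simply observe that by Lemma~\ref{lemma:explicit_u}(i), $u_n$ takes only the $4^n$ values $\{j/4^n : 0 \le j < 4^n\}$, each on a dyadic cube of sidelength $2^{-n}$; thus the superlevel set $\{u_n \ge t\}$ changes precisely when $t$ crosses one of these multiples of $1/4^n$. For (ii), the key remark is that for every $x$ and every $m > n$,
\[
0 \le u_m(x) - u_n(x) = \sum_{j=n+1}^m \frac{i_j(x)}{4^j} < 4^{-n}, \qquad 0 \le u(x) - u_n(x) \le 4^{-n},
\]
with the rightmost equality holding only on the Lebesgue-null set of points $x$ whose base-$4$ expansion eventually consists of $3$'s. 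Consequently the threshold $j/4^n$ never lies in $(u_n(x), u_n(x) + 4^{-n})$ outside this null set, so the three superlevel sets at the level $j/4^n$ agree. Part (iii) will then be a direct consequence of Lemma~\ref{lemma:explicit_u}(ii): the strict double inequality pins down the first $n$ base-$4$ digits of $u(x)$, equivalently places $x$ in $Q_{i_1,\ldots,i_n}$, again after discarding the same null set.

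For (iv), using (iii) as a partition I will first compute $\L^2(\{u \ge j/4^n\}) = 1 - j/4^n$, since $\{u \ge j/4^n\}$ is the disjoint union (modulo null sets) of the $(4^n - j)$ cubes $Q_\ibf$ with $\sum_\ell i_\ell/4^\ell \ge j/4^n$. For $s = b/4^n \le t = a/4^n$, the $2$-current
\[
W := \bigl[\1_{\{s \le u < t\}}\bigr]\, \ee_1 \wedge \ee_2\, \L^2
\]
is a normal filling of $S(s) - S(t)$ with mass $\L^2(\{s \le u < t\}) = t - s$, giving the upper bound $\Fhom(S(t) - S(s)) \le t - s$. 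For the matching lower bound, if $W' \in \Nrm_2(\R^2)$ is any other filling then $W - W'$ is a boundaryless normal $2$-current in $\R^2$ of finite mass; by the constancy lemma in $\R^2$ it must equal $c\,\ee_1 \wedge \ee_2\, \L^2$ for some constant $c$, and since its mass on $\R^2$ has to be finite we must have $c = 0$. Hence $W$ is the unique filling and $\Fhom(S(t) - S(s)) = \Mbf(W) = t - s$.

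For (v), the only additional input I need is that $\L^2(\{u < t\}) = t$ for every $t \in [0,1]$, not merely dyadic $t$; this will follow from (iv) by sandwiching, since for $j/4^n \le t < (j+1)/4^n$ monotonicity gives $j/4^n \le \L^2(\{u < t\}) \le (j+1)/4^n$, so letting $n\to\infty$ yields the claim. Once this is known, the same candidate $W$ defined for arbitrary $s < t$ in $[0,1]$ is a filling of $S(s) - S(t)$ with mass $\L^2(\{s \le u < t\}) = t - s$, and the constancy-lemma argument from (iv) again supplies uniqueness, proving $\Fhom(S(t) - S(s)) = |t - s|$. The only genuinely nontrivial step in the whole lemma is precisely this uniqueness of the top-dimensional filling, which crucially uses that $\R^2$ has trivial top-dimensional homology; everything else reduces to bookkeeping on the base-$4$ digits.
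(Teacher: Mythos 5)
Your proof is correct and follows the paper's line for parts (i)--(iv), while taking a genuinely different route for part (v). Two comments on the comparison, and one minor imprecision.

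For (iv), you make explicit what the paper leaves implicit: the infimum in $\Fhom$ is attained at $\1_{\{s\le u<t\}}\ee_1\wedge\ee_2\,\L^2$ by the constancy lemma, because any two normal fillings of the same boundaryless $1$-current in $\R^2$ differ by a boundaryless normal $2$-current of finite mass, which must vanish. The paper simply states the conclusion; your version of the argument is the correct justification. For (v), you diverge from the paper: the paper extends (iv) to all $t,s\in[0,1]$ by observing that the dyadics are dense and that $t\mapsto S(t)$ is continuous (a claim it does not prove), whereas you first establish $\L^2(\{u<t\})=t$ for every $t$ by a sandwiching argument and then repeat the explicit-filling-plus-constancy argument verbatim. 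Your route is more self-contained and avoids having to justify continuity of the slice map or the integrality of the flat limit; what the paper's route buys is brevity. Both are valid. (You attribute the computation $\L^2(\{u<j/4^n\})=j/4^n$ to (iv), but it in fact follows from (iii) — a slip of the pen, not of substance.)

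There is a small imprecision in (ii) and (iii): you say the inequality $u(x)-u_n(x)\le 4^{-n}$ can become an equality on the "Lebesgue-null set of points whose base-$4$ expansion eventually consists of $3$'s", and conclude only equality modulo null. But since the cubes $Q_\ibf$ are taken half-open and exclude their upper-right corners, \emph{no} point of $[0,1)^2$ has an index $\ibf(x)$ ending in all $3$'s — that set is empty, not merely null — so the strict inequality $u(x)-u_n(x)<4^{-n}$ holds everywhere on $[0,1)^2$ and the three superlevel sets in (ii) coincide \emph{exactly}, as the paper claims. In (iii) the genuine exceptional set is a different one: the single corner point $x_{(i_1,\ldots,i_n,0,0,\ldots)}$ for which $u(x)=\sum_{j\le n}i_j/4^j$, which violates the \emph{left} strict inequality. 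None of this affects the downstream use, since (iv) and (v) only require equality modulo $\L^2$-null sets, but it is worth getting the bookkeeping right.
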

	
	\begin{proof}
		(i) This is clear from~\eqref{eq:u_n_base4}.
		
		(ii) Given any cube $Q_\ibf$, its subcubes are exactly those of the form $Q_{\ibf'}$, where $\ibf'$ is any sequence that agrees with $\ibf$ up to the $n$'th digit. Then again by~\eqref{eq:u_n_base4}, it follows that for every $m\geq n$ we have
		\[
		u_n(x)\leq u_m(x)< u_n(x)+\frac{1}{4^n}\qquad\text{for every $x\in Q_\ibf$, $\ibf=(i_1,\ldots, i_n)$.}
		\]
		From this the first equality follows. The second equality is derived from the uniform convergence of $u_n$ to $u$, which is a consequence of~\ref{lemma:contraction}~(i).
		
		(iii) If $x\in Q_{i_1,\ldots,i_n}$, then by Lemma~\ref{lemma:explicit_u}~(ii)
		\[
		\sum_{j=1}^n \frac{i_j}{4^j}\leq u(x)=\sum_{j=1}^n \frac{i_j}{4^j}+\sum_{j=n+1}^\infty \frac{i_j}{4^j}< \frac{1}{4^n}+ \sum_{j=1}^n \frac{i_j}{4^j},
		\]
		therefore one inclusion is proven up to removing the point whose index is $(i_1,\ldots,i_n,0,0,\ldots)$ (which realises the left-most equality).
		
		For the reverse inclusion, let $x\in Q_{\ell_1,\ldots,\ell_n}$, with $(\ell_1,\ldots,\ell_n)\neq (i_1,\ldots,i_n)$. Then either
		\[
		\sum_{j=1}^n\frac{\ell_j}{4^j}\leq \sum_{j=1}^n\frac{i_j}{4^j} -\frac{1}{4^n}
		\]
		or
		\[
		\sum_{j=1}^n\frac{\ell_j}{4^j}\geq \sum_{j=1}^n\frac{i_j}{4^j} +\frac{1}{4^n}.
		\]
		By Lemma~\ref{lemma:explicit_u}~(ii), in the first case we have 
		\[
		u(x)< \sum_{j=1}^n\frac{i_j}{4^j},
		\]
		while in the second case we have
		\[
		u(x)\geq \sum_{j=1}^n\frac{i_j}{4^j}+\frac{1}{4^n}.
		\]
		
		(iv) By the previous point, up to $\L^2$-negligible sets, it holds
		\[
		\left\{\frac{k}{4^n} < u(x)< \frac{h}{4^n}\right\}=\bigcup_{m=k}^{h-1} \left\{\frac{m}{4^n}< u(x) < \frac{m}{4^n}+\frac{1}{4^n}\right\}=\bigcup_{(i_1,\ldots,i_n)\in \Ical} Q_{i_1,\ldots,i_n},
		\]
		where $\Ical$ is a suitable set (of cardinality $h-k$) of indices, and the corresponding squares are essentially disjoint. Therefore,
		\[
		\Fhom(S(\tfrac{k}{4^n})-S(\tfrac{h}{4^n}))=\sum_{(i_1,\ldots,i_n)\in\Ical} \L^2(Q_{i_1,\ldots,i_n})=\frac{h}{4^n}-\frac{k}{4^n}.
		\]
		
		(v) From Points (ii) and (iv) we obtain that
		\[
		\Fhom(S(t)-S(s))=|t-s|
		\]
		for every $t,s$ that are multiples of $\tfrac{1}{4^n}$ for some $n$. Since the latter is a dense set, and the map $t\mapsto S(t)$ is easily seen to be continuous, we reach the conclusion for every $t,s \in[0,1]$.
	\end{proof}

\subsection{Summary of the example}
The Flat Mountain $S$ has the following properties:
\begin{enumerate}
    \item[(i)] It is an integral $2$-current in $\R^3$.
    \item[(ii)] Its projected slices $S(t)\in \Irm_1(\R^2)$ are boundaryless and $\Fhom(S(t)-S(s))=|t-s|$.
    \item[(iii)] $\|S\|(\Crit(S))=1$ because $\nabla u(x)=0$ for a.e.\ $x\in[0,1]^2$.
\end{enumerate}
Therefore, $S$ is a Lipschitz integral space-time current without the Sard property or equivalently, by Proposition~\ref{prop:sard_equivalent_AC}, without~\eqref{eq:NC}.

As a consequence of Proposition~\ref{prop:converse_if_PDE_then_Sard_for_slices}, there is no vector field $b\in {\rm L}^1(\L^1(\dd t)\otimes \|S|_t\|)$ such that $S$ that solves the Lie derivative equation~\eqref{eq:PDE} with $b$. Nevertheless, the Weak Differentiability Theorem~\ref{thm:weak_rademacher} ensures the existence of currents $R_t$ of finite mass that solve
\[
\frac{\dd}{\dd t}S(t)-\partial R_t=0.
\]
We finish by identifying these $R_t$ in our example. 

\begin{lemma}
For the Flat Mountain $S$, with the notation of Theorem~\ref{thm:weak_rademacher}, we have that
\[
R_t=\delta_{\gamma(t)} e_1\wedge e_2\qquad\text{for $t\in[0,1]$,}
\]
where $\gamma$ is the space-filling curve of Section~\ref{subsec:curve}.
\end{lemma}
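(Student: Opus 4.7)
The plan is to identify the optimal fillings $R_{t,h}$ constructed in the proof of Theorem~\ref{thm:weak_rademacher} explicitly, and then compute their weak* limit using a disintegration of planar Lebesgue measure along the level sets of $u$.

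As a first step, I would show that, for each $t\in(0,1)$ and small $h>0$, the $2$-current
\[
  R_{t,h} := \pm\,\1_{A(t,h)}\, e_1\wedge e_2, \qquad A(t,h) := \{x\in[0,1]^2:\, t\leq u(x)<t+h\}
\]
(with sign fixed so that $\partial R_{t,h}=S(t+h)-S(t)$) is an optimal filling. Indeed, $\partial(\1_E e_1\wedge e_2)$ is the current of integration along the (appropriately oriented) boundary of the finite-perimeter set $E$, so the required identity follows directly. From Lemma~\ref{lemma:properties_u_n}(ii)--(iii) one obtains $\L^2(\{u\geq j/4^n\})=1-j/4^n$, and by monotonicity this extends to $\L^2(\{u\geq t\})=1-t$ for every $t\in[0,1]$; in particular $u_\#\L^2=\L^1\restrict[0,1]$, so $\Mbf(R_{t,h})=\L^2(A(t,h))=h$, which matches $\Fhom(S(t+h)-S(t))=h$ from Lemma~\ref{lemma:properties_u_n}(v). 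Uniqueness of the optimal filling follows from the constancy lemma in $\R^2$: any two finite-mass fillings differ by a boundaryless finite-mass $2$-current, which necessarily vanishes.

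Next, I would disintegrate $\L^2\restrict[0,1]^2=\int_0^1 \mu_t\,\dd t$ along the map $u$, so that each $\mu_t$ is a probability measure concentrated on $\{u=t\}$. The key claim is that $\mu_t=\delta_{\gamma(t)}$ for $\L^1$-a.e.\ $t$, which amounts to essential injectivity of $u$. By~\eqref{eq:u_base4}, whenever $t\in[0,1]$ is not a dyadic rational (which is $\L^1$-almost every $t$), the base-$4$ expansion $\ibf(t)=(t_j)_{j\geq 1}$ is uniquely determined, and any $x$ with $u(x)=\sum_j i_j(x)/4^j=t$ must satisfy $\ibf(x)=\ibf(t)$, whence $x=x_{\ibf(t)}=\gamma(t)$. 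Since the set of dyadic rationals is $\L^1$-negligible, the claim follows.

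Combining the two steps, for every $\phi\in\Crm_c(\R^2)$ and $\L^1$-a.e.\ $t$ one has
\[
  \frac{1}{h}\int_{A(t,h)}\phi\,\dd\L^2 = \frac{1}{h}\int_t^{t+h}\langle \mu_s,\phi\rangle\,\dd s \xrightarrow{h\to 0}\langle\mu_t,\phi\rangle=\phi(\gamma(t))
\]
by Lebesgue differentiation, so $R_{t,h}/h$ converges in the sense of currents to $\delta_{\gamma(t)}\,e_1\wedge e_2$ (up to the orientation convention chosen above). Since Theorem~\ref{thm:weak_rademacher} identifies $R_t$ as a weak* subsequential limit of $R_{t,h_j}/h_j$, this pins down $R_t$ as claimed. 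The main technical obstacle is the essential-injectivity step, which crucially relies on the explicit base-$4$ representation of $u$ furnished by Lemma~\ref{lemma:explicit_u}; the remaining arguments are routine given the facts $\Fhom(S(t+h)-S(t))=h$ and $\L^2(A(t,h))=h$ already established in Lemma~\ref{lemma:properties_u_n}.
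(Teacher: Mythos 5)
Your proof is correct, but it takes a different route from the paper's. The paper's proof is a direct localization argument: from Lemma~\ref{lemma:properties_u_n}(iii), for every $n$ and all $h>0$ small enough (depending on $t$ and $n$), the normalised filling $\tfrac{1}{h}R_{t,h}$ is supported in the closure of $Q_{t_1,\ldots,t_n}$, and since these cubes shrink to $\{\gamma(t)\}$, the weak* limit must be a unit mass at $\gamma(t)$. Your argument instead establishes the pushforward identity $u_\#\L^2 = \L^1\restrict[0,1]$, disintegrates $\L^2 = \int_0^1 \mu_t\,\dd t$ along $u$, and identifies $\mu_t=\delta_{\gamma(t)}$ for a.e.\ $t$ by essential injectivity of $u$ away from base-$4$ rationals; the limit of $\tfrac1h R_{t,h}$ then comes out of Lebesgue differentiation. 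Both routes use the same two key inputs (constancy lemma for uniqueness of the filling, and the base-$4$ representation from Lemma~\ref{lemma:explicit_u}). The paper's approach is shorter and avoids the measure-theoretic overhead, but yours yields as a clean byproduct the disintegration $\L^2\restrict[0,1]^2=\int_0^1\delta_{\gamma(t)}\,\dd t$, which makes the informal claim that "$\gamma$ is the inverse of $u$" rigorous and is arguably a more structural explanation of why the Flat Mountain fails the Sard property. One small caveat: as you note, the conclusion is obtained for $\L^1$-a.e.\ $t$ rather than literally for all $t$; this matches the fact that $R_t$ is only defined a.e.\ in Theorem~\ref{thm:weak_rademacher}, so the discrepancy with the lemma's "for $t\in[0,1]$" is purely notational.
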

Observe that, in the notation of~\eqref{eq:structure_disint}, for the current $S$ it holds that
\[
\mu_t^\perp=\delta_{\gamma(t)}\perp \H^1\restrict \Gamma_t= \|S(t)\|,
\]
which is another way of saying that $S$ does not have the Sard property.

\begin{proof}
For any pair $t<t+h\in[0,1]$, by the constancy lemma there exists only one minimiser of the homogeneous flat norm $\Fhom(T_{t+h}-T_t)$, given by (in the notation of the proof of Theorem~\ref{thm:weak_rademacher}) $R_{t,h}=\1_{\{t\leq u(x)\leq t+h\}}e_1\wedge e_2$. Given $t\in[0,1)$, let us consider its base-$4$ expansion 
\[
t=\sum_{j=1}^\infty \frac{t_j}{4^j},\qquad t_j\in\{0,1,2,3\},
\]
where the expansion does not end with infinitely many $3$'s (so that it is uniquely determined). Given $n\in \mathbb{N}$, for all $h>0$ sufficiently small we necessarily have that
\[
\sum_{j=1}^n \frac{t_j}{4^j}\leq t\leq t+h < \frac{1}{4^n}+\sum_{j=1}^n \frac{t_j}{4^j}.
\]
By Lemma~\ref{lemma:properties_u_n} (iii), $\tfrac{1}{h}R_{t,h}$ is thus supported in $Q_{i_1,\ldots,i_n}$. Sending $n\to\infty$ we discover that $R_t$, being the weak limit of $\tfrac{1}{h}R_{t,h}$, must be supported in 
\[
\bigcap_{n\in\mathbb{N}} \overline{Q}_{i_1,\ldots,i_n}=\{\gamma(t)\}.
\]
Moreover, since the $\tfrac{1}{h}R_{t,h}$ have mass equal to $1$, it is easy to see that the limit must satisfy
\[
R_t =\delta_{\gamma(t)}e_1\wedge e_2
\]
and the proof is thus complete.
\end{proof}

\bibliographystyle{plain}
\bibliography{biblio_currents,Plast}
\end{document}